\numberwithin{figure}{section}
\numberwithin{table}{section}
\newtheorem{thm}{Theorem}[section]
\newtheorem{lem}[thm]{Lemma}
\newtheorem{cor}[thm]{Corollary}
\newenvironment{remark}[1]{\medskip \noindent {\it Remark.} #1}{}
\newcommand\tr{\operatorname{tr}}
\newcommand\grad{\operatorname{grad}}
\renewcommand\div{\operatorname{div}}
\newcommand\curl{\operatorname{curl}}
\newcommand\gap{\operatorname{gap}}
\newcommand\vol{\mathsf{vol}}
\newcommand\R{\mathbb{R}}
\newcommand\eps{\operatorname\epsilon}
\newcommand\x{\times}
\newcommand\B{{\mathcal B}}
\newcommand\Bfrak{\mathfrak B}
\newcommand\D{d}
\renewcommand\H{\mathcal H}
\newcommand\Hfrak{\mathfrak H}
\newcommand\id{\operatorname{id}}
\newcommand\Lin{\mathcal L}
\renewcommand\P{{\mathcal P}}
\newcommand\Th{{\mathcal T}_h}
\newcommand\T{\mathcal T}
\newcommand\V{{\mathbb V}}
\newcommand\Zfrak{\mathfrak Z}
\newcommand\Sym{{\mathbb S}}
\newcommand\Hdiv{H(\div;\Omega;\Sym)}
\newcommand{\<}{\langle}
\renewcommand{\>}{\rangle}
\newcommand{\0}{\mathaccent23}
\newcommand\Alt{\operatorname{Alt}}
\DeclareMathOperator{\sign}{sign}
\begin{document}
\title[Finite element exterior calculus]{Finite element
exterior calculus: \\
from Hodge theory to numerical stability}

\author{Douglas N. Arnold}
\address{School of Mathematics,
University of Minnesota, Minneapolis, MN 55455}
\email{arnold@umn.edu}
\thanks{}
\author{Richard S. Falk}
\address{Department of Mathematics,
Rutgers University, Piscataway, NJ 08854}
\email{falk@math.rutgers.edu}
\thanks{}
\author{Ragnar Winther}
\address{Centre of Mathematics for Applications
and Department of Informatics,
University of Oslo, 0316 Oslo, Norway}
\email{ragnar.winther@cma.uio.no}
\thanks{}
\subjclass[2000]{Primary: 65N30, 58A14}
\keywords{finite element exterior calculus, exterior calculus, de Rham cohomology, Hodge theory,
Hodge Laplacian, mixed finite elements}
\date{August 12, 2009}
\thanks{The work of the first author was supported in part by NSF grant
DMS-0713568.}

\thanks{The work of the second author was supported in part by NSF grant
DMS-0609755.}

\thanks{The work of the third author was supported by the Norwegian
Research Council}

\thanks{The authors would like to thank Marie Rognes for her help
with some of the computations appearing in Section~\ref{subsec:examples}.}

\dedicatory{}

\begin{abstract}
This article reports on the confluence of two streams of research,
one emanating from the fields of numerical analysis and scientific computation,
the other from topology and geometry.
In it we consider the numerical discretization of partial differential equations
that are related to differential complexes so that de~Rham cohomology
and Hodge theory are key tools for exploring the well-posedness of
the continuous problem.  The discretization methods we consider are
finite element methods, in which a variational or weak formulation of
the PDE problem is approximated by restricting the trial subspace to
an appropriately constructed piecewise
polynomial subspace.  After a brief introduction to finite element
methods, we develop an abstract Hilbert space framework for analyzing the stability
and convergence of such discretizations.  In this framework,
the differential complex is represented by a complex of
Hilbert spaces and stability is obtained
by transferring Hodge
theoretic structures that ensure well-posedness of the continuous problem
from the continuous level to the discrete.
We show stable discretization is achieved if the finite element spaces
satisfy two hypotheses: they can be arranged into a
subcomplex  of this Hilbert complex, and there exists a bounded
cochain projection from that complex to the subcomplex.
In the next part of the paper, we consider
the most canonical example of the abstract theory, in which the Hilbert complex
is the de~Rham complex of a domain in Euclidean space.
We use the Koszul complex to construct two families of finite element differential forms,
show that these can be arranged in subcomplexes of the de~Rham complex in numerous ways,
and for each construct a bounded cochain
projection.   The abstract theory therefore applies to give the stability and convergence
of finite element approximations of the Hodge Laplacian.
Other applications are considered as well, especially the elasticity
complex and its application to the equations of elasticity.
Background material is included to make the presentation self-contained for a variety
of readers.
\end{abstract}

\maketitle

\tableofcontents

\section{Introduction}\label{sec:intro}
Numerical algorithms for the solution of partial differential equations
are an essential tool of the modern world.   They are applied in
countless ways every day in problems as varied as the design of aircraft,
prediction of climate, development of cardiac devices, and modeling
of the financial system.  Science, engineering, and technology depend
not only on efficient and accurate algorithms for approximating the
solutions of a vast and diverse array of differential equations which
arise in applications, but also on mathematical analysis of the behavior
of computed solutions, in order to validate the computations, determine
the ranges of applicability, compare different algorithms, and point the
way to improved numerical methods.  Given a partial differential equation (PDE)
problem, a numerical algorithm approximates the solution by the solution
of a finite dimensional problem which can be implemented and solved on
a computer.  This discretization depends on a parameter---representing,
for example, a grid spacing, mesh size, or time step---which can be
adjusted to obtain a more accurate approximate solution, at the cost
of a larger finite dimensional problem.  The mathematical analysis
of the algorithm aims to describe the relationship between the true
solution and the numerical solution as the discretization parameter
is varied.  For example, at its most basic, the theory attempts to
establish convergence of the discrete solution to the true solution in
an appropriate sense as the discretization parameter tends to zero.

Underlying the analysis of numerical methods for PDEs is the
realization that convergence depends on consistency and stability of
the discretization.  Consistency, whose precise definition depends
on the particular PDE problem and the type of numerical method,
aims to capture the idea that the operators and data defining the
discrete problem are appropriately close to those of the true problem
for small values of the discretization parameter.
The essence of stability is that the discrete
problem is well-posed, uniformly with respect to the discretization
parameter.  Like well-posedness of the PDE problem itself, stability
can be very elusive.  One might think that well-posedness of the PDE
problem, which means invertibility of the operator, together with
consistency of the discretization, would imply invertibility of the
discrete operator, since invertible operators between a pair of Banach
spaces form an open set in the norm topology.  But this reasoning is
bogus.  Consistency is not and cannot be defined to mean norm
convergence of the discrete operators to the PDE operator, since the
PDE operator, being an invertible operator between infinite dimensional
spaces, is not compact and so is not the norm limit of finite dimensional
operators.  In fact, in the first part of the preceding century, a
fundamental, and initially unexpected, realization was made: that a
consistent discretization of a well-posed problem need not be stable
\cite{cfl,vonneumann-goldstine,charney-fjortoft-vonneumann}.  Only for
very special classes of problems and algorithms does well-posedness
at the continuous level transfer to stability at the discrete level.
In other situations, the development and analysis of stable, consistent
algorithms can be a challenging problem, to which a wide array of
mathematical techniques have been applied, just as for establishing
the well-posedness of PDEs. 

In this paper we will consider PDEs that are related to differential
complexes, for which de~Rham cohomology and Hodge theory are key tools
for exploring the well-posedness of the continuous problem.  These are
linear elliptic PDEs, but they are a fundamental component of problems
arising in many mathematical models, including parabolic, hyperbolic,
and nonlinear problems.  The finite element exterior calculus, which
we develop here, is a theory which was developed to capture the key
structures of de~Rham cohomology and Hodge theory at the discrete level
and to relate the discrete and continuous structures, in order to obtain
stable finite element discretizations.

\subsection{The finite element method}\label{subsec:fem}
The finite element method, whose development as an approach to
the computer solution of PDEs began over 50 years ago and is still
flourishing today, has proven to be one of the most important technologies
in numerical PDEs.  Finite elements not only provide a methodology to
develop numerical algorithms for many different problems, but also a
mathematical framework in which to explore their behavior.  
They are based on a weak or variational form of the
PDE problem, and fall into the class of Galerkin methods, in which the
trial space for the weak formulation is replaced by a finite dimensional
subspace to obtain the discrete problem.  For a finite element method, this
subspace is a space of piecewise polynomials defined by specifying three
things: a simplicial decomposition of the domain, and, for each simplex,
a space of polynomials called the shape functions, and a set of degrees
of freedom for the shape functions, i.e., a basis for their dual space,
with each degree of freedom associated to a face of some dimension of
the simplex.  This allows for efficient implementation of the global
piecewise polynomial subspace, with the degrees of freedom determining
the degree of interelement continuity.

For readers unfamilar with the finite element method, we introduce some basic ideas
by considering the approximation of the simplest two-point
boundary value problem
\begin{equation}\label{1d}
-u^{\prime\prime}(x) = f(x), \quad -1 < x < 1, \qquad u(-1) = u(1) =0.
\end{equation}
Weak solutions to this problem are sought in the Sobolev space $H^1(-1,1)$
consisting of functions in $L^2(-1,1)$ whose first derivative
also belong to $L^2(-1,1)$.
Indeed, the solution $u$ can be characterized as the minimizer of the energy
functional
\begin{equation}\label{min}
J(u):= \frac{1}{2} \int_{-1}^1 |u^{\prime}(x)|^2 \, dx - \int_{-1}^1
f(x) u(x) \, dx
\end{equation}
over the space $\0 H^1(-1,1)$ (which consists of $H^1(-1,1)$ functions
vanishing at $\pm 1$), or, equivalently, as the solution of the weak problem:
Find $u \in \0H^1(-1,1)$ such that
\begin{equation}\label{wf}
\int_{-1}^1 u^{\prime}(x) v^{\prime}(x) \, dx = \int_{-1}^1 f(x) v(x) \, dx,
\quad v \in \0H^1(-1,1).
\end{equation}
It is easily seen by integrating by parts that a smooth solution
of the boundary value problem satisfies the weak formulation and that
a solution of the weak formulation which posesses appropriate smoothness
will be a solution of the boundary value problem.

Letting $V_h$ denote a finite dimensional subspace of $\0 H^1(-1,1)$, called
the \emph{trial space}, we may define an approximate solution $u_h \in V_h$ as
the minimizer of the functional $J$ over the trial space (the classical Ritz
method), or, equivalently by Galerkin's method, in which $u_h \in V_h$ is
determined by requiring that the variation given in the weak problem hold only
for functions in $V_h$, i.e., by the equations
\begin{equation*}
\int_{-1}^1 u_h^{\prime}(x) \, v^{\prime}(x) \, dx
= \int_{-1}^1 f(x) \, v(x) \, dx, \quad  v \in V_h.
\end{equation*}
By choosing a basis for the trial space $V_h$, the Galerkin method reduces to
a linear system of algebraic
equations for the coefficients of the expansion of $u_h$ in terms of the basis
functions.  More specifically, if we write $u_h = \sum_{j=1}^M c_j
\phi_j$, where the functions $\phi_j$ form a basis for the trial space, then
the Galerkin equations hold if and only if $A c=b$ where the coefficient matrix
of the linear system is given by 
 $A_{ij} =\int_{-1}^1 \phi_j^{\prime} \, \phi_i^{\prime} \, dx$ and
$b_i=\int_{-1}^1 f \phi_i \, dx$. Since this is a square
system of linear equations, it is nonsingular if and only if the only
solution for $f=0$ is  $u_h=0$.  This follows
immediately by choosing $v =u_h$.

The simplest finite element method is obtained applying Galerkin's method
with the trial space $V_h$ consisting of all element of $\0H^1(-1,1)$ which
are linear on each subinterval of some chosen mesh $-1=x_0<x_1<\cdots <x_N=1$
of the domain $(-1,1)$.  Figure~\ref{fg:simplest} compares the exact solution
$u=\cos(\pi x/2)$ and the finite element solution $u_h$ in the case of a uniform mesh with
$N=14$ subintervals.  The derivatives are compared as well.
For this simple problem, $u_h$ is simply the orthogonal projection of $u$
into $V_h$ with respect to the inner product defined by the left hand side of \eqref{wf},
and the finite element method gives good approximation even with a fairly coarse mesh.
Higher accuracy can easily be obtained by
using a finer mesh or piecewise polynomials of higher degree.
\begin{figure}[htb]
\centerline{\raise-.05in\hbox{\includegraphics[height=1.85in]{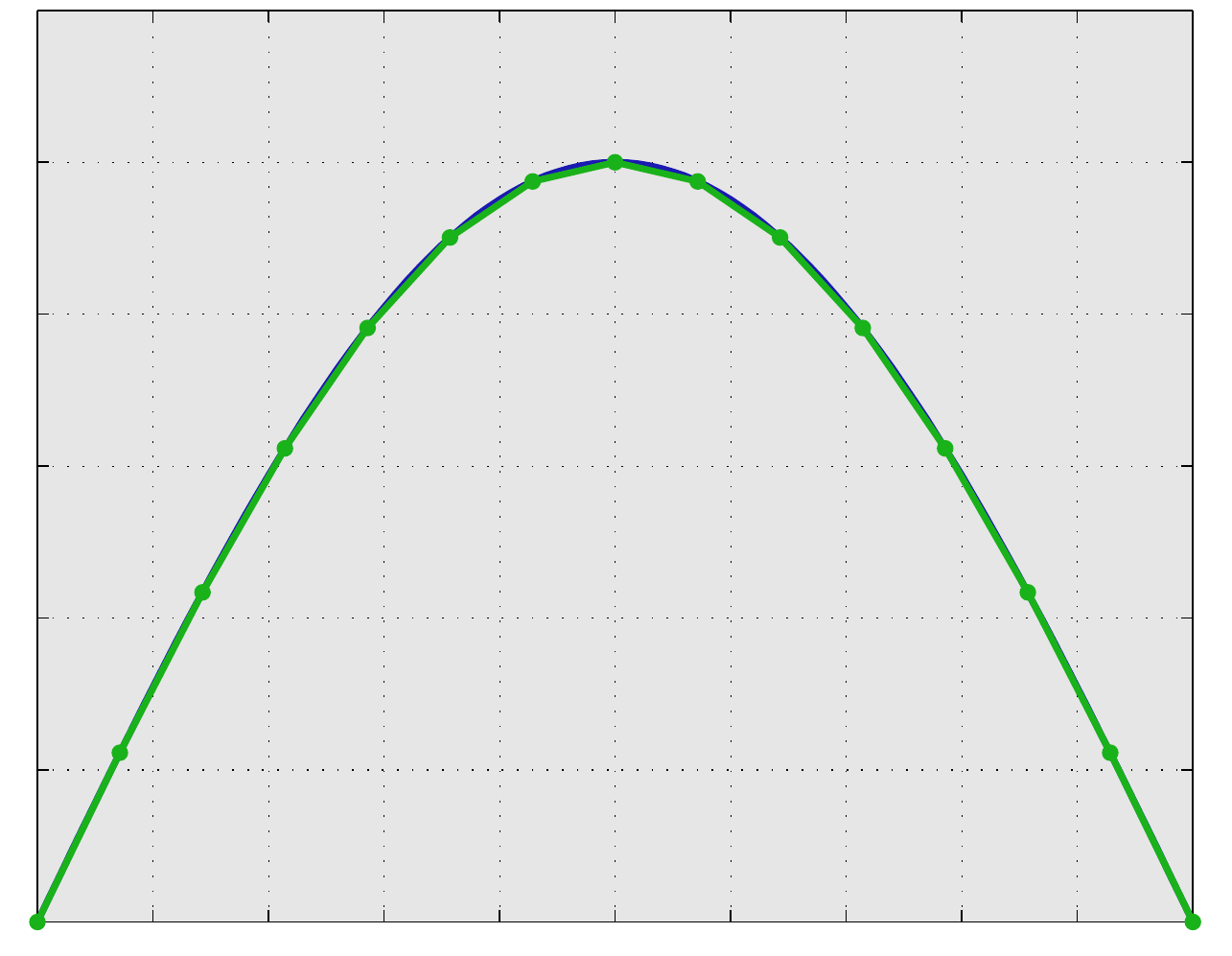}}\quad
\includegraphics[height=1.8in]{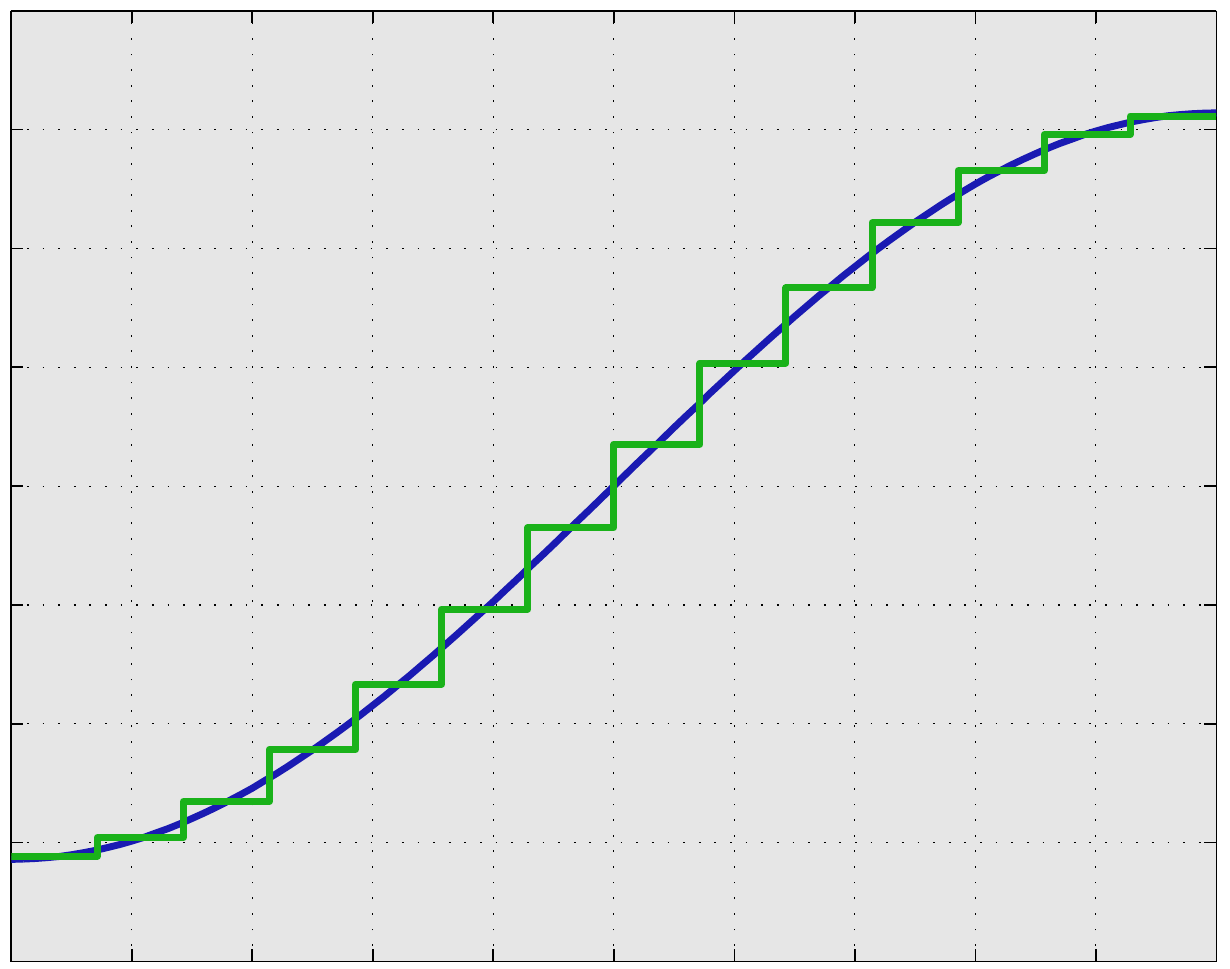}}
\caption[]{Approximation of
$-u^{\prime\prime}=f$ by the simplest finite element method.  The
left plot shows $u$ and the right plot shows $-u'$, with the exact solution
in blue and the finite element solution in green.}
\label{fg:simplest}
\end{figure}

The weak formulation \eqref{wf} associated to minimization of the functional \eqref{min}
is not the only variational formulation that can be used for discretization, and in
more complicated situations other formulations may bring important advantages.
In this simple situation, we may, for example, start by
writing the differential equation $-u^{\prime\prime} = f$ as
the first order system
\begin{equation*}
\sigma = - u^{\prime}, \quad \sigma^{\prime} =f.
\end{equation*}
The pair $(\sigma, u)$ can then be characterized variationally as the unique
critical point of the functional
\begin{equation*}
I(\sigma, u) = \int_{-1}^1 (\frac{1}{2} \sigma^2
- u \sigma^{\prime}) \, dx + \int_{-1}^1 f u \, dx
\end{equation*}
over $H^1(-1,1) \times L^2(-1,1)$. Equivalently, the pair is
the solution of the weak formulation: Find $\sigma \in H^1(-1,1),
u \in L^2(-1,1)$ satisfying
\begin{gather*}
\int_{-1}^1 \sigma \tau \, dx - \int_{-1}^1 u \tau^{\prime} \, dx
= 0, \quad \tau \in H^1(-1,1),
\\
\int_{-1}^1 \sigma^{\prime} v \, dx =  \int_{-1}^1 f v \, dx,
\quad v \in L^2(-1,1).
\end{gather*}
This is called a \emph{mixed formulation} of the boundary value problem.  Note that
for the mixed formulation, the Dirichlet boundary condition is implied
by the formulation, as can be seen by integration by parts.  Note also that in
this case the solution is a saddle point of the functional $I$, not an extremum:
$I(\sigma,v)\le I(\sigma,u)\le I(\tau,u)$ for $\tau\in H^1(-1,1)$, $v\in
L^2(-1,1)$.

Although the mixed formulation is perfectly well-posed, it may easily lead to
a discretization which is not.  If we apply Galerkin's method with a simple choice
of trial subspaces $\Sigma_h\subset H^1(-1,1)$ and $V_h\subset L^2(-1,1)$, we obtain
a finite-dimensional linear system, which, however, may be singular, or may become increasingly \emph{unstable} as
the mesh is refined. This concept will be formalized and
explored in the next section, but the result of such instability is clearly
visible in simple computations.  For example,
the choice of continuous piecewise linear functions
for both $\Sigma_h$ and $V_h$ leads to a singular linear
system.  The choice of continuous piecewise linear
functions for $\Sigma_h$ and piecewise constants for $V_h$ leads to stable
discretization and good accuracy.
However choosing
piecewise quadratics for $\Sigma_h$ and piecewise constants for $V_h$ gives a
nonsingular system but unstable approximation (see
\cite{Brezzi-Bathe} for further discussion of this example).  The dramatic difference between the stable and unstable
methods can be seen in Figure~\ref{fg:f0}.
\begin{figure}[htb]
\centerline{\raise-.05in\hbox{\includegraphics[height=1.85in]{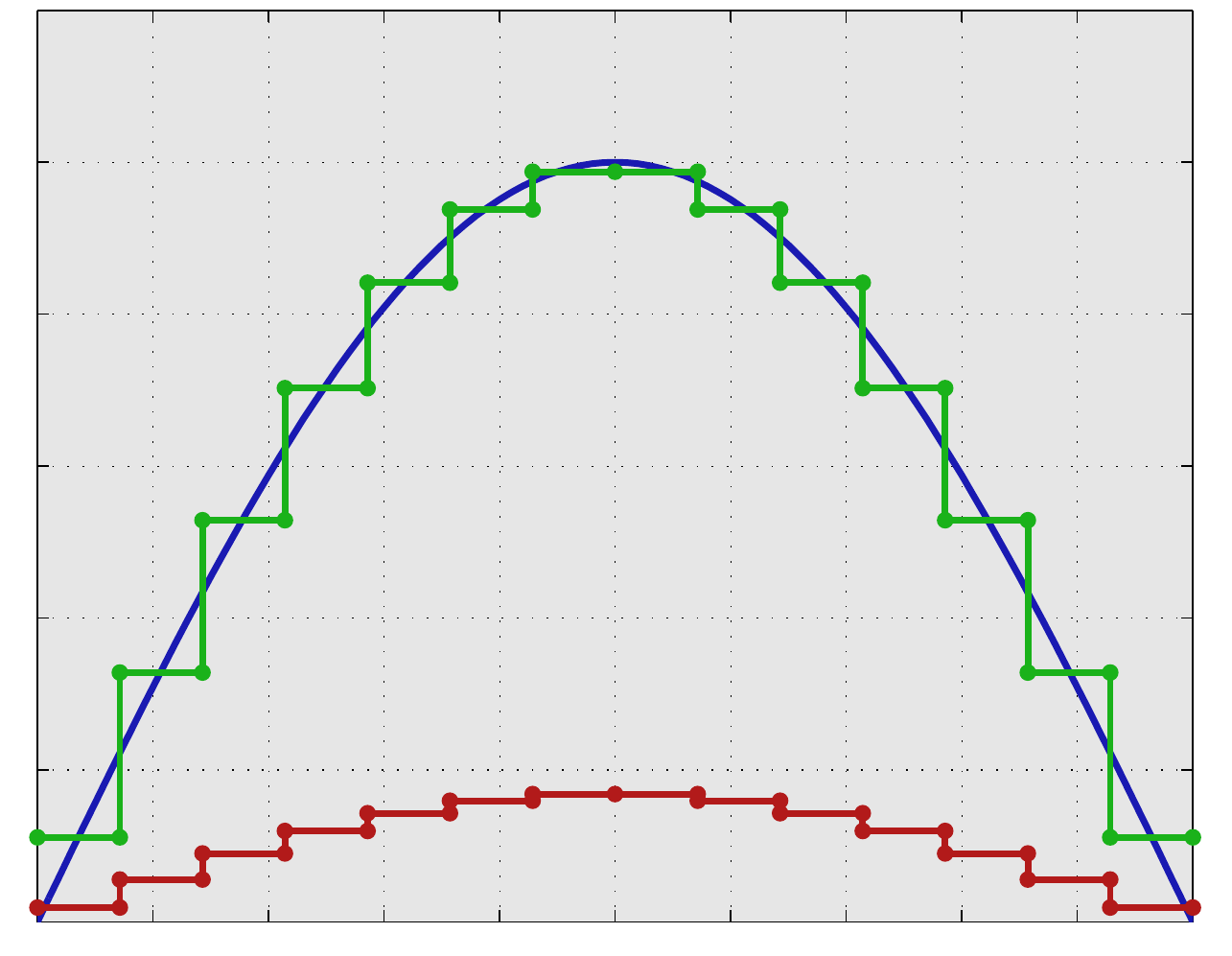}}\quad
\includegraphics[height=1.8in]{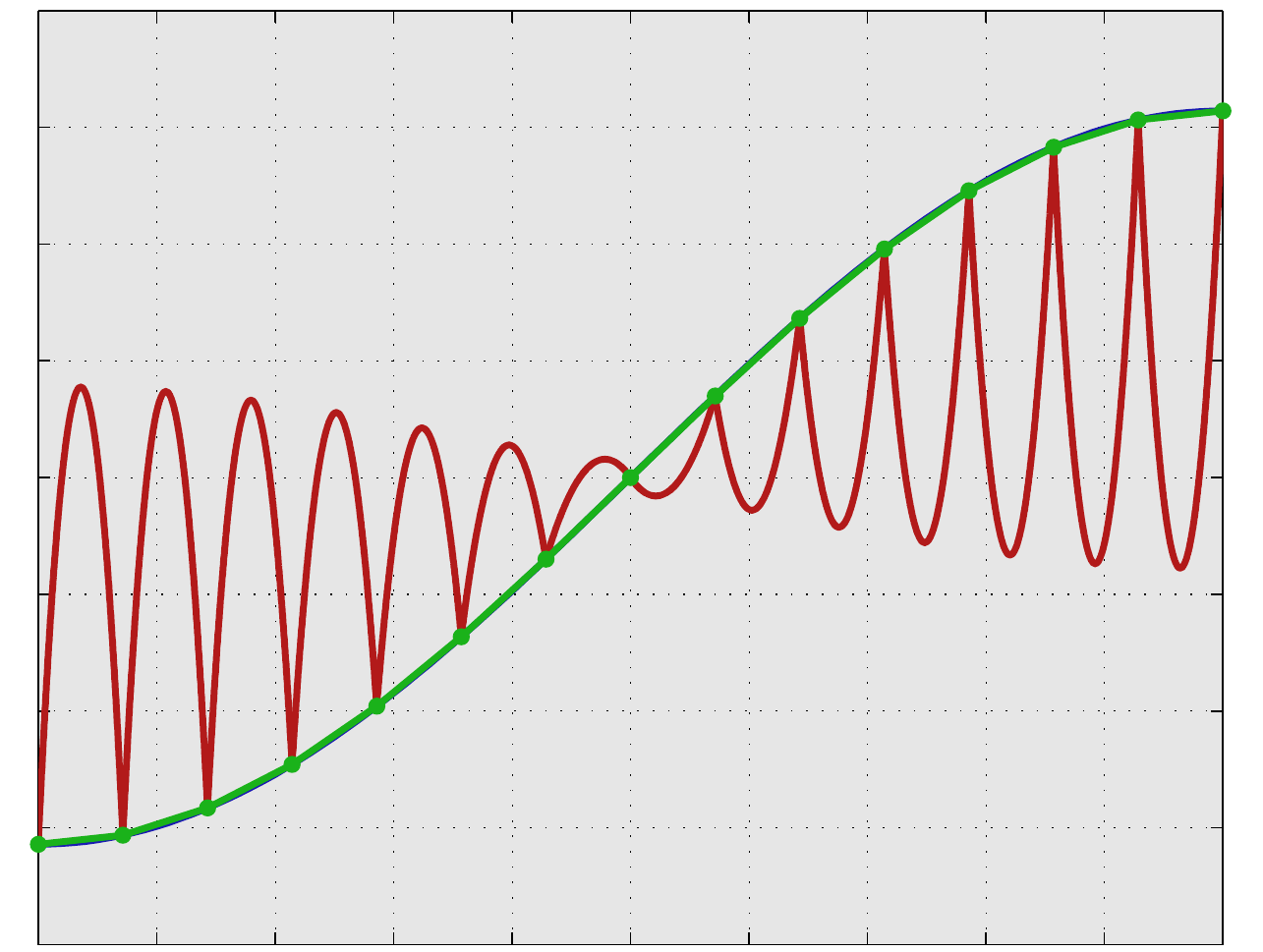}}
\caption[]{Approximation of the mixed formulation for 
$-u^{\prime\prime}=f$ in one dimension with two choices of elements, piecewise constants for
$u$ and piecewise linears for $\sigma$ (a stable method, shown in green), or
piecewise constants for $u$ and piecewise quadratics for $\sigma$ (unstable, shown in red).  The
left plot shows $u$ and the right plot shows $\sigma$, with the exact solution
in blue.  (In the right plot, the blue curve essentially coincides with the green
curve and hence is not visible.)}
\label{fg:f0}
\end{figure}

In one dimension, finding stable pairs of finite dimensional subspaces for the
mixed formulation of the two-point boundary value problem is easy.  For any
integer $r \ge 1$, the combination of continuous piecewise polynomials of
degree at most $r$ for $\sigma$ and arbitrary piecewise polynomials of degree
at most $r-1$ for $u$ is stable as can be verified via elementary means (and
which can be viewed as a very simple application
of the theory presented in this paper).  In higher
dimensions, the problem of finding stable combinations of elements is
considerably more complicated.  This is discussed in Section~\ref{subsubsec:mixedlap}
below.  In particular, we shall see that the choice of continuous
piecewise linear functions for $\sigma$ and piecewise constant functions for
$u$ is not stable in more than one dimension.  However stable element choices are known for this
problem and again may be viewed as a simple application of the finite element
exterior calculus developed in this paper.

\subsection{The contents of this paper}\label{subsec:abstract}
The brief introduction to the finite element method just given will be continued in
Section~\ref{sec:femethod}.  In particular, there we formalize the notions of
consistency and stability and establish their relation to convergence.  We shall also give
several more computational examples. While seemingly simple, some of these
examples may be surprising even to specialists, and they illustrate the difficulty
in obtaining good methods and the need for a theoretical framework in which
to understand such behaviors.

Like the theory of weak solutions of PDEs, the theory of finite element
methods is based on functional analysis and takes its most natural form in
a Hilbert space setting.  In Section~\ref{sec:HCA} of this paper, we
develop an abstract Hilbert space framework which
captures key elements of  Hodge theory, and can be used to explore
the stability of finite element methods.  The most basic object in
this framework is a cochain complex of Hilbert spaces, referred to as
a \emph{Hilbert complex}. Function spaces of such complexes will occur
in the weak formulations of the PDE problems we consider,
and the differentials will be differential operators entering into the
PDE problem. The most canonical example of a Hilbert complex is the
$L^2$ de~Rham complex of a Riemannian manifold, but it is a far more
general object with other important realizations.
For example, it allows
the definition of spaces of harmonic forms and the proof that they are
isomorphic to the cohomology groups. A Hilbert complex includes
enough structure to define an abstract Hodge Laplacian, defined from
a variational problem with a saddle point structure. However, for
these problems to be well--posed, we need the additional property of a
\emph{closed} Hilbert complex, i.e., that the range of the differentials
are closed.

In this framework, the finite element spaces used to compute
approximate solutions are represented by finite dimensional subspaces
of the spaces in the closed Hilbert complex.  We identify two key
properties of these subspaces: first, they should combine
to form a \emph{subcomplex} of the Hilbert complex, and, second, there should
exist a \emph{bounded cochain projection} from the Hilbert complex to this
subcomplex.  Under these hypotheses and a minor consistency condition,
it is easy to show that the subcomplex inherits the cohomology of the
true complex, i.e., that the cochain projections induce an isomorphism
from the space of harmonic forms to the space of discrete harmonic
forms, and to get an error estimate on the difference between a
harmonic form and its discrete counterpart.  In the applications, this
will be crucial for stable approximation of the PDEs.  In fact, a main
theme of finite element exterior calculus is that the same two
assumptions, the subcomplex property and the existence of a bounded
cochain projection, are the natural hypotheses to establish the
stability of the corresponding discrete Hodge Laplacian, defined by
the Galerkin method.

In Section~\ref{sec:deRham} we look in more depth at the canonical
example of the de~Rham complex for a bounded domain in Euclidean space,
beginning with a brief summary of exterior calculus.  We interpret
the de~Rham complex as a Hilbert complex and discuss the PDEs
most closely associated with it.  This brings us to the topic of
Section~\ref{sec:FEDF}, the construction of finite element de~Rham
subcomplexes, which is the heart of finite element exterior calculus and
the reason for its name.  In this section, we construct finite element
spaces of differential forms---piecewise polynomial spaces defined via a
simplicial decomposition and specification of shape functions and degrees
of freedom---which combine to form a subcomplex of the $L^2$ de~Rham
complex admitting a bounded cochain projection.  First we construct the
spaces of polynomial differential forms used for shape functions, relying
heavily on the Koszul complex and its properties, and then we construct
the degrees of freedom.  We next show that the resulting finite element
spaces can be efficiently implemented, have good approximation properties,
and can be combined into de~Rham subcomplexes.  Finally, we construct
bounded cochain projections, and, having verified the hypotheses of the
abstract theory, draw conclusions for the finite element approximation of
the Hodge Laplacian.

In the final two sections of the paper, we make other applications of
the abstract framework.  In the last section, we study a differential
complex we call the elasticity complex, which is quite different from the
de~Rham complex.  In particular, one of its differentials is a partial
differential operator of second order.  Via the finite element exterior
calculus of the elasticity complex, we have obtained the first stable
mixed finite elements using polynomial shape functions
for the equations of elasticity, with important
applications in solid mechanics.

\subsection{Antecedents and related approaches}\label{subsec:ant}
We now discuss some of the antecedents of finite element exterior
calculus and some related approaches.  While the first comprehensive
view of finite element exterior calculus, and the first use of that
phrase, was in the 2006 paper \cite{acta}, this was certainly
not the first intersection of finite element theory and Hodge theory.
In 1957, Whitney \cite{whitney} published his complex of Whitney forms,
which is, in our terminology, a finite element de~Rham subcomplex.
Whitney's goals were geometric.  For example, he used these forms in
a proof of de~Rham's theorem identifying the cohomology of a manifold
defined via differential forms (de~Rham cohomology) with that defined
via a triangulation and cochains (simplicial cohomology).  With the
benefit of hindsight, we may view this, at least in principle, as an early
application of finite elements to reduce the computation of a quantity of
interest defined via infinite dimensional function spaces and operators,
to a finite dimensional computation using piecewise polynomials on a
triangulation.  The computed quantities are the Betti numbers of the
manifold, i.e., the dimensions of the de~Rham cohomology spaces. For
these integer quantities, issues of approximation and convergence do
not play much of a role.  The situation is different in the 1976 work
of Dodziuk \cite{dodziuk} and Dodziuk and Patodi \cite{dodziuk-patodi},
who considered the approximation of the Hodge Laplacian on a Riemannian
manifold by a combinatorial Hodge Laplacian, a sort of finite difference
approximation defined on cochains with respect to a triangulation.
The combinatorial Hodge Laplacian was defined in \cite{dodziuk} using
the Whitney forms, thus realizing the finite difference operator as a
sort of finite element approximation. A key result in \cite{dodziuk}
was a proof of some convergence properties of the Whitney forms.
In \cite{dodziuk-patodi} the authors applied them to show that the
eigenvalues of the combinatorial Hodge Laplacian converge to those of
the true Hodge Laplacian.  This is indeed a finite element convergence
result, as the authors remark.  In 1978, M\"uller \cite{muller} further
developed this work and used it to prove the Ray--Singer conjecture.
This conjecture  equates a topological invariant defined in terms of
the Riemannian structure with one defined in terms of a triangulation,
and was the original goal of \cite{dodziuk,dodziuk-patodi}. (Cheeger
\cite{cheeger} gave a different, independent proof of the Ray--Singer
conjecture  at about the same time.) Other spaces of finite element
differential forms have appeared in geometry as well, especially
the differential graded algebra of piecewise polynomial forms on a
simplicial complex introduced by Sullivan \cite{sullivan73,sullivan77}.
Baker \cite{baker} calls these Sullivan--Whitney forms, and, in an early
paper bringing finite element analysis techniques to bear on geometry,
gives a numerical analysis of their accuracy for approximating the
eigenvalues of the Hodge Laplacian.

Independently of the work of the geometers, during the 1970s and 1980s
numerical analysts and computational engineers reinvented various
special cases of the Whitney forms and developed new variants of them
to use for the solution of partial differential equations on two and
three-dimensional domains.  In this work, naturally, implementational
issues, rates of convergence, and sharp estimates played a more prominent
role than in the geometry literature.  The pioneering paper of Raviart and
Thomas \cite{Raviart-Thomas}, presented at a finite element conference in
1975, proposed the first stable finite elements for solving the scalar
Laplacian in two dimensions using the mixed formulation.  The mixed
formulation involves two unknown fields, the scalar-valued solution, and
an additional vector-valued variable representing its gradient. Raviart
and Thomas
 proposed a family
of pairs of finite element spaces, one for each polynomial degree.
As was realized later, in the lowest degree case the space they
constructed for the vector-valued variable is just the space of Whitney
1-forms, while they used piecewise constants, which are Whitney 2-forms,
for the scalar variable.  For higher degrees, their elements are the
higher order Whitney forms.  In three-dimensions, the introduction of
Whitney 1- and 2-forms for finite element computations and their higher
degree analogues was made by N\'ed\'elec \cite{Nedelec1} in 1980, while
the polynomial mixed elements which can be viewed as Sullivan--Whitney
forms were introduced as finite elements by  Brezzi, Douglas, and Marini
\cite{Brezzi-Douglas-Marini}  in 1985 in two dimensions, and then  by
N\'ed\'elec \cite{Nedelec1} in 1986 in three dimensions.

In 1988 Bossavit, in a paper in the IEE Transactions on Magnetics
\cite{Bossavit}, made the connection between Whitney's forms used by
geometers and some of the mixed finite element spaces that had been
proposed for electromagnetics, inspired in part by Kotiuga's Ph.D.\
thesis in electrical engineering  \cite{kotiuga-thesis}. Maxwell's
equations are naturally formulated in terms of differential
forms, and the computational electromagnetics community
developed the connection between mixed finite elements and
Hodge theory in a number of directions.  See, in particular,
\cite{boffi,Demkowicz-Monk-Rachowicz-Vardapetyan,Hiptmair,hiptmair-pier,HiptmairActa,monk-book}.

The methods we derive here are examples of compatible discretization
methods, which means that at the discrete level they reproduce,
rather than merely approximate, certain essential structures of the
continuous problem.  Other examples of compatible discretization
methods for elliptic PDEs are mimetic finite difference methods
\cite{bochev-hyman,brezzi-lipnikov-shashkov} including covolume
methods \cite{nicolaides-trapp} and the discrete exterior calculus
\cite{desbrun-et-al}.  In these methods, the fundamental object used to
discretize a differential $k$-form is typically a simplicial cochain,
i.e., a number is assigned to each $k$-dimensional face of the mesh
representing the integral of the $k$-form over the face.  This is more
of a finite difference, rather than finite element, point of view,
recalling the early work of Dodziuk on combinatorial Hodge theory.
Since the space of $k$-dimensional simplicial cochains is isomorphic to
the space of Whitney $k$-forms, there
is a close relationship between these methods and the simplest methods of the finite element
exterior calculus.  In some simple cases, the methods even coincide.
In contrast to the finite element approach, these cochain-based approaches
do not naturally generalize to higher order methods.  Discretizations of exterior
calculus and Hodge theory have also been used for purposes other than solving partial
differential equations. For example, discrete forms which are
identical or closely related to cochains or the corresponding Whitney
forms play an important role in geometric modeling, parameterization, and computer graphics.
See for example \cite{fsdh07,ggt06,gu-yau,wwtds06}.

\subsection{Highlights of the finite element exterior calculus}\label{subsec:high}
We close this introduction by highlighting some of the features that are
unique or much more prominent in the finite element exterior calculus
than in earlier works.
\begin{itemize}
 \item We work in an abstract Hilbert space setting that captures the fundamental
structures of the Hodge theory of Riemannian manifolds, but applies more generally.
In fact, the paper proceeds in two parts, first the abstract theory for Hilbert complexes,
and then the application to the de~Rham complex and Hodge theory and other applications.
 \item Mixed formulations based on saddle point variational principles
play a prominent role.  In particular, the algorithms we use to
approximate the Hodge Laplacian are based on a mixed formulation, as is
the analysis of the algorithms.  This is in contrast to the approach
in the geometry literature, where the underlying variational principle
is a minimization principle.  In the case of the simplest elements,
the Whitney elements, the two methods are equivalent.  That is, the
discrete solution obtained by the mixed finite element method using
Whitney forms, is the same as obtained by Dodziuk's combinatorial
Laplacian.  However the different viewpoint leads naturally to
different approaches to the analysis.  The use of Whitney forms for
the mixed formulation is obviously a \emph{consistent} discretization,
and the key to the analysis is to establish \emph{stability} (see the next
section for the terminology).  However, for the minimization
principle, it is unclear whether Whitney forms provide a consistent
approximation, because they do not belong to the domain of the
exterior coderivative, and, as remarked in \cite{dodziuk-patodi}, this
greatly complicates the analysis.  The results we obtain are both more
easily proven and sharper.
\item
Our analysis is based on two main properties of the subspaces used to discretize
the Hilbert complex.  First, they can
be formed into subcomplexes, which is a key assumption in
much of the work we have discussed.  Second, there exist a bounded cochain projection
from the Hilbert complex to the subcomplex.  This is a new feature.  In previous work,
a cochain projection often played a major role, but it was not bounded, and
the existence of bounded cochain projections was not realized.  In fact, they exist
quite generally (see Theorem~\ref{converse}), and we review the construction for the
de~Rham complex in Section~\ref{subsec:bcp}.
\item
Since we are interested in actual numerical computations, it is important that our spaces
be efficiently implementable.  This is not true for all piecewise polynomial spaces.
As explained in the next section,
finite element spaces are a class of piecewise polynomial spaces that can be implemented
efficiently by local computations thanks to the existence of degrees of freedom,
and the construction of degrees of freedom and local bases is an important part of the
finite element exterior calculus.
\item
For the same reason, high order piecewise polynomials are of great importance, and all the
constructions and analysis of finite element exterior calculus carries through for
arbitrary polynomial degree.
\item
A prominent aspect of the finite element exterior calculus is the role of
two families of spaces of polynomial differentials form, $\P_r\Lambda^k$
and $\P_r^-\Lambda^k$, where the index $r\ge 1$ denotes the polynomial
degree and $k\ge 0$ the form degree.  These are the shape functions
for corresponding finite element spaces of differential $k$-forms
which include, as special cases, the Lagrange finite element family,
and most of the stable finite element spaces that have been used
to define mixed formulations of the Poisson or Maxwell's equations.
The space $\P_1^-\Lambda^k$ is the classical space of Whitney $k$-forms.
The finite element spaces based on $\P_r\Lambda^k$ are the spaces
of Sullivan--Whitney forms.
We show that for each polynomial degree $r$, there are $2^{n-1}$ ways to
form these spaces in de~Rham subcomplexes for a domain in $n$ dimensions.
The unified treatment of the spaces $\P_r\Lambda^k$
and $\P_r^-\Lambda^k$, particularly their connections via the Koszul complex, is new to the
finite element exterior calculus.
\end{itemize}
The finite element exterior calculus unifies many of the finite element
methods that have been developed for solving PDEs arising in fluid
and solid mechanics, electromagnetics, and other areas.  Consequently,
the methods developed here have been widely implemented and applied in
scientific and commercial programs such as GetDP \cite{getdp}, FEniCS
\cite{fenics}, EMSolve \cite{emsolve}, deal.II \cite{deal.ii}, Diffpack
\cite{diffpack}, Getfem++ \cite{getfem}, and NGSolve \cite{NGSolve}.
We also note that, as part of a recent programming effort connected with
the FEniCS project, Logg and Mardal \cite{logg-mardal} have implemented
the full set of finite element spaces developed in this paper strictly
following the finite element exterior framework as laid out here and
\cite{acta}.

\section{Finite element discretizations} \label{sec:femethod}
In this section we continue the introduction to the finite element
method begun above.  We move beyond the case of one dimension, and
consider not only the formulation of the method, but also its analysis.
To motivate the theory developed later in this paper, we present further
examples that illustrate how for some problems, even rather simple ones,
deriving accurate finite element methods is not a straightforward process.

\subsection{Galerkin methods and finite elements} \label{subsec:gal}
We consider first a simple problem, which \emph{can} be discretized in a straightforward
way, namely the Dirichlet problem for Poisson's equation in a polyhedral
domain $\Omega\subset\R^n$:
\begin{equation}\label{poisson}
- \Delta u = f \text{ in } \Omega, \quad u=0 \text{ on } \partial \Omega.
\end{equation}
This is the generalization to $n$ dimensions of the problem \eqref{1d} discussed
in the introduction, and the solution may again be characterized as the minimizer of an energy
functional analogous to \eqref{min} or as the solution of a weak problem analogous
to \eqref{wf}.  This leads to discretization just as for the one-dimensional case,
by choosing a trial space $V_h\subset \0 H^1(\Omega)$ and defining the
approximate solution $u_h\in V_h$ by Galerkin's method:
\begin{equation*}
\int_{\Omega} \grad u_h(x) \cdot \grad v(x) \, dx
= \int_{\Omega} f(x) v(x) \, dx, \quad  v \in V_h.
\end{equation*}

As in one dimension, the simplest finite element method is obtained by
using the trial space consisting of all piecewise linear functions
with respect to a given simplicial triangulation of the domain $\Omega$, which
are continuous and vanish on $\partial\Omega$.  A key to
the efficacy of this finite element method is the existence of a basis for the
trial space consisting of functions which are locally supported, i.e.,
vanish on all but a small number of
the elements of the triangulation. See Figure~\ref{fg:hatfn}.
Because of this, the coefficient matrix of the linear system is
easily computed and is sparse, and so the system can be solved
efficiently.
\begin{figure}[htb]
\centerline{\raise-.05in\hbox{\includegraphics[height=1.5in]{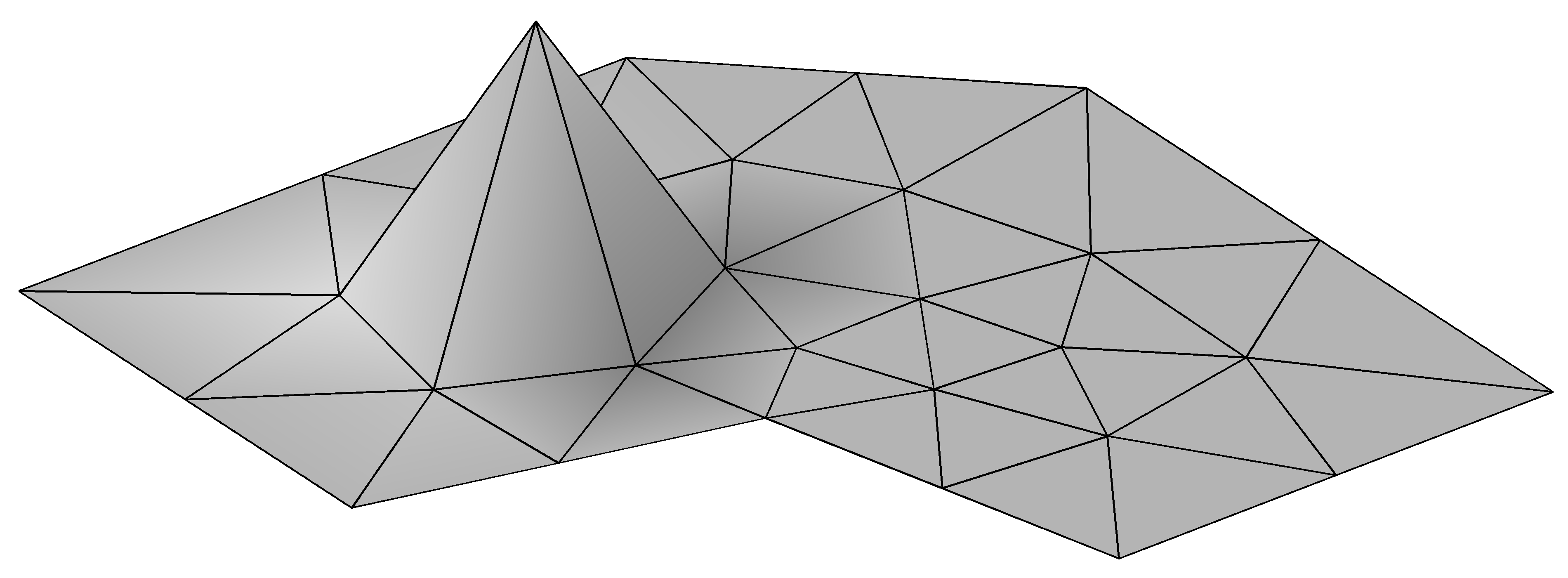}}}
\caption[]{A piecewise linear finite element basis function.}
\label{fg:hatfn}
\end{figure}

More generally, a finite element method is a Galerkin method for which the trial
space $V_h$ is a space of piecewise polynomial functions which can be obtained
by what is called the finite element assembly process.  This means that the space
can be defined by specifying  the triangulation $\T_h$ and, for each
element $T\in\T_h$, a space of polynomial functions on $T$ called the
\emph{shape functions}, and a set of \emph{degrees of freedom}.  By degrees of freedom on $T$,
we mean a set of functionals on the space of shape functions, which
can be assigned values arbitrarily to determine a unique shape function.  In other words, the
degrees of freedom form a basis for the dual space of the space of shape functions.
In the case of piecewise linear finite elements, the shape functions are of course
the linear polynomials on $T$, a space of dimension $n+1$, and the
degrees of freedom are the $n+1$ evaluation functionals $p\mapsto p(x)$, where $x$ varies over the
vertices of $T$.  For the finite element assembly process, we also
require that each degree of freedom be associated to a face of some dimension of the simplex $T$.
For example, in the case
of piecewise linear finite elements, the degree of freedom $p\mapsto p(x)$ is associated to
the vertex $x$.  Given the triangulation, shape functions, and degrees of freedom,
the finite element space $V_h$ is defined as the set
of functions on $\Omega$ (possibly multivalued on the element boundaries)
whose restriction to any $T\in\T_h$ belongs to the
given space of shape functions on $T$, and for which the degrees of freedom are
single-valued in the sense that when two elements share a common face, the
corresponding degrees of freedom take on the same value. Returning again to the
example of piecewise linear functions, $V_h$ is the set of functions which are linear
polynomials on each element, and which are single-valued at the vertices.
It is easy to see that this is precisely the space of continuous piecewise
linear functions, which is a subspace of $H^1(\Omega)$.
As another example, we could take the shape functions on $T$ to be the
polynomials of degree at most $2$, and take as degrees of freedom the functions
$p\mapsto p(x)$, $x$ a vertex of $T$, and $p\mapsto \int_e p\,ds$, $e$ an edge of $T$.
The resulting assembled finite element space is the space of all continuous piecewise
quadratics.
The finite element assembly process insures the existence of a computable locally
supported basis, which is
important for efficient implementation.

\subsection{Consistency, stability, and convergence} \label{subsec:erroranal}
We now turn to the important problem of analyzing the error in the finite element
method.
To understand when a Galerkin method will produce a good approximation to the true
solution, we introduce the standard abstract framework. Let $V$ be a Hilbert space,
$B:V \times V \to \R$ a bounded bilinear form, and $F:V \to \R$ a bounded
linear form.  We assume the problem to be solved can be stated in the form:
Find $u \in V$ such that
$$
B(u,v)=F(v), \quad v \in V.
$$
This problem is
called well-posed if for each $F \in V^*$, there exists a unique solution $u
\in V$ and the mapping $F \mapsto u$ is bounded, or, equivalently, if the operator
$L:V \to V^*$ given by $\<Lu,v\> = B(u,v)$ is an isomorphism.  For the
Dirichlet problem for Poisson's equation,
\begin{equation}\label{Bpoisson}
 V=\0H^1(\Omega),\quad B(u,v)=\int_{\Omega} \grad u(x) \cdot \grad v(x) \, dx,
\quad F(v)= \int_{\Omega} f(x) v(x) \, dx.
\end{equation}

A \emph{generalized Galerkin method} for the abstract problem begins with a finite-dimensional
normed space $V_h$ (not necessarily a subspace of $V$), a bilinear form
$B_h:V_h\x V_h\to R$, and a linear form $F_h:V_h\to\R$,
and defines $u_h \in V_h$ by
\begin{equation}
\label{genG}
B_h(u_h,v)=F_h(v), \quad v
\in V_h.
\end{equation}
A \emph{Galerkin method} is the special case of a generalized Galerkin
method for which $V_h$ is a subspace of $V$ and the forms $B_h$ and $F_h$ are
simply the restrictions of the forms $B$ and $F$ to the subspace.  The more
general setting is important since it allows the incorporation of additional approximations,
such as numerical integration to evaluate the integrals, and also allows for
situations in which $V_h$ is not a subspace of $V$.  Although
we do not treat approximations such as numerical integration in this paper,
for the fundamental discretization
method we study, namely the mixed method for the abstract Hodge Laplacian
introduced in Section~\ref{subsec:well-p-h-hc}, the trial space
$V_h$ is not a subspace of $V$, since it involves discrete harmonic forms
which will not, in general, belong to the space of harmonic forms.

The generalized Galerkin method \eqref{genG} may be written $L_h u_h=F_h$
where $L_h:V_h\to V_h^*$ is given by $\<L_hu,v\> = B_h(u,v)$, $u,v\in V_h$.
If the finite-dimensional problem is nonsingular, then we define the norm of
the discrete solution operator, $\|L_h^{-1}\|_{\Lin(V_h^*,V_h)}$, as the
\emph{stability constant} of the method.

Of course, in approximating the original problem determined by $V$, $B$, and $F$, by the
generalized Galerkin method given by $V_h$, $B_h$, and $F_h$,
we intend that the space $V_h$ in some sense approximates $V$ and
that the discrete forms $B_h$ and $F_h$ in some sense approximate $B$ and $F$.
This is the essence of \emph{consistency}.
Our goal is to prove that the discrete solution $u_h$ approximates $u$ in an appropriate
sense (\emph{convergence}).  In order to make these notions precise, we need to compare
a function in $V$ to a function in $V_h$.  To this end, we suppose that there is
a \emph{restriction operator} $\pi_h:V\to V_h$, so that $\pi_hu$ is thought to be
close to $u$.  Then the \emph{consistency error} is simply
$L_h\pi_hu-F_h$ and the error in the generalized Galerkin method which we wish
to control is $\pi_h u -u_h$.  We immediately get a relation between
the error and the consistency error
\begin{equation*}
 \pi_h u -u_h = L_h^{-1}(L_h\pi_hu-F_h),
\end{equation*}
and so the norm of the error is bounded by the product of the stability constant and
the norm of the consistency error:
\begin{equation*}
 \|\pi_h u -u_h\|_{V_h} \le  \|L_h^{-1}\|_{\Lin(V_h^*,V_h)}\|L_h\pi_hu-F_h\|_{V^*_h}.
\end{equation*}
Stated in terms of the bilinear form $B_h$, the norm of the consistency error can be
written
\begin{equation*}
\|L_h\pi_hu-F_h\|_{V^*_h}=\sup_{0\ne v\in V_h}\frac{B_h(\pi_hu,v)-F_h(v)}{\|v\|_{V_h}}.
\end{equation*}
As for stability, the finite dimensional problem is nonsingular if and only if
\begin{equation*}
\gamma_h  := \inf_{0\ne u \in V_h} \sup_{0\ne v \in V_h}
\frac{B_h(u,v)}{\|u\|_{V_h} \|v\|_{V_h}} > 0,
\end{equation*}
and the stability constant is then given by $\gamma_h^{-1}$.

Often we consider a sequence of spaces $V_h$ and forms $B_h$ and $F_h$
where we think of $h>0$ as an index accumulating at $0$.  The corresponding
generalized Galerkin method is consistent if the $V_h$ norm of the consistency
error tends to zero with $h$ and it is stable if the stability constant $\gamma_h^{-1}$
is uniformly bounded.  For a consistent, stable generalized Galerkin method,
$\|\pi_hu-u_h\|_{V_h}$ tends to zero, i.e., the method is convergent.

In the special case of a Galerkin
 method, we can bound the consistency
error
\begin{equation*}
\sup_{0\ne v\in V_h}\frac{B_h(\pi_hu,v)-F_h(v)}{\|v\|_V}
=\sup_{0\ne v\in V_h}\frac{B(\pi_hu-u,v)}{\|v\|_V}\le\|B\|\|\pi_hu-u\|_V.
\end{equation*}
In this case it is natural to choose the restriction $\pi_h$ to be the orthogonal projection onto
$V_h$, and so the consistency error is bounded by the norm of the bilinear form times the
error in the best approximation of the solution.  Thus we obtain
\begin{equation*}
 \|\pi_h u -u_h\|_V \le  \gamma_h^{-1}\|B\| \inf_{v\in V_h}\| u - v \|_V.
\end{equation*}
Combining this with the triangle inequality, we obtain
the basic error estimate for Galerkin methods
\begin{equation}
\label{basicerr}
\|u - u_h\|_V \le (1+ \gamma_h^{-1}\|B\|) \inf_{v\in V_h}\| u - v \|_V.
\end{equation}
(In fact, in this Hilbert space setting, the quantity in parentheses can be
replaced with $\gamma_h^{-1} \|B\|$, see~\cite{xu-zikatanov}.)
Note that a Galerkin method is consistent as long as the sequence of
subspaces $V_h$ is \emph{approximating} in $V$
in the sense that
\begin{equation}\label{approximating}
 \lim_{h\to 0}\inf_{v\in V_h}\|u - v\|_V =0 ,\quad u\in V.
\end{equation}
A consistent, stable Galerkin method converges, and the
approximation given by the method is quasioptimal, i.e., up to multiplication
by a constant, it is as good as the best approximation in the subspace.

In practice, it can be quite difficult to show that the finite dimensional problem is nonsingular
and to bound the stability constant, but there is one important case in which it is easy.
Namely when
the form $B$ is coercive, i.e., there is a positive constant $\alpha$ for which
\begin{equation*}
B(v,v) \ge \alpha \|v\|_V^2, \quad  v \in V,
\end{equation*}
and so $\gamma_h \ge\alpha$.  The bilinear form \eqref{Bpoisson} for Poisson's equation
is coercive, as follows from Poincar\'e's inequality.  This explains, and can be used
to prove, the good convergence behavior of the method depicted in Figure~\ref{fg:simplest}.

\subsection{Computational examples} \label{subsec:examples}
\subsubsection{Mixed formulation of the Laplacian} \label{subsubsec:mixedlap}
For an example of a problem that fits in the standard abstract framework with a noncoercive
bilinear form,  we take the mixed formulation of the Dirichlet problem for Poisson's
equation, already introduced in one dimension in Section~\ref{subsec:fem}.
Just as there, we begin by writing Poisson's equation
as the first order system
\begin{equation}\label{mixedpoisson}
\sigma = -\grad u, \quad  \div \sigma = f.
\end{equation}
The pair $(\sigma, u)$ can again be characterized variationally as the unique
critical point (a saddle point) of the functional
\begin{equation*}
I(\sigma, u) = \int_{\Omega} (\frac{1}{2} \sigma \cdot \sigma
- u \div \sigma) \, dx + \int_{\Omega} f u \, dx
\end{equation*}
over $H(\div; \Omega) \times L^2(\Omega)$ where
$H(\div; \Omega) = \{\sigma \in L^2(\Omega): \div \sigma \in L^2(\Omega)\}$.
Equivalently, it solves the weak problem: Find $\sigma \in H(\div;\Omega),
u \in L^2(\Omega)$ satisfying
\begin{gather*}
\int_{\Omega} \sigma \cdot \tau \, dx - \int_{\Omega} u \div \tau \, dx
= 0, \quad \tau \in H(\div;\Omega),
\\
\int_{\Omega} \div \sigma v \, dx =  \int_{\Omega} f v \, dx,
\quad v \in L^2(\Omega).
\end{gather*}

This mixed formulation of Poisson's equation fits in the abstract framework
if we define $V = H(\div; \Omega) \times L^2(\Omega)$,
\begin{equation*}
B(\sigma,u;\tau,v) = \int_{\Omega} \sigma \cdot \tau \, dx 
- \int_{\Omega} u \div \tau \, dx + \int_{\Omega} \div \sigma v \, dx,
\quad F(\tau,v) = \int_{\Omega} f v \, dx.
\end{equation*}
In this case the bilinear form $B$ is \emph{not} coercive, and so the choice
of subspaces and the analysis
is not so simple as for the standard finite element method for
Poisson's equation, a point we already illustrated in the one-dimensional case.

Finite element discretizations based on such saddle
point variational principles are called \emph{mixed finite element methods}.  Thus
a mixed finite element for Poisson's equation
is obtained by choosing subspaces $\Sigma_h \subset H(\div; \Omega)$ and
$V_h \subset L^2(\Omega)$ and seeking a critical point of $I$ over $\Sigma_h \times
V_h$.  The resulting Galerkin method has the form: 
Find $\sigma_h \in \Sigma_h, u_h \in V_h$ satisfying
\begin{gather*}
\int_{\Omega} \sigma_h \cdot \tau \, dx - \int_{\Omega} u_h \div \tau \, dx
= 0, \quad \tau \in \Sigma_h,
\quad
\int_{\Omega} \div \sigma_h v \, dx = \int_{\Omega} f v \, dx,
\quad v \in V_h.
\end{gather*}
This again reduces to a linear system of algebraic equations.

Since the bilinear form is not coercive, it is not automatic
that the linear system is nonsingular, i.e., that for $f=0$, the only
solution is $\sigma_h=0$, $u_h=0$.  Choosing $\tau = \sigma_h$ and $v =
u_h$ and adding the discretized variational equations, it follows immediately
that when $f=0$, $\sigma_h =0$.  However, $u_h$ need not vanish
unless the condition that
$\int_{\Omega} u_h \div \tau \, dx =0$ for all $\tau \in \Sigma_h$ implies that $u_h=0$.
In particular, this requires that $\dim(\div \Sigma_h)\ge\dim V_h$.
Thus, even nonsingularity of the approximate
problem depends on a relationship between the two finite dimensional spaces.
Even if the linear system is nonsingular, there remains the issue of stability,
i.e., a uniform bound on the inverse operator.

As mentioned earlier, the combination of continous piecewise linear elements
for $\sigma$ and piecewise constants for $u$ is not stable in two dimensions.
The simplest stable elements use the piecewise constants for $u$, and the
lowest order \emph{Raviart-Thomas elements} for $\sigma$.  These are finite elements
defined with respect to a triangular mesh by shape functions of the form $(a+
bx_1, c+ bx_2)$ and one degree of freedom for each edge $e$, namely
$\sigma\mapsto\int_e\sigma\cdot n\,ds$.  We show in Fig.~\ref{fg:f1} below two
numerical computations that demonstrate the difference between an unstable and
stable choice of elements for this problem.  The stable method accurately approximates the true
solution $u = x(1-x)y(1-y)$ on $(0,1) \times (0,1)$ with a piecewise constant,
while the unstable method is wildly oscillatory.
\begin{figure}[htb]
\centerline{\includegraphics[width=2.5in]{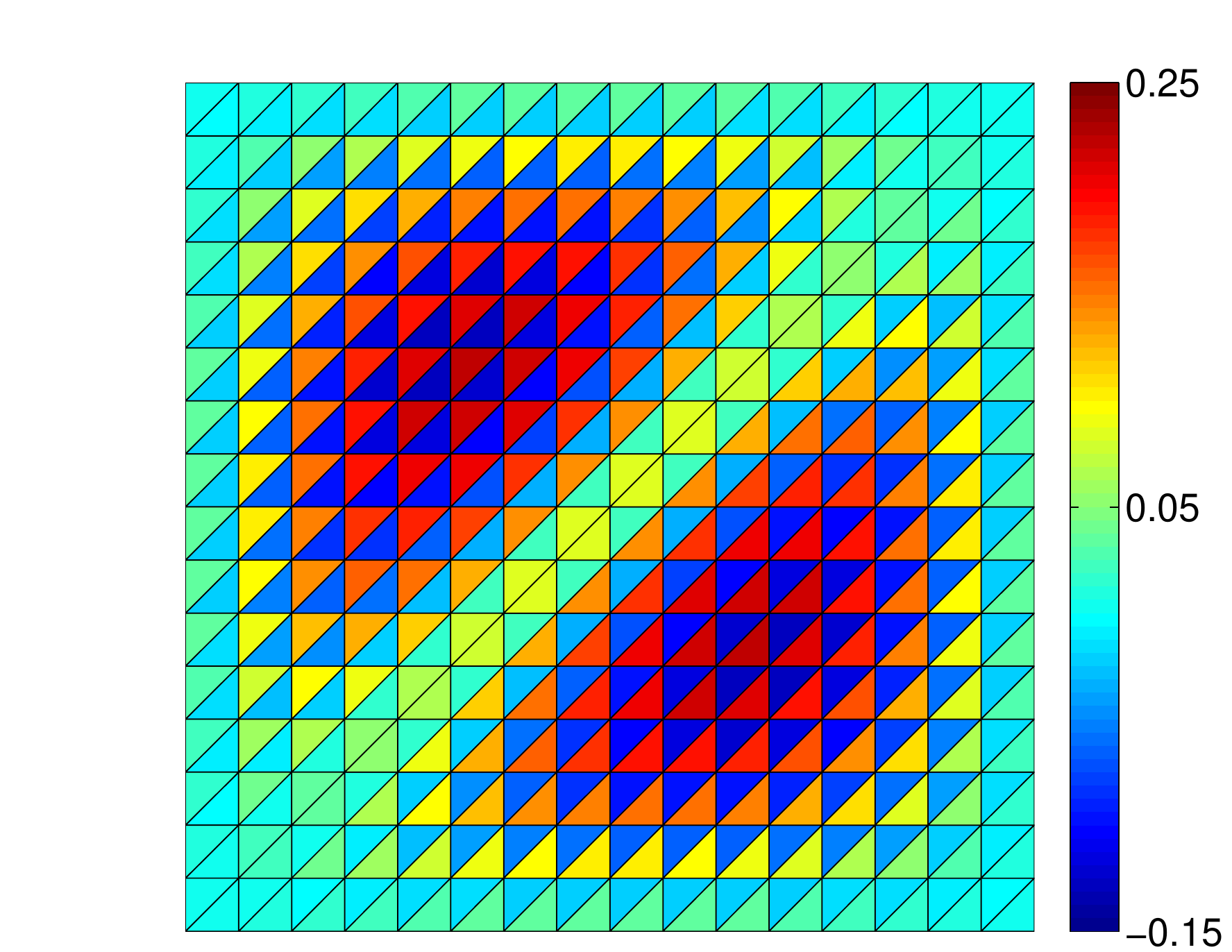}%
\includegraphics[width=2.5in]{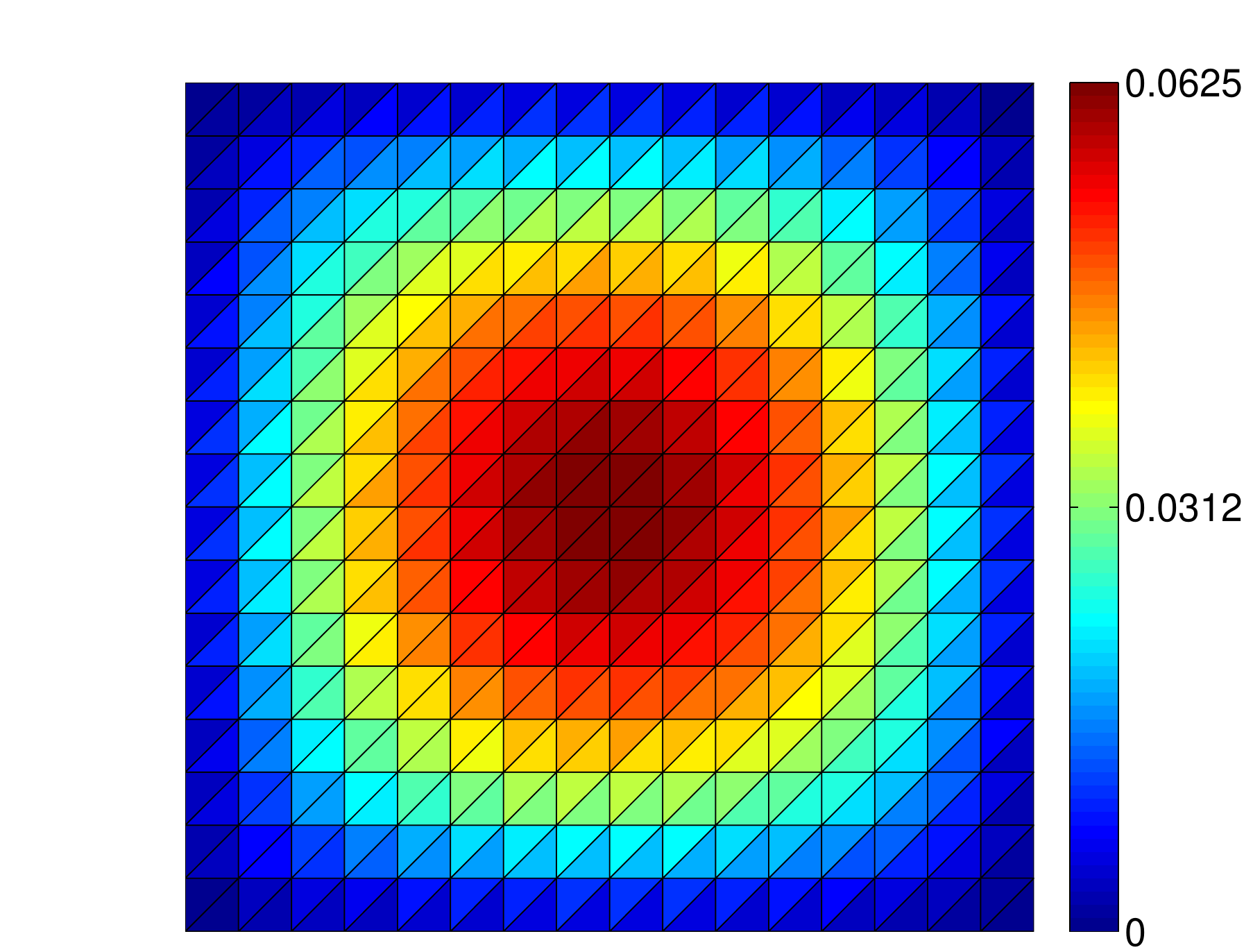}}
\caption[]{Approximation of the mixed formulation for
Poisson's equation using piecewise constants for $u$ and for $\sigma$
either continuous piecewise linears (left), or Raviart--Thomas elements (right).
The plotted quantity is $u$ in each case.}
\label{fg:f1}
\end{figure}

This problem is a special case of the Hodge Laplacian with
$k=n$ as discussed briefly in Section~\ref{subsec:deRcHc}; see especially
Section~\ref{subsubsec:hlk=3}.  The error analysis for a variety of finite element
methods for this problem, including the Raviart--Thomas elements, is thus a special
case of the general theory of this paper, yielding
the error estimates in Section~\ref{subsec:ahl}.

\subsubsection{The vector Laplacian on a nonconvex polygon} \label{subsubsec:l-shape}
Given the subtlety of finding stable pairs of finite element spaces
for the mixed variational formulation of Poisson's equation, we might
choose to avoid this formulation, in favor of the standard
formulation, which leads to a coercive bilinear form.  However, while
the standard formulation is easy to discretize for Poisson's equation,
additional issues arise already if we try to discretize the vector
Poisson equation. For a domain $\Omega$ in $\R^3$ with unit outward
normal $n$, this is the problem
\begin{equation}\label{vecpoissonbvp}
- \grad \div u + \curl \curl u = f, \text{ in } \Omega,
\quad u \cdot n =0, \quad
\curl u \x n =0, \quad \text{ on } \partial \Omega.
\end{equation}
The solution of this problem can again be characterized as the minimizer
of an appropriate energy functional,
\begin{equation}\label{Jvec}
J(u)= \frac{1}{2} \int_{\Omega} (|\div u|^2 + |\curl u|^2) \, dx
- \int_{\Omega} f \cdot u \, dx,
\end{equation}
but this time over the space $H(\curl; \Omega) \cap \0H(\div; \Omega)$, where
$H(\curl; \Omega) = \{u \in L^2(\Omega) \, | \, \curl u \in L^2(\Omega)\}$ and
$\0H(\div; \Omega) = \{u \in H(\div; \Omega) \, | \, u \cdot n =0 \
\text{on} \ \partial \Omega\}$ with $H(\div; \Omega)$ defined above.  This
problem is associated to a coercive bilinear form, but a standard finite element method based on a trial
subspace of the energy space $H(\curl; \Omega) \cap \0H(\div; \Omega)$, e.g., using continuous
piecewise linear vector functions, is very problematic.
In fact, as we shall illustrate shortly,
if the domain $\Omega$ is a nonconvex polyhedron, \emph{for almost
all $f$ the Galerkin method solution will converge to a function that is
not the true solution of the problem!}
The essence of this unfortunate situation is that any
piecewise polynomial subspace of $H(\curl; \Omega) \cap \0H(\div; \Omega)$ is
a subspace of  $H^1(\Omega) \cap \0H(\div; \Omega)$, and this space is
a closed subspace of $H(\curl; \Omega) \cap \0H(\div; \Omega)$.
For a nonconvex polyhedron, it is a proper closed subspace and generally the true solution will not
belong to it, due to a singularity at the reentrant corner.  Thus the method, while stable,
is inconsistent.  For more on
this example, see \cite{costabel}.

An accurate approximation of the vector Poisson equation can be obtained from a mixed
finite element formulation, based on the system:
\begin{equation*}
\sigma = -\div u, \quad  \grad\sigma  + \curl\curl u  = f \text{ in }
\Omega, \quad u \cdot n =0, \quad \curl u\x n =0 \text{ on }
\partial \Omega.
\end{equation*}
 Writing this system in weak form, we obtain the mixed formulation of the problem:
Find $\sigma \in H^1(\Omega)$, $u \in H(\curl;\Omega)$ satisfying
\begin{gather*}
\int_{\Omega} \sigma \tau \, dx - \int_{\Omega} u \cdot \grad \tau \, dx
=0, \quad \tau \in H^1(\Omega),
\\
\int_{\Omega} \grad \sigma \cdot v \, dx + \int_{\Omega} \curl u \cdot\curl v
\, dx = \int_{\Omega} f \cdot v \, dx, \quad  v 
\in H(\curl; \Omega).
\end{gather*}
In contrast
to a finite element method based on minimizing the energy \eqref{Jvec}, a
finite element approximation based on the mixed formulation
uses separate trial subspaces of $H^1(\Omega)$ and $H(\curl;\Omega)$, rather
than a single subspace of the intersection $H(\curl; \Omega) \cap \0H(\div; \Omega)$.

We now illustrate the nonconvergence of a Galerkin method based on energy minimization
and the convergence of one based on the mixed formulation, via computations in two space
dimensions (so now the curl of a vector $u$ is the scalar $\partial u_2/\partial x_1-
\partial u_1/\partial x_2$).  For the trial subspaces we make the simplest choices:
for the former method
we use continuous piecewise linear functions and for the mixed method we use
continuous piecewise linear functions to approximate $\sigma \in H^1(\Omega)$
and a variant of the lowest order Raviart--Thomas elements,
for which the shape functions are the infinitesimal rigid motions
$(a-bx_2,c+bx_1)$ and the degrees of
freedom are the tangential moments $u\mapsto \int_e u\cdot s\,ds$ for $e$ an edge.
The discrete solutions obtained
by the two methods for the problem when $f = (-1,0)$ are shown in Figure~\ref{fg:f2}.  As we shall show
later in this paper, the mixed formulation
gives an approximation that provably converges to the true solution,
while, as can be seen from comparing the two plots, the first approximation
scheme gives a completely different (and therefore inaccurate) result.

\begin{figure}[htb]
\centerline{\includegraphics[width=2in]{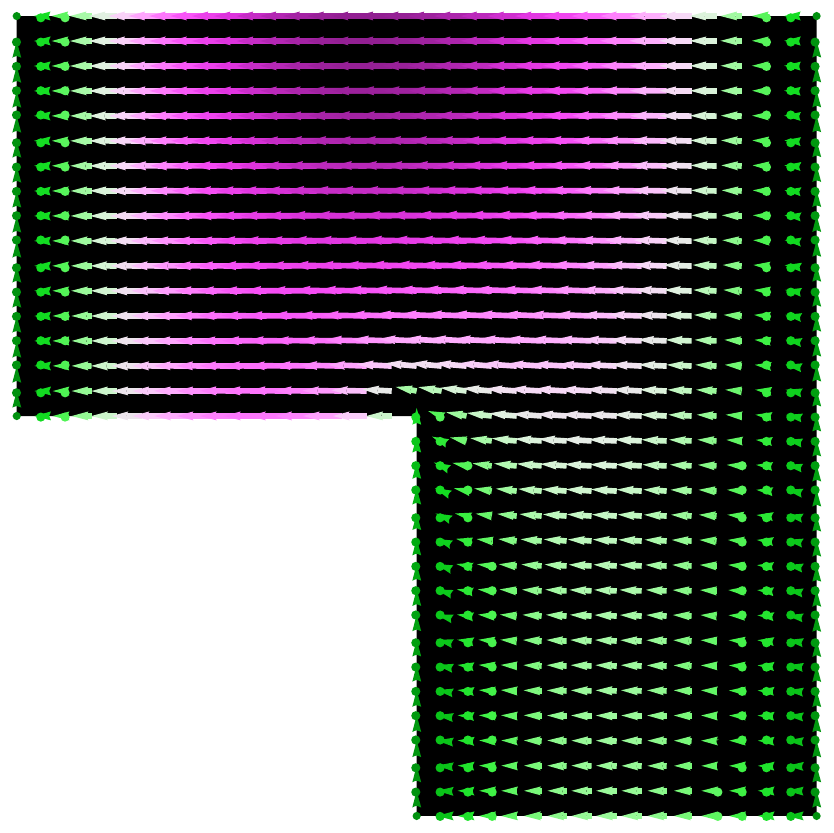} \qquad
\includegraphics[width=2in]{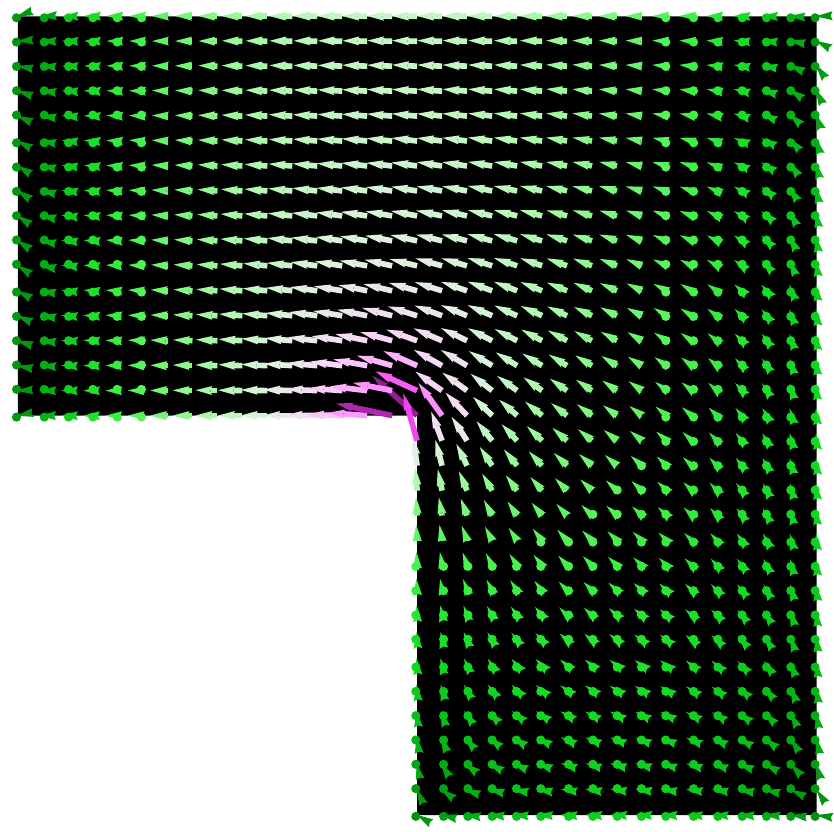}}
\caption[]{Approximation of the vector Laplacian by the standard finite
element method (left) and a mixed finite
element method (right).  The former method totally misses the singular
behavior of the solution near the reentrant corner.}
\label{fg:f2}
\end{figure}

This problem is again a special case of the Hodge Laplacian, now with $k=1$.  See
Section~\ref{subsubsec:hlk=1}.  The error analysis thus falls within the theory
of this paper, yielding estimates as in Section~\ref{subsec:ahl}.

\subsubsection{The vector Laplacian on an annulus} \label{subsubsec:annulus}
In the example just considered, the failure of a standard Galerkin method
based on energy minimization to solve the vector Poisson equation
was related to the reentrant corner of the domain and the resulting
singular behavior of the solution.  A quite different failure mode
for this method occurs if we take a domain which is smoothly bounded,
but not simply connected, e.g., an annulus.  In that case, as discussed
below in Section~\ref{subsec:abstract-hl}, the boundary value problem
\eqref{vecpoissonbvp} is not well-posed except for special values of
the forcing function $f$.  In order to obtain a well-posed problem,
the differential equation should be solved only modulo the space of
harmonic vector fields (or harmonic 1-forms)---which is a one-dimensional
space for the annulus---and the solution should be rendered unique by
enforcing orthogonality to the harmonic vector fields.  If we choose
the annulus with radii $1/2$ and $1$, and forcing function $f=(0,x)$,
the resulting solution, which can be computed accurately with a mixed
formulation falling within the theory of this paper, is displayed on the
right in Figure~\ref{fg:ann}.  However, the standard Galerkin method does
not capture the non-uniqueness and computes the discrete solution shown
on the left of the same figure, which is dominated by an approximation of
the harmonic vector field, and so is nothing like the true solution.

\begin{figure}[ht]
\centerline{\includegraphics[width=2in,viewport = 100 200 880 1000]{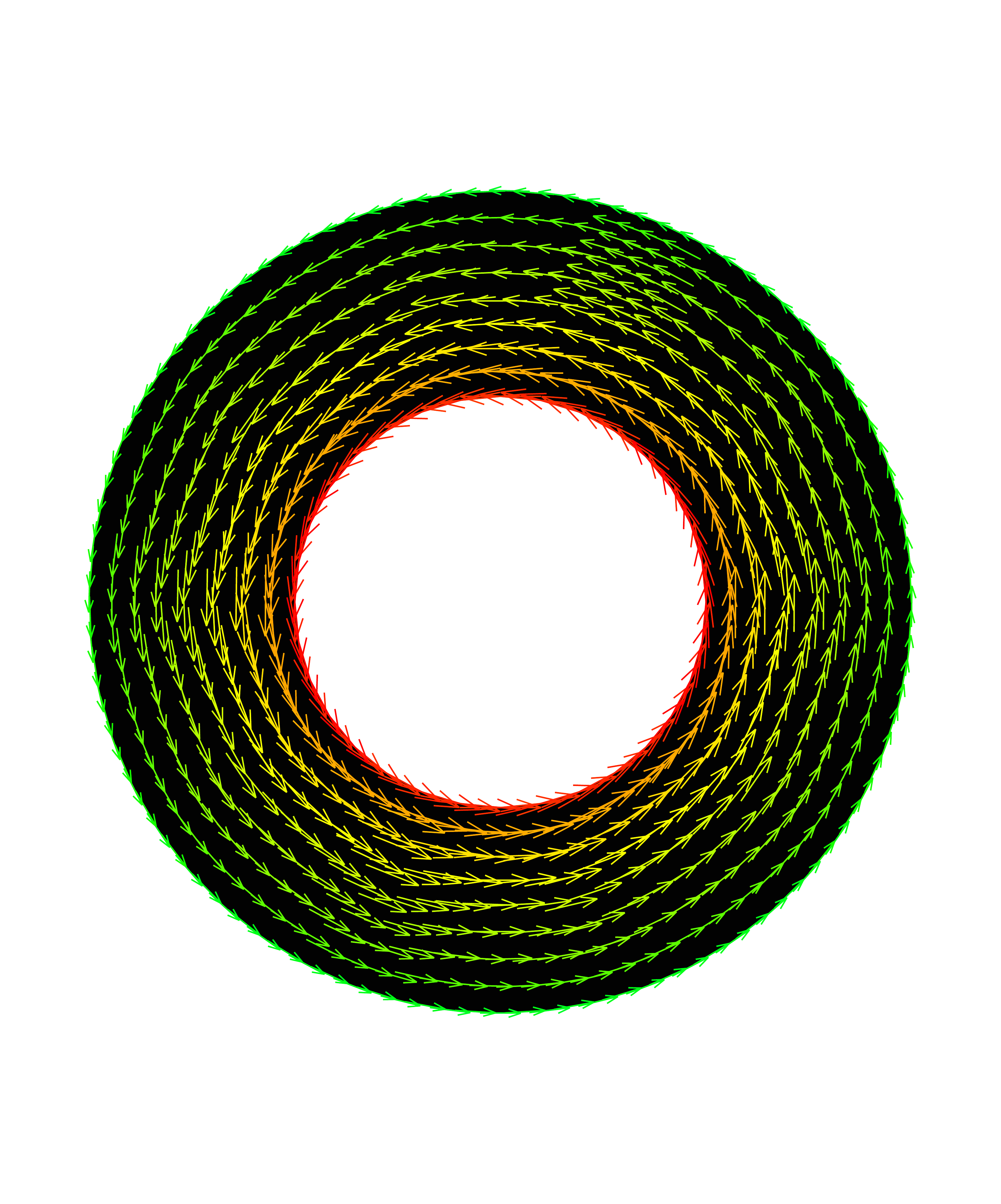}\qquad
\includegraphics[width=2in,viewport = 100 200 880 1000]{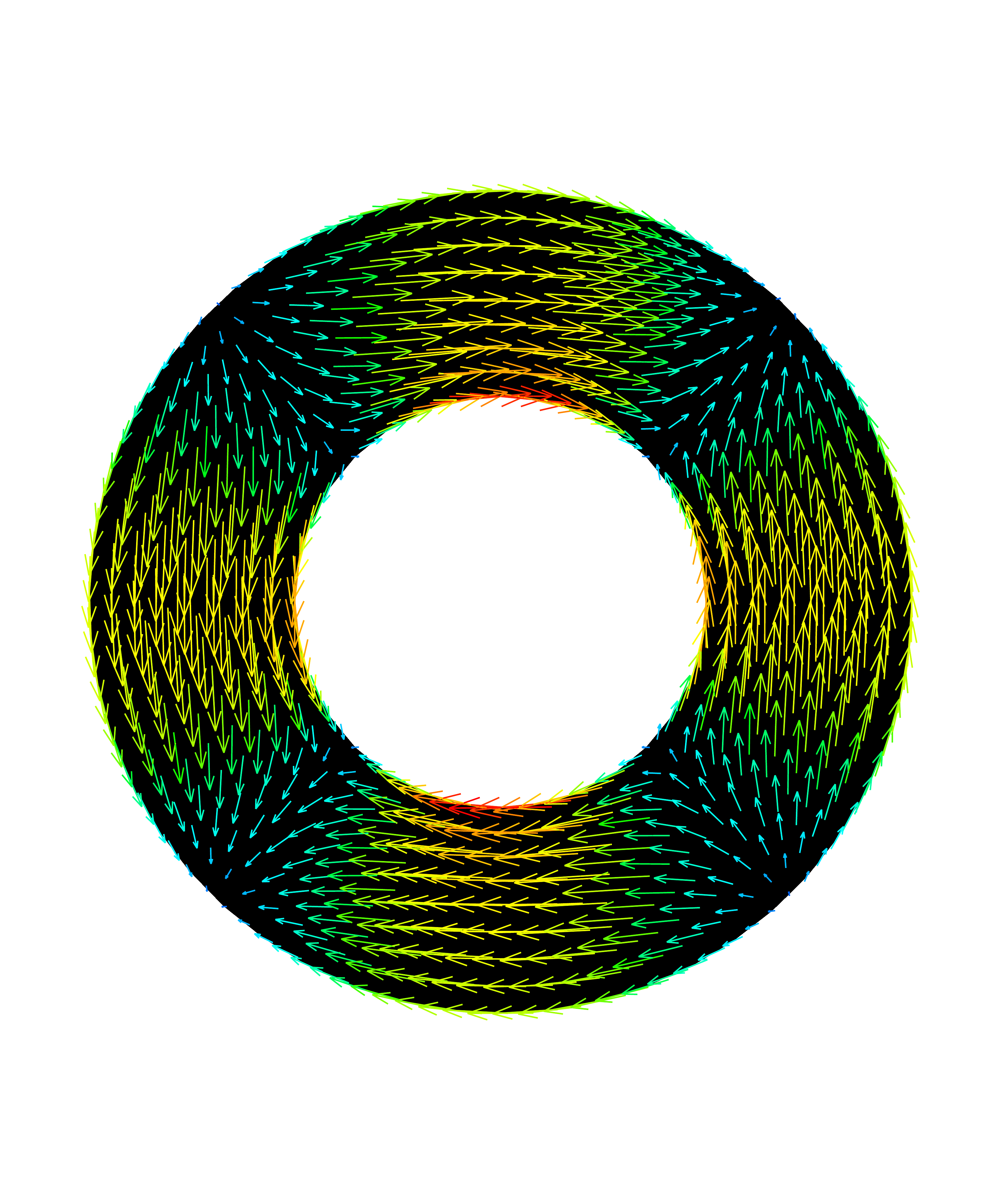}}
\caption[]{Approximation of the vector Laplacian on an annulus.  The true solution
shown here on the right in an (accurate) approximation by a mixed method.  It
is orthogonal to the harmonic fields, and satisfies the differential equation only
modulo harmonic fields.   The standard Galerkin solution using continuous piecewise
linear vector fields shown on the left, is totally different.}
\label{fg:ann}
\end{figure}

\subsubsection{The Maxwell eigenvalue problem} \label{subsubsec:maxwelleig}
Another situation where a standard finite element method gives unacceptable results,
but a mixed method succeeds, arises in the approximation of
elliptic eigenvalue problems related to the vector Laplacian or Maxwell's
equation.  This will be analyzed in detail later in this paper, and here we
only present a simple but striking computational example.
Consider the eigenvalue problem for the vector Laplacian discussed above,
which we write in mixed form as: Find nonzero $(\sigma,u) \in H^1(\Omega)\x H(\curl;\Omega)$
and $\lambda \in \R$ satisfying
\begin{equation}\label{mixedev}
 \begin{gathered}
\int_{\Omega} \sigma \cdot \tau \, dx - \int_{\Omega} \grad \tau\cdot u  \, dx
=0, \quad \tau \in H^1(\Omega).
\\
\int_{\Omega} \grad \sigma \cdot v \, dx + \int_{\Omega} \curl u \cdot\curl v
\, dx = \lambda \int_{\Omega} u \cdot v \, dx, \quad  v 
\in H(\curl; \Omega).
\end{gathered}
\end{equation}
As explained in Section~\ref{related-eigenv}, this problem can be
split into two subproblems.  In particular, if  $0\ne u \in H(\curl;\Omega)$
and $\lambda\in\R$ solves the eigenvalue problem
\begin{equation}\label{curlcurlev}
\int_{\Omega} \curl u \cdot\curl v
\, dx = \lambda \int_{\Omega} u \cdot v \, dx, \quad  v 
\in H(\curl; \Omega),
\end{equation}
and $\lambda$ is not equal to zero, then $(\sigma,u)$, $\lambda$ is an eigenpair
for \eqref{mixedev} with $\sigma=0$.

We now consider the solution of the eigenvalue problem \eqref{curlcurlev}, with two
different choices of trial subspaces in $H(\curl;\Omega)$.  Again, to make our point it is enough
to consider a two dimensional case, and we consider the solution of \eqref{curlcurlev} with
$\Omega$ a square of side length $\pi$.  For this domain, the positive
eigenvalues can be computed by separation of variables.
They are of the form $m^2+n^2$ with $m$ and $n$ integers: $1,1,2,4,4,5,5,8, \ldots$.
If we approximate \eqref{curlcurlev} using the space of continuous piecewise linear
vector fields as the trial subspace of $H(\curl;\Omega)$, the approximation fails badly.
This is shown for an unstructured mesh in Figure~\ref{fg:f3} and for a structured crisscross mesh
in Figure~\ref{fg:f4}, where the nonzero discrete eigenvalues are plotted.
Note the very different mode of failure for the two mesh types.
For more discussion of the spurious eigenvalues
arising using continuous piecewise linear vector fields on a crisscross mesh
see \cite{boffi-brezzi-gastaldi}.
By contrast, if we use the lowest order Raviart--Thomas approximation of $H(\curl;\Omega)$,
as shown on the right of Figure~\ref{fg:f3},
we obtain a provably good approximation for any mesh.  This is a very simple case of the general
eigenvalue approximation theory presented in
Section~\ref{subsec:eigenv} below.

\begin{figure}[htb]
\centerline{\includegraphics[height=1.3in]{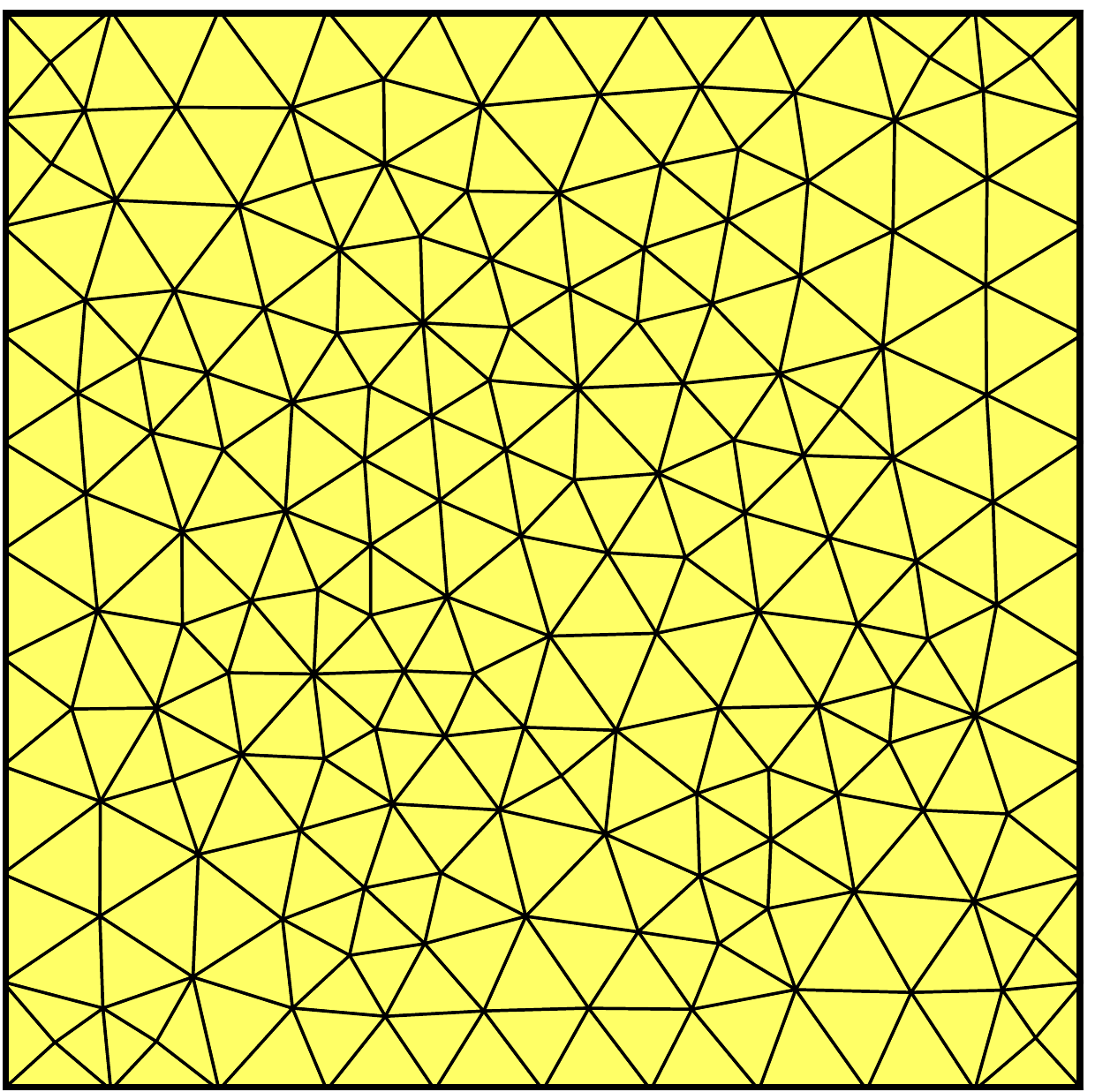}
\includegraphics[height=1.35in]{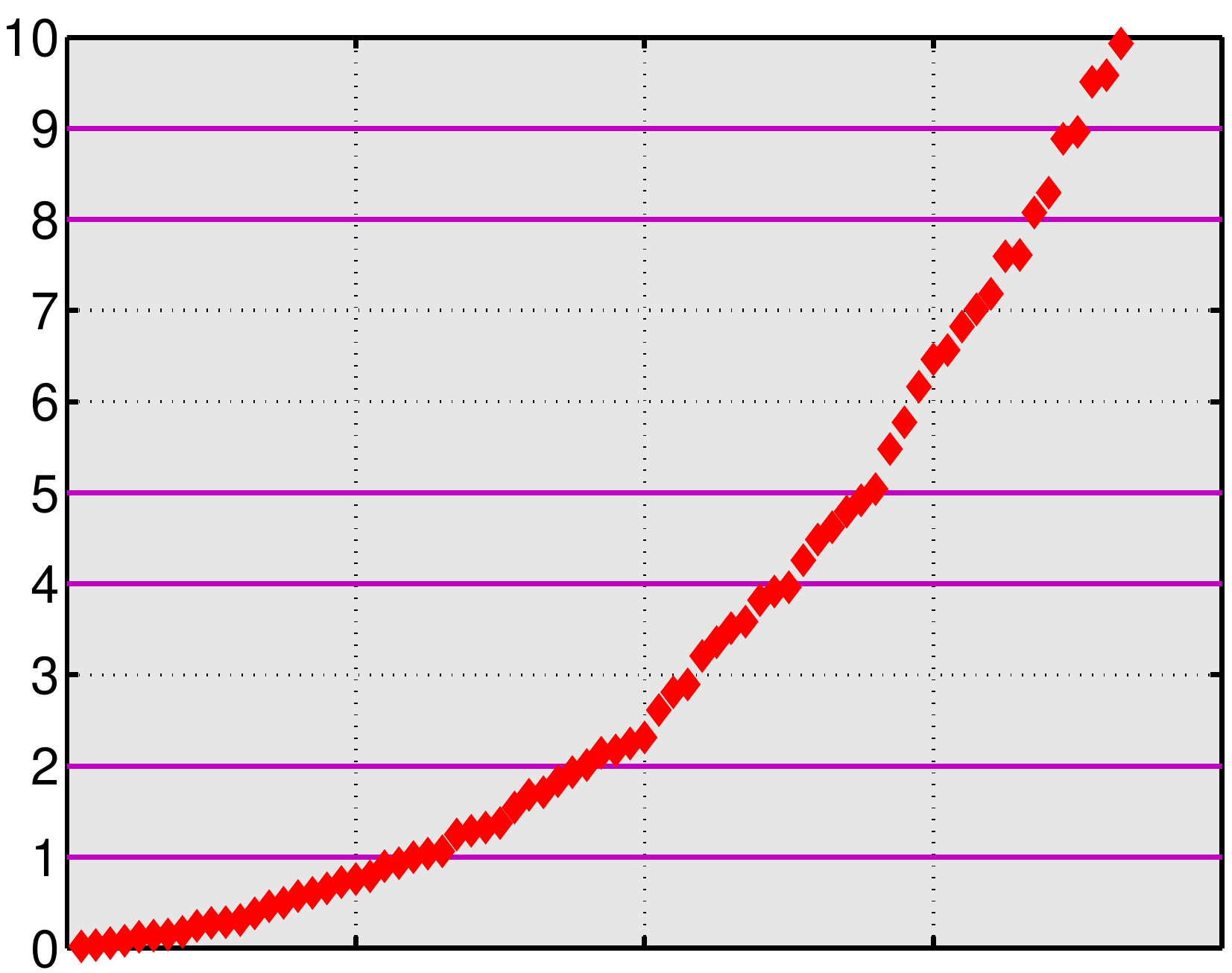}
\includegraphics[height=1.35in]{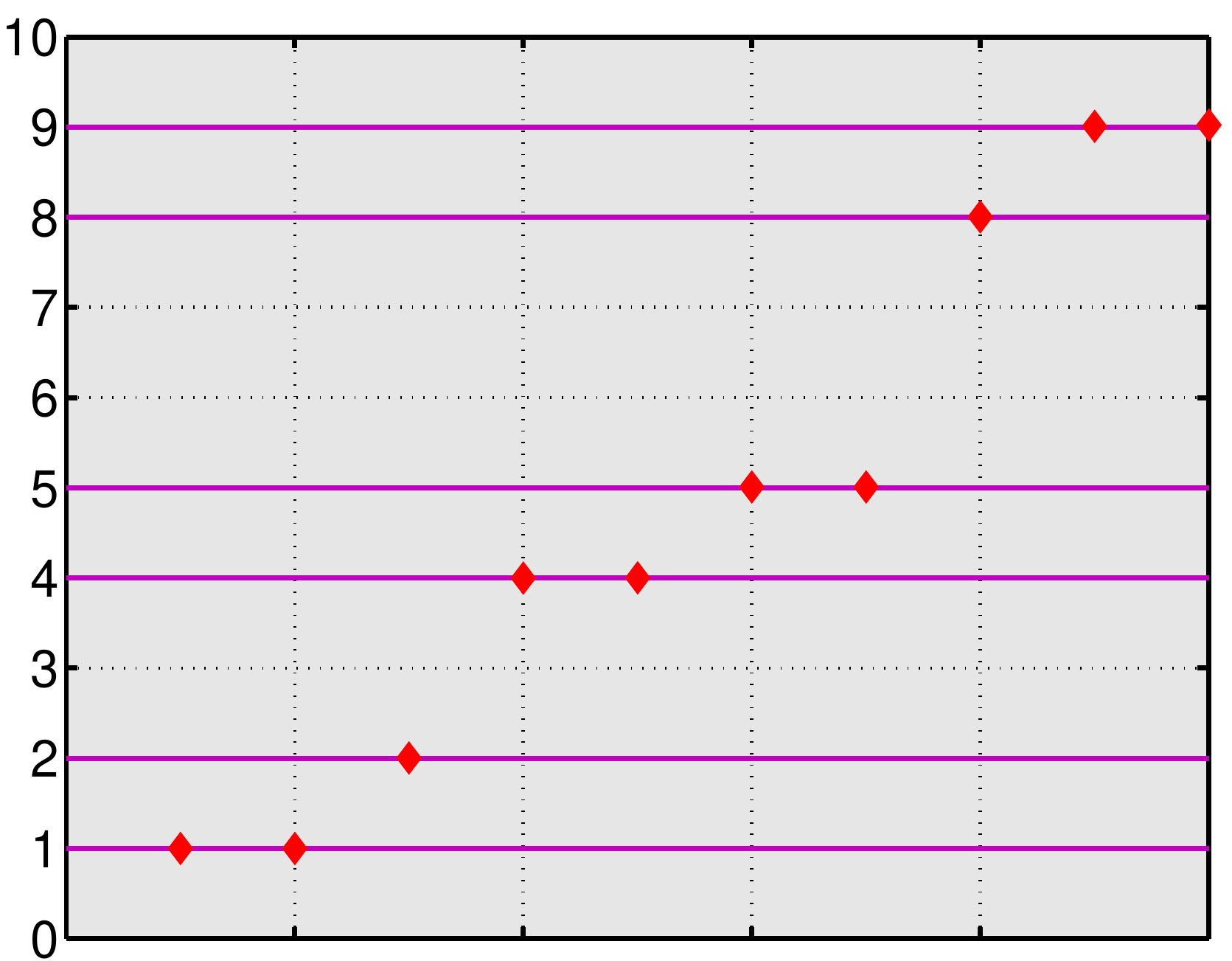}}
\caption[]{Approximation of the nonzero eigenvalues of \eqref{curlcurlev} on an unstructured mesh
of the square (left) using continuous piecewise linear finite elements (middle) and
Raviart--Thomas elements (right).  For the former, the discrete spectrum looks nothing
like the true spectrum, while for the later it is very accurate.}
\label{fg:f3}
\end{figure}

\begin{figure}[htb]
\centerline{\includegraphics[width=1.25in]{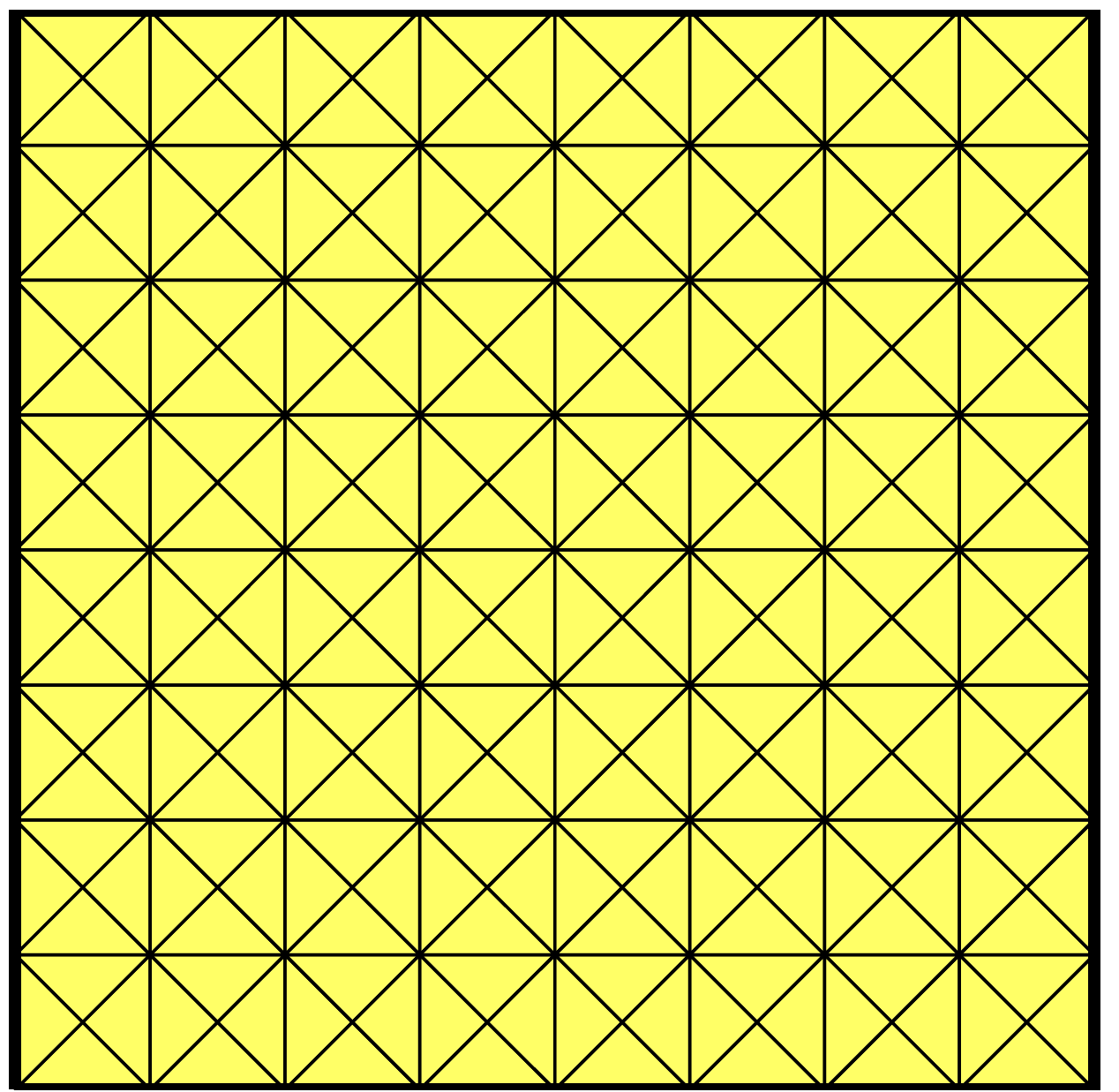} \qquad
\includegraphics[width=1.6in]{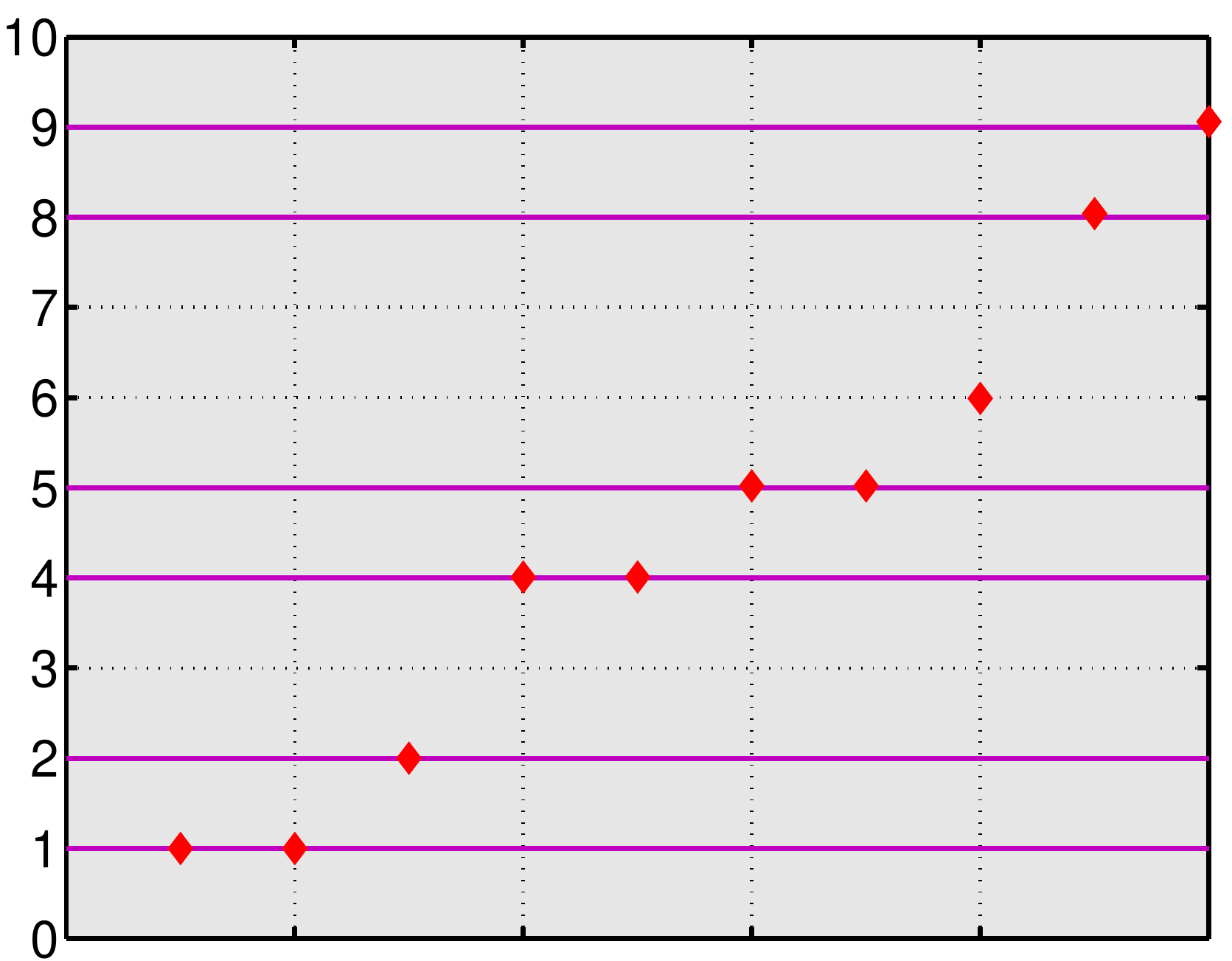}}
\caption[]{Approximation of the nonzero eigenvalues of
\eqref{curlcurlev} using continuous piecewise linear elements on the structured mesh shown.
The first seven discrete nonzero eigenvalues converge to
true eigenvalues, but the eighth converges to a spurious value.}
\label{fg:f4}
\end{figure}

\section{Hilbert complexes and their approximation}
\label{sec:HCA}
In this section, we construct a Hilbert space framework for finite element
exterior calculus.  The most basic object in this framework is a Hilbert
complex, which extracts essential features of the $L^2$ de~Rham complex.
Just as the Hodge Laplacian is naturally associated with the de~Rham
complex, there is a system of variational problems, which we call the
abstract Hodge Laplacian, associated to any Hilbert complex.  Using a
mixed formulation we prove that these abstract Hodge Laplacian problems
are well-posed.  We next consider approximation of Hilbert complexes using
finite dimensional subspaces.  Our approach emphasizes two key properties,
the subcomplex property and the existence of bounded cochain projections.
These same properties prove to be precisely what is needed both to show
that the approximate Hilbert complex accurately reproduces geometrical
quantities associated to the complex, like cohomology spaces, and also
to obtain error estimates for the approximation of the abstract Hodge
Laplacian source  and eigenvalue problems, which is our main goal in
this section.  In the following section of the paper we will derive
finite element subspaces in the concrete case of the de~Rham complex
and verify the hypotheses needed to apply the results of this section.

Although the $L^2$ de~Rham complex is the canonical example of a Hilbert
complex, there are many others.  In this paper, in Section~\ref{sec:variations}
we consider some variations of de~Rham complex that allow us to treat
more general PDEs and boundary value problems.  In the final section we
briefly discuss the equations of elasticity, for which a very different
Hilbert complex, in which one of the differentials is a second order
PDE, is needed.  Another useful feature of Hilbert complexes, is that a
subcomplex of Hilbert complex is again such, and so the properties we
establish for them apply not only at the continuous, but also at the
discrete level.

\subsection{Basic definitions}
\label{subsec:basicdefs}
We begin by recalling some basic definitions of homological algebra and functional analysis and
establishing some notation.
\subsubsection{Cochain complexes}
\label{subsubsec:complexes}
Consider a \emph{cochain complex}
$(V,d)$ of vector spaces, i.e., a sequence of vector spaces $V^k$
and linear maps $d^k$, called the differentials:
\begin{equation*}
\cdots \to V^{k-1}\xrightarrow{d^{k-1}} V^k \xrightarrow{d^k}
V^{k+1} \to \cdots
\end{equation*}
with $d^k\circ d^{k-1}=0$.  Equivalently, we may think of such a complex as
the graded vector space $V=\bigoplus V^k$, equipped with a graded
linear operator $d:V\to V$ of degree $+1$ satisfying $d\circ d=0$.
A chain complex differs from a cochain complex only in that the indices decrease.
All the complexes we consider are nonnegative and finite, meaning that
$V^k=0$ whenever $k$ is negative or sufficiently large.

Given a cochain complex $(V,d)$, the elements of the null space
$\Zfrak^k=\Zfrak^k(V,d)$ of $d^k$ are called the $k$-cocycles and the
elements of the range $\Bfrak^k=\Bfrak^k(V,d)$ of $d^{k-1}$ the
$k$-coboundaries.  The $k$th cohomology group is defined to be the quotient
space $\Zfrak^k/\Bfrak^k$.

Given two cochain complexes $(V,d)$ and $(V',d')$, a set of
linear maps $f^k:V^k\to V'^k$ satisfying $d'^k f^k = f^{k+1}d^k$
(i.e., a graded linear map $f:V\to V'$ of degree $0$
satisfying $d'f=fd$) is called a \emph{cochain map}.  When $f$ is a
cochain map, $f^k$ maps $k$-cocycles to $k$-cocycles and
$k$-coboundaries to $k$-coboundaries, and hence induces a map $\bar f$
on the cohomology spaces.  This map is functorial, i.e., it respects
composition.

Let $(V,d)$ be a cochain complex and $(V_h,d)$ a subcomplex.  In
other words, $V_h^k$ is a subspace of $V^k$ and $d^k
V_h^k\subset V_h^{k+1}$.  Then the inclusion $i_h:V_h\to
V$ is a cochain map and so induces a map of cohomology.  If there exists
a \emph{cochain projection} of $V$ onto $V_h$, i.e., a cochain map
$\pi_h$ such that $\pi_h^k:V^k\to V_h^k$ leaves the subspace $V_h^k$
invariant, then $\pi_h\circ i_h = \id_{V_h}$, so $\bar \pi_h\circ
\bar i_h= \id_{\Zfrak_h^k/\Bfrak_h^k}$ (where
$\Zfrak_h^k:=\Zfrak^k(V_h,d)$ and similarly for $\Bfrak_h^k$).  We
conclude that in this case $\bar i_h$ is injective and $\bar \pi_h$ is
surjective.  In particular, the dimension of the cohomology spaces of the
subcomplex is at most that of the supercomplex.

\subsubsection{Closed operators on Hilbert space}
\label{subsubsec:closed}
This material can be found in many places, e.g.,
\cite[Chapter~III, \S~5 and Chapter IV, \S~5.2]{kato} or \cite[Chapter II,
\S~6 and Chapter VII]{yosida}.

By an operator $T$ from a Hilbert space $X$ to a Hilbert space $Y$, we mean a
linear operator from a
subspace $V$ of $X$, called the domain of $T$, into $Y$. The operator $T$ is
not necessarily bounded and the domain $V$ is not necessarily closed. We say that
the operator $T$ is
\emph{closed} if its graph $\{\,(x,Tx)\,|\,x\in V\,\}$ is closed in $X\x Y$.
We endow the domain $V$ with the graph norm inner product,
\begin{equation*}
  \< v,w\>_{V} = \< v,w\>_X +\< Tv,Tw\>_Y.
\end{equation*}
It is easy to check that this makes $V$ a Hilbert space (i.e., complete), if
and only if $T$ is closed, and moreover, that $T$ is a bounded operator from
$V$ to $Y$.  Of course, the null space of $T$ is the set of elements of its
domain which it maps to $0$, and the range of $T$ is $T(V)$.  The null space of a
closed operator from $X$ to $Y$ is a closed subspace of $X$, but its range
need not be closed in $Y$ (even if the operator is defined on all of $X$ and
is bounded).

The operator $T$ is said to be densely defined if its domain $V$ is dense in
$X$.  In this case the adjoint operator $T^*$ from $Y$ to $X$ is defined to be
the operator whose domain consists of all $y\in Y$ for which there exists
$x\in X$ with
\begin{equation*}
 \< x,v\>_X=\< y,Tv\>_Y, \quad v\in V,
\end{equation*}
in which case $T^*y=x$ (well-defined since $V$ is dense).  If $T$ is closed
and densely defined, then $T^*$ is as well and $T^{**}=T$.  Moreover, the null
space of $T^*$ is the orthogonal complement of the range of $T$ in $Y$.
Finally, by the closed range theorem, the range of $T$ is closed
in $Y$ if and only if the range of $T^*$ is closed in $X$.

If the range of a closed linear operator is of finite codimension, i.e., 
$\dim Y/T(V)<\infty$, then the range is closed \cite[Lemma 19.1.1]{hormander}.

\subsubsection{Hilbert complexes}
\label{subsubsec:hilbert-complexes}
 A \emph{Hilbert complex} is a sequence of Hilbert spaces $W^k$ and closed, densely-defined
linear operators $d^k$ from $W^k$ to $W^{k+1}$ such that the range of $d^k$ is contained in
the domain of $d^{k+1}$ and $d^{k+1}\circ d^k=0$.  A Hilbert complex is \emph{bounded} if,
for each $k$, $d^k$ is a bounded linear operator from
$W^k$ to $W^{k+1}$.  In other words, a bounded Hilbert complex is a cochain complex in the category
of Hilbert spaces. 
A Hilbert complex is \emph{closed} if for each $k$, the range of $d^k$ is closed in $W^{k+1}$.
A \emph{Fredholm complex} is a Hilbert complex for which the range of $d^k$ is finite codimensional
in the null space of $d^{k+1}$ (and so is closed).
Hilbert and Fredholm complexes have been discussed by various authors working in
geometry and topology.
Br\"uning and Lesch \cite{bruning-lesch} have advocated for them as an abstraction of
elliptic complexes on manifolds and applied them to spectral geometry on singular spaces.
Glotko
\cite{glotko} used them to define a generalization of Sobolev spaces on Riemannian
manifolds and study their compactness properties and Gromov and Shubin \cite{gromov-shubin} to
define topological invariants of manifolds.

Associated to any Hilbert complex $(W,d)$ is a bounded Hilbert complex $(V,d)$,
called the \emph{domain complex}, for which the space $V^k$ is the domain of $d^k$, endowed with the
inner product associated to the graph norm:
$$
\< u,v\>_{V^k} = \<u,v\>_{W^k} + \<d^ku, d^kv\>_{W^{k+1}}.
$$
Then $d^k$ is a bounded linear operator from $V^k$ to $V^{k+1}$, and so $(V,d)$ is indeed a bounded
Hilbert complex.  The domain complex is closed or Fredholm if and only if the original complex $(W,d)$ is.

Of course, for a Hilbert complex $(W,d)$, we have the null spaces and ranges $\Zfrak^k$ and
$\Bfrak^k$.  Utilizing the inner product, we define the space of \emph{harmonic forms}
$\Hfrak^k=\Zfrak^k\cap\Bfrak^{k\perp}$, the orthogonal complement of $\Bfrak^k$ in $\Zfrak^k$.
It is isomorphic
to the reduced cohomology space $\Zfrak^k/\overline{\Bfrak^k}$ or, for a closed complex,
to the cohomology space $\Zfrak^k/\Bfrak^k$.  For a closed Hilbert complex, we immediately
obtain the \emph{Hodge decomposition}
\begin{equation}\label{hodgedecompw}
W^k=\Bfrak^k\oplus\Hfrak^k\oplus \Zfrak^{k\perp_W}.
\end{equation}
For the domain complex $(V,d)$, the null space, range, and harmonic forms are the same
spaces as for the original complex, and the Hodge decomposition is
\begin{equation*}
V^k = \Bfrak^k \oplus \Hfrak^k \oplus \Zfrak^{k\perp_V}.
\end{equation*}
The third summand, $\Zfrak^{k\perp_V}=\Zfrak^{k\perp_W}\cap V^k$.

Continuing to use the Hilbert space structure,
we define the \emph{dual complex} $(W,d^*)$, which is a Hilbert chain complex rather than cochain
complex.  The dual complex uses
the same spaces $W^i$, with the differential $d^*_k$ being the adjoint of
$d^{k-1}$, so $d^*_k$ is a closed, densely-defined operator from
$W^k$ to $W^{k-1}$, whose domain we denote by $V^*_k$.  The dual complex is closed or bounded
if and only if the
original complex is.  We denote by $\Zfrak^*_k=\Bfrak^{k\perp_W}$ the null space of $d^*_k$, and by
$\Bfrak^*_k$ the range of $d^*_{k+1}$.  Thus $\Hfrak^k=\Zfrak^k\cap\Zfrak^*_k$ is the space of
harmonic forms both for the original complex and the dual complex.
Since $\Zfrak^{k\perp_W}=\Bfrak^*_k$, the Hodge decomposition \eqref{hodgedecompw} can be
written
 \begin{equation}\label{hodgedecompW}
W^k = \Bfrak^k \oplus \Hfrak^k \oplus \Bfrak^*_k.
\end{equation}
We henceforth simply write $\Zfrak^{k\perp}$ for $\Zfrak^{k\perp_V}$.

Let $(W,d)$ be a closed Hilbert complex with domain complex $(V,d)$.
Then $d^k$ is a bounded bijection from
$\Zfrak^{k\perp}$ to the Hilbert space $\Bfrak^{k+1}$ and hence,
by Banach's bounded inverse theorem, there exists a constant $c_P$ such that
\begin{equation}\label{poincare}
\|v\|_V \le c_P \|d^k v\|_W,
\quad v \in \Zfrak^{k\perp},
\end{equation}
which we refer to as a \emph{Poincar\'e inequality}.  We remark that the condition that
$\Bfrak^k$ is closed is not only sufficient to obtain \eqref{poincare}, but
also necessary.

The subspace $V^k\cap V^*_k$ of $W^k$ is a Hilbert space with the norm
\begin{equation*}
 \|v\|_{V\cap V^*}^2 = \|v\|_V^2 + \|v\|_{V^*}^2 = 2\|v\|_W^2 + \|d^kv\|_W^2 
+ \|d_k^*v\|_W^2,
\end{equation*}
and is continuously included in $W^k$.  We say that the Hilbert complex $(W,d)$ has the
\emph{compactness property} if $V^k\cap V^*_k$ is dense in $W^k$ and the inclusion
is a compact operator.  Restricted to the space $\Hfrak^k$ of harmonic forms, the
$V^k\cap V^*_k$ norm is equal to the $W^k$ norm (times $\sqrt 2$).  Therefore the
compactness property implies that the inclusion of $\Hfrak^k$ into itself is compact,
so $\Hfrak^k$ is finite dimensional.  In summary, for Hilbert complexes,
$\text{compactness property} \implies \text{Fredholm} \implies \text{closed}$.

\subsection{The abstract Hodge Laplacian and the mixed formulation}
\label{subsec:abstract-hl}
Given a Hilbert complex $(W,d)$, the operator $L=dd^*+d^*d$ is an unbounded
operator $W^k\to W^k$ called, in the case of the de~Rham complex, the Hodge
Laplacian.  We refer to it as the abstract Hodge Laplacian in the general
situation.  Its domain is
$$
D_L=\{\,u\in V^k\cap V^*_k\,|\, du\in V^*_{k+1},d^*u\in V^{k-1}\,\}.
$$
If $u$ solves $Lu=f$, then
\begin{equation}\label{primal}
\< du,dv\>+\< d^*u,d^*v\> =\< f,v\>,\quad v\in V^k\cap V^*_k.
\end{equation}
Note that, in this equation, and henceforth, we use $\<\,\cdot\,,\,\cdot\,\>$
and $\|\,\cdot\,\|$ without subscripts, meaning the inner product and norm in the
appropriate $W^k$ space.

The harmonic functions measure the extent to which the Hodge Laplace problem
\eqref{primal} is well-posed.  The solutions to the homogeneous problem ($f=0$)
are precisely the functions in $\Hfrak^k$.  Moreover, a necessary condition
for a solution to exist for non-zero $f\in W^k$ is that $f\perp\Hfrak^k$.

For computational purposes, a formulation of the Hodge Laplacian based on
\eqref{primal} may be problematic, even when there are no harmonic forms,
because it may not be possible to construct an efficient finite element
approximation for the space $V^k\cap V^*_k$.  We have already seen an example
of this in the discussion of the approximation of a boundary value problem for
the vector Laplacian in Section~\ref{subsubsec:l-shape}. Instead we introduce
another formulation, which is a generalization of the mixed formulation discussed in
Section~\ref{sec:femethod} and which, simultaneously, accounts for
the nonuniqueness associated with harmonic forms.  With $(W,d)$ a Hilbert complex, $(V,d)$
the associated domain complex, and $f \in W^k$ given, we define the
\emph{mixed formulation of the abstract Hodge Laplacian} as the problem of
finding $(\sigma,u,p)\in V^{k-1}\x V^k \x \Hfrak^k$ satisfying
\begin{equation}\label{wfhc}
\begin{aligned}
\<\sigma,\tau\> - \<d \tau,u\> &=0,   &&\tau\in V^{k-1},
\\*
\< d \sigma,v\> + \<d  u,d  v\> + \< v, p \> 
&= \< f, v \>,
&&v\in V^k,
\\*
\< u, q \> &= 0, &&q\in \Hfrak^k.
\end{aligned}
\end{equation}

\begin{remark}
The equations \eqref{wfhc} are the Euler--Lagrange equations associated to
a variational problem.  Namely, if we define the quadratic functional
$I: V^{k-1} \x V^k \x \Hfrak^k \to \R$ by
\begin{equation*}
I(\tau,v,q)=\frac12\<\tau,\tau\> -\< d \tau,v\> -\frac12\< d  v,d  v\>
- \< v, q \> + \< f, v \>,
\end{equation*}
then a point $(\sigma,u,p)\in V^{k-1}\x V^k \x \Hfrak^k$ is a critical point of
$I$ if and only if \eqref{wfhc} holds, and in this case
$$
I(\sigma,v,q)\le I(\sigma,u,p) \le J(\tau,u,p), 
\quad (\tau,v,q)\in V^{k-1} \x V^k \x \Hfrak^k.
$$
Thus the critical point is a saddle point.
\end{remark}

An important result is that if the Hilbert complex is closed, then the mixed formulation
is well-posed.  The requirement that the complex is closed is crucial, since
we rely on the Poincar\'e inequality.
\begin{thm}\label{wp}
Let $(W,d)$ be a closed Hilbert complex with domain complex $(V,d)$.
The mixed formulation of the abstract Hodge Laplacian is well-posed.  That is,
 for any $f\in W^k$, there exists a unique $(\sigma,u,p)\in V^{k-1}\x V^k \x
 \Hfrak^k$ satisfying \eqref{wfhc}.  Moreover
$$
\|\sigma\|_V+\|u\|_V + \|p\| \le c \|f\|,
$$ where $c$ is a constant depending only on the Poincar\'e constant $c_P$ in
\eqref{poincare}.
\end{thm}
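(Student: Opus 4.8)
The plan is to recast the three equations \eqref{wfhc} as a single variational problem on the product Hilbert space $X=V^{k-1}\x V^k\x\Hfrak^k$, with norm $\|(\sigma,u,p)\|_X^2=\|\sigma\|_V^2+\|u\|_V^2+\|p\|^2$, and then apply the generalized Lax--Milgram (Banach--Ne\v{c}as--Babu\v{s}ka) theorem. Replacing the third equation by its negative (permissible since its right-hand side vanishes), the system reads $B(\sigma,u,p;\tau,v,q)=\<f,v\>$ for all $(\tau,v,q)\in X$, where
$$
B(\sigma,u,p;\tau,v,q)=\<\sigma,\tau\>-\<d\tau,u\>+\<d\sigma,v\>+\<du,dv\>+\<v,p\>-\<u,q\>.
$$
Boundedness of $B$ on $X\x X$ is immediate. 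The crux is the inf--sup condition: a constant $\gamma>0$, depending only on $c_P$, such that for every nonzero $(\sigma,u,p)$ there is a test triple $(\tau,v,q)$ with $B(\sigma,u,p;\tau,v,q)\ge\gamma\|(\sigma,u,p)\|_X\|(\tau,v,q)\|_X$. Granting this and the nondegeneracy in the second argument, the theorem yields existence, uniqueness, and $\|(\sigma,u,p)\|_X\le\gamma^{-1}\|f\|$, which is the asserted bound with $c=\gamma^{-1}$.

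To construct the test triple I would superpose five elementary choices, using the Hodge decomposition $u=u_\Bfrak+u_\Hfrak+u_\perp$ (with $u_\Bfrak\in\Bfrak^k$, $u_\Hfrak\in\Hfrak^k$, $u_\perp\in\Zfrak^{k\perp}$) and the Poincar\'e inequality \eqref{poincare}. The diagonal choice $(\sigma,u,p)$ makes the off-diagonal terms cancel and delivers $\|\sigma\|^2+\|du\|^2$. The choice $(0,p,0)$ delivers $\|p\|^2$: here $dp=0$, and $d\sigma\in\Bfrak^k$ is orthogonal to $p\in\Hfrak^k=\Zfrak^k\cap\Bfrak^{k\perp}$, so the stray term $\<d\sigma,p\>$ vanishes; the same orthogonality makes $(0,d\sigma,0)$ deliver exactly $\|d\sigma\|^2$ (note $d\sigma\in V^k$ with $d(d\sigma)=0$). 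To capture the low-order part of $u$, the choice $(0,0,-u_\Hfrak)$ gives $\|u_\Hfrak\|^2$, while, letting $\rho\in\Zfrak^{(k-1)\perp}$ be the unique element with $d\rho=u_\Bfrak$, the choice $(-\rho,0,0)$ gives $\|u_\Bfrak\|^2-\<\sigma,\rho\>$. Assembling $(\tau,v,q)=(\sigma,u,p)+\eta_1(0,p,0)+\eta_2(0,d\sigma,0)+\eta_3(-\rho,0,0)+\eta_4(0,0,-u_\Hfrak)$ with small positive weights, the only surviving cross term is $-\eta_3\<\sigma,\rho\>$, which I absorb into $\|\sigma\|^2$ by Young's inequality; the remaining component $u_\perp$ is then controlled through $\|u_\perp\|_V\le c_P\|du\|$. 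For the $\eta_i$ small enough in terms of $c_P$, this bounds $B$ below by a positive multiple of $\|\sigma\|_V^2+\|u\|_V^2+\|p\|^2$.

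The quantitative heart of the argument---and the step I expect to require the most care---is that this test triple also satisfies $\|(\tau,v,q)\|_X\le C\|(\sigma,u,p)\|_X$ with $C$ depending only on $c_P$; this is precisely where \eqref{poincare} re-enters, via $\|\rho\|_V\le c_P\|u_\Bfrak\|$ and $\|u_\perp\|_V\le c_P\|du\|$, together with $\|d\sigma\|\le\|\sigma\|_V$ and $\|u_\Hfrak\|,\|u_\Bfrak\|\le\|u\|_V$. Dividing the lower bound by this norm bound produces an inf--sup constant $\gamma$ of order $(1+c_P^2)^{-1}$, depending on $c_P$ alone as required.

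Finally, for the second Banach--Ne\v{c}as--Babu\v{s}ka condition (nondegeneracy in the test argument), I would exploit a structural symmetry rather than redo the construction: a direct computation shows $B(J\cdot\,;J\cdot)$ equals the transposed form $B^{\mathrm{T}}$, where $J(\sigma,u,p)=(\sigma,-u,p)$ is an isometry of $X$. Hence $\sup_{x}B(x;y)=\sup_{x'}B(Jy;x')\ge\gamma\|Jy\|_X=\gamma\|y\|_X$ for every nonzero $y$, so the transposed inf--sup holds with the same constant. This closes the verification and, through the generalized Lax--Milgram theorem, establishes well-posedness and the stated stability estimate.
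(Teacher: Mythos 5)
Your proposal is correct and takes essentially the same route as the paper: the inf-sup condition of Theorem~\ref{inf-sup-c} is established with the same test triple the paper uses (built from the Hodge decomposition $u=u_\Bfrak+u_\Hfrak+u_\perp$, the element $\rho\in\Zfrak^{(k-1)\perp}$ with $d\rho=u_\Bfrak$, and the Poincar\'e inequality \eqref{poincare}), your weighted superposition reducing to the paper's choice $\tau=\sigma-c_P^{-2}\rho$, $v=u+d\sigma+p$, $q=p-u_\Hfrak$, and then Babu\v{s}ka's theorem is applied. The only (minor, and in fact welcome) difference is that you verify the transposed inf-sup condition explicitly via the isometry $J(\sigma,u,p)=(\sigma,-u,p)$, whereas the paper treats $B$ as a symmetric form, which is accurate only up to exactly this sign change in the second argument.
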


We shall prove Theorem~\ref{wp} in Section~\ref{subsubsec:well-p}.  First, we interpret the
mixed formulation.

\subsubsection{Interpretation of the mixed formulation}
\label{subsubsec:interp}
The first equation states that $u$ belongs to the domain of
$d^*$ and $d^*u=\sigma\in V^{k-1}$.  The second equation similarly states that
$du$ belongs to the domain of $d^*$ and $d^*du=f-p-d\sigma$.  Thus $u$ belongs
to the domain $D_L$ of $L$ and solves the abstract Hodge Laplacian equation
$$
Lu=f-p.
$$ 
The harmonic form $p$ is simply the orthogonal projection $P_\Hfrak f$ of $f$ onto $\Hfrak^k$, required
for existence of a solution.  Finally the third equation fixes a particular
solution, through the condition $u\perp\Hfrak^k$.  Thus Theorem~\ref{wp}
establishes that for any $f\in W^k$ there is a unique $u\in D_L$ such
that $Lu=f-P_\Hfrak f$ and $u\perp\Hfrak^k$.  We
define $Kf=u$, so the solution operator
$K:W^k\to W^k$ is a bounded linear operator mapping into $D_L$.
The solution to the mixed formulation is
\begin{equation*}
 \sigma=d^*Kf, \quad u=Kf, \quad p=P_{\Hfrak}f.
\end{equation*}

The mixed formulation \eqref{wfhc} is also intimately connected to the
Hodge decomposition \eqref{hodgedecompW}.  Since $d\sigma\in\B^k$,
$p\in\Hfrak^k$, and $d^*du\in\Bfrak^*_k$, the expression $f=d\sigma+p+d^*du$
is precisely the Hodge decomposition of $f$.
In other words
\begin{equation*}
 P_\Bfrak = dd^*K, \quad P_{\Bfrak^*} = d^*dK,
\end{equation*}
where $P_\Bfrak$ and $P_{\Bfrak^*}$ are the $W^k$-orthogonal projections onto
$\Bfrak^k$ and $\Bfrak^*_k$, respectively.
We also note that $K$ commutes with $d$ and $d^*$ in the sense that
\begin{equation*}
dKf=Kdf, \ f\in V^k, \quad d^*Kg = Kd^*g, \ g\in V^*_k.
\end{equation*}
Indeed, if $f\in V^k$ and $u=Kf$, then $u\in D_L$, which implies that
$du\in V^{k+1}\cap V^*_{k+1}$.  Also $d^*u\in V^{k-1}$, so
$d^*du=f-P_\Hfrak f - dd^*u\in V^k$.  This shows that $du\in D_L$.
Clearly
\begin{equation*}
 Ldu=(dd^*+d^*d)du=dd^*du=d(dd^*+d^*d)u=dLu=df,
\end{equation*}
and both $du$ and $df$ are orthogonal to harmonic forms.  This establishes
that $du=Kdf$, i.e., $dKf=Kdf$.  The second equation is established similarly.

If we restrict the data $f$ in the abstract Hodge Laplacian to an element of
$\Bfrak^*_k$ or of $\Bfrak^k$, we get two other problems which are also of
great use in applications.

\paragraph{\it The $\Bfrak^*$ problem.}  If $f\in\Bfrak^*_k$, then $u=Kf\in \Bfrak^*_k$ satisfies $$
d^*d u = f, \quad u\perp \Zfrak^k,
$$
while $\sigma=d^*u=0$, $p=P_\Hfrak f=0$.   The solution $u$ can be characterized as
the unique element of $\Zfrak^{k\perp}$ such that
\begin{equation}\label{bfrak*}
 \< du,dv\>=\<f,v\>, \quad v\in\Zfrak^{k\perp}.
\end{equation}
and any solution to this problem is a solution of $Lu=f$, and so is uniquely determined.

\paragraph{\it The $\Bfrak$ problem.}  If $f\in\Bfrak^k$, then $u=Kf\in \Bfrak^k$
satisfies $dd^*u=f$, while $p=P_\Hfrak f=0$.  With $\sigma=d^*u$, the pair
$(\sigma,u)\in V^{k-1}\x \Bfrak^k$ is the unique solution of
\begin{equation}\label{bfrak}
 \<\sigma,\tau\>-\<d\tau,u\>=0, \ \tau\in V^{k-1}, \quad \<
d\sigma,v\>=\<f,v\>, \ v\in\Bfrak^k,
\end{equation}
and any solution to this problem is a solution of $Lu=f$, $\sigma=d^*u$, and so is
uniquely determined.

\subsubsection{Well-posedness of the mixed formulation}
\label{subsubsec:well-p}
We now turn to the proof of Theorem~\ref{wp}.  Let $B:X\x X\to\R$ be a
symmetric bounded bilinear form on a Hilbert space $X$ which satisfies the
inf-sup condition
$$
\gamma:=\inf_{0\ne y\in X}\sup_{0\ne x\in X}\frac{B(x,y)}{\|x\|_X\|y\|_X}>0.
$$ Then the problem of finding $x\in X$ such that $B(x,y)=F(y)$ for all $y\in
X$ is well-posed: it has a unique solution $x$ for each $F\in X^*$, and
$\|x\|_X\le \gamma^{-1}\|F\|_{X^*}$ \cite{babuska-71}.  The abstract Hodge
Laplacian problem \eqref{wfhc} is of this form, where $B:[V^{k-1} \x V^k \x
\Hfrak^k] \x [V^{k-1} \x V^k \x \Hfrak^k] \to\R$ denotes the bounded bilinear
form
\begin{equation*}
B(\sigma,u,p;\tau,v,q)= \< \sigma,\tau\> - \< d \tau,u\> + \< d \sigma,v\>
+ \< d u, d v\> + \< v,p\> - \< u,q \>,
\end{equation*}
and $F(\tau,v,p)=\< f,v\>$.

 The following theorem establishes the inf-sup
condition, and so implies Theorem~\ref{wp}.

\begin{thm}\label{inf-sup-c}
Let $(W,d)$ be a closed Hilbert complex with domain complex $(V,d)$.
There exists a constant $\gamma>0$, depending only on the constant $c_P$ in
the Poincar\'e inequality \eqref{poincare}, such that for any $(\sigma,u,p)\in
V^{k-1} \x V^k \x \Hfrak^k$, there exists $(\tau,v,q)\in V^{k-1} \x V^k \x
\Hfrak^k$ with
\begin{equation*}
B(\sigma,u,p;\tau,v,q)\ge \gamma(\|\sigma\|_{V}+\|u\|_{V}
+ \|p\|)(\|\tau\|_{V}+\|v\|_{V} + \|q\|).
\end{equation*}
\end{thm}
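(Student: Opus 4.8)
The plan is to verify the inf-sup condition directly, by exhibiting for each $(\sigma,u,p)$ an explicit test triple $(\tau,v,q)$ built from $\sigma$, $u$, $p$ together with two auxiliary elements furnished by the Hodge decomposition and the Poincar\'e inequality \eqref{poincare}. The guiding principle is that the symmetric form $B$, restricted to this nearly diagonal direction, should collapse to a sum of squares of every quantity we must control---$\|\sigma\|$, $\|d\sigma\|$, $\|du\|$, the three Hodge components of $u$, and $\|p\|$---while the indefinite cross terms either cancel by the antisymmetric structure of $B$ or can be absorbed. Establishing such a bound for a well-chosen $(\tau,v,q)$ of controlled size is precisely the inf-sup criterion recalled at the start of this subsection, and hence yields Theorem~\ref{wp}.

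Concretely, I would first use the Hodge decomposition $u=P_\Bfrak u+P_\Hfrak u+u_\perp$ of $u\in V^k$, with $u_\perp\in\Zfrak^{k\perp}$, and invoke closedness of the complex to select $\rho\in\Zfrak^{(k-1)\perp}$ with $d\rho=P_\Bfrak u$ and, by \eqref{poincare}, $\|\rho\|_V\le c_P\|P_\Bfrak u\|$. I would then set $\tau=\sigma-\eta\rho$, $v=u+d\sigma+p$, and $q=p-P_\Hfrak u$, where $\eta>0$ is a small parameter to be fixed. Using $d(d\sigma)=0$ and $dp=0$ (so that $dv=du$) together with $d\sigma\perp p$ (since $d\sigma\in\Bfrak^k$ while $p\in\Hfrak^k$), expanding $B$ and cancelling the matched antisymmetric pairings $\langle d\sigma,u\rangle$ and $\langle u,p\rangle$ would give
\begin{equation*}
B(\sigma,u,p;\tau,v,q)=\|\sigma\|^2+\|d\sigma\|^2+\|du\|^2+\|p\|^2+\|P_\Hfrak u\|^2+\eta\|P_\Bfrak u\|^2-\eta\langle\sigma,\rho\rangle .
\end{equation*}

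The main obstacle is the single surviving indefinite term $-\eta\langle\sigma,\rho\rangle$, and absorbing it is the crux of the argument. I would estimate $|\langle\sigma,\rho\rangle|\le c_P\|\sigma\|\,\|P_\Bfrak u\|$ and apply a weighted Young inequality, choosing $\eta$ of order $c_P^{-2}$ (after replacing $c_P$ by $\max(c_P,1)$, which is harmless), so that positive multiples of both $\|\sigma\|^2$ and $\|P_\Bfrak u\|^2$ survive. A second application of \eqref{poincare}, in the form $\|u_\perp\|\le c_P\|du_\perp\|=c_P\|du\|$, shows that the $\|du\|^2$ already present also controls the remaining Hodge component of $u$. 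Collecting terms then yields $B(\sigma,u,p;\tau,v,q)\ge c_1\bigl(\|\sigma\|_V^2+\|u\|_V^2+\|p\|^2\bigr)$ with $c_1$ depending only on $c_P$.

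It remains to bound the size of the test triple, which is routine. Since $d\sigma$ is a cocycle, $\|d\sigma\|_V=\|d\sigma\|\le\|\sigma\|_V$; since $p\in\Hfrak^k$, $\|p\|_V=\|p\|$; and since $\eta c_P\le1$, $\eta\|\rho\|_V\le\|P_\Bfrak u\|\le\|u\|_V$. These give $\|\tau\|_V+\|v\|_V+\|q\|\le C\bigl(\|\sigma\|_V+\|u\|_V+\|p\|\bigr)$ with $C$ an absolute constant. Combining this with the lower bound of the previous paragraph and the elementary inequality $a^2+b^2+c^2\ge\tfrac13(a+b+c)^2$ applied to $(\|\sigma\|_V,\|u\|_V,\|p\|)$ produces the claimed estimate $B(\sigma,u,p;\tau,v,q)\ge\gamma\,(\|\sigma\|_V+\|u\|_V+\|p\|)(\|\tau\|_V+\|v\|_V+\|q\|)$ with $\gamma$ a fixed multiple of $c_1/C$, depending only on $c_P$.
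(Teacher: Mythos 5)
Your proposal is correct and follows essentially the same route as the paper's proof: the same test triple $\tau=\sigma-c_P^{-2}\rho$, $v=u+d\sigma+p$, $q=p-P_\Hfrak u$ (your parameter $\eta$ is fixed to $c_P^{-2}$ in the paper), the same Hodge decomposition of $u$ with $\rho$ supplied by closedness and the Poincar\'e inequality, the same cancellation of the antisymmetric pairings, the same Young-inequality absorption of $-\eta\langle\sigma,\rho\rangle$, and the same second use of \eqref{poincare} to control $u_\perp$ by $\|du\|$. The concluding combination with the upper bound on $\|\tau\|_V+\|v\|_V+\|q\|$ is likewise the paper's final step.
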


\begin{proof}
By the Hodge decomposition, $u= u_\Bfrak + u_{\Hfrak} + u_\perp$, where
$u_\Bfrak = P_\Bfrak u$, $u_{\Hfrak} = P_\Hfrak u$, and 
$u_\perp = P_{\Bfrak^{*}}u$.
Since $u_\Bfrak \in \Bfrak^k$, $u_\Bfrak =d \rho$, for some $\rho \in \Zfrak^{k-1
\perp}$.  Since $d u_\perp =d u$, we get using \eqref{poincare} that
\begin{equation}
\label{poincare-stab}
\|\rho\|_V \le c_P  \|u_\Bfrak\|, \qquad \|u_\perp\|_V \le c_P \|d u\|,
\end{equation}
where $c_P\ge 1$ is the constant in Poincare\'s inequality. 
Let
\begin{equation}\label{tfdef}
\tau=\sigma-\frac{1}{c_P^2}\rho\in V^{k-1},\quad
v=u+d\sigma+p\in V^k,\quad q = p - u_{\Hfrak} \in \Hfrak^k.
\end{equation}
From \eqref{poincare-stab} and the orthogonality of the Hodge
decomposition, we have
\begin{equation}\label{ub}
\|\tau\|_{V}+\|v\|_{V} + \|q\| \le C (\|\sigma\|_{V}+\|u\|_{V} + \|p\|).
\end{equation}
We also get, from a simple
computation using \eqref{poincare-stab} and \eqref{tfdef}, that
\begin{align*}
&B(\sigma,u,p;\tau,v,q)
\\*
&=\|\sigma\|^2+\|d \sigma\|^2
+\|d u\|^2+ \|p\|^2 + \|u_{\Hfrak}\|^2 + \frac1{c_P^2} \|u_\Bfrak\|^2
-\frac1{c_P^2} \<\sigma,\rho\>
\\
&\ge \frac{1}{2}\|\sigma\|^2+\|d \sigma\|^2
+\|d u\|^2 + \|p\|^2 + \|u_{\Hfrak}\|^2
 + \frac1{c_P^2} \|u_\Bfrak\|^2 - \frac1{2c_P^4}\|\rho\|^2
\\
&\ge \frac{1}{2}\|\sigma\|^2+\|d \sigma\|^2
+\|d u\|^2 + \|p\|^2 + \|u_{\Hfrak}\|^2 + 
\frac1{2c_P^2}\|u_\Bfrak\|^2
\\
&\ge \frac{1}{2}\|\sigma\|^2+\|d \sigma\|^2
+\frac{1}{2}\|d u\|^2 + \|p\|^2 + \|u_{\Hfrak}\|^2 
+ \frac{1}{2 c_P^2} \|u_\Bfrak\|^2
+ \frac{1}{2 c_P^2} \|u_\perp\|^2
\\*
&\ge \frac{1}{2c_P^2}(\|\sigma\|_V^2+ \|u\|_V^2
+ \|p\|^2).
\end{align*}
The theorem easily follows from this bound and \eqref{ub}.
\end{proof}

We close this section with two remarks.  First we note that in fact
Theorem~\ref{inf-sup-c} establishes more than the well-posedness of the
problem \eqref{wfhc} stated in Theorem~\ref{wp}.  It establishes that, for any
$G\in (V^{k-1})^*$, $F\in (V^k)^*$, and $R\in (\Hfrak^k)^*$ (these are the
dual spaces furnished with the dual norms), there exists a unique
$(\sigma,u,p)\in V^{k-1}\x V^k \x \Hfrak^k$ satisfying
\begin{equation*}
\begin{aligned}
\<\sigma,\tau\> - \<
d \tau,u\> &=G(\tau),   &&\tau\in V^{k-1},
\\*
\< d \sigma,v\>
+ \<
d  u,d  v\> + \< v, p \> 
&= F(v),
&&v\in V^k,
\\*
\< u, q \> &= R(q), 
&&q\in \Hfrak^k,
\end{aligned}
\end{equation*}
and moreover the correspondence $(\sigma,u,p)\leftrightarrow(F,G,R)$ is an
isomorphism of $V^{k-1}\x V^k \x \Hfrak^k$ onto its dual space.

Second, we note that the above result bears some relation to a fundamental
result in the theory of mixed finite element methods, due to Brezzi
\cite{Brezzi}, which we state here.
\begin{thm}\label{bt}
Let $X$ and $Y$ be Hilbert spaces and $a:X\x X\to \R$, $b:X\x Y\to \R$ bounded
bilinear forms.  Let $Z=\{\,x\in X\,|\, b(x,y)=0\ \forall y\in Y\,\}$, and
suppose that there exists positive constants $\alpha$ and $\gamma$ such that
\begin{enumerate}
\item (coercivity in the kernel) $a(z,z)\ge \alpha \|z\|_X^2,\quad z\in Z$,
\item (inf-sup condition) $\displaystyle \inf_{0\ne y\in Y}\sup_{0\ne x\in X}
\frac{b(x,y)}{\|x\|_X\|y\|_Y} \ge \gamma$.
\end{enumerate}
Then, for all $G\in X^*$, $F\in Y^*$, there exists a unique $u\in X$, $v\in Y$
such that
\begin{equation}\label{mixed}
\begin{aligned}
a(u,x) + b(x,v) &= G(x), \quad x\in X,
\\
b(u,y) &= F(y), \quad y\in Y.
\end{aligned}
\end{equation}
Moreover $\|u\|_X+\|v\|_Y\le c(\|G\|_{X^*} + \|F\|_{Y^*})$ with the constant
$c$ depending only on $\alpha$, $\gamma$, and the norms of the bilinear forms
$a$ and $b$.
\end{thm}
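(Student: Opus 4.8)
The plan is to follow the classical constructive argument for Brezzi's theorem, reducing the coupled system to a coercive problem on the kernel $Z$ together with a lifting supplied by the inf-sup condition. Throughout, I introduce the operators $A\colon X\to X^*$ and $B\colon X\to Y^*$ defined by $\<Au,x\>=a(u,x)$ and $\<Bu,y\>=b(u,y)$, with adjoint $B^*\colon Y\to X^*$ given by $\<B^*v,x\>=b(x,v)$, so that \eqref{mixed} reads $Au+B^*v=G$, $Bu=F$.

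First I would extract the functional-analytic content of hypothesis (ii). Since $\sup_{0\ne x}b(x,v)/\|x\|_X=\|B^*v\|_{X^*}$, the inf-sup condition states exactly that $B^*$ is bounded below, $\|B^*v\|_{X^*}\ge\gamma\|v\|_Y$, so $B^*$ is injective with closed range. By the closed range theorem (as recalled in Section~\ref{subsubsec:closed}), $B$ is therefore surjective onto $Y^*$, its kernel is $Z$, and its restriction to $Z^\perp$ is an isomorphism onto $Y^*$ whose inverse is bounded by $\gamma^{-1}$. Equivalently, there is a bounded lifting $L\colon Y^*\to Z^\perp$ with $BL=\id$ and $\|L\|\le\gamma^{-1}$. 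This is the step where I expect to do the most careful work, since it is the only place the closed range theorem and the duality between the two equivalent forms of the inf-sup condition enter; everything afterward is bookkeeping.

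With the lifting in hand, I would set $u_0=LF\in Z^\perp$, so that $Bu_0=F$ and $\|u_0\|_X\le\gamma^{-1}\|F\|_{Y^*}$, and seek $u=u_0+w$ with $w\in Z$ (forced by the constraint $Bu=F$). Testing the first equation against $z\in Z$ annihilates the term $\<B^*v,z\>=b(z,v)=0$, leaving the reduced problem $a(w,z)=\<G,z\>-a(u_0,z)$ for all $z\in Z$. Coercivity of $a$ on $Z$ (hypothesis (i)) then lets me invoke Lax--Milgram on the Hilbert space $Z$ to obtain a unique $w$ satisfying $\|w\|_X\le\alpha^{-1}(\|G\|_{X^*}+\|a\|\,\|u_0\|_X)$, which determines $u=u_0+w$.

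Finally I would recover $v$. The reduced problem just solved says precisely that $G-Au$ annihilates $Z$, i.e.\ $G-Au\in Z^\circ$, which by the closed range theorem coincides with the (closed) range of $B^*$; hence there is a unique $v\in Y$ with $B^*v=G-Au$, uniqueness coming from injectivity of $B^*$. The lower bound on $B^*$ gives $\|v\|_Y\le\gamma^{-1}\|G-Au\|_{X^*}\le\gamma^{-1}(\|G\|_{X^*}+\|a\|\,\|u\|_X)$. Assembling the three estimates yields the asserted bound with a constant depending only on $\alpha$, $\gamma$, $\|a\|$, and $\|b\|$. Uniqueness for the full system is immediate by taking $G=F=0$: then $u\in Z$, testing the first equation with $z=u$ gives $a(u,u)=0$, so $u=0$ by coercivity, whereupon $B^*v=0$ forces $v=0$.
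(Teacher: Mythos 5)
Your argument is correct, but it takes a genuinely different route from the paper. You give the classical constructive proof of Brezzi's theorem: translate \eqref{mixed} into operator form, use the inf-sup condition and the closed range theorem to show that $B|_{Z^\perp}$ is an isomorphism onto $Y^*$ with inverse bounded by $\gamma^{-1}$ (one half-line worth adding: this uses that $\sup_{0\ne x\in X}b(x,y)/\|x\|_X=\sup_{0\ne x\in Z^\perp}b(x,y)/\|x\|_X$, which holds because $b(\cdot,y)$ vanishes on $Z$), lift $F$ to $u_0\in Z^\perp$, solve the reduced problem on $Z$ by Lax--Milgram, and recover $v$ from the identity $Z^\circ=\operatorname{range}(B^*)$. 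The paper, by contrast, does not reprove Brezzi's theorem from scratch: it cites Brezzi's original paper and then shows that, \emph{under the additional assumption that $a$ is symmetric with $a(x,x)>0$ for all $0\ne x\in X$}, the theorem is a special case of its own Theorem~\ref{inf-sup-c}. It takes $W^0$ to be the completion of $X$ in the inner product given by $a$, sets $W^1=Y$, and defines a closed densely defined operator $d$ from $W^0$ to $W^1$ with domain $X$ by $\langle dx,y\rangle_Y=b(x,y)$; the inf-sup condition makes the range of $d$ closed, so this two-space Hilbert complex is closed, \eqref{mixed} becomes the mixed formulation of the associated abstract Hodge Laplacian, and Theorem~\ref{inf-sup-c} together with the Poincar\'e inequality and the splitting $X=Z+Z^\perp$ yields the stated estimate. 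The trade-off is this: the paper's route exhibits Brezzi's theorem as an instance of the Hilbert complex framework that organizes the whole paper, but it only covers symmetric, positive $a$ --- the general case is explicitly deferred to ``a slight generalization of Theorem~\ref{inf-sup-c}'' that is not carried out --- whereas your Lax--Milgram-based argument needs no symmetry of $a$ and proves the theorem in exactly the stated generality, at the cost of being a free-standing argument disconnected from the abstract machinery of Section~\ref{sec:HCA}.
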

If we make the additional assumption (usually satisfied in applications of
this theorem), that the bilinear form $a$ is symmetric and satisfies
$a(x,x)>0$ for all $0\ne x\in X$, then this theorem can be viewed as a special
case of Theorem~\ref{inf-sup-c}.  In fact, we define $W^0$ as the completion
of $X$ in the inner product given by $a$, let $W^1=Y$, and define $d$ as the
closed linear operator from $W^0$ to $W^1$ with domain $X$ given by
\begin{equation*}
\<dx,y\>_Y = b(x,y), \quad x\in X, y\in Y.
\end{equation*}
In this way we obtain a Hilbert complex (with just two spaces $W^0$ and $W^1$).
The inf-sup condition of Theorem~\ref{bt} implies that $d$ has closed
range, so it is a closed Hilbert complex, and so Poincar\'e's inequality holds.
The associated abstract Hodge Laplacian is just the system \eqref{mixed},
and from Theorem~\ref{inf-sup-c} and the first remark above, we have
\begin{equation*}
\sqrt{a(u,u)}+\|du\|_Y+\|v\|_Y \le c (\|G\|_{X^*} + \|F\|_{Y^*}).
\end{equation*}
But, using the coercivity in the kernel, the decomposition $X=Z+Z^\perp$, and
Poincar\'e's inequality, we get $\|u\|_X\le C[\sqrt{a(u,u)}+\|du\|_Y]$, which gives the
estimate from Brezzi's theorem.  Finally, we mention that we could dispense
with the extra assumption about the symmetry and positivity of the bilinear
form $a$, but this would require a slight generalization of
Theorem~\ref{inf-sup-c} which we do not consider here.

\subsection{Approximation of Hilbert complexes}
\label{subsec:approxhc}
The remainder of this section will be devoted to approximation of
quantities associated to a Hilbert complex, such as the cohomology spaces, harmonic
forms, and solutions to the Hodge Laplacian, by quantities associated to
a subcomplex.

Let $(W,d)$ be a Hilbert complex with domain complex $(V,d)$, and suppose
we choose a finite-dimensional subspace $V^k_h$ of $V^k$ for each $k$.
We assume that $dV^k_h\subset V^{k+1}_h$ so that $(V_h,d)$ is a subcomplex
of $(V,d)$.  We also take $W^k_h$ to be the same subspace $V^k_h$ but endowed
with the norm of $W^k$.  In this way we obtain a closed (even bounded) Hilbert complex
$(W_h,d)$ with domain complex $(V_h,d)$, and all the results of
Sections~\ref{subsubsec:hilbert-complexes} and \ref{subsec:abstract-hl} apply to this
subcomplex.  Although the differential for the subcomplex is just the restriction
of $d$, and so does not need a new notation, its adjoint $d^*_h:V^{k+1}_h\to V^k_h$,
defined by
\begin{equation*}
 \< d^*_h u,v\>=\<u,dv\>,\quad u\in V^{k+1}_h,v\in V^k_h,
\end{equation*}
is not the restriction of $d^*$.  We use the notations
$\Bfrak_h$, $\Zfrak_h$, $\Bfrak^*_h=\Zfrak^\perp_h$, $\Hfrak_h$, $K_h$,
with the obvious meanings.  We use the term discrete when we wish to emphasize quantities
associated to the subcomplex.  For example, we refer to
$\Hfrak^k_h$ as the space of discrete harmonic $k$-forms, and the discrete Hodge decomposition
is
$$
V^k_h=\Bfrak^k_h\oplus\Hfrak^k_h\oplus\Zfrak^{k\perp}_h.
$$
The $W^k$-projections $P_{\Bfrak_h}:W^k\to\Bfrak^k_h$, $P_{\Bfrak^*_h}:W^k\to\Zfrak^{k\perp}_h$
satisfy $P_{\Bfrak_h}=dd^*_hK_h$
and $P_{\Bfrak^*_h}=d^*_hdK_h$, respectively, when restricted to $V^k_h$.
We also have that $K_h$ commutes with both $d$ and $d^*_h$.
Note that $\Bfrak^k_h\subset\Bfrak^k$ and $\Zfrak^k_h\subset\Zfrak^k$,
but in general $\Hfrak^k_h$ is not contained in $\Hfrak^k$,
nor is $\Zfrak^{k\perp}_h$ contained in $\Zfrak^{k\perp}$.

In order that the subspaces $V^k_h$ can be used effectively to approximate quantities associated
to the original complex, we require not only that they form a subcomplex, but of course
need to know something about the \emph{approximation} of $V^k$ by $V^k_h$, i.e., an assumption that
$\inf_{v\in V^k_h}\|u-v\|_V$ is sufficiently small for some or all $u\in
V^k$.  A third assumption, which plays
an essential role in our analysis, is that there exists a \emph{bounded
cochain projection} $\pi_h$ from the complex $(V,d)$ to the subcomplex $(V_h,d)$.
Explicitly, for each $k$, $\pi^k_h$ maps $V^k$ to  $V^k_h$, leaves the subspace invariant,
satisfies $d^k\pi^{k}_h=\pi^{k+1}d^k$, and there exists a constant
$c$ such that $\|\pi^k_hv\|_V\le c\|v\|_V$ for all $v\in V^k$.
In other words, we have the following commuting diagram relating the
complex $(V,d)$ to the subcomplex $(V_h,d)$:
\[
\begin{CD}
0\to\,@. V^0 @>\D>> V^1 @>\D>>
\cdots @>\D>> V^n @.\,\to0\\
 @. @VV\pi_h V @VV\pi_hV @.  @VV\pi_hV\\
0\to\,@. V^0_h @>\D>> 
V^1_h  @>\D>>
\cdots @>\D>> V^n_h @.\,\to 0.
\end{CD}
\] 
Note that a bounded projection gives quasioptimal approximation:
\begin{equation*}
 \|u-\pi_h u\|_V = \inf_{v\in V^k_h}\|(I-\pi_h)(u-v)\|_V \le c \inf_{v\in V^k_h}\|u-v\|_V.
\end{equation*}

We now present two results indicating
that, under these assumptions,
the space $\Hfrak^k_h$ of discrete harmonic forms provides a faithful
approximation of $\Hfrak^k$.
In the first result we show that
a bounded cochain projection into a subcomplex of a bounded closed Hilbert complex which
satisfies a rather weak approximability assumption (namely \eqref{pi-bnd}
below), induces, not only a surjection, but an \emph{isomorphism on
cohomology}.

\begin{thm}
\label{isom-cohom}
Let $(V,d)$ be a bounded closed Hilbert complex, $(V_h,d)$ a Hilbert subcomplex, and $\pi_h$
a bounded cochain projection.  Suppose that for all $k$
\begin{equation}\label{pi-bnd}
\|q-\pi_h^k q\|_V < \|q\|_V, \quad 0\ne q\in\Hfrak^k.
\end{equation}
Then the induced map on cohomology is an isomorphism.
\end{thm}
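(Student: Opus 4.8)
The plan is to prove that the map on cohomology induced by the cochain projection, call it $\bar\pi_h$, is bijective. One direction is already available: from the discussion of cochain projections in Section~\ref{subsubsec:complexes}, the relation $\bar\pi_h\circ\bar i_h=\id$ (with $\bar i_h$ induced by the inclusion) shows that $\bar\pi_h$ is surjective and $\bar i_h$ injective. Hence everything reduces to showing that $\bar\pi_h$ is \emph{injective}; this done, $\bar\pi_h$ is an isomorphism with inverse $\bar i_h$.

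To study injectivity I would replace cohomology classes by their harmonic representatives. Since $(V,d)$ is closed, $\Zfrak^k/\Bfrak^k\cong\Hfrak^k$, and similarly $\Zfrak^k_h/\Bfrak^k_h\cong\Hfrak^k_h$. A class is represented by a unique $q\in\Hfrak^k$, and since $\pi_h$ is a cochain map we have $d\pi_h^kq=\pi_h^{k+1}dq=0$, so $\pi_h^kq\in\Zfrak^k_h$ represents $\bar\pi_h[q]$. Injectivity of $\bar\pi_h$ is therefore the implication: if $q\in\Hfrak^k$ and $\pi_h^kq\in\Bfrak^k_h$, then $q=0$.

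The heart of the argument is a single orthogonality estimate linking the hypothesis \eqref{pi-bnd} to the defining property of harmonic forms. Suppose $0\ne q\in\Hfrak^k$ with $\pi_h^kq\in\Bfrak^k_h$. Since $\Bfrak^k_h\subset\Bfrak^k$, the element $\pi_h^kq$ is a coboundary of the full complex, and harmonic forms are orthogonal to $\Bfrak^k$, so $\<q,\pi_h^kq\>_V=0$. By the Pythagorean identity,
\begin{equation*}
\|q-\pi_h^kq\|_V^2=\|q\|_V^2+\|\pi_h^kq\|_V^2\ge\|q\|_V^2,
\end{equation*}
which contradicts \eqref{pi-bnd} unless $q=0$. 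This yields injectivity and finishes the proof.

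The main point to get right, and the one carrying all the content, is the recognition that the approximability hypothesis \eqref{pi-bnd}, a strict inequality in the $V$-norm, is precisely incompatible with the Pythagorean lower bound forced whenever $\pi_h^kq$ lies in the coboundary space. The two details requiring care are that $\pi_h^kq$ really belongs to $\Bfrak^k_h\subset\Bfrak^k$ (which uses that $\pi_h$ is a cochain projection, hence maps cocycles to cocycles and fixes the subcomplex) and that the harmonic--coboundary orthogonality may be taken in the $V$-inner product; the latter holds because every coboundary is annihilated by $d$, so its $V$- and $W$-inner products with $q$ coincide.
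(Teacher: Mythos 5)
Your proof is correct and follows essentially the same route as the paper's: reduce to injectivity via the known surjectivity of $\bar\pi_h$, use the hypothesis that $\pi_h$ is a cochain map to place $\pi_h^k q$ in $\Bfrak_h^k\subset\Bfrak^k$, and derive a contradiction with \eqref{pi-bnd} from harmonic--coboundary orthogonality and the Pythagorean identity. The only cosmetic difference is that you pass immediately to the unique harmonic representative of a class, whereas the paper Hodge-decomposes an arbitrary cocycle $z=q+b$ and applies the same argument to $q$; your careful justification that the orthogonality holds in the $V$-inner product (because both forms are cocycles) is a detail the paper leaves implicit.
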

\begin{proof}
We already know that the induced map is a surjection, so it is sufficient to
prove that it is an injection.  Thus, given $z\in\Zfrak^k$ with $\pi_h
z\in\Bfrak_h^k$, we must prove that $z\in\Bfrak^k$.  By the Hodge
decomposition, $z=q+b$ with $q\in\Hfrak^k$ and $b\in\Bfrak^k$.  We have that
$\pi_hz\in\Bfrak_h^k$ by assumption and $\pi_hb\in \Bfrak_h^k$ since
$b\in\Bfrak^k$ and $\pi_h$ is a cochain map.  Thus
$\pi_hq=\pi_hz-\pi_hb\in \Bfrak_h^k\subset\Bfrak^k$, and so
$\pi_hq\perp q$.  In view of \eqref{pi-bnd}, this implies that $q=0$, and so
$z\in\Bfrak^k$ as desired.
\end{proof}

\begin{remark}  In applications, the space of harmonic forms, 
$\Hfrak^k$, is a finite-dimensional space of smooth functions, and $\pi_h$
is a projection operator associated to a triangulation with mesh size $h$.
The estimate \eqref{pi-bnd} will then be satisfied for $h$ sufficiently small.
However in the most important application, in which $(V,d)$ is the de~Rham complex
and $(V_h,d)$ is a finite element discretization, $\pi_h$ induces an isomorphism on
cohomology not only for $h$ sufficiently small, but in fact for all $h$.  See Section~\ref{subsec:ahl}.
\end{remark}

The second result relating $\Hfrak^k$ and $\Hfrak^k_h$ is quantitative in nature,
bounding the distance, or \emph{gap}, between these two spaces.
Recall that the gap between two subspaces $E$ and $F$ of
a Hilbert space $V$ is defined \cite[Chapter IV, \S~2.1]{kato}
\begin{equation}
\label{gapdef}
\gap (E,F) = \max\biggl(\sup_{\substack{u \in E\\ \| u \| = 1}}
   \inf_{v \in F} \| u - v \|_V, 
\sup_{\substack{v \in F\\  \| v\| = 1}} \inf_{u \in E} \| u - v \|_V \biggr).
\end{equation}
\begin{thm}
\label{gap}
Let $(V,d)$ be a bounded closed Hilbert complex, $(V_h,d)$ a Hilbert subcomplex, and $\pi_h$
a bounded cochain projection.  Then
\begin{gather}
\label{gap1}
 \|(I-P_{\Hfrak_h})q\|_V\le \|(I-\pi_h^k)q\|_V, \quad q\in\Hfrak^k,
\\
\label{gap2}
 \|(I-P_{\Hfrak})q\|_V\le \|(I-\pi_h^k)P_{\Hfrak}q\|_V, \quad q\in\Hfrak^k_h,
\\
\label{gap0}
\gap\bigl(\Hfrak^k,\Hfrak_h^k\bigr) \le\sup_{\substack{q \in\Hfrak^k \\ \| q\|
= 1}}\|(I-\pi_h^k)q\|_V.
\end{gather}
\end{thm}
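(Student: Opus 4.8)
The plan is to prove the two pointwise estimates \eqref{gap1} and \eqref{gap2} and then obtain \eqref{gap0} by taking suprema in the definition \eqref{gapdef}. The organizing observation I would use throughout is that all four spaces in play---$\Hfrak^k$, $\Hfrak^k_h$, $\Bfrak^k$, $\Bfrak^k_h$---sit inside the cocycle space $\Zfrak^k$, on which $d$ vanishes and hence the $V$- and $W$-inner products agree. Thus, although $P_\Hfrak$ and $P_{\Hfrak_h}$ are defined through the $W$-inner product, they also furnish $V$-best approximations onto $\Hfrak^k$ and $\Hfrak^k_h$ when applied to cocycles, and the orthogonality relations coming from the continuous and discrete Hodge decompositions remain valid in the $V$-inner product. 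This frees me to compute with whichever inner product is convenient.

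For \eqref{gap1} I would fix $q\in\Hfrak^k$ and note that $\pi_h q$ is a discrete cocycle, since $d\pi_h q=\pi_h dq=0$; its discrete Hodge decomposition is therefore $\pi_h q=b_h+h_h$ with $b_h=P_{\Bfrak_h}\pi_h q\in\Bfrak^k_h$ and $h_h=P_{\Hfrak_h}\pi_h q\in\Hfrak^k_h$. The point is the orthogonality $q-h_h\perp b_h$, which holds because $q$ is harmonic (so $q\perp\Bfrak^k\supset\Bfrak^k_h$) and $h_h$ is discrete harmonic (so $h_h\perp\Bfrak^k_h$). Writing $q-\pi_h q=(q-h_h)-b_h$ and applying Pythagoras gives $\|q-\pi_h q\|_V^2=\|q-h_h\|_V^2+\|b_h\|_V^2\ge\|q-h_h\|_V^2$. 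Since $h_h\in\Hfrak^k_h$ while $P_{\Hfrak_h}q$ realizes the $V$-best approximation of $q$ from $\Hfrak^k_h$, the left side of \eqref{gap1} is bounded by $\|q-h_h\|_V$, and \eqref{gap1} follows.

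The estimate \eqref{gap2} is the heart of the matter. I would fix $q\in\Hfrak^k_h$ and put $p=P_\Hfrak q$; since $q$ is a cocycle its continuous Hodge decomposition has trivial $\Zfrak^{k\perp}$ part, so $b:=q-p=P_\Bfrak q\in\Bfrak^k$. First I rewrite the right side: because $q\in V^k_h$ we have $\pi_h q=q$, and applying $\pi_h$ to $q=p+b$ yields $p-\pi_h p=\pi_h b-b=-(I-\pi_h)b$, whence $\|(I-\pi_h)P_\Hfrak q\|_V=\|(I-\pi_h)b\|_V$. The decisive step is then the orthogonality $b\perp_V\pi_h b$: indeed $\pi_h b\in\Bfrak^k_h$ since $b\in\Bfrak^k$ and $\pi_h$ is a cochain map, while $b=q-p$ is orthogonal to $\Bfrak^k_h$ because $q\in\Hfrak^k_h$ is orthogonal to $\Bfrak^k_h$ and $p\in\Hfrak^k$ is orthogonal to $\Bfrak^k\supset\Bfrak^k_h$. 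Pythagoras now gives $\|(I-\pi_h)b\|_V^2=\|b\|_V^2+\|\pi_h b\|_V^2\ge\|b\|_V^2$, and since $\|(I-P_\Hfrak)q\|_V=\|b\|_V$ this is precisely \eqref{gap2}.

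Finally I would derive \eqref{gap0} from \eqref{gapdef}. The first sup--inf defining $\gap(\Hfrak^k,\Hfrak^k_h)$ equals $\sup_{\|q\|=1,\,q\in\Hfrak^k}\|(I-P_{\Hfrak_h})q\|_V$, which \eqref{gap1} bounds by the right side of \eqref{gap0}. For the second sup--inf, \eqref{gap2} bounds $\|(I-P_\Hfrak)q\|_V$ for $q\in\Hfrak^k_h$ by $\|(I-\pi_h)P_\Hfrak q\|_V$; as $P_\Hfrak q\in\Hfrak^k$ with $\|P_\Hfrak q\|\le\|q\|=1$, homogeneity makes this at most $\sup_{\|q\|=1,\,q\in\Hfrak^k}\|(I-\pi_h)q\|_V$ as well, and taking the maximum yields \eqref{gap0}. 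I expect the main obstacle to be \eqref{gap2}: one has to see that the right quantity is the coboundary part $b$ of $q$, reduce the right-hand side to $\|(I-\pi_h)b\|_V$ via $\pi_h q=q$, and then locate the orthogonality $b\perp_V\pi_h b$, which rests on $\pi_h$ mapping $\Bfrak^k$ into $\Bfrak^k_h$ together with both harmonic spaces being orthogonal to $\Bfrak^k_h$. The secondary care needed everywhere is the $V$- versus $W$-inner product bookkeeping, which is legitimate exactly because every vector in sight is a cocycle.
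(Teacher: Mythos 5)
Your proposal is correct and follows essentially the same route as the paper's proof: both arguments rest on the facts that $\pi_h$ maps $\Zfrak^k$ into $\Zfrak^k_h$ and $\Bfrak^k$ into $\Bfrak^k_h$, that $q\perp\Bfrak^k_h$ for harmonic and discrete harmonic $q$, and then a Pythagoras/best-approximation step, with \eqref{gap2} in particular using exactly the paper's orthogonality $b\perp\pi_h b$ for $b=q-P_\Hfrak q$ together with $\pi_h q=q$. Your explicit bookkeeping that all quantities are cocycles, on which the $V$- and $W$-inner products coincide, is a point the paper leaves implicit but is the same argument.
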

\begin{proof}
Given $q\in\Hfrak^k$, $P_{\Hfrak_h}q=P_{\Zfrak_h}q$, since $\Zfrak_h^k= \Hfrak_h^k\oplus\Bfrak_h^k$ and
$q\perp\Bfrak^k\supset\Bfrak_h^k$.  Also $\pi_h^k q\in\Zfrak_h^k$, since
$\pi_h$ is a cochain map.  This implies \eqref{gap1}.

If $q\in\Hfrak_h^k\subset\Zfrak_h^k\subset\Zfrak^k$, the Hodge
decomposition gives us $q-P_{\Hfrak}q\in \Bfrak^k$, so $\pi^k_h(q-P_{\Hfrak}q)\in
\Bfrak^k_h$, and so is orthogonal to both the discrete harmonic
form $q$ and the harmonic form $P_{\Hfrak}q$.  Therefore
$$
\|q-P_{\Hfrak}q\|_V\le \|q-P_{\Hfrak}q - \pi_h(q-P_{\Hfrak}q)\|_V
=\|(I-\pi^k_h)P_{\Hfrak}q\|_V.
$$
Finally \eqref{gap0} is an immediate consequence of \eqref{gap1} and \eqref{gap2}.
\end{proof}

Next we deduce another important property of a Hilbert
subcomplex with a bounded cochain projection.
Since the complex $(V,d)$ is closed and bounded, the Poincar\'e inequality
\eqref{poincare} holds (with the $W$ and $V$ norms coinciding).
Now we obtain the Poincar\'e
inequality for the subcomplex with a constant that depends only on the Poincar\'e constant for
the supercomplex and the norm of the cochain projection.  In the
applications, we will have a sequence of such subcomplexes related to a
decreasing mesh size parameter and this theorem will imply that the discrete
Poincar\'e inequality is uniform with respect to the mesh parameter, an
essential step in proving stability for numerical methods.
\begin{thm}\label{Poincare_discrete}
Let $(V,d)$ be a bounded closed Hilbert complex, $(V_h,d)$ a Hilbert
subcomplex, and $\pi_h^k$ a bounded cochain projection.  Then
\[
\|v\|_V \le c_P\|\pi_h^k\|\|dv\|_V, \quad v\in\Zfrak_h^{k\perp},
\]
where $c_P$ is the constant appearing in the Poincar\'e inequality
\eqref{poincare} and $\|\pi_h^k\|$ denotes the $V^k$ operator norm of
$\pi_h^k$.
\end{thm}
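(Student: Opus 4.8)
The plan is to reduce this discrete Poincar\'e inequality to the continuous one \eqref{poincare} by using the bounded cochain projection to pull a continuous preimage of $dv$ back into the subcomplex.

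First I would produce a good continuous preimage. Given $v\in\Zfrak_h^{k\perp}$, note that $dv$ lies in $\Bfrak^{k+1}$, the range of $d^k$ on the full complex. Since $(W,d)$ is closed, $d^k$ restricts to a bounded bijection of $\Zfrak^{k\perp}$ onto $\Bfrak^{k+1}$, so there is a unique $w\in\Zfrak^{k\perp}$ with $dw=dv$, and \eqref{poincare} gives $\|w\|_V\le c_P\|dw\|_W=c_P\|dv\|_V$; here I use that $dv$ is a cocycle (because $d\circ d=0$), so that its $W$- and $V$-norms coincide.

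Next I would show that $v-\pi_h^k w$ is a discrete cocycle. Since $d(v-w)=0$, the element $v-w$ is a cocycle, and because $\pi_h$ is a cochain map fixing $V_h^k$ pointwise we have $v-\pi_h^k w=\pi_h^k(v-w)$, whose image under $d$ is $d^k\pi_h^k(v-w)=\pi_h^{k+1}d(v-w)=0$; thus $v-\pi_h^k w\in\Zfrak_h^k$.

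The decisive step is the orthogonality argument, and this is also where I expect the only real subtlety. The space $\Zfrak_h^{k\perp}$ is defined through the $W$-inner product, so a priori one would test against $\Zfrak_h^k$ in the $W$-inner product; but $\<z,x\>_V=\<z,x\>_W$ whenever $z$ is a cocycle, so the $V$- and $W$-orthogonal complements of $\Zfrak_h^k$ in $V_h^k$ coincide, and $v$ is in fact $V$-orthogonal to $\Zfrak_h^k$. Using $v-\pi_h^k w\in\Zfrak_h^k$ this gives $\|v\|_V^2=\<v,\pi_h^k w\>_V\le\|v\|_V\,\|\pi_h^k w\|_V$, whence $\|v\|_V\le\|\pi_h^k w\|_V\le\|\pi_h^k\|\,\|w\|_V\le c_P\|\pi_h^k\|\,\|dv\|_V$, as claimed. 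Exploiting the $V$- rather than the $W$-orthogonality is precisely what produces the clean constant $c_P\|\pi_h^k\|$: a $W$-based estimate would only control $\|v\|_W$ and then reintroduce a $\|dv\|_V$ contribution through the graph norm, degrading the constant.
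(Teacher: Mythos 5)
Your proposal is correct and follows essentially the same route as the paper's own proof: produce the continuous preimage $w\in\Zfrak^{k\perp}$ with $dw=dv$, apply the continuous Poincar\'e inequality to $w$, observe that $v-\pi_h^k w\in\Zfrak_h^k$, and conclude by orthogonality that $\|v\|_V\le\|\pi_h^k w\|_V$. Your extra remarks (that the $V$- and $W$-orthogonal complements of $\Zfrak_h^k$ coincide because discrete cocycles pair identically in either inner product, and that $v-\pi_h^k w=\pi_h^k(v-w)$) are just careful spellings-out of steps the paper leaves implicit.
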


\begin{proof}
Given $v \in \Zfrak^{k\perp}_{h}$, define $z \in \Zfrak^{k\perp} \subset V^k$
by $dz=dv$.  By \eqref{poincare}, $\|z\|\le c_P\|dv\|$, so it is enough to
show that $\|v\|_V \le \|\pi_h\|\|z\|_V$.  Now, $v-\pi_h z\in V^k_h$ and
$d(v-\pi_h z)=0$, so $v-\pi_h z\in\Zfrak^k_h$.  Therefore
\begin{equation*}
\|v\|_V^2=\< v,\pi_h z\>_V +\< v,v-\pi_h z\>_V
=\< v,\pi_h  z\>_V \le \|v\|_V\|\pi_h z\|_V,
\end{equation*}
and the result follows.
\end{proof}

We have established several important properties possessed by a subcomplex
of a bounded closed Hilbert complex with bounded cochain projection. 
We also remark that from \eqref{gap2} and the triangle inequality, we have
\begin{equation}\label{phinv}
 \|q\|_V \le c\| P_{\Hfrak}q\|_V, \quad q\in\Hfrak_h^k.
\end{equation}
We close this section
by presenting a converse result.  Namely we show that if the discrete Poincar\'e inequality 
and the bound \eqref{phinv} hold, then a bounded cochain projection exists.
\begin{thm}\label{converse}
 Let $(V,d)$ be a bounded closed Hilbert complex and $(V_h,d)$ a
subcomplex.  Assume that
\begin{equation*}
\|v\|_V \le c_1 \|dv\|_V, \quad v\in\Zfrak_h^{k\perp}, \quad
\text{and}\quad \|q\|_V \le c_2 \| P_{\Hfrak}q\|_V, \quad
q\in\Hfrak_h^k,
\end{equation*}
for some constants $c_1$ and $c_2$. Then there
exists a bounded cochain projection $\pi_h$ from $(V,d)$
to $(V_h,d)$, and the $V$ operator norm $\|\pi_h\|$
can be bounded in terms of $c_1$ and $c_2$.
\end{thm}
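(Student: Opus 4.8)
The plan is to build $\pi_h$ by downward induction on the degree $k$, defining $\pi_h^k$ so that it respects both the continuous Hodge decomposition $V^k=\Bfrak^k\oplus\Hfrak^k\oplus\Zfrak^{k\perp}$ and the discrete one $V_h^k=\Bfrak_h^k\oplus\Hfrak_h^k\oplus\Zfrak_h^{k\perp}$, with commutativity forced into the construction. The key observation is that $d$ restricts to an isomorphism of $\Zfrak_h^{k\perp}$ onto $\Bfrak_h^{k+1}$; write $d_h^{-1}\colon\Bfrak_h^{k+1}\to\Zfrak_h^{k\perp}$ for its inverse. Starting at the top degree, where $\Zfrak_h^{k\perp}=0$, and working down, I would set, for $v\in V^k$,
\[
w := d_h^{-1}\,\pi_h^{k+1}(dv)\in\Zfrak_h^{k\perp},\qquad
\pi_h^k v := w + P_{\Bfrak_h}(v-w) + \Psi\,P_{\Hfrak}(v-w),
\]
where $\Psi\colon\Hfrak^k\to\Hfrak_h^k$ is a harmonic correction map specified below. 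By construction $d\pi_h^k v = dw = \pi_h^{k+1}(dv)$, so $\pi_h$ is a cochain map provided $w$ is well defined, i.e.\ provided $\pi_h^{k+1}(dv)\in\Bfrak_h^{k+1}$; this is maintained as an inductive invariant, since one checks that $\pi_h^{k+1}$ sends coboundaries to coboundaries.

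For the harmonic map, the hypothesis $\norm{q}_V\le c_2\norm{P_\Hfrak q}_V$ for $q\in\Hfrak_h^k$ says precisely that $P_\Hfrak$ restricts to an injection of $\Hfrak_h^k$ into $\Hfrak^k$ bounded below by $c_2^{-1}$. I would therefore take $\Psi$ to be the left inverse of $P_\Hfrak|_{\Hfrak_h^k}$, extended by zero on the orthogonal complement of $P_\Hfrak(\Hfrak_h^k)$ in $\Hfrak^k$; then $\Psi P_\Hfrak q=q$ for every $q\in\Hfrak_h^k$ and $\norm{\Psi}\le c_2$. The reason for subtracting $w$ before applying $P_{\Bfrak_h}$ and $P_\Hfrak$ is to strip off the non-cocycle part of the image, so that the coboundary and harmonic components of $\pi_h^k v$ are computed from a genuine cocycle; this is what prevents spurious harmonic contributions from spoiling the projection property.

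To verify the invariants I would argue as follows. Coboundary preservation: if $v\in\Bfrak^k$ then $dv=0$, so $w=0$, while $P_\Hfrak v=0$ because coboundaries are $W$-orthogonal to harmonics; hence $\pi_h^k v=P_{\Bfrak_h}v\in\Bfrak_h^k$. Projection property: for $v_h\in V_h^k$ with discrete components $b_h+q_h+w_h$, one has $dv_h=dw_h\in\Bfrak_h^{k+1}$ and, by the projection property one degree up, $\pi_h^{k+1}(dw_h)=dw_h$, whence $w=w_h$; then $v_h-w=b_h+q_h$ is a discrete cocycle, so $P_{\Bfrak_h}(v_h-w)=b_h$, $P_\Hfrak(v_h-w)=P_\Hfrak q_h$, and $\Psi P_\Hfrak q_h=q_h$. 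Thus $\pi_h^k v_h=w_h+b_h+q_h=v_h$, and $\pi_h^k$ is a projection.

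Finally, for the norm bound I would use that the coboundary and harmonic components are cocycles, so their $V$-norms equal their $W$-norms and are controlled by $W$-orthogonal projection and by $\norm{\Psi}\le c_2$; only $w\in\Zfrak_h^{k\perp}$ carries $d$-content, and there the discrete Poincar\'e hypothesis gives $\norm{w}_V\le c_1\norm{dw}_V=c_1\norm{\pi_h^{k+1}(dv)}_V\le c_1\norm{\pi_h^{k+1}}\,\norm{v}_V$, using $\norm{dv}_V=\norm{dv}_W\le\norm{v}_V$. Collecting terms yields $\norm{\pi_h^k}\le C(c_1,c_2,\norm{\pi_h^{k+1}})$, and downward induction (the top degree giving $\norm{\pi_h^n}\le 1+c_2$) bounds $\norm{\pi_h}$ in terms of $c_1$ and $c_2$ alone. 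I expect the genuinely delicate step to be the projection property rather than the boundedness: it is there that the misalignment between the continuous and discrete Hodge decompositions must be reconciled, forcing both the subtraction of $w$ and the choice of $\Psi$ as a left inverse of $P_\Hfrak|_{\Hfrak_h^k}$, and it is there that the harmonic hypothesis is indispensable.
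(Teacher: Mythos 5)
Your proof is correct and is essentially the paper's proof in inductive clothing: the paper defines $Q_hv\in\Zfrak_h^{k\perp}$ directly by $dQ_hv=P_{\Bfrak_h}dv$ and sets $\pi_h=P_{\Bfrak_h}+R_hP_{\Hfrak}(I-Q_h)+Q_h$, and since your coboundary-preservation step shows $\pi_h^{k+1}(dv)=P_{\Bfrak_h}dv$, your $w$ coincides with $Q_hv$ and your operator is the paper's. Noticing this identity would also upgrade your norm bound from one that compounds over degrees to the paper's degree-independent bound $\|w\|_V\le c_1\|P_{\Bfrak_h}dv\|\le c_1\|v\|_V$; your extension-by-zero of the left inverse $\Psi$ is just a more careful rendering of the paper's $R_h$.
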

\begin{proof}
As a first step of the proof we define an operator $Q_h : V^k \to
\Zfrak_h^{k\perp}$ by $dQ_hv = P_{\Bfrak_h}dv$. By the first assumption, this
operator is $V$-bounded since
\[
\| Q_hv \|_V \le c_1 \|P_{\Bfrak_h} dv \| \le c_1 \| v \|_V,
\]
and if $v \in V_h$ then $Q_hv = P_{\Bfrak^*_h}v$.
By the second assumption, the operator
$P_\Hfrak|_{\Hfrak_h}$
has a bounded inverse $R_h$ mapping $\Hfrak^k$ to $\Hfrak^k_h$.
Note that for any $v \in \Zfrak_h^k$, $R_h P_\Hfrak v = P_{\Hfrak_h} v$.
We now define $\pi_h : V^k \to V^k_h$ by
\[
\pi_h = P_{\Bfrak_h} + R_hP_{\Hfrak} (I - Q_h) + Q_h.
\]
This operator is bounded in $V^k$ and invariant on $V_h^k$, since the
three terms correspond exactly to the discrete Hodge decomposition in
this case.
Furthermore, $\pi_h d v= P_{\Bfrak_h}dv = d Q_h v = d \pi_h v$, so
$\pi_h$ 
is indeed a bounded cochain projection.
\end{proof}

\subsection{Stability and convergence of the mixed method}
\label{subsec:well-p-h-hc}
Next we consider a closed Hilbert complex $(W,d)$ and
the Galerkin discretization of its Hodge Laplacian using
finite dimensional subspaces $V^k_h$ of the domain spaces $V^k$.  Our main assumptions are
those of Section~\ref{subsec:approxhc}: first, that $dV^k_h\subset
V^{k+1}_h$, so that we obtain a \emph{subcomplex}
\begin{equation*}
0 \to V^0_h \xrightarrow{d} V^1_h \xrightarrow{d} \cdots
\xrightarrow{d} V^n_h \to 0,
\end{equation*}
and, second, that there exists a \emph{bounded cochain projection}
$\pi_h$ from $(V,d)$ to $(V_h,d)$.

Let $f\in W^k$.  In view of the mixed formulation \eqref{wfhc},
we take as an approximation scheme: Find $\sigma_h \in V^{k-1}_h$,
$u_h\in V^k_h$, $p_h \in \Hfrak^k_h$, such that
\begin{equation}\label{wfhhc}
\begin{aligned}
\<\sigma_h,\tau\> - \< d \tau,u_h\> &= 0,  & \tau&\in V^{k-1}_h,
\\*
\<  d \sigma_h,v\> + \< d  u_h, d  v\> + \< v, p_h\>
&= \< f,v\>, &
v&\in V^k_h,
\\* 
\< u_h, q \>  &=0,
&q&\in \Hfrak^k_h.
\end{aligned}
\end{equation}
(Recall that we use $\<\,\cdot\,,\,\cdot\,\>$ and $\|\,\cdot\,\|$
without subscripts for the $W$ inner product and norm.) 
The discretization \eqref{wfhhc} is a generalized Galerkin method
as discussed in Section~\ref{subsec:erroranal}.  In the case that there are
no harmonic forms (and therefore no discrete harmonic forms), it is a Galerkin
method, but in general not, since $\Hfrak^k_h$ is not in general a subspace of
$\Hfrak^k$.  We may write the solution of \eqref{wfhhc}, which always exists and is unique in view of
the results of Section~\ref{subsubsec:well-p}, as
\begin{equation*}
 u_h=K_hP_h f, \quad \sigma_h=d^*_h u_h, \quad p_h=P_{\Hfrak_h}f,
\end{equation*}
where $P_h:W^k\to V^k_h$ is the $W^k$-orthogonal projection.

 As in Section~\ref{subsec:erroranal}, we will bound the error
in terms of the stability of the discretization and the consistency error.
We start by establishing a lower bound on the inf-sup constant, i.e., an
upper bound on the stability constant.
\begin{thm}
\label{inf-sup-d}
Let $(V_h,d)$ be a family of subcomplexes of the domain complex
$(V,d)$ of a closed Hilbert complex, parametrized by $h$ and admitting
uniformly $V$-bounded cochain projections.  Then there exists a constant $\gamma_h>0$,
depending only on $c_P$ and the norm of
the projection operators $\pi_h$, such that for any $(\sigma,u,p)\in
V^{k-1}_h\x V^k_h \x\Hfrak^k_h$, there exists $(\tau,v,q)\in V^{k-1}_h\x V^k_h
\x\Hfrak^k_h$ with
\begin{align*}
B(\sigma,u,p;\tau,v,q)&\ge
\gamma_h(\|\sigma\|_{V}+\|u\|_{V}
+ \|p\|)(\|\tau\|_{V}+\|v\|_{V} +\|q\|).
\end{align*}
\end{thm}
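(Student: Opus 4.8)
The plan is to recognize that Theorem~\ref{inf-sup-d} is nothing other than Theorem~\ref{inf-sup-c} applied to the subcomplex, combined with a uniform control of the relevant Poincar\'e constant. As recorded at the start of Section~\ref{subsec:approxhc}, the spaces $V_h^k$ equipped with the $W^k$ norm form a bounded, closed Hilbert complex $(W_h,d)$, whose differential is the restriction of $d$, whose adjoint is $d^*_h$, and whose harmonic forms are precisely the discrete harmonic forms $\Hfrak_h^k$. Crucially, the bilinear form $B$ involves only $d$ and the $W$-inner products and never $d^*$, so its restriction to $V_h^{k-1}\x V_h^k\x\Hfrak_h^k$ is exactly the bilinear form to which Theorem~\ref{inf-sup-c} pertains, now built from the discrete complex. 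Hence, for each fixed $h$, Theorem~\ref{inf-sup-c} immediately produces a constant $\gamma_h>0$ together with, for every $(\sigma,u,p)$, a test triple $(\tau,v,q)$ achieving the asserted lower bound. All the Hodge-theoretic structure needed for this step---the orthogonal discrete Hodge decomposition $V_h^k=\Bfrak_h^k\oplus\Hfrak_h^k\oplus\Zfrak_h^{k\perp}$ and the Poincar\'e inequality on $\Zfrak_h^{k\perp}$---is available because $(W_h,d)$ is a genuine closed Hilbert complex, so the entire development of Sections~\ref{subsubsec:hilbert-complexes} and \ref{subsec:abstract-hl} applies verbatim to it.

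The only point requiring attention is the dependence of $\gamma_h$ on $h$. Inspecting the proof of Theorem~\ref{inf-sup-c}, the constant it furnishes depends solely on the Poincar\'e constant of the complex to which it is applied. For the subcomplex that constant is supplied by Theorem~\ref{Poincare_discrete}, which gives the discrete Poincar\'e inequality with constant $c_P\|\pi_h^k\|$. By hypothesis the cochain projections are \emph{uniformly} $V$-bounded, so $\|\pi_h^k\|$ is bounded independently of $h$; therefore the discrete Poincar\'e constant, and with it $\gamma_h$, is bounded below by a positive quantity depending only on $c_P$ and $\sup_h\|\pi_h^k\|$. This is exactly the uniformity claimed. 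I would thus carry out the argument in three strokes: first record the reduction to the subcomplex, then invoke Theorem~\ref{Poincare_discrete} to bound the discrete Poincar\'e constant, and finally quote Theorem~\ref{inf-sup-c} with that constant.

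If one prefers to exhibit the test functions explicitly rather than invoke Theorem~\ref{inf-sup-c} as a black box, the construction \eqref{tfdef} transfers directly: one takes the discrete Hodge decomposition $u=u_\Bfrak+u_\Hfrak+u_\perp$ in $V_h^k$, chooses $\rho\in\Zfrak_h^{k-1\perp}$ with $u_\Bfrak=d\rho$, and uses the discrete Poincar\'e inequality \eqref{poincare-stab} with $c_P$ replaced by $c_P\|\pi_h\|$; the ensuing chain of inequalities is identical to that in the proof of Theorem~\ref{inf-sup-c}. In either rendering, the main obstacle is conceptual rather than computational: the essential insight is that the discretized problem is a bona fide instance of the continuous one, so that no estimate beyond the discrete Poincar\'e inequality of Theorem~\ref{Poincare_discrete} is needed, and uniformity in $h$ follows from nothing more than the uniform boundedness of the cochain projections.
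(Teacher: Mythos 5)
Your proposal is correct and is essentially identical to the paper's own proof: the paper likewise observes that the theorem is just Theorem~\ref{inf-sup-c} applied to the closed Hilbert complex $(V_h,d)$, with the discrete Poincar\'e constant $c_P\|\pi_h\|$ supplied by Theorem~\ref{Poincare_discrete} and kept uniform by the uniform $V$-boundedness of the cochain projections. Your additional remarks (that $B$ involves only $d$ and the $W$-inner product, and that the test triple could be exhibited explicitly via \eqref{tfdef}) are correct elaborations of the same argument, not a different route.
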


\begin{proof}
This is just Theorem~\ref{inf-sup-c} applied to the Hilbert complex $(V_h,d)$,
combined with the fact that the constant in the Poincar\'e inequality for
$V^k_h$ is $c_P\|\pi_h\|$ by Theorem~\ref{Poincare_discrete}.
\end{proof}

From this stability result, we obtain the following error estimate.
\begin{thm}\label{qo1cor}
Let $(V_h,d)$ be a family of subcomplexes of the domain complex
$(V,d)$ of a closed Hilbert complex, parametrized by $h$ and admitting
uniformly $V$-bounded cochain projections, and let $(\sigma,u,p)\in V^{k-1} \x V^k \x \Hfrak^k$
be the solution of problem
\eqref{wfhc} and $(\sigma_h, u_h, p_h) \in V^{k-1}_h \x V^k_h \x \Hfrak^k_h$
the solution of problem \eqref{wfhhc}.  Then
\begin{multline*}
\|\sigma-\sigma_h\|_{V} + \|u-u_h\|_{V}
+ \|p - p_h\|
\\
\le C(\inf_{\tau\in V_h^{k-1}}\|\sigma-\tau\|_{V}
 +\inf_{v\in V_h^k} \|u-v\|_{V}
 +\inf_{q\in V_h^k} \|p - q\|_{V}
 + \mu\inf_{v\in V_h^k}\|P_\Bfrak u - v\|_V),
\end{multline*}
where
$\displaystyle
\mu =\mu^k_h =
\sup_{\substack{r \in \Hfrak^k \\ \| r \|=1}} \|(I - \pi_h^k)r\|$.
\end{thm}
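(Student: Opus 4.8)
The plan is to control the error through the standard stability-plus-consistency machinery of Section~\ref{subsec:erroranal}, using the uniform discrete inf-sup bound from Theorem~\ref{inf-sup-d} as the stability ingredient. Since that theorem asserts well-posedness of the discrete saddle-point problem \eqref{wfhhc} with a stability constant $\gamma_h^{-1}$ bounded in terms of $c_P$ and $\|\pi_h^k\|$, quasioptimality would follow immediately in the pure Galerkin case. The genuine difficulty here is that \eqref{wfhhc} is only a \emph{generalized} Galerkin method, not a Galerkin method, because $\Hfrak_h^k$ is not a subspace of $\Hfrak^k$: the third equation of \eqref{wfhhc} tests against discrete harmonic forms $q\in\Hfrak_h^k$ rather than against $\Hfrak^k$. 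Consequently the exact solution $(\sigma,u,p)$ does not satisfy the discrete equations, and one must carefully estimate the resulting consistency error.

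First I would choose a convenient comparison element in the discrete space. The natural candidate is the cochain projection $(\pi_h^{k-1}\sigma,\pi_h^k u, P_{\Hfrak_h}p)$, or a variant adapted to the discrete Hodge decomposition. The key structural point is that $\pi_h$ commutes with $d$, so $d\pi_h^{k-1}\sigma=\pi_h^k d\sigma$ and $d\pi_h^k u=\pi_h^{k+1}du$, and hence the projected element respects the complex structure. I would then insert this comparison element and measure, equation by equation, how far it is from solving \eqref{wfhhc}. The first and second equations of \eqref{wfhhc} versus \eqref{wfhhc} produce terms controlled by $\|\sigma-\pi_h^{k-1}\sigma\|_V$, $\|u-\pi_h^k u\|_V$, and $\|p-P_{\Hfrak_h}p\|$, all of which are bounded by best-approximation errors via the quasioptimality of the bounded cochain projection noted just before Theorem~\ref{isom-cohom}.

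The subtle term is the harmonic one, and this is where the extra summand $\mu\inf_{v\in V_h^k}\|P_\Bfrak u-v\|_V$ enters. The source of the discrepancy is that the discrete problem enforces $u_h\perp\Hfrak_h^k$ whereas the exact $u$ satisfies $u\perp\Hfrak^k$; the mismatch between $\Hfrak^k$ and $\Hfrak_h^k$ is exactly quantified by the gap estimate \eqref{gap0}, giving the factor $\mu=\sup_{\|r\|=1,\,r\in\Hfrak^k}\|(I-\pi_h^k)r\|$. I expect the main obstacle to be showing that this harmonic mismatch multiplies only the approximation error of the \emph{coboundary} part $P_\Bfrak u$, rather than the full error in $u$. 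The mechanism should be that testing the consistency error against a discrete harmonic form $q\in\Hfrak_h^k$ picks out, through the Hodge decomposition $u=P_\Bfrak u+P_\Hfrak u+P_{\Bfrak^*}u$ and the orthogonality relations, only the component whose interaction with $\Hfrak_h^k$ survives; the $\Bfrak^*$ and $\Hfrak$ parts of $u$ are orthogonal to $\Bfrak_h^k\subset\Bfrak^k$ and pair with harmonic forms in a controllable way, leaving $P_\Bfrak u$ as the active piece. I would verify this by writing $\<u,q\>$ for $q\in\Hfrak_h^k$, substituting $q=P_\Hfrak q+(I-P_\Hfrak)q$ with $\|(I-P_\Hfrak)q\|$ controlled by $\mu$ from \eqref{gap0}, and tracking which best-approximation error each resulting term attaches to.

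Finally I would assemble the pieces: bound the norm of the total consistency error by the sum of best-approximation errors for $\sigma$, $u$, and $p$ plus the harmonic-defect term $\mu\inf_v\|P_\Bfrak u-v\|_V$, multiply by the uniformly bounded stability constant $\gamma_h^{-1}$ from Theorem~\ref{inf-sup-d}, and conclude with the triangle inequality to pass from $\|\pi_h\cdot-\cdot_h\|$ to $\|\cdot-\cdot_h\|$, absorbing the projection errors once more into the best-approximation terms. The constant $C$ then depends only on $c_P$, $\|\pi_h^k\|$, and the norms of the bilinear form, as required.
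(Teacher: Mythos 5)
Your overall strategy coincides with the paper's: observe that the exact solution satisfies the first two discrete equations exactly and is inconsistent only in the third, where the defect is (up to sign) $\<u,q_h\>$ for $q_h\in\Hfrak^k_h$; control the discrete error by this defect plus projection errors via the uniform discrete inf-sup condition of Theorem~\ref{inf-sup-d}; and finish with the triangle inequality. You also correctly isolate the crux — the harmonic defect must be bounded by $\mu$ times the \emph{approximation error} of $P_\Bfrak u$, not by $\mu$ times a norm of $u$ — and you correctly observe that, since $P_\Hfrak u=0$ and $P_{\Bfrak^*}u\perp\Zfrak^k\supset\Hfrak^k_h$, only the component $P_\Bfrak u$ pairs nontrivially with $q_h$.

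However, the mechanism you describe for that crucial step does not reach the claimed bound, and this is a genuine gap. Writing $q=P_\Hfrak q+(I-P_\Hfrak)q$ with $\|(I-P_\Hfrak)q\|\le\mu\|q\|$ (the relevant tool is \eqref{gap2} rather than \eqref{gap0}) and using $\<P_\Bfrak u,P_\Hfrak q\>=0$ yields only
\begin{equation*}
|\<u,q\>| = |\<P_\Bfrak u,\,(I-P_\Hfrak)q\>| \le \mu\,\|P_\Bfrak u\|\,\|q\|,
\end{equation*}
and $\|P_\Bfrak u\|$ does not tend to zero with $h$: this proves a strictly weaker theorem, with $\mu\|P_\Bfrak u\|$ in place of $\mu\inf_{v\in V^k_h}\|P_\Bfrak u-v\|_V$. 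No rearrangement of the two decompositions you have introduced can attach a best-approximation error to this term, because neither decomposition brings any element of the discrete space near $P_\Bfrak u$ into play. The missing idea — the paper's identity \eqref{e3} — is to subtract $\pi_h P_\Bfrak u$, which costs nothing because of \emph{two} orthogonalities at once: since $\pi_h$ is a cochain map, $\pi_h P_\Bfrak u\in\Bfrak^k_h$, and $\Bfrak^k_h$ is $W$-orthogonal both to $q\in\Hfrak^k_h$ (discrete Hodge decomposition) and to $P_\Hfrak q\in\Hfrak^k$ (because $\Bfrak^k_h\subset\Bfrak^k$). Hence
\begin{equation*}
\<u,q\> = \<(I-\pi_h)P_\Bfrak u,\; q-P_\Hfrak q\>
\le \|(I-\pi_h)P_\Bfrak u\|\,\mu\,\|q\|
\le C\,\mu\,\|q\|\,\inf_{v\in V^k_h}\|P_\Bfrak u-v\|_V,
\end{equation*}
the last step because $\pi_h$ is a $V$-bounded projection, so $\|(I-\pi_h)P_\Bfrak u\|=\|(I-\pi_h)(P_\Bfrak u-v)\|\le C\|P_\Bfrak u-v\|_V$ for every $v\in V^k_h$. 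With this identity your argument closes; without it you cannot obtain the stated estimate, and the loss matters downstream, since the product structure $\mu\cdot E(P_\Bfrak u)$ is exactly what feeds the improved higher-order estimates such as \eqref{estu} in Theorem~\ref{impest}.
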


\begin{proof}
First observe that $(\sigma,u,p)$ satisfies
\begin{equation*}
B(\sigma,u,p;\tau_h,v_h,q_h) = \< f, v_h \> 
- \< u, q_h \>, \quad 
(\tau_h,v_h,q_h) \in V^{k-1}_h \x V^{k}_h \x \Hfrak^k_h.
\end{equation*}
Let $\tau$, $v$, and $q$ be the $V$-orthogonal projections of
$\sigma$, $u$, and $p$ into $V^{k-1}_h$, $V^{k}_h$, and $\Hfrak^k_h$, respectively.
Then, for any
$(\tau_h,v_h,q_h) \in V^{k-1}_h \x V^{k}_h \x \Hfrak^k_h$, we have
\begin{align*}
B(\sigma_h&- \tau,u_h -v,p_h -q;\tau_h,v_h,q_h) 
\\*
&= B(\sigma- \tau,u -v,p -q;\tau_h,v_h,q_h) + \< u, q_h \>
\\
&= B(\sigma- \tau,u -v,p -q;\tau_h,v_h,q_h) 
+ \< P_{\Hfrak_h}u, q_h \>
\\
&\le C (\|\sigma- \tau\|_{V}
 + \|u -v\|_{V} +\|p -q\| +\|P_{\Hfrak_h}u\|)
  (\|\tau_h\|_{V}+\|v_h\|_{V}+\|q_h\|).
\end{align*}
Theorem~\ref{inf-sup-d} then gives
\begin{multline}
\label{e1}
\|\sigma_h - \tau\|_{V} + \|u_h-v\|_{V}
+ \|p_h-q\|
\\*
\le C (\|\sigma - \tau\|_{V} + \|u-v\|_{V}
+ \|p-q\| + \|P_{\Hfrak_h}u\|).
\end{multline}
Using
\eqref{gap1} and the boundedness of the projection $\pi_h$ we have
\begin{equation}
 \label{e2}
\|p - q\| \le \|(I-\pi_h) p\|\le C \inf_{q\in V_h^k} \|p - q\|_{V}.
\end{equation}
Next we show that
$$
\|P_{\Hfrak_h}u\|\le \mu \|(I- \pi_h) u_\Bfrak\|_V.
$$
Now $u\perp\Hfrak^k$, so $u=u_\Bfrak + u_\perp$,
with $u_\Bfrak \in\Bfrak^k$ and $u_\perp\in\Zfrak^{k\perp}$.
Since $\Hfrak^k_h\subset\Zfrak^k$, $P_{\Hfrak_h}u_\perp=0$, and
since $\pi_h u_\Bfrak\in \Bfrak^k_h$, $P_{\Hfrak_h}\pi_h u_\Bfrak =0$.
Let $q = P_{\Hfrak_h}u/\|P_{\Hfrak_h}u\| \in \Hfrak^k_h$.
By Theorem~\ref{gap}, there exists $r\in\Hfrak^k$ (and so $r\perp\Bfrak^k$)
with $\| r \| \le 1$ and
\begin{equation*}
\| q - r \| \le \|(I - \pi_h)r\| \le 
\sup_{\substack{r \in \Hfrak^k \\ \| r \|=1}} \|(I - \pi_h)r\|.
\end{equation*}
Therefore
\begin{multline}\label{e3}
\|P_{\Hfrak_h}u \| = 
\< u_\Bfrak - \pi_h u_\Bfrak,q - r \> 
\\
\le \|(I -  \pi_h)u_\Bfrak\|\sup_{\substack{r \in \Hfrak^k \\ 
\| r \|=1}} \|(I - \pi_h)r\|\le c\,\mu \inf_{v\in V_h^k}\|P_\Bfrak u - v\|_V),
\end{multline}
since $\pi_h$ is a bounded projection.
The theorem follows from \eqref{e1}--\eqref{e3} and the triangle inequality.
\end{proof}

To implement the discrete problem, we need to be able to compute the discrete
harmonic forms.  The following lemma shows one way to do this; namely it shows
that the discrete harmonic forms can be computed as the elements of the null
space of a matrix.  For finite element approximations of the de~Rham sequence,
which are the most canonical example of this theory and which will be
discussed below, it is often possible to compute the discrete harmonic forms
more directly.  See, for example, \cite{amrouche}.
\begin{lem}
Consider the homogeneous linear system: Find $(\sigma_h,u_h)\in V^{k-1}_h\x
V^k_h$ such that
\begin{gather*}
\<\sigma_h,\tau\> = \<
 d \tau,u_h\>,  \quad \tau\in V^{k-1}_h,
\\
\<  d \sigma_h,v\>
+ \< d  u_h, d  v\> 
= 0, \quad
v\in V^k_h.
\end{gather*}
Then $(\sigma_h,u_h)$ is a solution if and only if $\sigma_h=0$
and $u_h\in\Hfrak^k_h$.
\end{lem}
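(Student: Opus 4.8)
The plan is to prove both implications by the standard energy-testing argument, exactly the technique invoked in Section~\ref{subsubsec:mixedlap} to discuss nonsingularity of the mixed system. Throughout I would use that, by the ``obvious meaning'' of the discrete notation, $\Hfrak^k_h=\Zfrak^k_h\cap\Bfrak^{k\perp}_h$, so that $u_h\in\Hfrak^k_h$ is equivalent to the two conditions $d u_h=0$ and $\langle u_h,d\tau\rangle=0$ for all $\tau\in V^{k-1}_h$ (orthogonality to $\Bfrak^k_h$ in $W^k$).

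For the easy direction, I would assume $\sigma_h=0$ and $u_h\in\Hfrak^k_h$ and simply verify the two equations. The first reads $\langle\sigma_h,\tau\rangle=\langle d\tau,u_h\rangle$, whose left side vanishes since $\sigma_h=0$ and whose right side vanishes since $u_h\perp\Bfrak^k_h$; the second reads $\langle d\sigma_h,v\rangle+\langle du_h,dv\rangle=0$, which holds because $\sigma_h=0$ and $du_h=0$ (as $u_h\in\Zfrak^k_h$).

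For the forward direction I would proceed in two stages. First, to extract vanishing, take $\tau=\sigma_h$ in the first equation, giving $\|\sigma_h\|^2=\langle d\sigma_h,u_h\rangle$, and take $v=u_h$ in the second, giving $\langle d\sigma_h,u_h\rangle+\|du_h\|^2=0$. Eliminating the common term $\langle d\sigma_h,u_h\rangle$ yields $\|\sigma_h\|^2+\|du_h\|^2=0$, whence $\sigma_h=0$ and $du_h=0$; in particular $u_h\in\Zfrak^k_h$. Second, having established $\sigma_h=0$, I would return to the first equation, which now reads $0=\langle d\tau,u_h\rangle$ for all $\tau\in V^{k-1}_h$, i.e.\ $u_h\perp\Bfrak^k_h$ in $W^k$. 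Combined with $u_h\in\Zfrak^k_h$, this is precisely the statement $u_h\in\Zfrak^k_h\cap\Bfrak^{k\perp}_h=\Hfrak^k_h$, completing the proof.

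There is no substantive obstacle here; the argument is a routine pairing computation, entirely parallel to the continuous nonsingularity discussion already in the paper. The only point deserving care is the clean reading of the definition of $\Hfrak^k_h$: once $\sigma_h$ is known to vanish, the two variational equations encode exactly the two defining conditions $du_h=0$ and $u_h\perp\Bfrak^k_h$ of a discrete harmonic form, so the equivalence falls out immediately.
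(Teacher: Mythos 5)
Your proof is correct and is essentially identical to the paper's: the paper also tests with $\tau=\sigma_h$ and $v=u_h$, combines the two equations to get $\|\sigma_h\|^2+\|du_h\|^2=0$, and then uses the first equation to conclude $u_h\perp\Bfrak^k_h$, hence $u_h\in\Hfrak^k_h$. Your write-up merely spells out the elimination of the cross term $\langle d\sigma_h,u_h\rangle$ and the verification of the easy direction in slightly more detail.
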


\begin{proof}
Clearly $(0,u_h)$ is a solution if $u_h\in\Hfrak^k_h$.  On the other hand, if
$(\sigma_h,u_h)$ is a solution, by taking $\tau=\sigma_h$, $v=u_h$, and
combining the two equations, we find that $\|\sigma_h\|^2+\| d u_h\|^2=0$, so
that $\sigma_h=0$ and $ d u_h=0$.  Then the first equation implies that $\< d
\tau,u_h\>=0$ for all $\tau\in V^{k-1}_h$, so indeed $u_h\in\Hfrak^k_h$.
\end{proof}

\subsection{Improved error estimates}
\label{subsec:improved}
Suppose we have a family of subcomplexes $(V_h,d)$ of the domain complex
$(V,d)$ of a closed Hilbert complex, parametrized by $h$ with
uniformly bounded cochain projections $\pi_h$.  Assuming also that the subspaces
$V^k_h$ are approximating in $V^k$ in the sense of \eqref{approximating}, we can conclude from
Theorem~\ref{qo1cor} that $\sigma_h\to \sigma$, $u_h\to u$, and $p_h\to p$ as
$h\to 0$ (in the norms of $V^{k-1}$ and $V^k$).  In other words, the Galerkin method
for the Hodge Laplacian is convergent.

The \emph{rate of convergence} will depend on the approximation properties of
the subspaces $V^k_h$, the particular component considered
($\sigma_h$, $u_h$, or $p_h$), the norm in which we measure the error (e.g.,
$W$ or $V$), as well as properties of the data $f$ and the corresponding
solution.  For example, in Section~\ref{sec:FEDF} we will consider approximation of the
de~Rham complex using various subcomplexes for which the spaces $V^k_h$ consist of
piecewise polynomial differential forms with respect to a triangulation
$\T_h$ of the domain with mesh size $h$.  The space $W^k$ is the space of
$L^2$ differential $k$-forms in this case.  One possibility we consider for the
solution of the Hodge Laplacian for $k$-forms using the mixed formulation is to take
subspaces $\P_{r+1}\Lambda^{k-1}(\T_h)$ and $\P_r\Lambda^k(\T_h)$.  The space $\P_r\Lambda^k(\T_h)$,
which is defined in Section~\ref{subsec:dof}, consists piecewise of all $k$-forms
of polynomial degree at most $r$.  Assuming that the solution $u$
to the Hodge Laplacian is sufficiently smooth, application of Theorem~\ref{qo1cor}
will give, in this case,
\begin{equation*}
 \|\sigma-\sigma_h\|_{V} + \|u-u_h\|_{V} + \|p - p_h\| =O(h^r).
\end{equation*}
Approximation theory tells us that this rate is the best possible for $\|u-u_h\|_{V}$,
but we might hope for a faster rate for $\|u-u_h\|$ and for
$\|\sigma-\sigma_h\|_{V}$ and $\|\sigma-\sigma_h\|$.

In order to obtain improved error estimates,
we make two addition assumptions, first that the complex $(W,d)$ satisfies the
compactness property introduced at the end of Section~\ref{subsec:basicdefs},
and second, that the cochain projection is bounded not only in $V$ but in
$W$:
\begin{itemize}
 \item The intersection $V^k\cap V^*_k$ is a dense subset of $W^k$ with 
compact inclusion.
\item
The cochain projections $\pi^k_h$ are bounded in $\Lin(W^k,W^k)$ 
uniformly with respect to $h$.
\end{itemize}
The second property implies that $\pi_h$ extends to a bounded
linear operator $W^k\to V^k_h$.  Since the subspaces $V^k_h$ are approximating in $W^k$
as well as in $V^k$ (by density), it follows that
$\pi_h$ converges pointwise to the identity in $W^k$.  Finally, note that if we
are given a $W$-bounded cochain projection mapping $W^k\to V^k_h$, the restrictions to $V^k$
define a $V$-bounded cochain projection.

Next, we note that on the Hilbert space $V^k\cap V_k^*$, the inner product given by 
\[
\< u,v \>_{V\cap V^*}
:=\< d^*u, d^* v\>
+ \< d u,  d v\> + \< P_\Hfrak u, P_\Hfrak v \>
\]
is equivalent to the usual intersection inner product which is the sum of the
inner products for $V^k$ and $V^*_k$.  This can be seen by Hodge decomposing
$u$ as $P_\Bfrak u+P_\Hfrak u+P_{\Bfrak^*}u$, and using the Poincar\'e bound
$\|P_{\Bfrak^*}u\|\le c\|dP_{\Bfrak^*}u\|=c\|du\|$ and the analogous
bound $\|P_\Bfrak u\|\le
c\|d^*P_\Bfrak u\|=c\|d^*u\|$.  Now
$K$ maps $W^k$ boundedly into $V^k\cap V^*_k$ and satisfies
\[
\< K f,v \>_{V\cap V^*} =  \< f, v-P_{\Hfrak}v \>, \quad f\in W^k, 
\ v\in V^k\cap V^*_k.
\]
In other words, $K \in \Lin(W^k, V^k\cap V^*_k)$ is the adjoint of the
operator $(I-P_{\Hfrak})\mathcal I\in \Lin(V^k\cap V^*_k,W^k)$ where $\mathcal
I\in\Lin(V^k\cap V^*_k,W^k)$ is the compact inclusion operator.  Hence $K$ is
a compact operator $W^k\to V^k\cap V^*_k$ and, a fortiori, compact as an
operator from $W^k$ to itself.  As an operator on $W^k$, $K$ is also
self-adjoint, since
$$
\<f,Kg\>=\<f-P_{\Hfrak}f,Kg\>=\<dd^*Kf+d^*dKf,Kg\>
=\<d^*Kf,d^*Kg\>+\<dKf,dKg\>
$$
for all $f,g\in W^k$.  Furthermore, if we follow $K$ by one of the bounded
operators $d:V^k\to W^{k+1}$ or $d^*:V^*_k\to W^{k-1}$, the compositions $dK$
and $d^*K$ are also compact operators from $W^k$ to itself.  Since we have
assumed the compactness property, $\dim \Hfrak^k<\infty$, and so
$P_{\Hfrak^k}$ is also a compact operator on $W^k$.
Define
\begin{gather*}
 \delta = \delta_h^k = \|(I -  \pi_h) K \|_{\Lin(W^k, W^k)},\quad
\mu = \mu_h^k
= \|(I -  \pi_h) P_\Hfrak \|_{\Lin(W^k, W^k)},
\\
\eta=\eta_h^k=\max_{j=0,1}[\|(I -  \pi_h) d K \|_{\Lin(W^{k-j}, W^{k-j+1})},
\|(I -  \pi_h) d^* K \|_{\Lin(W^{k+j}, W^{k+j-1})}].
\end{gather*}
(Note that $\mu$ already appeared in Theorem~\ref{qo1cor}.)
Recalling that composition on the right with a compact operator
converts pointwise convergence to norm convergence, we see that
$\eta,\delta,\mu \rightarrow 0$ as $h \rightarrow 0$.
In the applications in Sections~\ref{sec:FEDF} and \ref{sec:variations}, the
spaces $\Lambda^k_h$ will consist of piecewise polynomials.  We will
then have $\eta=O(h)$, $\delta=O(h^{\min(2,r+1)})$, and $\mu=O(h^{r+1})$
where $r$ denotes
the largest degree of complete polynomials in the space $\Lambda^k_h$.

In Theorem~\ref{qo1cor}, the error estimates were given in terms of the best approximation
error afforded by the subspaces in the $V$ norm.  The improved error estimates will be
in terms of the best approximation error in the $W$ norm, for which we introduce the notation
$$
 E(w)=E^k_h(w) = \inf_{v\in V^k_h}\|w-v\|, \quad w\in W^k.
$$
The following theorem gives improved the improved error estimates.  Its proof
incorporates a variety
of techniques developed in the numerical analysis literature in the last decades, e.g.,
\cite{falk-osborn,douglas-roberts,acta}.
\begin{thm}
\label{impest}
Let $(V,d)$ be the domain complex of a closed Hilbert complex $(W,d)$ satisfying the compactness property,
and $(V_h,d)$ a family of subcomplexes parametrized by $h$ and admitting
uniformly $W$-bounded cochain projections. Let $(\sigma,u,p)\in V^{k-1} \x V^k \x \Hfrak^k$
be the solution of problem
\eqref{wfhc} and $(\sigma_h, u_h, p_h) \in V^{k-1}_h \x V^k_h \x \Hfrak^k_h$
the solution of problem \eqref{wfhhc}.  Then for some constant
$C$ independent of $h$ and $(\sigma,u,p)$, we have
\begin{gather}
\label{estds}
\|d(\sigma-\sigma_h)\| \le C E(d \sigma),
\\
\label{ests}
\|\sigma - \sigma_h\| \le C[E(\sigma) + \eta E(d \sigma)],
\\
\label{estp}
\|p-p_h\| \le C [E(p) + \mu E(d \sigma)].
\\
\label{estdu}
\|d(u-u_h)\| \le C(E(du) + \eta[E(d \sigma) + E(p)]),
\\
\label{estu}
\|u-u_h\| \le C(E(u) + \eta [E(du) +E(\sigma)]
+ (\eta^2 + \delta) [E(d \sigma) + E(p)] + \mu E(P_{\Bfrak}u)]).
\end{gather}
\end{thm}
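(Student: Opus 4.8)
The plan is to prove the five estimates in the order listed, since each relies on its predecessors, and to rely throughout on three ingredients: the Galerkin orthogonality relations obtained by subtracting \eqref{wfhhc} from \eqref{wfhc} tested against discrete functions; the Hodge decomposition together with the commuting property $d\pi_h=\pi_h d$; and, for the sharp higher-order factors $\eta,\delta,\mu$, a duality argument exploiting that $K$ is compact and self-adjoint. Subtracting the two mixed systems gives, for $(\tau,v,q)\in V^{k-1}_h\x V^k_h\x\Hfrak^k_h$,
\begin{align*}
\<\sigma-\sigma_h,\tau\>-\<d\tau,u-u_h\>&=0,\\
\<d(\sigma-\sigma_h),v\>+\<d(u-u_h),dv\>+\<v,p-p_h\>&=0,\\
\<u-u_h,q\>&=\<u,q\>;
\end{align*}
note the third relation is \emph{inhomogeneous}, since $\Hfrak^k_h\not\subset\Hfrak^k$, and its right-hand side must be controlled by the gap between the harmonic spaces.

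First I would dispose of the two estimates that need no duality. For \eqref{estds}, the interpretation of the mixed formulation gives $d\sigma=P_\Bfrak f$ and $d\sigma_h=P_{\Bfrak_h}f$; since $\Bfrak^k_h\subset\Bfrak^k$ this yields $d(\sigma-\sigma_h)=(I-P_{\Bfrak_h})d\sigma$. Because $d\sigma\in\Bfrak^k$ and $\pi_h$ is a cochain map, $\pi_h d\sigma\in\Bfrak^k_h$, so $\|(I-P_{\Bfrak_h})d\sigma\|\le\|(I-\pi_h)d\sigma\|$, and $W$-boundedness together with $\pi_h v=v$ on $V^k_h$ gives the bound $CE(d\sigma)$. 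For \eqref{estp}, I would write $p-p_h=(I-P_{\Hfrak_h})p-P_{\Hfrak_h}d\sigma$: the first piece is bounded by $E(p)$ because $\pi_h p\in\Zfrak^k_h$ forces $P_{\Hfrak_h}p=P_{\Zfrak_h}p$ to be within $\|(I-\pi_h)p\|$ of $p$; the second piece, for a unit $q\in\Hfrak^k_h$, is estimated by choosing $r\in\Hfrak^k$ with $\|q-r\|\le\gap(\Hfrak^k,\Hfrak^k_h)\le\mu$ and using $\<d\sigma,r\>=\<\pi_h d\sigma,q\>=0$ to obtain $\<d\sigma,q\>=\<(I-\pi_h)d\sigma,q-r\>\le C\mu E(d\sigma)$, via \eqref{gap0} and \eqref{estds}.

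Next come \eqref{ests} and \eqref{estdu}. Here the leading terms $E(\sigma)$ and $E(du)$ arise from inserting a good interpolant (I would use $\pi_h\sigma$ and $\pi_h u$) into $\|\sigma-\sigma_h\|^2$ and $\|d(u-u_h)\|^2$ and estimating the discrete remainder through the error equations. The couplings to the already-bounded quantities $d(\sigma-\sigma_h)$ and $p-p_h$ produce cross terms whose naive bound would be only first order; to extract the sharp factor $\eta$ I would run a duality argument, solving an auxiliary Hodge Laplacian with the operator $K$ and using that the relevant residuals have the form $(I-\pi_h)dK(\cdot)$ or $(I-\pi_h)d^*K(\cdot)$, whose operator norms are at most $\eta$. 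This converts a product of two best-approximation errors into $\eta$ times a single one, producing $\eta E(d\sigma)$ in \eqref{ests} and $\eta[E(d\sigma)+E(p)]$ in \eqref{estdu}.

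The main obstacle is \eqref{estu}, the $W$-norm estimate of $u-u_h$, which I would obtain by a full Aubin--Nitsche duality. Setting $e=u-u_h$ and $w=Ke$ (legitimate because $K$ is self-adjoint and maps $W^k$ compactly into $V^k\cap V^*_k$), one writes $\|e\|^2=\<e,Lw\>+\|P_\Hfrak e\|^2$ and expands $\<e,Lw\>$ through the weak form, inserting the discrete approximations $\pi_h w$, $\pi_h\,dw$, $\pi_h\,d^*w$ and invoking Galerkin orthogonality. The extra regularity of $w$ is spent by applying $(I-\pi_h)$ to $dKe$, $d^*Ke$, and $P_\Hfrak Ke$, which is precisely what produces the factors $\eta^2$, $\delta$, and $\mu$ once the earlier estimates are substituted for the residuals: the $\eta^2$ term reflects the product of two first-order residuals, while $\delta=\|(I-\pi_h)K\|$ enters at the zeroth-derivative level. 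The delicate points---and the reason this step is hardest---are that $u_h\notin V^*_k$ in general, so $\<e,Lw\>$ must be manipulated entirely in terms of $d$, $\sigma_h$, and the discrete adjoint $d^*_h$ rather than $d^*e$, and that the inhomogeneous term $\<u,q\>$ from the third error equation must be folded in; estimating it exactly as the $\|P_{\Hfrak_h}u\|$ bound of Theorem~\ref{qo1cor} contributes the $\mu E(P_\Bfrak u)$ term, and careful bookkeeping of all residuals against \eqref{estds}--\eqref{estdu} yields the stated estimate.
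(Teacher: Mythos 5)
Your handling of \eqref{estds} and \eqref{estp} is correct and essentially identical to the paper's (Lemma~\ref{sig-ests} and Lemma~\ref{pest}), and your sketch of \eqref{estdu} is close to the paper's argument, the only missing detail being that the test function must be projected onto $\Zfrak_h^{k\perp}$, i.e., one takes $v_h=P_{\Bfrak^*_h}(\pi_hu-u_h)$, so that the orthogonality $\langle d(\sigma-\sigma_h)+(p-P_{\Hfrak_h}p),P_{\Bfrak^*}v_h\rangle=0$ and Lemma~\ref{prelim} become available. But there is a genuine gap in \eqref{ests}. You assert that the cross terms there couple to ``the already-bounded quantities $d(\sigma-\sigma_h)$ and $p-p_h$''; this is wrong. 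Testing the \emph{first} error equation with $\tau=\pi_h\sigma-\sigma_h$ produces the cross term $\langle d(\pi_h\sigma-\sigma_h),u-u_h\rangle$, which couples to the completely unestimated primal error $u-u_h$ (your \eqref{estu} comes last), not to $d(\sigma-\sigma_h)$ or $p-p_h$, which appear only in the second equation. Since $d(\pi_h\sigma-\sigma_h)\in\Bfrak^k_h$ you may replace $u-u_h$ by $P_{\Bfrak_h}(u-u_h)$, but bounding that quantity is itself a duality argument requiring both $K$ and $K_h$ (the paper's \eqref{estpbu}), and it yields $\|P_{\Bfrak_h}(u-u_h)\|\le \eta\|\sigma-\sigma_h\|+C\delta E(d\sigma)$; feeding this back and using Young's inequality gives only $\|\sigma-\sigma_h\|\le C[E(\sigma)+\eta E(d\sigma)+\sqrt{\delta}\,E(d\sigma)]$, which is weaker than \eqref{ests}: abstractly one has $\delta\lesssim\eta$ but not $\delta\lesssim\eta^2$, and even concretely, for Whitney forms $\delta=O(h)$, so $\sqrt\delta\gg\eta$. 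The paper's device, which your plan is missing, is to split $\sigma=\sigma^1+\sigma^2$ with $\sigma^2=d^*KP_{\Bfrak_h}d\sigma$, $\sigma^1=d^*K(I-P_{\Bfrak_h})d\sigma$, and test with $\tau=\pi_h\sigma^2-\sigma_h$: since $d\sigma^2=P_{\Bfrak_h}d\sigma=d\sigma_h\in V^k_h$, one gets $d\tau=0$, the cross term vanishes identically, and the duality $\|\sigma^1\|^2=\langle Kd\sigma^1,(I-P_{\Bfrak_h})d\sigma\rangle\le C\eta\|\sigma^1\|E(d\sigma)$ finishes.

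The second gap is in \eqref{estu}. The single global Aubin--Nitsche argument with $e=u-u_h$, $w=Ke$ does not close. The term $\langle e,d^*dw\rangle=\langle de,dKe\rangle$ is fine, but in $\langle e,dd^*w\rangle=\langle e,d\phi\rangle$ with $\phi=d^*Ke$ you cannot integrate by parts on the $u_h$ part ($u_h\notin V^*_k$), and discrete integration by parts applies only against test functions in $V^{k-1}_h$; after inserting $\pi_h\phi$ you are left with the irreducible remainder $\langle u_h,d(I-\pi_h)\phi\rangle=\langle u_h,(I-\pi_h)P_\Bfrak e\rangle$. The operator $(I-\pi_h)P_\Bfrak$ has $O(1)$ norm---the exterior derivative sitting outside $(I-\pi_h)d^*K$ destroys the smallness that defines $\eta$---so this term is of size $\|u_h\|\,\|e\|$ and can be neither absorbed nor made higher order. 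This is precisely why the paper does \emph{not} run one duality for $\|e\|$, but instead first decomposes the error by the \emph{discrete} Hodge decomposition, $u-u_h=(u-P_hu)+P_{\Bfrak_h}(u-u_h)+P_{\Hfrak_h}u+P_{\Bfrak^*_h}(u-u_h)$, and estimates each piece with its own tailored duality: the $\Bfrak_h$-component via dual problems solved both continuously and discretely ($w=Ke$, $w_h=K_he$, comparing $\phi=d^*w$ with $\phi_h=d^*_hw_h$), the $\Hfrak_h$-component via the gap estimate (this part of your plan, giving $\mu E(P_\Bfrak u)$ as in Theorem~\ref{qo1cor}, is correct), and the $\Bfrak^*_h$-component via Lemma~\ref{hests}. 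This decomposition-first strategy is the missing idea; without it the $(\eta^2+\delta)$ structure of \eqref{estu} does not emerge.
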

We now develop the proof of Theorem~\ref{impest} in a series of lemmas.
\begin{lem}
\label{prelim}
Let $v_h \in \Zfrak^{k\perp}_h$ and $v = P_{\Bfrak^*}v_h$. Then
\begin{equation*}
\|v- v_h\| \le  \|(I-\pi_h^k)v\|\le \eta \|dv_h\|.
\end{equation*}
\end{lem}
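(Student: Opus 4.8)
The plan is to establish the chain of two inequalities separately, after first recording the identity $dv = dv_h$. To get that identity, I would Hodge-decompose $v_h$ in the continuous complex as $v_h = P_\Bfrak v_h + P_\Hfrak v_h + P_{\Bfrak^*}v_h$, noting that $P_{\Bfrak^*}v_h = v$. Since the first two summands lie in $\Zfrak^k$, they are annihilated by $d$, so $dv = dv_h$; equivalently $v - v_h \in \Zfrak^k$.

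For the left inequality $\|v-v_h\|\le\|(I-\pi_h^k)v\|$, the crux is the orthogonality $v-v_h\perp\Zfrak^k_h$. This holds because $v=P_{\Bfrak^*}v_h\in\Bfrak^*_k=\Zfrak^{k\perp_W}$ is orthogonal to all of $\Zfrak^k$ (hence to $\Zfrak^k_h$), while $v_h\in\Zfrak^{k\perp}_h$ is orthogonal to $\Zfrak^k_h$ by hypothesis. I would then observe that $\pi_h v - v_h\in\Zfrak^k_h$: it lies in $V^k_h$, and since $\pi_h$ is a cochain projection, $d(\pi_h v - v_h)=\pi_h dv - dv_h = \pi_h dv_h - dv_h = 0$, using $dv=dv_h$ and the fact that $\pi_h$ acts as the identity on $V^{k+1}_h\ni dv_h$. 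Splitting $v-v_h=(v-\pi_h v)+(\pi_h v - v_h)$ and using that the second summand is orthogonal to $v-v_h$, I obtain $\|v-v_h\|^2 = \langle v-v_h,(I-\pi_h^k)v\rangle$, and Cauchy--Schwarz finishes this step.

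For the right inequality $\|(I-\pi_h^k)v\|\le\eta\|dv_h\|$, I would express $v$ through the solution operator $K$. Starting from the projection formula $P_{\Bfrak^*}=d^*dK$ (valid on all of $W^k$) and the commutativity $dKv_h = K\,dv_h$ (valid since $v_h\in V^k$), I rewrite $v = d^*dKv_h = d^*K(dv_h)$, where $K$ here denotes $K^{k+1}$ and $d^*$ the dual differential $W^{k+1}\to W^k$. Consequently $(I-\pi_h^k)v = (I-\pi_h^k)d^*K(dv_h)$, and the operator $(I-\pi_h^k)d^*K\colon W^{k+1}\to W^k$ is exactly the $j=1$ term $\|(I-\pi_h)d^*K\|_{\Lin(W^{k+1},W^k)}$ bounded by $\eta=\eta_h^k$. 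Applying this operator-norm bound gives the desired estimate.

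The only nonroutine point---and thus the main obstacle---is recognizing the representation $v=d^*K(dv_h)$, which couples the projection formula for $P_{\Bfrak^*}$ with the commuting property of $K$, and then matching $(I-\pi_h^k)d^*K$ against the precise index bookkeeping in the definition of $\eta$. The remaining arguments are a routine Hodge decomposition together with a Galerkin-type orthogonality.
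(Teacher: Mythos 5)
Your proof is correct and takes essentially the same route as the paper's: the same orthogonality argument ($v-v_h\perp\pi_h v-v_h\in\Zfrak^k_h$, which the paper combines with the Pythagorean theorem where you use Cauchy--Schwarz) for the first inequality, and the same identity $v=d^*Kdv_h$ matched against the $j=1$ term in the definition of $\eta$ for the second. You have simply filled in the details (the verification that $dv=dv_h$, that $\pi_h v-v_h\in\Zfrak^k_h$, and the derivation of $v=d^*Kdv_h$ from $P_{\Bfrak^*}=d^*dK$ and the commutativity of $K$ with $d$) that the paper's terse proof leaves implicit.
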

\begin{proof}
Since  $\pi_h v - v_h \in \Zfrak_h^k \subset
\Zfrak^k$, $v_h - v\perp\pi_h v - v_h$, so, by the Pythagorean theorem,
$\| v_h - v \| \le \|(I - \pi_h)v\|$.  The second inequality
holds since $v=d^*Kdv_h$.
\end{proof}
\begin{lem}
\label{sig-ests}
The estimates \eqref{estds} and \eqref{ests} hold.  Moreover
\begin{equation}
\label{estpbu}
\|P_{\Bfrak_h} (u -u_h)\| 
\le C[\eta E(\sigma) + (\eta^2 + \delta) E(d \sigma)].
\end{equation}
\end{lem}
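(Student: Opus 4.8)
\emph{Proof proposal.} The plan is to prove the three estimates in the order \eqref{estds}, then \eqref{ests}, and finally \eqref{estpbu}, the last one reusing the intermediate bounds produced along the way. The engine is the identification of $d\sigma$ and $d\sigma_h$ with Hodge projections of $f$, an orthogonal splitting of $\sigma-\sigma_h$, and a duality argument built on Lemma~\ref{prelim}.

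First I would record the two driving facts. Testing the second equations of \eqref{wfhc} and \eqref{wfhhc} against coboundaries (where $dv=0$ and the harmonic term drops) gives $d\sigma=P_\Bfrak f$ and $d\sigma_h=P_{\Bfrak_h}f$; since $\Bfrak^k_h\subset\Bfrak^k$ this yields $d\sigma_h=P_{\Bfrak_h}d\sigma$, hence $d(\sigma-\sigma_h)=(I-P_{\Bfrak_h})d\sigma$. Because $\sigma\in V^{k-1}$ and $\pi_h$ is a cochain map, $\pi_h d\sigma=d\pi_h\sigma\in\Bfrak^k_h$, so the best-approximation property of the $W$-projection $P_{\Bfrak_h}$ and the $W$-quasioptimality of $\pi_h$ give $\|d(\sigma-\sigma_h)\|\le\|(I-\pi_h)d\sigma\|\le CE(d\sigma)$, which is \eqref{estds}. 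I will also use that $\sigma=d^*u\in\Zfrak^{(k-1)\perp}$ and $\sigma_h=d^*_hu_h\in\Zfrak^{(k-1)\perp}_h$.

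For \eqref{ests} the key is the orthogonal splitting $\sigma-\sigma_h=\sigma'+(P_{\Bfrak^*}\sigma_h-\sigma_h)$, where $\sigma':=\sigma-P_{\Bfrak^*}\sigma_h\in\Zfrak^{(k-1)\perp}$ and $P_{\Bfrak^*}\sigma_h-\sigma_h\in\Bfrak^{k-1}\oplus\Hfrak^{k-1}$. Using $\sigma=d^*Kd\sigma$ and $P_{\Bfrak^*}\sigma_h=d^*Kd\sigma_h$ one gets $\sigma'=d^*Kw$ with $w:=(I-P_{\Bfrak_h})d\sigma$. To bound $\|\sigma'\|$ I would argue by duality: for a unit $\psi\in\Zfrak^{(k-1)\perp}$, self-adjointness of $K$ gives $\langle\sigma',\psi\rangle=\langle w,Kd\psi\rangle=\langle w,(I-P_{\Bfrak_h})Kd\psi\rangle$ (as $w\perp\Bfrak^k_h$ and $Kd\psi\in\Bfrak^k$); then, since $Kd\psi=dK\psi$ has $\pi_hKd\psi=d\pi_hK\psi\in\Bfrak^k_h$, the improved estimate $\|(I-P_{\Bfrak_h})Kd\psi\|\le\|(I-\pi_h)dK\psi\|\le\eta\|\psi\|$ yields $\|\sigma'\|\le\eta\|w\|\le C\eta E(d\sigma)$. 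For the second summand, Lemma~\ref{prelim} gives $\|P_{\Bfrak^*}\sigma_h-\sigma_h\|\le\|(I-\pi_h)P_{\Bfrak^*}\sigma_h\|$, and substituting $P_{\Bfrak^*}\sigma_h=\sigma-\sigma'$ bounds this by $\|(I-\pi_h)\sigma\|+\|(I-\pi_h)d^*Kw\|\le CE(\sigma)+\eta\|w\|\le C[E(\sigma)+\eta E(d\sigma)]$. The Pythagorean theorem then gives \eqref{ests}.

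For \eqref{estpbu} I would use the error relation from the first equations, $\langle\sigma-\sigma_h,\tau\rangle=\langle d\tau,P_{\Bfrak_h}(u-u_h)\rangle$ for $\tau\in V^{k-1}_h$. Writing $P_{\Bfrak_h}(u-u_h)=d\rho_h$ with $\rho_h\in\Zfrak^{(k-1)\perp}_h$ (discrete Hodge decomposition) and taking $\tau=\rho_h$ gives $\|P_{\Bfrak_h}(u-u_h)\|^2=\langle\sigma-\sigma_h,\rho_h\rangle$, into which I insert the same splitting. The part $P_{\Bfrak^*}\sigma_h-\sigma_h$ pairs with $\rho_h$ only through $\rho_h-P_{\Bfrak^*}\rho_h$ (the $\Zfrak^{(k-1)\perp}$-component is orthogonal), which Lemma~\ref{prelim} controls by $\eta\|d\rho_h\|$, contributing $C[\eta E(\sigma)+\eta^2E(d\sigma)]\|P_{\Bfrak_h}(u-u_h)\|$. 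For $\langle\sigma',\rho_h\rangle=\langle w,Kd\rho_h\rangle=\langle w,(I-P_{\Bfrak_h})Kd\rho_h\rangle$, now bounding $\|(I-P_{\Bfrak_h})Kd\rho_h\|\le\|(I-\pi_h)Kd\rho_h\|\le\delta\|d\rho_h\|$ produces the factor $\delta E(d\sigma)$; Theorem~\ref{Poincare_discrete} guarantees the discrete Poincar\'e control of $\rho_h$ used throughout. Dividing by $\|P_{\Bfrak_h}(u-u_h)\|$ gives \eqref{estpbu}.

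I expect the main obstacle to be the duality step for $\sigma'$. One must resist the crude bound $\|\sigma'\|=\|d^*Kw\|\le\|d^*K\|\,\|w\|\le CE(d\sigma)$, which loses the essential factor $\eta$, and instead extract the extra order by transferring $(I-P_{\Bfrak_h})$ onto the test-function side and invoking the commutation $Kd=dK$ together with $\|(I-\pi_h)dK\|\le\eta$. The companion subtlety is that the sharp exponent $(\eta^2+\delta)$ in \eqref{estpbu} depends on bounding $(I-P_{\Bfrak_h})Kd\rho_h$ through the $\delta$-estimate for $K$ rather than the $\eta$-estimate for $dK$; attributing each smoothing factor to the correct operator is where the bookkeeping is most delicate.
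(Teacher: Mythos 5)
Your proposal is correct --- every step checks out --- and after unwinding the notation it is very close to the paper's own proof, differing genuinely in only one place. Your $\sigma'$ is exactly the paper's $\sigma^1=d^*K(I-P_{\Bfrak_h})d\sigma$, your $P_{\Bfrak^*}\sigma_h$ is the paper's $\sigma^2=d^*KP_{\Bfrak_h}d\sigma$, and your $\rho_h$ is the paper's $\phi_h=d_h^*K_he$ (both are the unique element of $\Zfrak^{(k-1)\perp}_h$ whose differential is $e=P_{\Bfrak_h}(u-u_h)$); moreover your two terms in the proof of \eqref{estpbu} coincide, term by term, with the paper's $\langle \sigma-\sigma_h,\phi_h-\phi\rangle$ and $\langle d(\sigma-\sigma_h),w\rangle$, since $\phi=P_{\Bfrak^*}\rho_h$ and $Ke=Kd\rho_h$, and both are estimated by the same $\eta$- and $\delta$-mechanisms. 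The one genuine divergence is in \eqref{ests}: the paper obtains quasi-optimality from Galerkin orthogonality in the first error equation, choosing $\tau=\pi_h\sigma^2-\sigma_h$ (whose exterior derivative vanishes) to get $\|\sigma-\sigma_h\|\le\|\sigma-\pi_h\sigma^2\|$, whereas you never touch the variational error equations there: you exploit the $W$-orthogonality of the splitting $\sigma-\sigma_h=\sigma'+(P_{\Bfrak^*}\sigma_h-\sigma_h)$, with the $\Zfrak^{k-1}$-component controlled by Lemma~\ref{prelim} applied to $\sigma_h\in\Zfrak^{(k-1)\perp}_h$. What your route buys is that \eqref{ests} follows purely from the operator identities $\sigma=d^*Kd\sigma$, $P_{\Bfrak^*}\sigma_h=d^*Kd\sigma_h$, $d\sigma_h=P_{\Bfrak_h}d\sigma$; what the paper's route buys is that it does not need $\sigma_h\in\Zfrak^{(k-1)\perp}_h$ or Lemma~\ref{prelim} at that stage, only the error equation. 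The estimates produced are identical.

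Two small points of hygiene. In the duality step for $\|\sigma'\|$ you should restrict the test form to $\psi\in\Zfrak^{(k-1)\perp}\cap V^{k-1}$ so that $\langle d^*Kw,\psi\rangle=\langle Kw,d\psi\rangle$ is legitimate; the cleanest fix is to take $\psi=\sigma'/\|\sigma'\|$ itself, which lies in $V^{k-1}$ because $\sigma'=d^*Kw$ with $Kw\in D_L$. Also, your appeal to Theorem~\ref{Poincare_discrete} in the proof of \eqref{estpbu} is superfluous: nothing in the argument requires a bound on $\|\rho_h\|$, since $\rho_h$ enters only through $\|d\rho_h\|=\|e\|$ and through $\|\rho_h-P_{\Bfrak^*}\rho_h\|$, which Lemma~\ref{prelim} already controls by $\eta\|d\rho_h\|$.
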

\begin{proof}
Since $d\sigma_h=P_{\Bfrak_h} f =P_{\Bfrak_h}P_{\Bfrak} f =P_{\Bfrak_h} d \sigma$, we have
\begin{equation*}
\|d(\sigma- \sigma_h)\| =\|(I- P_{\Bfrak_h})d \sigma\|
\le \|(I - \pi_h)d \sigma\| \le C E(d \sigma),
\end{equation*}
giving \eqref{estds}.  To prove \eqref{ests} we write
\begin{equation*}
\sigma=d^* K d\sigma= d^* K (I - P_{\Bfrak_h}) d \sigma
+ d^* K P_{\Bfrak_h} d \sigma=:\sigma^1+\sigma^2.
\end{equation*}
Taking $\tau=\pi_h \sigma^2- \sigma_h$ in \eqref{wfhc} and \eqref{wfhhc}, we obtain
\begin{equation*}
\langle\sigma -\sigma_h,\pi_h \sigma_2- \sigma_h \rangle
= \langle d (\pi_h \sigma_2- \sigma_h),u-u_h\rangle =0.
\end{equation*}
Hence,
\begin{equation*}
\|\sigma -\sigma_h\| \le \|\sigma- \pi_h \sigma^2 \|
\le \|(I - \pi_h)\sigma \| + \|\pi_h \sigma^1 \|
\le C E(\sigma)| + C\|\sigma^1 \|.
\end{equation*}
Since $\sigma^1\in\Bfrak^*_k$,
\begin{multline*}
\|\sigma^1\|^2 = \langle Kd\sigma^1, d \sigma^1 \rangle
= \langle Kd\sigma^1, (I- P_{\Bfrak_h})d \sigma \rangle
= \langle (I- P_{\Bfrak_h})Kd\sigma^1 , (I- P_{\Bfrak_h})d \sigma \rangle
\\
\le \|(I- \pi_h) K d \sigma^1\| \|(I- \pi_h) d \sigma\| 
\le C \eta \|\sigma^1\| E(d \sigma).
\end{multline*}
Then \eqref{ests} follows from the last two estimates.

Let $e=P_{\Bfrak_h}( u- u_h)$.  To estimate $e$, we set
$w=Ke$, $\phi=d^*w$, $w_h=K_he$, $\phi_h=d^*_h w_h$.  Then $d\phi=d\pi_h\phi=d\phi_h=e$, so
$\pi_h\phi-\phi_h\in \Zfrak^k_h$, and so is orthogonal to $\phi-\phi_h$.  Thus
$\|\phi-\phi_h\| \le \|(I- \pi_h) \phi\| = \|(I- \pi_h)d^*Ke\|$. 
Then
\begin{align*}
\|e\|^2 
&= \langle d \phi_h, e \rangle
= \langle d \phi_h, u -u_h\rangle = 
\langle \sigma - \sigma_h, \phi_h \rangle 
\\
&= \langle \sigma - \sigma_h, \phi_h - \phi\rangle 
+ \langle d (\sigma - \sigma_h) ,w \rangle
\\
&= \langle \sigma- \sigma_h, \phi_h - \phi\rangle 
+ \langle (I- P_{\Bfrak_h}) d \sigma, (I- P_{\Bfrak_h}) w \rangle
\\
&\le \|\sigma - \sigma_h\| \|\phi_h - \phi\| 
+ \|(I- \pi_h)d \sigma\| \| (I- \pi_h) w\|
\\
&\le \|\sigma - \sigma_h\| \|(I- \pi_h)d^*Ke\|
+ C E(d \sigma) \| (I- \pi_h) Ke\|
\\
&\le [\eta \|\sigma-\sigma_h\| 
+  C E(d \sigma) \delta] \|e\|.
\end{align*}
Combining with \eqref{ests} we get \eqref{estpbu}.
\end{proof}

\begin{lem}
\label{pest}
Estimate \eqref{estp} holds and, moreover, 
$\|P_{\Hfrak_h} u\| \le C \mu E(P_{\Bfrak} u)$.
\end{lem}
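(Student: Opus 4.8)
The plan is to prove both assertions by testing against unit discrete harmonic forms, exploiting the fact that such a form $q\in\Hfrak^k_h$ is ``nearly harmonic'', in the sense that its $\Bfrak$-component is controlled by $\mu\|q\|$. First I would recall from Section~\ref{subsubsec:interp} that $p=P_\Hfrak f$, that $p_h=P_{\Hfrak_h}f$, and that $f=d\sigma+p+d^*du$ is the Hodge decomposition of $f$. Writing $p-p_h=(p-P_{\Hfrak_h}p)+(P_{\Hfrak_h}p-p_h)$, the second difference equals $-P_{\Hfrak_h}(f-p)=-P_{\Hfrak_h}(d\sigma+d^*du)$, and since $d^*du\in\Bfrak^*_k=\Zfrak^{k\perp}$ while $\Hfrak^k_h\subset\Zfrak^k$, it collapses to $-P_{\Hfrak_h}(d\sigma)$.

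For the first difference, $p\in\Hfrak^k$ gives $P_{\Hfrak_h}p=P_{\Zfrak_h}p$ exactly as in the proof of \eqref{gap1}, and since $\pi_h p\in\Zfrak^k_h$ (cochain map) is a competitor for the best $\Zfrak^k_h$-approximation, I obtain $\|p-P_{\Hfrak_h}p\|\le\|(I-\pi_h)p\|\le C\,E(p)$ by quasioptimality of the $W$-bounded projection $\pi_h$. The key preliminary fact I would isolate next is that every $q\in\Hfrak^k_h$ is a cocycle, so $q=P_\Hfrak q+P_\Bfrak q$ with $P_\Bfrak q=q-P_\Hfrak q\perp\Bfrak^k_h$ (both $q$ and $P_\Hfrak q$ are orthogonal to $\Bfrak^k_h$), and moreover $\|P_\Bfrak q\|\le\|(I-\pi_h)P_\Hfrak q\|\le\mu\|q\|$; this is the argument behind \eqref{gap2}, which carries over to the $W$ norm verbatim since all forms involved are cocycles.

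The crux is to bound $\|P_{\Hfrak_h}(d\sigma)\|$ by $C\mu\,E(d\sigma)$ rather than the naive $C\,E(d\sigma)$. For unit $q\in\Hfrak^k_h$, the relation $d\sigma\in\Bfrak^k\perp\Hfrak^k$ eliminates the $P_\Hfrak q$ part, so $\langle d\sigma,q\rangle=\langle d\sigma,P_\Bfrak q\rangle$; since $P_\Bfrak q\perp\Bfrak^k_h$ I may subtract $P_{\Bfrak_h}d\sigma$ to get $\langle d\sigma,q\rangle=\langle(I-P_{\Bfrak_h})d\sigma,P_\Bfrak q\rangle\le\|(I-P_{\Bfrak_h})d\sigma\|\,\mu$. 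Because $\pi_h d\sigma=d\pi_h\sigma\in\Bfrak^k_h$, the first factor satisfies $\|(I-P_{\Bfrak_h})d\sigma\|\le\|(I-\pi_h)d\sigma\|\le C\,E(d\sigma)$, so taking the supremum over $q$ gives $\|P_{\Hfrak_h}(d\sigma)\|\le C\mu\,E(d\sigma)$; combined with the previous paragraph this is \eqref{estp}.

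The bound $\|P_{\Hfrak_h}u\|\le C\mu\,E(P_\Bfrak u)$ then follows by the identical duality: since $u\perp\Hfrak^k$ (third equation of \eqref{wfhc}), $u=P_\Bfrak u+P_{\Bfrak^*}u$, and for unit $q\in\Hfrak^k_h$ the cocycle property kills $\langle P_{\Bfrak^*}u,q\rangle$ while $P_\Bfrak u\perp P_\Hfrak q$, so $\langle u,q\rangle=\langle P_\Bfrak u,P_\Bfrak q\rangle=\langle(I-P_{\Bfrak_h})P_\Bfrak u,P_\Bfrak q\rangle\le C\,E(P_\Bfrak u)\,\mu$, using $\pi_h P_\Bfrak u\in\Bfrak^k_h$. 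I expect the main obstacle to be precisely this gain of the full factor $\mu$: plain duality yields only $C\,E(\cdot)$, and the decisive ingredients are the estimate $\|P_\Bfrak q\|\le\mu\|q\|$ for discrete harmonic $q$ together with the orthogonality $P_\Bfrak q\perp\Bfrak^k_h$, which is what licenses replacing $d\sigma$ (resp.\ $P_\Bfrak u$) by its $\Bfrak^k_h$-projection error and estimating that error by $E$ through the cochain projection.
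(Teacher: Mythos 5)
Your proposal is correct and follows essentially the same route as the paper: the same splitting $p-p_h=(p-P_{\Hfrak_h}p)\mp P_{\Hfrak_h}(d\sigma)$, the same bound $\|p-P_{\Hfrak_h}p\|\le C E(p)$ via \eqref{gap1}, and the same duality mechanism in which the pairing of $d\sigma$ (resp.\ $P_{\Bfrak}u$) with a discrete harmonic form $q$ is rewritten as a pairing of a $\Bfrak^k_h$-projection error against $P_{\Bfrak}q$, whose norm is at most $\mu\|q\|$ by the $W$-norm version of \eqref{gap2}. The differences are only cosmetic---you normalize $q$ and take a supremum where the paper works directly with $q=P_{\Hfrak_h}f_{\Bfrak}$ and squares its norm, and you subtract $P_{\Bfrak_h}d\sigma$ where the paper subtracts $\pi_h d\sigma$; likewise your proof of the second assertion reproduces the argument that the paper invokes by citing \eqref{e3}.
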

\begin{proof}
The second estimate is just \eqref{e3}.  Using the Hodge decomposition of
$f$ and the fact that $P_{\Hfrak_h}\Bfrak^*_k=0$, we have
$P_{\Hfrak_h}p-p_h=P_{\Hfrak_h}P_{\Hfrak}f - P_{\Hfrak_h} f=P_{\Hfrak_h}f_{\Bfrak}$,
where $f_{\Bfrak} = P_{\Bfrak} f$. Therefore,
\begin{equation*}
\|p-p_h\| = \|p-P_{\Hfrak_h}p\|+\|P_{\Hfrak_h}f_{\Bfrak}\|
\le C E(p) + \|P_{\Hfrak_h}f_{\Bfrak}\|,
\end{equation*}
by \eqref{gap1}.
Applying \eqref{gap2}  we get
\begin{multline*}
\|P_{\Hfrak_h}f_{\Bfrak}\|^2 = \langle f_{\Bfrak} - \pi_h f_{\Bfrak},
P_{\Hfrak_h}f_{\Bfrak} \rangle
= \langle f_{\Bfrak} - \pi_h f_{\Bfrak}, P_{\Hfrak_h}f_{\Bfrak}  - P_{\Hfrak} P_{\Hfrak_h}f_{\Bfrak}
\rangle
\\
\le C \|(I-\pi_h)f_{\Bfrak}\| \|(I- \pi_h) P_{\Hfrak} P_{\Hfrak_h}f_{\Bfrak}\|
\le C E(d \sigma) \mu \|P_{\Hfrak_h}f_{\Bfrak}\|.
\end{multline*}
Combining these results completes the proof of the lemma.
\end{proof}

\begin{lem}\label{duest}
The estimate \eqref{estdu} holds.
\end{lem}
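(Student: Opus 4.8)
The plan is to split the error as $d(u-u_h) = (I-\pi_h)du + dz$ with $z := \pi_h u - u_h \in V^k_h$, exploiting that $\pi_h$ is a cochain map so that $(I-\pi_h)du = d(u-\pi_h u)$. The first piece is controlled immediately: since $\pi_h$ is a $W$-bounded projection, $\|(I-\pi_h)du\| \le C E(du)$. All the difficulty is then concentrated in bounding $\|dz\|$. To that end I would write $\|dz\|^2 = \<d(u-u_h),dz\> - \<(I-\pi_h)du,dz\>$ and substitute $v=z$ into the error equation obtained by subtracting the second line of \eqref{wfhhc} from that of \eqref{wfhc},
\[
\<d(\sigma-\sigma_h),v\> + \<d(u-u_h),dv\> + \<v,p-p_h\> = 0, \quad v\in V^k_h,
\]
which replaces $\<d(u-u_h),dz\>$ by $-\<d(\sigma-\sigma_h),z\> - \<z,p-p_h\>$, leaving three inner products to estimate. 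The term $\<(I-\pi_h)du,dz\>$ is at once bounded by $C E(du)\|dz\|$.

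The crucial observation, and the step I expect to carry the weight, is that testing the error equation against cocycles yields $\<d(\sigma-\sigma_h),v\> + \<v,p-p_h\> = 0$ for every $v\in\Zfrak^k_h$ (since then $dv=0$). Decomposing $z$ into its $\Zfrak^k_h$-component and $w := P_{\Bfrak^*_h}z \in \Zfrak^{k\perp}_h$, this identity kills the $\Zfrak^k_h$-part entirely, so it remains only to bound $\<d(\sigma-\sigma_h),w\>$ and $\<w,p-p_h\>$. Here I must produce the factor $\eta$, because the crude bound $\|d(\sigma-\sigma_h)\|\,\|w\|$ would lose it and give the useless $E(d\sigma)$ rather than $\eta E(d\sigma)$.

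The gain of $\eta$ comes from a duality step. Because $dw = dz$, the discrete Poincar\'e inequality (Theorem~\ref{Poincare_discrete}) gives $\|w\|\le C\|dz\|$, and Lemma~\ref{prelim} applied to $w\in\Zfrak^{k\perp}_h$ furnishes $\hat w := P_{\Bfrak^*}w\in\Zfrak^{k\perp}$ with $\|w-\hat w\|\le\eta\|dz\|$. Now $d(\sigma-\sigma_h) = (I-P_{\Bfrak_h})d\sigma\in\Zfrak^k$ and $p-p_h\in\Zfrak^k$, so both are $W$-orthogonal to $\hat w$; replacing $w$ by $w-\hat w$ in each inner product therefore costs nothing and inserts the factor $\eta$. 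This gives $|\<d(\sigma-\sigma_h),w\>|\le C\eta E(d\sigma)\|dz\|$ via \eqref{estds}, and $|\<w,p-p_h\>|\le C\eta\|dz\|\,\|p-p_h\|$, where $\|p-p_h\|\le C(E(p)+\mu E(d\sigma))$ is already available from \eqref{estp}.

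Collecting the three estimates, $\|dz\|^2 \le C\bigl(E(du) + \eta E(d\sigma) + \eta E(p)\bigr)\|dz\|$ after absorbing the uniformly bounded quantity $\mu$, and dividing by $\|dz\|$ and adding back $\|(I-\pi_h)du\|\le C E(du)$ yields \eqref{estdu}. The main obstacle is precisely the orthogonality bookkeeping just described: the naive handling of $\<d(\sigma-\sigma_h),z\>$ loses $\eta$, and recovering it requires both the cocycle-testing identity to discard the $\Zfrak^k_h$-component of $z$ and the duality argument of Lemma~\ref{prelim} to exploit that $d(\sigma-\sigma_h)$ and $p-p_h$ lie in $\Zfrak^k$.
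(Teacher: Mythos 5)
Your proof is correct and follows essentially the same route as the paper's: both arguments test the error equation with the discrete-coexact part of $\pi_h u - u_h$ (your $w$, the paper's $v_h$) and gain the factor $\eta$ from Lemma~\ref{prelim} combined with the $W$-orthogonality of $d(\sigma-\sigma_h)$ and the harmonic-form error (both lying in $\Zfrak^k$) to the continuous coexact space $\Bfrak^*_k$. The only cosmetic difference is in handling the harmonic term: the paper swaps $p_h$ for $P_{\Hfrak_h}p$ via discrete orthogonality and invokes \eqref{gap1}, while you invoke \eqref{estp} and absorb the uniformly bounded factor $\mu$; both yield \eqref{estdu}.
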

\begin{proof}
From \eqref{wfhc} and \eqref{wfhhc},
\begin{equation}
\label{orthog2}
\langle d (\sigma -\sigma_h),v_h\rangle
+ \langle
d  (u-u_h),d  v_h\rangle + \langle v_h, p-p_h \rangle 
= 0 \qquad v_h\in V_h^k.
\end{equation}
Choose $v_h = P_{\Bfrak^*_h}(\pi_hu - u_h)$
in \eqref{orthog2} and set $v = d^* K d v_h=P_{\Bfrak^*}v_h$.   Using Lemmas~\ref{prelim}
and \eqref{gap1}, we get
\begin{multline*}
\langle d(u-u_h), d(\pi_h u - u_h) \rangle = 
\langle d(u-u_h), d v_h \rangle
= - \langle d(\sigma - \sigma_h) + (p-p_h), v_h \rangle
\\
= - \langle d(\sigma - \sigma_h) + (p-P_{\Hfrak_h}p),
v_h -v \rangle
\le [\|d(\sigma - \sigma_h)\| + \|p-P_{\Hfrak_h}p\|] \|v_h -v\|
\\
\le C [E(d \sigma) + E(p)]\eta \|d v_h\|
\le C \eta [E(d \sigma) + E(p)] \|d(\pi_h u - u_h)\|.
\end{multline*}
Hence,
\begin{align*}
\|d(\pi_h u-u_h)\|^2 &= \langle d(\pi_h u -u), d(\pi_h u-u_h) \rangle
+ \langle d(u -u_h), d(\pi_h u-u_h) \rangle
\\
&\le \{\|d(\pi_h u -u)\| + C \eta [E(d \sigma) + E(p)]\}\|d(\pi_h u-u_h)\|
\\
&\le C\{ E(du) + \eta [E(d \sigma) + E(p)]\} \|d(\pi_h u-u_h)\|.
\end{align*}
The result follows by the triangle inequality.
\end{proof}

\begin{lem}\label{hests}
$\|P_{\Bfrak^*_h} (u - u_{h})\|  \le 
C\{E(u) + \eta E(du) + (\eta^2 + \delta) [E(d \sigma) + E(p)])\}$.
\end{lem}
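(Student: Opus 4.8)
The plan is to control the discrete coexact component
\[
e:=P_{\Bfrak^*_h}(u-u_h)\in\Zfrak^{k\perp}_h
\]
by an Aubin--Nitsche duality argument that reduces it to quantities already estimated in Lemmas~\ref{sig-ests}, \ref{pest}, and \ref{duest}. Two preliminary observations set this up. First, because $u,u_h\in V^k$ and $d$ annihilates the $\Zfrak^k_h$ part of the discrete Hodge decomposition, I would check that $de=d(u-u_h)$; hence $\|de\|$ is already controlled by \eqref{estdu}. Second, writing $\rho:=P_{\Bfrak^*}e\in\Bfrak^*_k$ for the continuous coexact lift (so $d\rho=de$), Lemma~\ref{prelim} gives $\|e-\rho\|\le\eta\|d(u-u_h)\|$, and since $e-\rho=P_\Bfrak e+P_\Hfrak e\in\Zfrak^k$ is $W$-orthogonal to $\rho$, the Pythagorean theorem yields $\|e\|^2=\|\rho\|^2+\|e-\rho\|^2$. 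Thus everything reduces to bounding $\|\rho\|$.

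For the duality heart, note that both $(u-u_h)-e\in\Zfrak^k_h\subset\Zfrak^k$ and $e-\rho\in\Zfrak^k$ are $W$-orthogonal to $\rho\in\Bfrak^*_k$, so $\|\rho\|^2=\langle\rho,u-u_h\rangle$. Since $\rho\in\Bfrak^*_k$, I would introduce $\tilde z:=K\rho$; the $\Bfrak^*$ problem gives $d^*\tilde z=0$ and $d^*d\tilde z=\rho$, while the commuting relation $Kd=dK$ gives $Kd\rho=d\tilde z$, whence $\rho=d^*d\tilde z=d^*Kd\rho$. Integrating by parts (legitimate since $u-u_h\in V^k$ and $d\tilde z\in V^*_{k+1}$) produces
\[
\|\rho\|^2=\langle d(u-u_h),d\tilde z\rangle,
\]
and I would split $d\tilde z=\pi_h d\tilde z+(I-\pi_h)d\tilde z$.

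The decisive point is that, $\pi_h$ being a cochain map, $\pi_h d\tilde z=d\pi_h\tilde z$ is a \emph{discrete coboundary}, so the Galerkin orthogonality \eqref{orthog2} taken with $v_h=\pi_h\tilde z$ converts $\langle d(u-u_h),d\pi_h\tilde z\rangle$ into $-\langle d(\sigma-\sigma_h),\pi_h\tilde z\rangle-\langle\pi_h\tilde z,p-p_h\rangle$. Because $\tilde z\in\Bfrak^*_k$ is $W$-orthogonal to $\Zfrak^k$, which contains both $d(\sigma-\sigma_h)\in\Bfrak^k$ and $p-p_h$, I may replace $\pi_h\tilde z$ by $\pi_h\tilde z-\tilde z$, thereby exposing the factor $\|(I-\pi_h)\tilde z\|=\|(I-\pi_h)K\rho\|\le\delta\|\rho\|$; with \eqref{estds} and \eqref{estp} this bounds the first piece by $C\delta[E(d\sigma)+E(p)]\|\rho\|$. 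The second piece is estimated directly, $\langle d(u-u_h),(I-\pi_h)d\tilde z\rangle\le\|d(u-u_h)\|\,\|(I-\pi_h)dK\rho\|\le\eta\|d(u-u_h)\|\,\|\rho\|$. Dividing by $\|\rho\|$ and inserting $\|d(u-u_h)\|\le C(E(du)+\eta[E(d\sigma)+E(p)])$ from \eqref{estdu} gives $\|\rho\|\le C\{\eta E(du)+(\eta^2+\delta)[E(d\sigma)+E(p)]\}$; as $\|e-\rho\|\le\eta\|d(u-u_h)\|$ is of the same order, the asserted estimate follows. The best-approximation term $E(u)$ in the statement is then included harmlessly; it is the term one picks up from $\|u-\pi_h u\|\le C E(u)$ if one prefers to run the argument on $\pi_h u-u_h$ in place of $u-u_h$, in parallel with Lemma~\ref{duest}.

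The main obstacle is exactly this duality step: recognizing the identity $\rho=d^*Kd\rho$, and, after integrating by parts, using the cochain-map property to make $\pi_h d\tilde z$ a discrete coboundary so that Galerkin orthogonality applies, then exploiting $\tilde z\perp\Zfrak^k$ to subtract $\tilde z$ and reveal the factor $\delta=\|(I-\pi_h)K\|_{\Lin(W,W)}$. This gain over the crude Poincar\'e bound $\|\rho\|\le c_P\|d(u-u_h)\|$ is precisely what upgrades the convergence rate by the extra powers of $\eta$ and $\delta$; getting the bookkeeping of the orthogonalities right (which spaces each of $d(\sigma-\sigma_h)$, $p-p_h$, and $\tilde z$ lives in) is where the care is required.
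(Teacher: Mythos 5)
Your duality mechanism is the right one---it is in substance the paper's own---but the way you set it up contains a genuine error. You define $e:=P_{\Bfrak^*_h}(u-u_h)$ and then treat $u-u_h$ as if it had a discrete Hodge decomposition: you claim $de=d(u-u_h)$, and later that $(u-u_h)-e\in\Zfrak^k_h$, which is what yields your key identity $\|\rho\|^2=\langle\rho,u-u_h\rangle$. But $u-u_h\notin V^k_h$, so the discrete Hodge decomposition does not apply to it: $P_{\Bfrak^*_h}$ is merely the $W$-orthogonal projection onto $\Zfrak^{k\perp}_h$, and the complementary part $(I-P_{\Bfrak^*_h})(u-u_h)$ contains, in particular, the entire component of $u-u_h$ that is $W$-orthogonal to $V^k_h$; it does not lie in $\Zfrak^k_h$, and $dP_{\Bfrak^*_h}(u-u_h)$ lies in $V^{k+1}_h$ while $d(u-u_h)$ in general does not, so they cannot be equal. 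Consequently both the bound $\|e-\rho\|\le\eta\|d(u-u_h)\|$ and the identity $\|\rho\|^2=\langle\rho,u-u_h\rangle$ are unjustified as stated. The remedy is exactly the switch you relegate to a closing remark as optional (``if one prefers''): it is mandatory. One must first split $u-u_h=(u-\pi_hu)+(\pi_hu-u_h)$, bound $\|P_{\Bfrak^*_h}(u-\pi_hu)\|\le\|u-\pi_hu\|\le CE(u)$, and run the entire duality argument on $v_h:=P_{\Bfrak^*_h}(\pi_hu-u_h)$, which genuinely lies in $\Zfrak^{k\perp}_h$ and satisfies $dv_h=d(\pi_hu-u_h)$. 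This is precisely what the paper does, and it is the real source of the $E(u)$ term, which your uncorrected argument would (incorrectly) appear to dispense with.

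Once that correction is made, your proof goes through and coincides step for step with the paper's: the lift $\rho=P_{\Bfrak^*}v_h$, the identity $\rho=d^*Kd\rho$, integration by parts to get $\langle d(u-u_h),dK\rho\rangle$, the split into $\pi_h$ and $(I-\pi_h)$ parts, the Galerkin orthogonality \eqref{orthog2}, and the subtraction of $K\rho\perp\Zfrak^k$ to expose the factor $\delta$, with \eqref{estdu} closing the loop. The one place you genuinely differ from the paper is minor but legitimate: you test \eqref{orthog2} with $\pi_hK\rho$ and estimate $\|p-p_h\|$ via \eqref{estp}, whereas the paper tests with $P_{\Bfrak^*_h}\pi_hK v$---the extra projection makes the test function orthogonal to $\Hfrak^k_h$, so that $p_h$ can be replaced by $P_{\Hfrak_h}p$ and \eqref{gap1} invoked instead. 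Your variant is slightly more direct, at the harmless cost of an extra $\mu E(d\sigma)$ inside the $\delta$ term.
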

\begin{proof}
Again letting $v_h = P_{\Bfrak^*_h}(\pi_hu - u_h)$ and $v=P_{\Bfrak^*}v_h$, we
observe that
\begin{equation*}
\|P_{\Bfrak^*_h} (u - u_{h})\|
\le \|P_{\Bfrak^*_h} (u - \pi_h u)\| + \|v_h\|
\le \|u - \pi_h u\| + \|v_h\| \le CE(u) + \|v_h\|.
\end{equation*}
Next
\begin{equation*}
\|v_h\|^2 = \langle v_h - v, v_h \rangle + \langle v, \pi_h u- u_h \rangle
= \langle v_h - v, v_h \rangle + \langle v, \pi_h u -u \rangle
+  \langle v, u -u_h \rangle.
\end{equation*}
Then using Lemmas~\ref{prelim} and \ref{duest}, we get
\begin{multline*}
\langle v_h - v, v_h \rangle + \langle v, \pi_h u -u \rangle
\le \|v_h - v\| \|v_h\| + \|v\| \|(I-\pi_h)u\|
\\
\le [\eta \|d v_h\| + \|(I-\pi_h)u\|] \|v_h\|
\le [\eta \|d(\pi_hu-u_h)\| + C E(u)] \|v_h\|.
\end{multline*}
We next estimate the term $\langle v, u -u_h \rangle$.
Since $dv = dv_h$ and $Kv \in \Zfrak^{k \perp}$, we get
\begin{align*}
\langle v, u -u_h \rangle &= 
\langle K dv_h, d(u-u_h) \rangle = \langle K dv , d(u-u_h) \rangle
= \langle d K v , d(u-u_h) \rangle 
\\
&= \langle (I-\pi_h)d K v , d(u-u_h) \rangle 
+ \langle d \pi_h K v , d(u-u_h) \rangle
\\
&= \langle (I-\pi_h)d K v , d(u-u_h) \rangle 
+ \langle d P_{\Bfrak^*_h} \pi_h K v , d(u-u_h) \rangle.
\end{align*}
Now
\begin{multline*}
\langle (I-\pi_h)d K v , d(u-u_h) \rangle 
\le \eta \|v\| \|d(u-u_h)\| 
\le C \eta \|d(u-u_h)\|  \|v_h\|,
\end{multline*}
while
\begin{align*}
\langle d P_{\Bfrak^*_h} \pi_h K v , d(u-u_h) \rangle
&= - \langle d(\sigma - \sigma_h) + (p-p_h), 
P_{\Bfrak^*_h} \pi_h Kv -Kv \rangle
\\
&= - \langle d(\sigma - \sigma_h) + (p-P_{\Hfrak_h}p), 
P_{\Bfrak^*_h} \pi_h K v -Kv \rangle
\\
&\le [\|d(\sigma-\sigma_h)\|
+ \|p-P_{\Hfrak_h}p\|] \|P_{\Bfrak^*_h} \pi_h K v -Kv\|.
\end{align*}
But
\begin{multline*}
\|P_{\Bfrak^*_h} \pi_h K v -Kv\|^2
= \langle P_{\Bfrak^*_h} \pi_h K v -Kv, \pi_h Kv - Kv \rangle
\\
\le \|P_{\Bfrak^*_h} \pi_h K v -Kv\| \|(I- \pi_h) Kv\|
\le \|P_{\Bfrak^*_h} \pi_h K v -Kv\| \delta \|v\|.
\end{multline*}
Hence,
\begin{equation*}
\|P_{\Bfrak^*_h} \pi_h K v -Kv\|
\le C \delta \|v_h\|.
\end{equation*}

Combining these results, and using the previous lemmas, we obtain
\begin{multline*}
\|v_h\| \le C( \eta \|d(\pi_hu-u_h)\| + \eta \|d(u-u_h)\| + E(u) 
+ \delta [\|d(\sigma - \sigma_h)\| + \|p-P_{\Hfrak_h}p\|])
\\
\le C\{E(u) + \eta E(du) + (\eta^2 + \delta) [E(d \sigma) + E(p)]\}.
\end{multline*}
The final result of the lemma follows immediately.
\end{proof}

It is now an easy matter to prove \eqref{estu} and so complete
the proof Theorem~\ref{impest}. We write
\begin{equation*}
u -u_h = (u-P_hu)+ P_{\Bfrak_h} (u - u_h) + P_{\Hfrak_h^k} u
+ P_{\Bfrak^*_h} (u - u_h),
\end{equation*}
and so \eqref{estu} follows from Lemmas~\ref{sig-ests},
\ref{pest}, and \ref{hests}.

To get a feeling for these results, we return to the example mentioned
earlier, where $V^{k-1}_h=\P_{r+1}\Lambda^{k-1}(\T_h)$ and
$V^k_h=\P_r\Lambda^k(\T_h)$ are used to approximate the $k$-form Hodge
Laplacian with some $r\ge 1$.  If the domain is convex, we may apply
elliptic regularity to see that $Kf$ belongs to the Sobolev space
$H^2\Lambda^k$ for $f\in L^2\Lambda^k$, so $dKf\in H^1\Lambda^{k+1}$
and $d^*Kf\in H^1\Lambda^{k-1}$, and then standard approximation
theory shows that $\eta=O(h)$, $\delta=O(h^2)$, and $\mu=O(h^2)$.
From Theorem~\ref{impest}, we then obtain that
\begin{equation*}
 \|\sigma-\sigma_h\|+h\|d(\sigma-\sigma_h)\|
+h \|u-u_h\| + h \|p-p_h\| +h^2 \|d(u-u_h)\|=O(h^{r+2}),
\end{equation*}
assuming the solution $u$ is sufficiently smooth.  That is, all
components converge with the optimal order possible given the degree
of polynomial approximation.

Finally we note a corollary of Theorem~\ref{impest}, which will be useful in the
analysis of the eigenvalue problem.
\begin{cor}
  \label{oper-conv}
Under the assumptions of Theorem~\ref{impest}, there exists a constant $C$ such that
\begin{gather*}
\|K-K_hP_h\|_{\Lin(W^k,W^k)}\le C(\eta^2 + \delta + \mu), \\
\|dK-dK_hP_h\|_{\Lin(W^k,W^{k+1})}+
\|d^*K-d^*_hK_hP_h\|_{\Lin(W^k,W^{k-1})}\le C\eta.
\end{gather*}
Therefore all three operator norms converge to zero with $h$.
\end{cor}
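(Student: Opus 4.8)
The plan is to recognize the three operators in the corollary as the solution-error maps for the mixed problem, and then reduce everything to Theorem~\ref{impest} by replacing each best-approximation error appearing there with an explicit bound in terms of $\|f\|$. Concretely, for data $f\in W^k$ the continuous solution of \eqref{wfhc} is $(\sigma,u,p)=(d^*Kf,\,Kf,\,P_\Hfrak f)$ and the discrete solution of \eqref{wfhhc} is $(\sigma_h,u_h,p_h)=(d^*_hK_hP_hf,\,K_hP_hf,\,P_{\Hfrak_h}f)$, so that
\[
(K-K_hP_h)f=u-u_h,\quad (dK-dK_hP_h)f=d(u-u_h),\quad (d^*K-d^*_hK_hP_h)f=\sigma-\sigma_h.
\]
Hence the three operator norms are exactly $\sup_{\|f\|=1}$ of $\|u-u_h\|$, $\|d(u-u_h)\|$, and $\|\sigma-\sigma_h\|$, each of which Theorem~\ref{impest} already bounds in terms of the quantities $E(\sigma)$, $E(u)$, $E(p)$, $E(d\sigma)$, $E(du)$, and $E(P_\Bfrak u)$.

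Next I would bound each of these best-approximation errors by $\|f\|$ times an appropriate factor, distinguishing those that must be estimated \emph{sharply} from those for which a crude bound suffices. Since $\pi_h^kw\in V^k_h$ gives $E(w)\le\|(I-\pi_h^k)w\|$, the definitions of $\delta$, $\eta$, and $\mu$ yield the sharp bounds $E(u)\le\delta\|f\|$, $E(du)\le\eta\|f\|$, $E(\sigma)\le\eta\|f\|$, and $E(p)\le\mu\|f\|$, using respectively $\|(I-\pi_h)K\|$, the $\Lin(W^k,W^{k+1})$ norm of $(I-\pi_h)dK$, the $\Lin(W^k,W^{k-1})$ norm of $(I-\pi_h)d^*K$, and $\|(I-\pi_h)P_\Hfrak\|$. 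For the remaining two errors I would use only the trivial bound $E(w)\le\|w\|$ obtained by taking $v=0$: since $d\sigma=dd^*Kf=P_\Bfrak f$ we get $E(d\sigma)\le\|P_\Bfrak f\|\le\|f\|$, and since $\|P_\Bfrak u\|\le\|u\|=\|Kf\|\le C\|f\|$ we get $E(P_\Bfrak u)\le C\|f\|$.

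Substituting these into Theorem~\ref{impest} then finishes the argument. In \eqref{ests} and \eqref{estdu} the prefactor $\eta$ in front of $E(d\sigma)$ converts the crude bound $E(d\sigma)\le\|f\|$ into $\eta\|f\|$, so these estimates collapse to $\|\sigma-\sigma_h\|,\,\|d(u-u_h)\|\le C\eta\|f\|$, giving the second displayed inequality of the corollary; similarly, in \eqref{estu} the prefactors $\eta^2+\delta$ and $\mu$ absorb the crude bounds on $E(d\sigma)$, $E(p)$, and $E(P_\Bfrak u)$, while $E(u)$, $E(du)$, $E(\sigma)$ enter at their sharp rates, leaving $\|u-u_h\|\le C(\eta^2+\delta+\mu)\|f\|$, the first inequality. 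Taking the supremum over $\|f\|=1$ completes the proof, and the closing convergence statement follows since $\eta,\delta,\mu\to0$ as $h\to0$, as observed after their definition. There is no genuine analytic obstacle here---Theorem~\ref{impest} does all the work---so the one thing to get right is the bookkeeping: one must notice that $E(d\sigma)$ and $E(P_\Bfrak u)$ occur only with small prefactors and hence need no convergence rate, whereas $E(u)$, $E(du)$, and $E(\sigma)$ must be estimated at their full rates $\delta$ and $\eta$.
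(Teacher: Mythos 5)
Your proposal is correct and follows essentially the same route as the paper: identify $(K-K_hP_h)f=u-u_h$, $d(K-K_hP_h)f=d(u-u_h)$, $(d^*K-d^*_hK_hP_h)f=\sigma-\sigma_h$, and then feed bounds on the best-approximation errors into Theorem~\ref{impest}, namely $E(u)\le\delta\|f\|$, $E(du)+E(\sigma)\le\eta\|f\|$, and crude $O(\|f\|)$ bounds on the terms carrying small prefactors. The only cosmetic difference is that you bound $E(p)\le\mu\|f\|$ sharply where the paper is content with $E(p)\le\|f\|$; both suffice since $E(p)$ never appears without an adequate prefactor in the estimates being used.
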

\begin{proof}
 Let $f\in W^k$ and set $u=Kf$, $\sigma=d^*Kf$, $p=P_\Hfrak f$, and $u_h=K_hP_hf$,
$\sigma_h=d^*_hK_hP_hf$.
The desired bounds on $(K-K_hP_h)f=u-u_h$, $d(K-K_hP_h)f=d(u-u_h)$, and
$(d^*K-d^*_hK_hP_h)f=\sigma-\sigma_h$ follow from Theorem~\ref{impest}, since
$$
E(u)\le\delta\|f\|,\quad
E(du)+E(\sigma)\le \eta\|f\|,\quad 
E(d\sigma)+E(p)+E(P_\Bfrak u)\le \|f\|.
$$
\end{proof}

We end this section with an estimate of the difference between the true Hodge decomposition
and the discrete Hodge decomposition of an element of $V^k_h$.  While this result
is not needed in our approach, such an estimate was central to the estimation of the
eigenvalue error for the Hodge Laplacian using Whitney forms made in
\cite{dodziuk-patodi}.
\begin{lem}
\label{dodziuk}
Let $P_{\Bfrak}u + P_{\Hfrak}u + P_{\Bfrak^*}u$ and 
$P_{\Bfrak_h}u + P_{\Hfrak_h}u + P_{\Bfrak_h^*}u$ denote the Hodge and discrete
Hodge decompositions of $u \in V_h^k$. Then
\begin{gather*}
\|P_{\Bfrak^*}u - P_{\Bfrak_h^*}u\| +
\|P_{\Hfrak}u - P_{\Hfrak_h}u\| +
\|P_{\Bfrak}u - P_{\Bfrak_h}u\| \le C (\eta +\mu)\| u\|_V.
\end{gather*}
\end{lem}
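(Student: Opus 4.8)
The plan is to exploit the fact that, writing the continuous and discrete Hodge decompositions of $u\in V_h^k$ as $u=P_\Bfrak u+P_\Hfrak u+P_{\Bfrak^*}u=P_{\Bfrak_h}u+P_{\Hfrak_h}u+P_{\Bfrak^*_h}u$, the three differences $P_\Bfrak u-P_{\Bfrak_h}u$, $P_\Hfrak u-P_{\Hfrak_h}u$, $P_{\Bfrak^*}u-P_{\Bfrak^*_h}u$ sum to zero. Hence it suffices to estimate two of them and recover the third by the triangle inequality. Throughout I would use the containments $\Bfrak^k_h\subset\Bfrak^k$ and $\Zfrak^k_h\subset\Zfrak^k$ to see that $P_\Hfrak$ and $P_{\Bfrak^*}$ annihilate the discrete coboundary part $P_{\Bfrak_h}u$, and that any discrete closed form lies in the continuous $\Zfrak^k$.

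First I would bound the $\Bfrak^*$ term using Lemma~\ref{prelim}. Set $v_h=P_{\Bfrak^*_h}u\in\Zfrak^{k\perp}_h$ and $v=P_{\Bfrak^*}v_h$. Since $u-v_h=P_{\Bfrak_h}u+P_{\Hfrak_h}u\in\Zfrak^k_h$, we have $dv_h=du$. The key observation is that $v=P_{\Bfrak^*}v_h=d^*Kdv_h=d^*Kdu=P_{\Bfrak^*}u$, using $P_{\Bfrak^*}=d^*dK$ and $dK=Kd$; that is, the continuous $\Bfrak^*$-projection of the discrete component equals the continuous $\Bfrak^*$-component of $u$ itself. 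Lemma~\ref{prelim} then gives directly
$$
\|P_{\Bfrak^*}u-P_{\Bfrak^*_h}u\|=\|v-v_h\|\le\eta\|dv_h\|=\eta\|du\|\le\eta\|u\|_V.
$$

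Next I would treat the harmonic term. Writing $q_h=P_{\Hfrak_h}u$ and $s_h=P_{\Bfrak^*_h}u$, and applying $P_\Hfrak$ to the discrete decomposition (with $P_\Hfrak P_{\Bfrak_h}u=0$), I obtain the algebraic identity $P_\Hfrak u-P_{\Hfrak_h}u=-P_\Bfrak q_h+P_\Hfrak s_h$, using $q_h=P_\Hfrak q_h+P_\Bfrak q_h$ since $q_h\in\Zfrak^k$. The term $P_\Hfrak s_h$ is already controlled: from $s_h-P_{\Bfrak^*}s_h=P_\Bfrak s_h+P_\Hfrak s_h$ and the previous step, $\|P_\Hfrak s_h\|\le\|s_h-P_{\Bfrak^*}u\|\le\eta\|u\|_V$. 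For $P_\Bfrak q_h$ I would invoke the gap estimate \eqref{gap2}, which for $q_h\in\Hfrak^k_h$ reads $\|P_\Bfrak q_h\|_V\le\|(I-\pi_h)P_\Hfrak q_h\|_V$; here the error form $(I-\pi_h)P_\Hfrak q_h$ is closed (since $\pi_h$ commutes with $d$ and $dP_\Hfrak q_h=0$), so its $V$- and $W$-norms coincide and the right-hand side is at most $\mu\|q_h\|\le\mu\|u\|_V$. Combining yields $\|P_\Hfrak u-P_{\Hfrak_h}u\|\le(\eta+\mu)\|u\|_V$, and then the $\Bfrak$ term follows from the vanishing of the sum of the three differences.

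The step I expect to be the main obstacle is the harmonic term: obtaining the $\mu$ factor (rather than a merely $\mu$-free $V$-bound) requires the algebraic splitting above together with the careful observation that $(I-\pi_h)P_\Hfrak q_h$ is closed, so that the $V$-norm in \eqref{gap2} collapses to the $W$-norm controlled by $\mu$. The $\Bfrak^*$ term, by contrast, is essentially a direct citation of Lemma~\ref{prelim} once one notices the identity $P_{\Bfrak^*}P_{\Bfrak^*_h}u=P_{\Bfrak^*}u$.
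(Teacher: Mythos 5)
Your proof is correct and follows essentially the same route as the paper's: Lemma~\ref{prelim} for the $\Bfrak^*$ term (your identity $P_{\Bfrak^*}P_{\Bfrak^*_h}u=P_{\Bfrak^*}u$ is exactly what the paper uses, there written via $v=P_{\Bfrak^*}v_h$), the same algebraic splitting $P_\Hfrak u-P_{\Hfrak_h}u=-(I-P_\Hfrak)P_{\Hfrak_h}u+P_\Hfrak(P_{\Bfrak^*_h}u-P_{\Bfrak^*}u)$ combined with \eqref{gap2} and the first estimate for the harmonic term, and the triangle inequality for the $\Bfrak$ term. Your explicit remark that $(I-\pi_h)P_\Hfrak q_h$ is closed, so the $V$-norm in \eqref{gap2} collapses to the $W$-norm controlled by $\mu$, is a detail the paper leaves implicit but is exactly the right justification.
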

\begin{proof}
Let $v_h = P_{\Bfrak_h^*}u$ and $v = P_{\Bfrak^*} P_{\Bfrak_h}u = P_{\Bfrak^*} u$.
The first estimate follows immediately from Lemma~\ref{prelim}. To obtain
the second estimate, we write
\begin{equation*}
P_{\Hfrak}u - P_{\Hfrak_h} u = P_{\Hfrak}(P_{\Hfrak_h} u + P_{\Bfrak_h^*}u)
- P_{\Hfrak_h} u = (P_{\Hfrak} -I) P_{\Hfrak_h} u
+ P_{\Hfrak}(P_{\Bfrak_h^*}u - P_{\Bfrak^*}u).
\end{equation*}
The second estimate now follows directly from \eqref{gap2} and the first
estimate and the final estimate follows from the first two by the triangle
inequality.
\end{proof}
Applied to the case of Whitney forms, both $\eta$ and $\mu$ are $O(h)$, and so this
result improves the $O(h |\log h|)$ estimate of \cite[Theorem~2.10]{dodziuk-patodi}.

\subsection{The eigenvalue problem}\label{subsec:eigenv}
The purpose of this section is to study the eigenvalue problem associated to
the abstract Hodge Laplacian \eqref{wfhc}. As in the previous section, we will
assume $(W,d)$ is a Hilbert complex satisfying the compactness property and that the cochain
projections $\pi_h^k$ are bounded in $\Lin(W^k,W^k)$, uniformly in $h$.  A
pair $(\lambda,u) \in \R \x V^k$, where $u \neq 0$, is referred to as an
eigenvalue/eigenvector of the problem \eqref{wfhc} if there exists $(\sigma,p)
\in V^{k-1}\x \Hfrak^k$ such that
\begin{equation}\label{eigenv-pr}
\begin{aligned}
\<\sigma,\tau\> - \<
d \tau,u\> &=0,   &&\tau\in V^{k-1},
\\*
\< d \sigma,v\>
+ \<
d  u,d  v\> + \< v, p \> 
&= \lambda \< u, v \>
&&v\in V^k,
\\*
\< u, q \> &= 0, 
&&q\in \Hfrak^k.
\end{aligned}
\end{equation}
In operator terms, $u = \lambda Ku$, $\sigma = d^*u$, $p = 0$.
Note that it follows from this system that
\[
\lambda \| u \|^2 = \| d u \|^2 + \| d^* u \|^2 > 0,
\]
so $Ku = \lambda^{-1}u$. Since the
operator $K \in \Lin(W^k,W^k)$ is compact and self-adjoint, we can conclude that the problem
\eqref{eigenv-pr} has at most a countable set of eigenvalues, each of finite
multiplicity.  We denote these by
\[
0 < \lambda_1 \le \lambda_2 \le \ldots 
\]
where each eigenvalue is repeated according to its multiplicity.  Furthermore,
when $W^k$ is infinite dimensional, we have $\lim_{j \rightarrow \infty}
\lambda_j = \infty$.  We denote by $\{v_i\}$ a corresponding orthonormal basis
of eigenvectors for $W^k$.

The corresponding discrete eigenvalue problem takes the form
\begin{equation}\label{eigenv-pr-h}
\begin{aligned}
\<\sigma_h,\tau\> - \<
 d \tau,u_h\> &= 0,  & \tau&\in V^{k-1}_h,
\\*
\<  d \sigma_h,v\>
+ \< d  u_h, d  v\> + \< v, p_h\>
&= \lambda_h\< u_h,v\>, &
v&\in V^k_h,
\\* 
\< u_h, q \>  &=0,
&q&\in \Hfrak_h^k,
\end{aligned}
\end{equation}
where $\lambda_h \in \R$, and $(\sigma_h,u_h,p_h) \in V^{k-1}_h \x V^{k}_h \x
\Hfrak_h^k$, with $u_h \neq 0$.  As above, we can conclude that $p_h = 0$,
$\lambda_h >0$, and that $\lambda_h^{-1}$ is an eigenvalue for the operator
$K_h$, i.e. $K_h u_h = \lambda_h^{-1}u_h$.  We denote by
\[
0 < \lambda_{1,h} \le \lambda_{2,h} \le \ldots \le \lambda_{N_h,h},
\]
$N_h=\dim V^k_h$, the eigenvalues of problem \eqref{eigenv-pr-h}, repeated
according to multiplicity, and by $\{v_{i,h}\}$ a corresponding
$W^k$-orthonormal eigenbasis for $V^k_h$.

Next, we will study how the discrete eigenvalue problem \eqref{eigenv-pr-h}
converges to the eigenvalue problem \eqref{eigenv-pr}, i.e., how the
eigenvalues and eigenvectors of the operator $K_h$ converge to those of $K$.
We let $E_i$ and $E_{i,h}$ denote the 1-dimensional spaces spanned by $v_i$
and $v_{i,h}$.  For every positive integer $j$, let $m(j)$ denote the number
of eigenvalues less than or equal to the $j$th \emph{distinct} eigenvalue of
the Hodge--Laplace problem \eqref{eigenv-pr}.  Thus $\sum_{i=1}^{m(j)} E_i$ is
the space spanned by the eigenspaces associated to the first $j$ distinct
eigenvalues and does not depend on the choice of the eigenbasis.

The discrete eigenvalue problem \eqref{eigenv-pr-h} is said to {\em converge}
to the exact eigenvalue problem \eqref{eigenv-pr} if, for any $\epsilon >0$
and integer $j>0$, there exists a mesh parameter $h_0 > 0$ such that, for all
$h \le h_0$, we have
\begin{equation}\label{evalconv}
\max_{1 \le i \le m(j)} |\lambda_i - \lambda_{i,h}| \le \epsilon,
\quad \text{and} \quad 
\gap\Biggl(\sum_{i=1}^{m(j)}E_i, \sum_{i=1}^{m(j)}E_{i,h}\Biggr)
\le \epsilon,
\end{equation}
where the gap between two subspaces $E$ and $F$ of a Hilbert space is
defined by \eqref{gapdef}.  The motivation for this rather strict concept of convergence
is that it not only implies that each eigenvalue is approximated by the
appropriate number of discrete eigenvalues, counting multiplicities, and that
the eigenspace is well-approximated by the corresponding discrete eigenspaces,
but it also rules out spurious discrete eigenvalues and eigenvectors.  In
particular, this rules out the behavior exemplified in Figure~\ref{fg:f4} in
Section~\ref{sec:femethod}.

A key result in the perturbation theory of linear operators is that, for
eigenvalue problems of the form we consider, corresponding to the bounded
compact self-adjoint Hilbert space operators $K$ and $K_h$, convergence of the
eigenvalue approximation holds if the operators $K_hP_h$ converge to $K$ in
$\Lin(W^k,W^k)$.  This result, widely used in the theory of mixed finite
element eigenvalue approximation, essentially follows from the contour
integral representation of the spectral projection, and can be extracted from
\cite[Chapters III, IV]{kato} or \cite[Section~7]{babuska-osborn}.  For a
clear statement, see \cite{boffi-ima}.  In fact, as observed in
\cite{boffi-brezzi-gastaldi}, this operator norm convergence is sufficient as
well as necessary for obtaining convergence of the eigenvalue approximations
in the sense above.  As a consequence of Corollary~\ref{oper-conv}, we therefore
obtain the following theorem.
\begin{thm}
\label{ev-conv}
Let $(V,d)$ be the domain complex of a closed Hilbert complex $(W,d)$ satisfying the compactness property,
and $(V_h,d)$ a family of subcomplexes parametrized by $h$ and admitting
uniformly $W$-bounded cochain projections. Then
the discrete eigenvalue problems \eqref{eigenv-pr-h} 
converge to the problem \eqref{eigenv-pr}, i.e., \eqref{evalconv} holds.
\end{thm}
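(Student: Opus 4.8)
The plan is to leverage the reduction, already carried out above, of the source eigenvalue problems \eqref{eigenv-pr} and \eqref{eigenv-pr-h} to eigenvalue problems for the solution operators $K$ and $K_h$, and then to invoke the abstract spectral perturbation theory cited just before the theorem. Recall that $(\lambda,u)$ solves \eqref{eigenv-pr} (with its associated $\sigma,p$) exactly when $\lambda>0$ and $Ku=\lambda^{-1}u$, and likewise $(\lambda_h,u_h)$ solves \eqref{eigenv-pr-h} exactly when $\lambda_h>0$ and $K_hu_h=\lambda_h^{-1}u_h$. Since $K_h$ maps $V^k_h$ into itself, the nonzero eigenvalues of $K_h$ on $V^k_h$ are precisely the nonzero eigenvalues of $K_hP_h\in\Lin(W^k,W^k)$, and one checks directly that $K_hP_h$ is of finite rank, hence compact, and self-adjoint on $W^k$ (for $f,g\in W^k$ one has $\<K_hP_hf,g\>=\<K_hP_hf,P_hg\>=\<P_hf,K_hP_hg\>=\<f,K_hP_hg\>$, using that $K_h$ is self-adjoint on $V^k_h$), just as $K$ is. Thus everything may be staged among compact self-adjoint operators on the single fixed space $W^k$, and the task reduces to showing that the operator-norm convergence $K_hP_h\to K$ forces the eigenvalue and eigenspace convergence demanded by \eqref{evalconv}.

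The essential analytic input is Corollary~\ref{oper-conv}, which supplies the estimate $\|K-K_hP_h\|_{\Lin(W^k,W^k)}\le C(\eta^2+\delta+\mu)$, whose right-hand side tends to $0$ with $h$. This is where the real work lies: the subcomplex structure, the uniformly $W$-bounded cochain projections, and the compactness property all enter through Theorem~\ref{impest} and its corollary. Once this norm convergence is in hand, the remaining argument is classical and entirely insensitive to the finite element context, so I do not expect a genuine obstacle there; the only points demanding care are the reduction to $K_hP_h$ just described and the correct bookkeeping of multiplicities so as to exclude spurious eigenvalues.

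For this last step I would use the contour-integral representation of the spectral projections. Fixing a distinct-eigenvalue index $j$, I would choose a contour $\Gamma\subset\mathbb C$ enclosing the reciprocals $\lambda_1^{-1},\dots,\lambda_{m(j)}^{-1}$ of the first $j$ distinct Hodge--Laplace eigenvalues and no other spectral point of $K$; this is possible because, $K$ being compact, its nonzero spectrum consists of isolated eigenvalues of finite multiplicity accumulating only at $0$. For $h$ small the spectral projections $E=(2\pi i)^{-1}\oint_\Gamma(z-K)^{-1}\,dz$ and $E_h=(2\pi i)^{-1}\oint_\Gamma(z-K_hP_h)^{-1}\,dz$ are both defined, and since $\Gamma$ stays a fixed positive distance from the spectrum of $K$, the resolvent identity together with $\|K-K_hP_h\|\to0$ gives $\|(z-K)^{-1}-(z-K_hP_h)^{-1}\|\to0$ uniformly on $\Gamma$, whence $\|E-E_h\|\to0$. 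Norm convergence of spectral projections then yields, for $h$ small: that $\operatorname{range}E_h$ has the same dimension $m(j)$ as $\operatorname{range}E$, so exactly the right number of discrete eigenvalues lie inside $\Gamma$ and none are spurious; that the enclosed eigenvalues $\lambda_{i,h}^{-1}$ converge to $\lambda_i^{-1}$, hence $|\lambda_i-\lambda_{i,h}|\to0$ because the $\lambda_i^{-1}$ are bounded away from $0$; and that $\gap(\operatorname{range}E,\operatorname{range}E_h)\to0$. Identifying $\operatorname{range}E$ with $\sum_{i=1}^{m(j)}E_i$ and $\operatorname{range}E_h$ with $\sum_{i=1}^{m(j)}E_{i,h}$ turns the last two conclusions into exactly \eqref{evalconv}, and choosing $h_0$ so small that all the relevant quantities fall below a prescribed $\epsilon$ completes the proof.
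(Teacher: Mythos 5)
Your proposal is correct and takes essentially the same route as the paper: both reduce the mixed eigenvalue problems \eqref{eigenv-pr} and \eqref{eigenv-pr-h} to the compact self-adjoint solution operators $K$ and $K_hP_h$ on $W^k$, obtain operator-norm convergence $\|K-K_hP_h\|_{\Lin(W^k,W^k)}\to 0$ from Corollary~\ref{oper-conv}, and conclude by spectral perturbation theory. The only difference is that you write out the contour-integral argument for spectral projections, whereas the paper cites it (Kato, Babu\v{s}ka--Osborn, Boffi), noting that it ``essentially follows from the contour integral representation of the spectral projection''---which is exactly the argument you supply.
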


It is also possible to use the theory developed above to obtain
rates of convergence for the approximation of eigenvalues, based on the
following result
(Theorem 7.3 of \cite{babuska-osborn}).
Here, we consider only the case of a simple eigenvalue, but  with a small modification
the results extend to eigenvalues of positive multiplicity.
\begin{lem}
\label{b-o-lem}
If $\lambda$ is a simple eigenvalue and $u$ a normalized eigenvector,
then
\begin{equation*}
|\lambda^{-1} - \lambda_h^{-1}| \le C\{|\langle (K-K_hP_h)u,u \rangle| 
+ \|(K-K_hP_h)u\|_V^2\}.
\end{equation*}
\end{lem}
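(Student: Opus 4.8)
The plan is to recognize this as the classical Babuška--Osborn eigenvalue estimate specialized to the self-adjoint pair $T=K$ and $T_h=K_hP_h$, and to reprove it from the basic spectral identity rather than merely quote it. Both operators are compact and self-adjoint on $W^k$: compactness of $K$ was established above and $K_hP_h$ is finite rank, while self-adjointness of $K_hP_h$ follows exactly as for $K$, using that $P_h$ is the $W^k$-orthogonal projection and $K_h$ is self-adjoint on $V^k_h$. By Corollary~\ref{oper-conv} we have $T_h\to T$ in $\Lin(W^k,W^k)$. Writing $\mu=\lambda^{-1}$ and $\mu_h=\lambda_h^{-1}$, we have $Tu=\mu u$ with $\|u\|=1$ and $T_hu_h=\mu_h u_h$ for the normalized discrete eigenvector $u_h$ associated with the eigenvalue converging to $\mu$.

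First I would record the algebraic heart of the argument. Using self-adjointness of $T$ together with the two eigenrelations,
\[
\langle(T-T_h)u_h,u\rangle=\langle Tu_h,u\rangle-\langle T_hu_h,u\rangle=(\mu-\mu_h)\langle u_h,u\rangle,
\]
and splitting $u_h=u+(u_h-u)$ and invoking self-adjointness of $T-T_h$,
\[
\langle(T-T_h)u_h,u\rangle=\langle(T-T_h)u,u\rangle+\langle u_h-u,(T-T_h)u\rangle .
\]
Hence $|\langle(T-T_h)u_h,u\rangle|\le|\langle(T-T_h)u,u\rangle|+\|u_h-u\|\,\|(T-T_h)u\|$, and everything reduces to two quantitative facts for small $h$: a lower bound $\langle u_h,u\rangle\ge c>0$ and the eigenvector estimate $\|u_h-u\|\le C\|(T-T_h)u\|$.

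The main work, and the step I expect to be the real obstacle, is this eigenvector estimate, for which I would use the Riesz spectral-projection machinery. Choose a small circle $\Gamma$ about $\mu$ separating it from the rest of the spectrum of $T$ (possible since $\mu$ is simple), and let $E$, $E_h$ be the Riesz projections of $T$, $T_h$ associated with $\Gamma$. Since $T_h\to T$ in norm, for $h$ small the resolvent $R_z(T_h)=(z-T_h)^{-1}$ is uniformly bounded on $\Gamma$, so $E_h$ has rank one and $\mu_h$ is the unique (simple) eigenvalue inside $\Gamma$. Using $Eu=u$, $R_z(T)u=(z-\mu)^{-1}u$, and the resolvent identity $R_z(T_h)-R_z(T)=R_z(T_h)(T-T_h)R_z(T)$, one obtains
\[
E_hu-u=(E_h-E)u=\frac{1}{2\pi i}\oint_\Gamma(z-\mu)^{-1}R_z(T_h)(T-T_h)u\,dz,
\]
and the uniform resolvent bound gives $\|E_hu-u\|\le C\|(T-T_h)u\|$. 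Normalizing, $u_h=E_hu/\|E_hu\|$ (sign chosen so $\langle u_h,u\rangle>0$) then satisfies $\|u_h-u\|\le C\|(T-T_h)u\|$, and in particular $\langle u_h,u\rangle\to1$, so $\langle u_h,u\rangle\ge 1/2$ for $h$ small.

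Combining these, $|\mu-\mu_h|=|\langle(T-T_h)u_h,u\rangle|/\langle u_h,u\rangle\le 2\bigl(|\langle(T-T_h)u,u\rangle|+C\|(T-T_h)u\|^2\bigr)$, which is precisely the claimed bound in the $W$-norm. Finally, since $(K-K_hP_h)u\in V^k$ and the $W$-norm is dominated by the $V$-norm ($\|w\|\le\|w\|_V$), replacing $\|(T-T_h)u\|^2$ by $\|(T-T_h)u\|_V^2$ only enlarges the right-hand side, yielding the stated estimate.
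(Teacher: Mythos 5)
Your proof is correct, but it takes a different route from the paper in a specific sense: the paper does not prove this lemma at all --- it simply quotes it as Theorem~7.3 of the Babu\v{s}ka--Osborn survey, which is why it is stated without proof. What you have done is reconstruct, essentially from scratch, the classical argument behind that cited result, specialized to the self-adjoint pair $T=K$, $T_h=K_hP_h$: the algebraic identity $(\mu-\mu_h)\langle u_h,u\rangle=\langle(T-T_h)u_h,u\rangle$ coming from self-adjointness, the splitting $u_h=u+(u_h-u)$, and the eigenvector bound $\|u_h-u\|\le C\|(T-T_h)u\|$ obtained from Riesz spectral projections together with the norm convergence $K_hP_h\to K$ supplied by Corollary~\ref{oper-conv}. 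Your version buys self-containedness and makes visible exactly where each hypothesis enters (simplicity of $\lambda$ fixes the contour $\Gamma$ and the resolvent bound; self-adjointness of $K_hP_h$, which you correctly verify using orthogonality of $P_h$; smallness of $h$, which is implicit in the lemma's phrase ``the corresponding discrete eigenvalue''), whereas the paper's citation buys brevity. Three minor points you should tidy but which do not affect correctness: with your convention $R_z(T_h)=(z-T_h)^{-1}$ the resolvent identity reads $R_z(T_h)-R_z(T)=R_z(T_h)(T_h-T)R_z(T)$, i.e.\ your sign is off (harmlessly, since only norms are used); the contour-integral machinery formally requires complexifying the real Hilbert space $W^k$, a routine step worth one sentence; and with $u_h=E_hu/\|E_hu\|$ and $E_h$ an orthogonal projection you get $\langle u_h,u\rangle=\|E_hu\|>0$ automatically, so no sign choice is actually needed.
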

\begin{thm}
\label{ev-basicest}
Let $(V,d)$ be the domain complex of a closed Hilbert complex $(W,d)$ satisfying the compactness property,
and $(V_h,d)$ a family of subcomplexes parametrized by $h$ and admitting
uniformly $W$-bounded cochain projections.
Let $\lambda$ be a simple eigenvalue, $u$ the corresponding eigenvector,
and $\lambda_h$ be the corresponding discrete eigenvalue.  Let $w = Ku$,
$\phi= d^*w$ denote the solution of the mixed formulation of the Hodge Laplace
problem with source term $u$, and let $w_h = K_hP_hu$, $\phi_h = d_h^* w_h$
denote the corresponding discrete solution. Then
\begin{equation*}
|\lambda^{-1} - \lambda_h^{-1}| 
\le C( \|w-w_h\|_V^2 + \|\phi-\phi_h\|^2 + \left| \langle d(\phi-\phi_h),
w-w_h \rangle \right|).
\end{equation*}
\end{thm}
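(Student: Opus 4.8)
The plan is to start from Lemma~\ref{b-o-lem}, which already reduces the eigenvalue error $|\lambda^{-1}-\lambda_h^{-1}|$ to the two quantities $|\langle (K-K_hP_h)u,u\rangle|$ and $\|(K-K_hP_h)u\|_V^2$. Since $w=Ku$ and $w_h=K_hP_hu$, the operator difference applied to $u$ is exactly $(K-K_hP_h)u=w-w_h$, so the second quantity equals $\|w-w_h\|_V^2$, which is already the first term on the right-hand side of the asserted estimate. Everything therefore comes down to bounding the scalar $\langle u,w-w_h\rangle$ by the three allowed quantities.

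The heart of the matter is an exact algebraic identity for $\langle u,w-w_h\rangle$. First I would record the two energy identities $\langle u,w\rangle=\|\phi\|^2+\|dw\|^2$ and $\langle u,w_h\rangle=\|\phi_h\|^2+\|dw_h\|^2$. The first follows by testing the second equation of the mixed formulation \eqref{wfhc} (with source $u$) with $v=w$, using that $p=P_\Hfrak u=0$ because the eigenvector satisfies $u\perp\Hfrak^k$, together with $\langle d\phi,w\rangle=\|\phi\|^2$ obtained by testing the first equation with $\tau=\phi$. The second is the discrete analogue from \eqref{wfhhc} (with source $P_hu$), using $\langle P_hu,w_h\rangle=\langle u,w_h\rangle$, the orthogonality $\langle w_h,p_h\rangle=0$, and $\langle d\phi_h,w_h\rangle=\|\phi_h\|^2$. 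Subtracting gives $\langle u,w-w_h\rangle=(\|\phi\|^2-\|\phi_h\|^2)+(\|dw\|^2-\|dw_h\|^2)$.

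The remaining work is to convert these two differences of squares into the target quantities via Galerkin orthogonality. Testing the second equations of \eqref{wfhc} and \eqref{wfhhc} with $v=w_h\in V^k_h$ and subtracting yields $\langle d(w-w_h),dw_h\rangle=-\langle d(\phi-\phi_h),w_h\rangle$; testing the first equations with $\tau=\phi_h\in V^{k-1}_h$ and subtracting yields $\langle\phi_h,\phi-\phi_h\rangle=\langle d\phi_h,w-w_h\rangle$. Expanding each difference of squares about $w-w_h$ and $\phi-\phi_h$, inserting these two relations, and using the adjoint identity $\langle d(\phi-\phi_h),w\rangle=\langle\phi-\phi_h,d^*w\rangle=\langle\phi-\phi_h,\phi\rangle$ (valid since $w\in V^*_k$ with $d^*w=\phi$), I expect everything to collapse to the single identity $\langle u,w-w_h\rangle=\|d(w-w_h)\|^2-\|\phi-\phi_h\|^2+2\langle d(\phi-\phi_h),w-w_h\rangle$. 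Taking absolute values and bounding $\|d(w-w_h)\|\le\|w-w_h\|_V$ then controls $|\langle u,w-w_h\rangle|$ by the three allowed terms, and combining with Lemma~\ref{b-o-lem} completes the proof.

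The only genuinely delicate point is the bookkeeping in the third paragraph: one must consistently use the discrete coderivative $\phi_h=d^*_hw_h$ rather than $d^*w_h$, since $w_h$ need not lie in the domain of the true $d^*$, and one must keep careful track of the projected source $P_hu$ and of the vanishing terms $p=0$ and $\langle w_h,p_h\rangle=0$. Once the two Galerkin relations are in hand, the collapse to the stated identity is a routine, if slightly lengthy, manipulation of inner products; I do not anticipate any analytic difficulty beyond this algebra, and in fact the exact identity is stronger than needed, so even a lossy bound $|A+B|\le|\langle d(\phi-\phi_h),w-w_h\rangle|+\|\phi-\phi_h\|^2$ on the cross terms would suffice.
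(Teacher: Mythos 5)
Your proposal is correct and takes essentially the same route as the paper's own proof: both reduce via Lemma~\ref{b-o-lem} to the exact identity $\langle u, w-w_h\rangle = \|d(w-w_h)\|^2 - \|\phi-\phi_h\|^2 + 2\langle d(\phi-\phi_h), w-w_h\rangle$, established from the energy identities, the Galerkin orthogonality of the second equations tested with $v=w_h$ (where, as you note, $\langle w_h,p_h\rangle=0$ disposes of the discrete harmonic term), and the adjoint identity $\langle \phi-\phi_h,\phi\rangle = \langle d(\phi-\phi_h),w\rangle$. The only cosmetic difference is that you obtain the energy identities by testing the mixed formulations, whereas the paper writes $\langle Ku,u\rangle = \langle w,(d^*d+dd^*)w\rangle$ directly; the algebra collapses exactly as you anticipate.
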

\begin{proof}
To estimate the first term in Lemma~\ref{b-o-lem}, we write
\begin{align*}
\langle (K-K_hP_h)u&,u\rangle = \langle Ku,u \rangle 
- \langle K_h P_h u, P_h u \rangle
\\
&=\langle w, (d^* d + dd^*) w \rangle - 
\langle w_h, (d_h^*d  + d d_h^*)w_h \rangle 
\\
&=\|dw\|^2 + \|\phi\|^2 - \|d w_h\|^2 - \|\phi_h\|^2
\\
&= \|d(w-w_h)\|^2 + 2 \langle d(w-w_h), d w_h \rangle
- \|\phi-\phi_h\|^2 + 2  \langle \phi- \phi_h, \phi \rangle
\\
&= \|d(w-w_h)\|^2 - \|\phi-\phi_h\|^2 - 2 \langle d(\phi- \phi_h), w_h \rangle
+ 2 \langle d(\phi- \phi_h), w \rangle
\\
&= \|d(w-w_h)\|^2 - \|\phi-\phi_h\|^2 
+ 2 \langle d(\phi- \phi_h), w -w_h \rangle,
\end{align*}
where we have used the orthogonality condition
\begin{equation*}
\langle d (\phi - \phi_{h}),v\rangle
+ \langle d  (w - w_{h}), d  v\rangle = 0, \quad v \in V^k_h
\end{equation*}
in the second last line above.  The theorem then follows from this estimate
and the fact that $\|(K-K_hP_h)u\|_V^2 = \|w-w_h\|_V^2 $.
\end{proof}

Order of convergence estimates now follow directly from
Theorem~\ref{impest}, with $f$ replaced by $u$, $u$ replaced by $w$,
and $\sigma$ replaced by $\phi$.  In the example following
Theorem~\ref{impest}, i.e., $V^{k-1}_h=\P_{r+1}\Lambda^{k-1}(\T_h)$
and $V^k_h=\P_r\Lambda^k(\T_h)$,
we saw that
\begin{equation*}
\|\phi-\phi_h\| + h \|d(\phi-\phi_h)\| + h \|w-w_h\|
+ h^2 \|d(w-w_h)\|=O(h^{r+2}),
\end{equation*}
as long as the domain is convex and the solution $w=Ku$ sufficiently smooth.  Inserting these
results into Theorem~\ref{ev-basicest}, we find that
the eigenvalue error
$|\lambda-\lambda_h|=O(h^{2r})$, double the rate achieved for the source
problem.  As another example, one can check that for the Whitney forms,
$V^{k-1}_h=\P_1^-\Lambda^{k-1}(\T_h)$,
$V^k_h=\P_1^-\Lambda^k(\T_h)$ we get $|\lambda-\lambda_h|=O(h^2)$, improving on
the $O(h|\log h|)$ estimate of \cite{dodziuk-patodi}.

\begin{remark}
It has long been observed that for mixed finite element approximation of
eigenvalue problems, stability and approximability alone, while sufficient for
convergence of approximations of the source problem, is not sufficient for
convergence of the eigenvalue problem.  An extensive literature developed in
order to obtain eigenvalue convergence, and a wide variety of additional
properties of the finite element spaces have been defined and hypothesized.
In particular, the first convergence results for the the important case of
electromagnetic eigenvalue problems, were obtained by Kikuchi based on the
\emph{discrete compactness property} \cite{kikuchi,boffi-dcp}, and the more
recent approach by Boffi and collaborators emphasize the Fortid property
\cite{boffi-ima}.
In our context, the discrete compactness property says that whenever $v_h\in\Zfrak^{k\perp}_h$
is uniformly bounded in $V^k$, then there exists a sequence $h_i$ converging to
zero such that $v_{h_i}$ converges in $W^k$, and the Fortid property says that
$\lim_{h\to0}\|I-\pi^k_h\|_{\Lin(V^k\cap V^*_k,W^k)}=0$.
These additional properties play no role in the theory as
presented here.  We prove eigenvalue convergence under the same sort of
assumptions we use to establish stability and convergence for the source
problem, namely subcomplexes, bounded cochain projections, and
approximability, but we require boundedness in $W$ for eigenvalue convergence, while
for stability we only require boundedness in $V$.
We believe that subcomplexes with bounded cochain projections provide
an appropriate framework for the
analysis of the eigenvalue problem, since, as far as we know, these properties
hold in all examples where eigenvalue convergence has been obtained by other
methods.  Moreover, it is easy to show that the discrete compactness property and
Fortid property hold whenever there exist $W$-bounded cochain projections.
\end{remark}

\subsubsection{Related eigenvalue problems}\label{related-eigenv}
Recall that the source problem for the Hodge Laplacian, given by \eqref{wfhc},
can be decomposed into the problems \eqref{bfrak*} and \eqref{bfrak}, referred
to as the $\Bfrak^*$ and the $\Bfrak$ problem, respectively. More precisely,
these problems arise if the right hand side $f$ of \eqref{wfhc} is in
$\Bfrak_k^*$ or $\Bfrak^k$, respectively. In a similar way the eigenvalue
problem \eqref{eigenv-pr} can be decomposed into a $\Bfrak^*$ and the $\Bfrak$
problem. To see this, note if $(\lambda,u) \in \R \x V^k$ is an
eigenvalue/eigenvector for \eqref{eigenv-pr}, then $u \in \Hfrak^{k\perp}$,
and therefore $u = u_\Bfrak + u_\perp$, where $u_\Bfrak \in \Bfrak^k$ and $u_\perp \in
\Bfrak_k^* \cap V^k = \Zfrak^{k\perp}$. It is straightforward to check that
$u_\perp$ satisfies the $\Bfrak_k^*$ eigenvalue problem given by
\begin{equation}\label{b*eigenv}
\< du_\perp,dv \> = \lambda \<u_\perp, v\>, \quad v \in \Zfrak^{k\perp}.
\end{equation}
Here $u_\perp$ can be equal to zero even if $u \neq 0$ is an eigenvector.
On the other hand, if the pair $(\lambda,u_\perp) \in \R \x \Zfrak^{k\perp}$ is
an eigenvalue/eigenvector for \eqref{b*eigenv}, then $(\lambda,u_\perp)$ is
also an eigenvalue/eigenvector for \eqref{eigenv-pr}, where $\sigma$
and $p$ are both zero. Furthermore, for
the extended eigenvalue problem
\begin{equation}\label{b*eigenv-ext}
\< du_\perp,dv \> = \lambda \<u_\perp, v\>, \quad v \in V^k,
\end{equation}
where $u_\perp$ is sought in $V^k$, the nonzero eigenvalues correspond precisely
to the eigenvalues of \eqref{b*eigenv}, while the eigenvalue $\lambda = 0$ has
the eigenspace $\Zfrak^k$.

The pair $(\lambda,u_\Bfrak)$ satisfies the corresponding $\Bfrak^k$ eigenvalue
problem given by
\begin{equation}\label{b-eigenv}
\begin{aligned}
\<\sigma,\tau\> - \<
d \tau,u_\Bfrak\> &=0,   &&\tau\in V^{k-1},
\\*
\< d \sigma,v\>
&= \lambda \< u_\Bfrak, v \>
&&v\in \Bfrak^k,
\end{aligned}
\end{equation}
where $\sigma = d^*u_\Bfrak \in \Bfrak_{k-1}^* \cap V^{k-1} =
\Zfrak^{(k-1)\perp}$. Furthermore, any solution $(\lambda,u_\Bfrak)$ of
\eqref{b-eigenv}, where $u_\Bfrak \neq 0$, corresponds to an eigenvalue/eigenvector
of the full Hodge Laplacian \eqref{eigenv-pr}. In fact, any eigenvalue of
\eqref{eigenv-pr} is an eigenvalue of either the $\Bfrak^*$ or $\Bfrak$
problem, or both, and all eigenvalues of the $\Bfrak^*$ and $\Bfrak$ problems
correspond to an eigenvalue of \eqref{eigenv-pr}.  In short, \emph{the
eigenvalue problem \eqref{eigenv-pr} can be decomposed into the two problems
\eqref{b*eigenv} and \eqref{b-eigenv}.}

Also, note that if $(\lambda,u_\Bfrak)$ is an eigenvalue/eigenvector for the
$\Bfrak^k$ problem \eqref{b-eigenv}, and $\sigma = d^*u_\Bfrak \in
\Zfrak^{(k-1)\perp}$, then $\sigma \neq 0$, and by taking $v = d\tau$ in
\eqref{b-eigenv}, we obtain
\begin{equation}\label{b*eigenv_k-1}
\< d\sigma,d\tau \> = \lambda \<\sigma, \tau\>, \quad \tau \in
\Zfrak^{(k-1)\perp},
\end{equation}
which is a $\Bfrak_{k-1}^*$ problem. Hence, we conclude that any eigenvalue of
the $\Bfrak^k$ problem is an eigenvalue of the $\Bfrak_{k-1}^*$ problem. The
converse is also true.  To see this, let $(\lambda,\sigma) \in \R \x
\Zfrak^{(k-1)\perp}$, $\sigma \neq 0$, be a solution of \eqref{b*eigenv_k-1},
and define $u_\Bfrak \in \Bfrak^k$ such that $d^*u_\Bfrak = \sigma$. Since $d^* :
\Bfrak^k \to \Zfrak^{(k-1)\perp}$ is a bijection, this determines $u_\Bfrak$
uniquely, and $(\lambda,u_\Bfrak)$ is an eigenvalue/eigenvector for the $\Bfrak^k$
problem \eqref{b-eigenv}.  Therefore, \emph{the two problems \eqref{b-eigenv}
and \eqref{b*eigenv_k-1} are equivalent,} with the same eigenvalues, and with
eigenvectors related by $d^*u_\Bfrak = \sigma$.  In particular, \emph{the
eigenvalues of the full Hodge Laplace problem \eqref{eigenv-pr} correspond
precisely to the eigenvalues of the $\Bfrak_{k-1}^*$ and $\Bfrak_k^*$
problems.}

In the special case of the de Rham complex, the $\Bfrak_1^*$ problem is
closely related to Maxwell's equations.  Discretizations of this problem have
therefore been intensively studied in the literature (e.g., see
\cite{boffi-dcp} and the references therein). However, it is usually not
straightforward to compute the eigenvalues and eigenvectors of the discrete
$\Bfrak_k^*$ problem from the formulation \eqref{b*eigenv} since we do not, a
priori, have a basis for the corresponding space $\Zfrak_h^{k\perp} \subset
V_h^k$ available. Usually, we will have only have a basis for the space
$V_h^k$ at our disposal, and a direct computation of a basis for
$\Zfrak_h^{k\perp}$ from this basis is costly. A better alternative is
therefore to solve the discrete version of the extended eigenvalue problem
\eqref{b*eigenv-ext}, and observe that the positive eigenvalues, and the
corresponding eigenvectors, are precisely the solutions of the corresponding
problem \eqref{b*eigenv}. Alternatively, we can solve the full Hodge Laplace
problem \eqref{eigenv-pr-h}, and then throw away all eigenvalues corresponding
to $\sigma_h \neq 0$.

\section{Exterior calculus and the de Rham complex}
\label{sec:deRham}
We next turn to the most important example of the preceding theory, in which
the Hilbert complex is the de~Rham complex associated to a bounded domain
$\Omega$ in $\R^n$.  We begin by a quick review of basic notions from exterior
calculus.  Details can be found in many references, e.g.,
\cite{acta,Arnold,Bott-Tu,Federer,Janich,Lang,Taylor}.

\subsection{Basic notions from exterior calculus}
For a vector space $V$ and a non-negative integer $k$, we denote by $\Alt^kV$
the space of real-valued $k$-linear forms on $V$.  If $\dim V=n$, then
$\dim\Alt^kV=\binom{n}{k}$.  The wedge product
$\omega\wedge\eta\in\Alt^{j+k}V$ of $\omega\in\Alt^jV$ and $\eta\in\Alt^kV$ is
given by
\begin{equation*}
(\omega\wedge\eta)(v_1,\ldots,v_{j+k})
=\sum_\sigma
(\sign\sigma)\omega(v_{\sigma(1)},\ldots,v_{\sigma(j)})
\eta(v_{\sigma(j+1)},\ldots,v_{\sigma(j+k)}),
\end{equation*}
where the sum is over all permutations $\sigma$ of $\{1,\ldots,j+k\}$, for
which $\sigma(1)<\sigma(2)<\cdots<\sigma(j)$ and
$\sigma(j+1)<\sigma(j+2)<\cdots<\sigma(j+k)$.  An inner product on $V$ induces
an inner product on $\Alt^kV$:
\begin{equation*}
\< \omega,\eta\> = \sum_\sigma \omega(e_{\sigma(1)},\ldots,
e_{\sigma(k)})\eta(e_{\sigma(1)},\ldots,e_{\sigma(k)}),
\quad \omega,\eta\in\Alt^kV,
\end{equation*}
where the sum is over increasing sequences
$\sigma:\{1,\ldots,k\}\to\{1,\ldots,n\}$ and $e_1,\ldots$, $e_n$ is any
orthonormal basis (the right-hand side being independent of the choice of
orthonormal basis).  If we orient $V$ by assigning positive orientation to
some particular ordered basis (thereby assigning a positive or negative orientation
to all ordered bases, according to the determinant of the change of
basis transformation), then we may define a unique \emph{volume form} $\vol$
in $\Alt^n V$, $n=\dim V$, characterized by $\vol(e_1,\ldots,e_n)=1$ for any
positively oriented ordered orthonormal basis $e_1,\ldots,e_n$.  The
\emph{Hodge star} operator is an isometry of $\Alt^k V$ onto $\Alt^{n-k} V$
given by
\begin{equation*}
 \omega\wedge\mu=\< \star\omega,\mu\>\vol,
\quad \omega\in\Alt^k V, \ \mu\in\Alt^{n-k} V.
\end{equation*}

Given a smooth manifold $\Omega$, possibly with boundary, a \emph{differential
$k$-form} $\omega$ is a section of the $k$-alternating bundle, i.e., a map
which assigns to each $x\in\Omega$ an element $\omega_x\in\Alt^kT_x\Omega$
where $T_x\Omega$ denotes the tangent space to $\Omega$ at $x$.  We write
$C^m\Lambda^k(\Omega)$ for the space of $m$ times continuously differentiable
$k$-forms, i.e., forms for which
\begin{equation*}
 x \mapsto \omega_x(v_1(x),\ldots,v_n(x))
\end{equation*}
belongs to $C^m(\Omega)$ for any smooth vector fields $v_i$. Similarly, we may
define $C^\infty\Lambda^k$, $C^\infty_c\Lambda^k$ (smooth forms with compact
support contained in the interior of $\Omega$), etc.  If $\Omega$ is a
Riemannian manifold, and so has a measure defined on it, we may similarly
define Lebesgue spaces $L_p\Lambda^k$ and Sobolev spaces $W^m_p\Lambda^k$ and
$H^m\Lambda^k=W^m_2\Lambda^k$.  The spaces $H^m\Lambda^k$ are Hilbert spaces.
In particular, the inner product in $L_2\Lambda^k=H^0\Lambda^k$ is given by
\begin{equation*}
 \langle\omega,\eta\rangle = \langle\omega,\eta\rangle_{L^2\Lambda^k} 
= \int_\Omega \< \omega_x,\eta_x\>\, dx.
\end{equation*}
We also write $\Lambda^k$ or $\Lambda^k(\Omega)$ for the space of all smooth
differential $k$-forms, or at least sufficiently smooth, as demanded by the
context.

For any smooth manifold $\Omega$ and $\omega\in \Lambda^k(\Omega)$, the
\emph{exterior derivative} $d\omega$ is a $(k+1)$-form, which itself has
vanishing exterior derivative: $d(d\omega)=0$.  On an oriented $n$-dimensional
piecewise smooth manifold, a differential $n$-form (with, e.g., compact
support) can be integrated, without recourse to a measure or metric.

A smooth map $\phi:\Omega'\to\Omega$ between manifolds, induces a pullback of
differential forms from $\Omega$ to $\Omega'$.  Namely, if
$\omega\in\Lambda^k(\Omega)$, the \emph{pullback}
$\phi^*\omega\in\Lambda^k(\Omega')$ is defined by
\begin{equation*}
(\phi^*\omega)_x(v_1,\ldots,v_k)=
\omega_{\phi(x)}\bigl(D\phi_x(v_1),\ldots,D\phi_x(v_k)\bigr),
\quad x\in\Omega',\ v_1,\ldots,v_k\in T_x\Omega'.
\end{equation*}
The pullback respects exterior products and exterior derivatives:
\begin{equation*}
 \phi^*(\omega\wedge\eta)=\phi^*\omega\wedge\phi^*\eta, \quad
 \phi^*(d\omega)=d\phi^*\omega.
\end{equation*}
If $\phi$ is an orientation-preserving diffeomorphism of oriented
$n$-dimensional manifolds, and $\omega$ is an $n$-form on $\Omega$, then
\begin{equation*}
 \int_{\Omega'} \phi^*\omega = \int_{\Omega} \omega.
\end{equation*}
If $\Omega'$ is a submanifold of $\Omega$, then the pullback of the inclusion
map is the \emph{trace} map $\tr_{\Omega,\Omega'}$, written simply
$\tr_{\Omega'}$ or $\tr$ when the manifolds can be inferred from context.  We
recall the trace theorem which states that if $\Omega'$ is a submanifold of
codimension $1$, then the trace map extends to a bounded operator from
$H^1\Lambda^k(\Omega)$ to $L^2\Lambda^k(\Omega')$, or, more precisely, to a
bounded surjection of $H^1\Lambda^k(\Omega)$ onto
$H^{1/2}\Lambda^k(\Omega')$. A particularly important situation is when
$\Omega'=\partial\Omega$, in which case Stokes's theorem relates the
integrals of the exterior derivative and trace of an $(n-1)$-form $\omega$ on
an oriented $n$-dimensional manifold-with-boundary $\Omega$:
\begin{equation*}
\int_\Omega d\omega = \int_{\partial\Omega}\tr \omega.
\end{equation*}
(A common abuse of notation is to write $\int_{\partial\Omega}\omega$ for the
$\int_{\partial\Omega}\tr\omega$.)  Applying Stokes's theorem to the
differential form $\omega\wedge\eta$ with $\omega\in\Lambda^{k-1}$,
$\eta\in\Lambda^{n-k}$, and using the Leibniz rule
$d(\omega\wedge\eta)=d\omega\wedge\eta+(-1)^{k-1}\omega\wedge d\eta$, we
obtain the integration-by-parts formula for differential forms:
\begin{equation}\label{ibp1}
\int_\Omega d\omega\wedge\eta = (-1)^k\int_\Omega \omega\wedge d\eta +
\int_{\partial\Omega}\tr\omega\wedge\tr\eta, \quad 
\omega\in\Lambda^{k-1},\quad\eta\in\Lambda^{n-k}.
\end{equation}

On an oriented $n$-dimensional Riemannian manifold, there is a volume form
$\vol\in\Lambda^n(\Omega)$ which at each point $x$ of the manifold is equal to
the volume form on $T_x\Omega$.  Consequently, the Hodge star operation takes
$\omega\in\Lambda^k(\Omega)$ to $\star\omega\in\Lambda^{n-k}(\Omega)$
satisfying
\begin{equation*}
 \int_\Omega \omega\wedge\mu=\< \star\omega,\mu\>_{L^2\Lambda^{n-k}},
\end{equation*}
for all $\mu\in\Lambda^{n-k}(\Omega)$.  Introducing the coderivative operator
$\delta:\Lambda^k\to\Lambda^{k-1}$ defined by
\begin{equation}\label{defdelta}
 \star\delta\omega=(-1)^kd\star\omega,
\end{equation}
and setting $\eta = \star \mu$, the integration-by-parts formula becomes
\begin{equation}\label{ibp2}
 \langle d\omega,\mu\rangle = \langle \omega,\delta\mu\rangle
+\int_{\partial\Omega}
\tr\omega\wedge\tr\star\mu,\quad \omega\in\Lambda^{k-1},\ \mu\in\Lambda^k.
\end{equation}
Since the Hodge star operator is smooth and an isometry at every
point, every property of $k$-forms yields a corresponding property for
$n-k$ forms.  For example, the Hodge star operator maps the spaces
$C^m\Lambda^k$, $W^m_p\Lambda^k$, etc., isometrically onto
$C^m\Lambda^{n-k}$, $W^m_p\Lambda^{n-k}$, etc.  By definition,
$\delta:\Lambda^{n-k}\to\Lambda^{n-k-1}$ corresponds to
$d:\Lambda^k\to\Lambda^{k+1}$ via the Hodge star isomorphism.  Thus
for every property of $d$ there is a corresponding property of
$\delta$.

In case $\Omega$ is a domain in $\R^n$, we may (but usually will not) use the
standard coordinates of $\R^n$ to write a $k$-form as
\begin{equation*}
\omega = \sum_{1\le\sigma_1<\cdots<\sigma_k\le n} a_\sigma \,
d x_{\sigma_1}\wedge\cdots\wedge d x_{\sigma_k},
\end{equation*}
where the $a_\sigma\in L^2(\Omega)$ and $d x_j:\R^n\to\R$ is the linear form
which associates to a vector its $j$th coordinate.  In this case, the exterior
derivative is given by the simple formula
\begin{equation*}
 d(a_{\sigma}\,
d x_{\sigma_1}\wedge\cdots\wedge d x_{\sigma_k}) = \sum_{j=1}^n
\frac{\partial a_\sigma}{\partial x_j}\,dx_j\wedge dx_{\sigma_1}\wedge
\cdots\wedge d x_{\sigma_k},
\end{equation*}
extended by linearity to a sum of such terms.  If $\Omega$ is a domain in
$\R^n$, then $\int_\Omega a\, d x_1\wedge\cdots\wedge d x_{\sigma_n})$, has
the value suggested by the notation.  In the case of a domain in $\R^n$, the
volume form is simply $dx_1\wedge\cdots\wedge dx_n$.  We remark that the
exterior derivative and integral of differential forms can be computed on
arbitrary manifolds from the formulas on subdomains on $\R^n$ and pullbacks
through charts.

\subsection{The de~Rham complex as a Hilbert complex}
\label{subsec:deRcHc}
Henceforth we restrict attention to the case that $\Omega$ is a bounded domain in $\R^n$
with a piecewise smooth, Lipschitz boundary.
In this section we show that the de~Rham complex is a Hilbert
complex which satisfies the compactness property, and so the abstract theory of
Section~\ref{sec:HCA} applies.  We then interpret the results in the case of the de~Rham complex.

We begin by indicating how the exterior derivative $d$ can be viewed as a closed
densely-defined operator from $W^k=L^2\Lambda^k(\Omega)$ to
$W^{k+1}=L^2\Lambda^{k+1}(\Omega)$. 
Let $\omega\in L^2\Lambda^k(\Omega)$.  In view of \eqref{ibp2}, we say that
$\eta\in L^2\Lambda^{k+1}(\Omega)$ is the weak exterior derivative of $\omega$
if
\begin{equation*}
 \langle \omega,\delta\mu\rangle=\langle \eta,\mu\rangle,\quad 
\mu\in C^\infty_c\Lambda^{k+1}.
\end{equation*}
The weak exterior derivative of $\omega$, if one exists, is unique and we
denote it by $d\omega$. In analogy with the definition of Sobolev spaces (cf.,
e.g., \cite[Section~5.2.2]{evans}), we define $H\Lambda^k$ to be the space of
forms in $L^2\Lambda^k$ with a weak derivative in $L^2\Lambda^{k+1}$.  With the inner product
\begin{equation*}
 \langle\omega,\eta\rangle_{H\Lambda^k}
:= \langle\omega,\eta\rangle_{L^2\Lambda^k} 
+ \langle d\omega,d\eta\rangle_{L^2\Lambda^{k+1}},
\end{equation*}
this is easily seen to be a Hilbert space
and clearly $d$ is a bounded map from $H\Lambda^k$ to $L^2\Lambda^{k+1}$.  A
standard smoothing argument, as in \cite[Section~5.3]{evans}, implies that
$C^\infty\Lambda^k(\bar\Omega)$ is dense in $H\Lambda^k$.  We take
$H\Lambda^k(\Omega)$ as the domain $V^k$ of the exterior derivative, which is
thus densely defined in $L^2\Lambda^k(\Omega)$.  Since $H\Lambda^k$ is
complete, $d$ is a closed operator.

Thus the spaces $L^2\Lambda^k(\Omega)$ and the exterior derivative operators
$d$ form a Hilbert complex with the associated domain complex
\begin{equation}\label{drl2}
0\to H\Lambda^0(\Omega) \xrightarrow{d} H\Lambda^1(\Omega) 
\xrightarrow{d} \cdots \xrightarrow{d}
H\Lambda^n(\Omega)\to 0.
\end{equation}
This is the $L^2$ de~Rham complex. We shall see below that it satisfies the
compactness property.

To proceed, we need to identify the adjoint operator $d^*$ and its
domain $V^*_k$.  Using the surjectivity of the trace operator from
$H^1\Lambda^l(\Omega)$ onto $H^{1/2}\Lambda^l(\partial\Omega)$ and the
integration-by-parts formula \eqref{ibp1}, we can show that (cf.,
\cite[page 19]{acta}) the trace operator extends boundedly from
$H\Lambda^k(\Omega)$ to $H^{-1/2}\Lambda^k(\partial\Omega)$, and that
\eqref{ibp1} holds for $\omega\in H\Lambda^{k-1}$, $\eta\in
H^1\Lambda^{n-k}$ where the integral on the boundary is interpreted
via the pairing of $H^{-1/2}(\partial\Omega)$ and
$H^{1/2}(\partial\Omega)$.  Equivalently, we have an extended version
of \eqref{ibp2}:
\begin{equation}\label{ibp2a}
 \langle d\omega,\mu\rangle = \langle \omega,\delta\mu\rangle
+\int_{\partial\Omega}
\tr\omega\wedge\tr\star\mu,\quad \omega\in H\Lambda^{k-1},
\ \mu\in H^1\Lambda^k.
\end{equation}
Of course, there is a corresponding result obtained by the Hodge star
isomorphism which interchanges $d$ and $\delta$.  After reindexing, this is
nothing but the fact that \eqref{ibp2a} holds also for $\omega\in
H^1\Lambda^{k-1},\ \mu\in H^*\Lambda^k$ where
\begin{equation}\label{Hstar}
 H^*\Lambda^k:= \star (H\Lambda^{n-k}).
\end{equation}
Note that $H^*\Lambda^k$ consists of those differential form in $L^2\Lambda^k$
for which a weak coderivative exists in $L^2\Lambda^{k-1}$, where the weak
exterior coderivative is defined in exact analogy to the weak exterior
derivative.  Its inner product is
\begin{equation*}
 \langle\omega,\eta\rangle_{H^*\Lambda^k}
:= \langle\omega,\eta\rangle_{L^2\Lambda^k} 
+ \langle \delta\omega,\delta\eta\rangle_{L^2\Lambda^{k-1}}.
\end{equation*}
The space $H^*\Lambda^k$ is isometric to $H\Lambda^{n-k}$ via the Hodge star,
but is quite different from $H\Lambda^k$.

We also make use of the trace defined on $H\Lambda^k$ to define the subspace
with vanishing trace:
\begin{equation*}
 \0H\Lambda^k(\Omega)= \{\,\omega\in H\Lambda^k(\Omega) \,|\, 
\tr_{\partial\Omega}\omega=0\,\}.
\end{equation*}
Correspondingly, for $\omega\in H^*\Lambda^k$, the quantity $\tr\star\omega$ is
well defined, and we have
\begin{equation}\label{H0star}
 \0H^*\Lambda^k(\Omega) := \star \0H\Lambda^{n-k} 
= \{\,\omega\in H^*\Lambda^k(\Omega) \,|\, 
\tr_{\partial\Omega}\star\omega=0\,\}.
\end{equation}

From \eqref{ibp2a}, we have
\begin{equation}\label{ibp3}
 \langle d\omega,\mu\rangle = \langle \omega,\delta\mu\rangle,
\quad \omega\in H\Lambda^{k-1},\ \mu\in\0H^*\Lambda^k.
\end{equation}
(We certainly have \eqref{ibp3} with the stronger condition $\omega\in
H^1\Lambda^{k-1}$, but then we can extend to all $\omega\in H\Lambda^{k-1}$ by
continuity and density.)  Of course, the corresponding result, where
$\omega\in \0H\Lambda^{k-1},\ \mu\in H^*\Lambda^k$, holds as well.
\begin{thm}  Let $d$ be the exterior derivative viewed as an unbounded operator
$L^2\Lambda^{k-1}\to L^2\Lambda^k$ with domain $H\Lambda^k$.  Then the adjoint
$d^*$, as an unbounded operator $L^2\Lambda^k\to L^2\Lambda^{k-1}$, has as its
domain $\0H^*\Lambda^k$, and coincides with the operator $\delta$ defined in
\eqref{defdelta}.
\end{thm}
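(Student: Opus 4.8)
The plan is to verify the two set inclusions separately, working directly from the abstract definition of the adjoint recalled in Section~\ref{subsubsec:closed}: here $d$ is the exterior derivative with domain $H\Lambda^{k-1}$, so a form $\mu\in L^2\Lambda^k$ lies in the domain of $d^*$ with $d^*\mu=\phi$ precisely when $\langle\mu,d\omega\rangle=\langle\phi,\omega\rangle$ for every $\omega\in H\Lambda^{k-1}$. One inclusion is immediate and the other carries the real content.

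First I would establish $\0H^*\Lambda^k\subseteq\operatorname{dom}(d^*)$ together with the identity $d^*=\delta$ on that subspace. This follows at once from the integration-by-parts formula \eqref{ibp3}: for $\mu\in\0H^*\Lambda^k$ and any $\omega\in H\Lambda^{k-1}$ one has $\langle d\omega,\mu\rangle=\langle\omega,\delta\mu\rangle$, which (using symmetry of the real $L^2$ inner product) is exactly the defining relation for $\mu$ to belong to $\operatorname{dom}(d^*)$ with $d^*\mu=\delta\mu$.

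The substance lies in the reverse inclusion $\operatorname{dom}(d^*)\subseteq\0H^*\Lambda^k$. Suppose $\mu\in\operatorname{dom}(d^*)$ and set $\phi=d^*\mu\in L^2\Lambda^{k-1}$, so that $\langle\mu,d\omega\rangle=\langle\phi,\omega\rangle$ for all $\omega\in H\Lambda^{k-1}$. Restricting the test forms to $C^\infty_c\Lambda^{k-1}\subset H\Lambda^{k-1}$, this relation is precisely the statement that $\phi$ is the weak coderivative of $\mu$; hence $\mu\in H^*\Lambda^k$ and $\delta\mu=\phi$. It then remains only to produce the boundary condition $\tr\star\mu=0$. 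For this I would invoke the Hodge-star dual of the extended integration-by-parts formula \eqref{ibp2a}, which is now legitimately applicable since $\mu\in H^*\Lambda^k$: for every $\omega\in H^1\Lambda^{k-1}$,
$$
\langle d\omega,\mu\rangle=\langle\omega,\delta\mu\rangle+\int_{\partial\Omega}\tr\omega\wedge\tr\star\mu.
$$
Comparing this with the adjoint relation $\langle d\omega,\mu\rangle=\langle\phi,\omega\rangle=\langle\omega,\delta\mu\rangle$, valid because $H^1\Lambda^{k-1}\subset H\Lambda^{k-1}$, forces the boundary integral to vanish for every $\omega\in H^1\Lambda^{k-1}$.

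Finally I would deduce $\tr\star\mu=0$ from this vanishing. The trace is a bounded surjection of $H^1\Lambda^{k-1}(\Omega)$ onto $H^{1/2}\Lambda^{k-1}(\partial\Omega)$, so $\tr\omega$ ranges over all of $H^{1/2}\Lambda^{k-1}(\partial\Omega)$, while $\tr\star\mu\in H^{-1/2}\Lambda^{n-k}(\partial\Omega)$; the boundary integral is exactly the wedge-product duality pairing of these two spaces, which have complementary degree on the $(n-1)$-dimensional boundary. Nondegeneracy of that pairing then yields $\tr\star\mu=0$, i.e.\ $\mu\in\0H^*\Lambda^k$, completing the argument. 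The one step beyond routine bookkeeping, and thus the main obstacle, is precisely this last one: justifying that vanishing of $\int_{\partial\Omega}\tr\omega\wedge\tr\star\mu$ over all admissible $\omega$ forces $\tr\star\mu=0$. This rests on the surjectivity of the trace onto $H^{1/2}$ and the nondegeneracy of the $H^{1/2}$--$H^{-1/2}$ boundary pairing between forms of complementary degree, both supplied by the trace theory summarized immediately before the theorem.
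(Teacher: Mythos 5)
Your proposal is correct and follows essentially the same route as the paper's proof: the easy inclusion via \eqref{ibp3}, then identifying $d^*\mu$ as the weak coderivative using compactly supported test forms, and finally integration by parts against $H^1\Lambda^{k-1}$ test forms to kill the boundary term. The only difference is that you spell out the last step (trace surjectivity onto $H^{1/2}$ plus nondegeneracy of the $H^{1/2}$--$H^{-1/2}$ boundary pairing), which the paper leaves as an unelaborated ``which implies that $\tr\star\mu=0$.''
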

\begin{proof}
 We must show that for $\mu\in L^2\Lambda^k$, there exists $\omega\in
 L^2\Lambda^{k-1}$ such that
\begin{equation}\label{adj}
 \langle\mu,d\nu\rangle = \langle\omega,\nu\rangle,\quad \nu\in H\Lambda^{k-1},
\end{equation}
if and only if $\mu\in\0H^*\Lambda^k$ and $\omega=\delta\mu$.  The if
direction is immediate from \eqref{ibp3}.  Conversely, if \eqref{adj} holds,
then $\mu$ has a weak exterior coderivative in $L^2$, namely
$\delta\mu=\omega$.  Thus $\mu\in H^*\Lambda^k$.  Integrating by parts we have
\begin{equation*}
 \int_{\partial\Omega}\tr\nu\wedge\tr\star\mu=\langle\mu,d\nu\rangle 
- \langle\omega,\nu\rangle=0,\quad \nu\in H^1\Lambda^{k-1},
\end{equation*}
which implies that $\tr\star\mu=0$, i.e., $\mu\in\0H^*\Lambda^k$.
\end{proof}
As a corollary, we obtain a concrete characterization of the
harmonic forms:
\begin{equation}\label{hfrak}
 \Hfrak^k=\{\,\omega\in H\Lambda^k\cap \0 H^*\Lambda^k\,|\,d\omega=0,
\delta\omega=0\,\}.
\end{equation}
In other words, a $k$-form is harmonic if it satisfies the differential
equations $d\omega=0$ and $\delta\omega=0$ together with the boundary
conditions $\tr\star\omega=0$.

If the boundary of $\Omega$ is smooth, then $H\Lambda^k\cap \0 H^*\Lambda^k$
is contained in $H^1\Lambda^k$ \cite{gaffney}, and hence, by the Rellich theorem,
we obtain the
compactness property of Section~\ref{subsubsec:hilbert-complexes}.  For a general Lipschitz
boundary, e.g., for a polygonal domain, the inclusion of $H\Lambda^k\cap \0
H^*\Lambda^k$ in $H^1\Lambda^k$ need not hold, but the compactness
property remains valid \cite{picard}.  Thus all the results of Section~\ref{sec:HCA}
apply to the de~Rham complex.
In particular, we have the Hodge
decomposition of $L^2\Lambda^k$ and of $H\Lambda^k$, the Poincar\'e
inequality, well-posedness of the mixed formulation of the Hodge
Laplacian, and all the approximation results established in
Sections~\ref{subsec:approxhc}--\ref{subsec:eigenv}. We now interpret these results
a bit more concretely in the present setting.

First of all, the cohomology groups associated to the
complex \eqref{drl2} are the de~Rham cohomology groups, whose dimensions are
the Betti numbers of the domain.
Turning next to the Hodge Laplacian problem given in the abstract case by
\eqref{wfhc}, we get that $(\sigma,u,p)\in H\Lambda^{k-1}\x
H\Lambda^k\x\Hfrak^k$ is a solution if and only if
\begin{gather}\label{pdes}
 \sigma=\delta u,\ d\sigma +\delta d u = f-p \quad\text{ in $\Omega$},
\\\label{bcs}
\tr\star u =0, \ \tr\star du =0 \quad\text{on $\partial\Omega$},
\\\label{sc}
u\perp \Hfrak^k.
\end{gather}
The first differential equation and the first boundary condition are implied
by the first equation in \eqref{wfhc}, and the second differential equation
and boundary condition by the second, while the third equation in \eqref{wfhc}
is simply the side condition $u\perp\Hfrak^k$.  Note that both boundary
conditions are \emph{natural} in this variational formulation: they are
implied but not imposed in the spaces where the solution is sought.  Essential
boundary conditions could be imposed instead.  We discuss this in
Section~\ref{subsec:ebc}.

To make things more concrete, we now restrict to a domain $\Omega\subset\R^3$,
and consider the Hodge Laplacian for $k$-forms, $k=0,1,2$, and $3$.  We also
discuss the $\Bfrak^*$ and $\Bfrak$ problems given by \eqref{bfrak*} and
\eqref{bfrak} for each $k$.  We shall encounter many of the most important
partial differential equations of mathematical physics: the Laplacian,
the vector Laplacian, div-curl problems, and curl-curl problems.  These PDEs arise
in manifold applications in electromagnetism, solid mechanics, fluid mechanics,
and many other fields.

We begin by noting that, on any oriented Riemannian
manifold of dimension $n$, we have a natural way to view $0$-forms and
$n$-forms as real-valued functions, and $1$-forms and $(n-1)$-forms as vector
fields.  In fact, $0$-forms are real-valued functions and $1$-forms are
covector fields, which can be identified with vector fields via the inner
product.  The Hodge star operation then carries these identifications to
$n$-forms and $(n-1)$-forms.  In the case of a 3-dimensional domain in $\R^3$,
via these identifications all $k$-forms can be viewed as either scalar or
vector fields (sometimes called \emph{proxy fields}).  With these
identifications, the Hodge star operation becomes trivial in the sense if a
certain vector field is the proxy for, e.g., a $1$-form $\omega$, the exact
same vector field is the proxy for the $2$-form $\star\omega$.  Via proxy
fields, the exterior derivatives coincide with standard differential operators
of calculus:
\begin{equation*}
 d^0=\grad, \ d^1=\curl,\ d^2 = \div, 
\end{equation*}
and the de~Rham complex \eqref{drl2} is realized as
\begin{equation*}
0\to H^1(\Omega) \xrightarrow{\grad} H(\curl;\Omega)
\xrightarrow{\curl} H(\div;\Omega) \xrightarrow{\div} L^2(\Omega) \to 0,
\end{equation*}
where
\begin{align*}
 H(\curl;\Omega)&= \{\, u:\Omega\to\R^3\,|\, u\in L^2, \curl u\in L^2\,\},\\
 H(\div;\Omega)&= \{\, u:\Omega\to\R^3\,|\, u\in L^2, \div u\in L^2\,\}.
\end{align*}
The exterior coderivatives $\delta$ become, of course, $-\div$, $\curl$, and
$-\grad$, when acting on $1$-forms, $2$-forms, and $3$-forms, respectively.
The trace operation on $0$-forms is just the restriction to the boundary,
and the trace operator on $3$-forms vanishes (since there are no nonzero
$3$-forms on $\partial\Omega$).  The trace operator from $1$-forms on $\Omega$
to $1$-forms on the boundary takes a vector field $u$ on $\Omega$ to a
tangential vector field on the boundary, namely at each boundary point $x$,
$(\tr u)_x$ is the tangential projection of $u_x$.  For a $2$-form $u$, the
trace corresponds to the scalar $u\cdot n$ (with $n$ the unit normal) at each boundary point.

\subsubsection{The Hodge Laplacian for $k=0$}  For $k=0$, the boundary
value problem \eqref{pdes}--\eqref{sc} is the Neumann problem for the ordinary
scalar Laplacian. The space $H\Lambda^{-1}$ is understood to be $0$, so
$\sigma$ vanishes.  The harmonic form space $\Hfrak^0$ consists of the
constant functions (we assume $\Omega$ is connected---otherwise $\Hfrak^0$
would consist of functions which are constant on each connected component),
and $p$ is just the average of $f$.  The first differential equation of
\eqref{pdes} vanishes, and the second gives Poisson's equation
\begin{equation*}
 -\div \grad u = f-p \quad\text{in $\Omega$}.
\end{equation*}
Similarly, the first boundary condition in \eqref{bcs} vanishes, while the
second is the Neumann condition
\begin{equation*}
 \grad u\cdot n = 0 \quad\text{on $\partial\Omega$}.
\end{equation*}
The side condition \eqref{sc} specifies a unique solution by requiring its
average value to be zero.

Nothing additional is obtained by considering the split into the $\Bfrak^*$
and $\Bfrak$ subproblems, since the latter is trivial.  Furthermore, the
eigenvalue problem \eqref{eigenv-pr} is precisely the corresponding eigenvalue
problem for the scalar Laplacian, with the $0$ eigenspace $\Hfrak^0$ filtered
out.

\subsubsection{The Hodge Laplacian for $k=1$} \label{subsubsec:hlk=1}
In this case the differential
equations and boundary conditions are
\begin{equation}\label{k=1}
\begin{gathered}
 \sigma=-\div u,\ \grad\sigma +\curl\curl u = f-p \quad\text{ in $\Omega$},
\\
u\cdot n =0, \ \curl u\x n =0 \quad\text{on $\partial\Omega$},
\end{gathered}
\end{equation}
which is a formulation of a boundary value problem for the vector Laplacian
$\curl\curl-\grad\div$.  (Here we have used the fact that the vanishing of the
tangential component of a vector is equivalent to the vanishing of its cross
product with the normal.)  The solution is determined uniquely by the
additional condition that $u$ be orthogonal to $\Hfrak^1$, which in this case
consists of those vector fields satisfying
\begin{equation*}
\curl p = 0,\ \div p=0 \ \text{ in $\Omega$}, \quad
p\cdot n =0, \ \text{on $\partial\Omega$}.
\end{equation*}
The dimension of $\Hfrak^1$ is equal to the first Betti number, i.e., the
number of handles, of the domain (so $\Hfrak^1=0$ if the domain is
simply-connected).

The $\Bfrak^*_1$ problem \eqref{bfrak*} is defined for $L^2$ vector fields $f$
which are orthogonal to both gradients and the vector fields in $\Hfrak^1$.
In that case, the solution to \eqref{k=1} has $\sigma=0$ and $p=0$, while $u$
satisfies
\begin{equation*}
\curl\curl u = f,\ \div u=0 \ \text{ in $\Omega$}, \quad
u\cdot n =0, \ \curl u\x n =0 \ \text{on $\partial\Omega$}.
\end{equation*}
The orthogonality condition $u\perp\Hfrak^1$ again determines the solution
uniquely.

Next we turn to the $\Bfrak^1$ problem.  For source functions of the form
$f=\grad F$ for some $F\in H^1$, which we normalize so that $\int_\Omega F=0$,
\eqref{k=1} reduces to the problem of finding $\sigma\in H^1$ and
$u\in\Bfrak^1=\grad H^1$ such that:
\begin{gather*}
\sigma=-\div u,\ \grad\sigma = f \ \text{ in $\Omega$}, \quad
u\cdot n =0,  \ \text{on $\partial\Omega$}.
\end{gather*}
The differential equations may be simplified to $-\grad\div u = f$, and the
condition that $u\in\Bfrak^1$ can be replaced by the differential equation
$\curl u=0$, together with orthogonality to $\Hfrak^1$.  Now
$\grad(\sigma-F)=0$ and $\int_\Omega\sigma=-\int_{\partial\Omega}u\cdot
n=0=\int_\Omega F$, so $\sigma=F$, and we may rewrite the system as
\begin{equation*}
-\div u=F, \ \curl u=0 \ \text{ in $\Omega$}, \quad
u\cdot n =0, \ \text{on $\partial\Omega$},
\end{equation*}
which, again, has a unique solution subject to orthogonality to $\Hfrak^1$.

The eigenvalue problem \eqref{eigenv-pr} is the corresponding eigenvalue
problem for the vector Laplacian with the boundary conditions of \eqref{k=1},
and with the eigenspace $\Hfrak^1$ of the eigenvalue $\lambda = 0$ filtered
out.  As mentioned above, in this case the $\Bfrak_1^*$ eigenvalue problem,
given by \eqref{b*eigenv}, is important for models based on Maxwell's
equations. This problem takes the form
\begin{equation}\label{b*eigenv-1}
\curl\curl u = \lambda u,\ \div u=0 \ \text{ in $\Omega$}, \quad
u\cdot n =0, \ \curl u\x n =0 \ \text{on $\partial\Omega$}, \ u \perp
\Hfrak^1.
\end{equation}

\subsubsection{The Hodge Laplacian for $k=2$}  The differential equations
and boundary conditions are
\begin{equation}\label{k=2}
\begin{gathered}
 \sigma=\curl u,\ \curl\sigma -\grad\div u = f-p \quad\text{ in $\Omega$},
\\
u\x n =0, \ \div u =0 \quad\text{on $\partial\Omega$}.
\end{gathered}
\end{equation}
This is again a formulation of a boundary value problem for the vector
Laplacian $\curl\curl-\grad\div$, but with different boundary conditions than
for \eqref{k=1}, and this time stated in terms of two vector variables, rather
than one vector and one scalar.  This time uniqueness is obtained by imposing
orthogonality to $\Hfrak^2$, the space of vector fields satisfying
\begin{equation*}
\curl p = 0,\ \div p=0 \ \text{ in $\Omega$}, \quad
p\x n =0, \ \text{on $\partial\Omega$},
\end{equation*}
which has dimension equal to the second Betti number, i.e., the number of
voids in the domain.

The $\Bfrak^*_2$ problem arises for source functions of the form $f=\grad F$
for some $F\in\0H^1$ ($\0H^1$ is the function space corresponding to
$\0H^*\Lambda^3$).  We find $\sigma=0$, and $u$ solves
\begin{equation*}
-\div u=F, \ \curl u=0 \ \text{ in $\Omega$}, \quad
u\x n =0, \ \text{on $\partial\Omega$},
\end{equation*}
i.e., the same differential equation as for $\Bfrak^1$, but with different
boundary conditions, an extra assumption on $F$, and, of course, now
uniqueness is determined by orthogonality to $\Hfrak^2$.

If $\div f=0$ and $f\perp\Hfrak^2$, we get the $\Bfrak^2$ problem for which
the differential equations are $\sigma=\curl u$, $\curl\sigma =f$ and the
condition $\div u=0$ arising from the membership of $u$ in $\Bfrak^2$.  Thus
$u$ solves
\begin{equation*}
\curl\curl u = f,\ \div u=0 \ \text{ in $\Omega$}, \quad
u\x n =0, \ \text{on $\partial\Omega$},
\end{equation*}
the same differential equation as for $\Bfrak^*_1$, but with different
boundary conditions.

The eigenvalue problem \eqref{eigenv-pr} is the corresponding eigenvalue
problem for the vector Laplacian with the boundary conditions of \eqref{k=2},
and with the eigenspace $\Hfrak^2$ of the eigenvalue $\lambda = 0$ filtered
out, while the corresponding $\Bfrak^2$ problem, of the form \eqref{b-eigenv},
takes the form
\begin{equation*}
\curl\curl u = \lambda u,\ \div u=0 \ \text{ in $\Omega$}, \quad
u\x n =0, \ \text{on $\partial\Omega$}, \  u \perp \Hfrak^2.
\end{equation*}
Note that this is the same problem as the $\Bfrak^*_1$ eigenvalue
problem\eqref{b*eigenv-1}, but with different boundary conditions. However, if
we define $\sigma = \curl u$, then it is straightforward to check that the pair
$(\lambda,\sigma)$ will indeed solve the $\Bfrak^*_1$ problem
\eqref{b*eigenv-1}. This is an instance of the general equivalence between the
$\Bfrak^k$ problem, given by \eqref{b-eigenv}, and the corresponding
$\Bfrak^*_{k-1}$ problem, which was pointed out in
Section~\ref{related-eigenv}.

\subsubsection{The Hodge Laplacian for $k=3$}\label{subsubsec:hlk=3}
In this case the Hodge
Laplacian problem, which coincides with the $\Bfrak^3$ problem, is
\begin{equation*}
 \sigma=-\grad u,\ \div\sigma = f \text{ in $\Omega$},
\quad
u =0\ \text{on $\partial\Omega$},
\end{equation*}
which is the Dirichlet problem for Poisson's equation.  There are no non-zero
harmonic forms, and the problem has a unique solution.  Furthermore, the
eigenvalue problem \eqref{eigenv-pr} is the corresponding eigenvalue problem
for the scalar Laplacian with Dirichlet boundary conditions.

\section{Finite element approximation of the de Rham complex}
\label{sec:FEDF}
Our goal in this section is to discretize the de~Rham complex so that we may apply
the abstract results on approximation of Hilbert complexes from Section~\ref{sec:HCA}.
Hence we need to construct
finite dimensional subspaces $\Lambda^k_h$ of $H\Lambda^k(\Omega)$.  As we saw
in Section~\ref{subsec:well-p-h-hc}, the key properties these spaces must possess is, first, that
$d\Lambda^k_h\subset \Lambda^{k+1}_h$ so they form a subcomplex
$(\Lambda_h,d)$
of the de~Rham complex, second, that there exist uniformly bounded cochain projections
$\pi_h$ from $(L^2\Lambda,d)$ to $(\Lambda_h,d)$, and third, good approximation properties.
We may then use these spaces in
a Galerkin method based on the mixed formulation, as
described in Section~\ref{subsec:well-p-h-hc}, and the error estimates given
in Theorems~\ref{qo1cor} and \ref{impest} bound the error in the Galerkin solution in terms of the
approximation error afforded by the subspaces.  In this section
we will construct the spaces $\Lambda^k_h$ as spaces of \emph{finite element differential forms}
and show that they satisfy all these requirements, and can be efficiently implemented.

As discussed in Section~\ref{sec:femethod}, a finite element space is a
space of piecewise polynomials which is specified by the finite element assembly process,
i.e., by giving
a triangulation of the domain, a finite dimensional space of polynomial
functions (or, in our case, differential forms) on each element of the triangulation,
called the shape functions,
and a set of degrees of freedom for the shape functions associated with the faces of
various dimensions of the elements, which will be used to determine the degree of
interelement continuity. Spaces constructed in this way can be implemented very efficiently, since
they admit basis functions with small support and so lead to sparse algebraic systems.

In Section~\ref{subsec:pdf}, we will discuss the spaces of polynomial differential forms
which we will use as shape functions.  In Section~\ref{subsec:dof}, we specify degrees of freedom
for these spaces on simplices, and study the resulting finite element spaces of differential forms.
In particular, we show that they can be combined
in a variety of ways to form subcomplexes of the de~Rham complex.  In Section~\ref{subsec:bases},
we briefly describe recent work related to the implementation
of such finite elements with explicit local bases. In Sections~\ref{subsec:approx} and \ref{subsec:bcp},
we construct $L^2$-bounded cochain projections into these
spaces and obtain error estimates for them.  Finally Section~\ref{subsec:ahl} is simply a matter of
collecting our results in order to obtain error estimates for the resulting approximations
of the Hodge Laplacian.  Many of the results of this section have
appeared previously, primarily in \cite{acta}, and therefore many
proofs are omitted.

\subsection{Polynomial differential forms and the Koszul complex}
\label{subsec:pdf}
In this section, we consider spaces of polynomial differential forms, which
lead to a variety of subcomplexes of the de~Rham complex.  These will be used
in the next section to construct finite element spaces of differential forms.
The simplest spaces of polynomial differential $k$-forms are the spaces $\P_r\Lambda^k(\R^n)$
consisting of all differential $k$-forms on $\R^n$ whose coefficients are polynomials of
degree at most $r$.  In addition to these spaces, we will use another family
of polynomial form spaces, denoted $\P_r^-\Lambda^k(\R^n)$, which will be constructed
and analyzed using the Koszul differential and the associated Koszul complex.
Spaces taken from these two families can be combined into polynomial subcomplexes
of the de~Rham complex in numerous ways
(there are essentially $2^{n-1}$ such subcomplexes associated to each polynomial degree).  These
will lead to finite element de~Rham subcomplexes, presented in
Section~\ref{subsec:dof}.  Some of these have appeared in the
literature previously, with the systematic derivation of all of them first
appearing in \cite{arnold-falk-winther1}.

\subsubsection{Polynomial differential forms}
\label{subsubsec:polynomial-df}
Let $\P_r(\R^n)$ and $\H_r(\R^n)$ denote the spaces of
polynomials in $n$ variables of degree at most $r$ and of homogeneous
polynomial functions of degree $r$, respectively.
We interpret these spaces to be the zero space if $r<0$.
We can then define spaces of polynomial differential forms,
$\P_r\Lambda^k(\R^n)$, $\H_r\Lambda^k(\R^n)$, etc., as those
differential forms which, when applied to a constant vector field,
have the indicated polynomial dependence. For brevity, we
will at times suppress $\R^n$ from the notation and write simply
$\P_r$, $\H_r$, $\P_r\Lambda^k$, etc.

The dimensions of these spaces are easily calculated:
\begin{equation*}
\begin{aligned}
\dim\P_r\Lambda^k(\R^n) &= \dim \P_r(\R^n)\cdot\dim\Alt^k\R^n
\\*&=\binom{n+r}{n}\binom{n}{k} = \binom{r+k}{r}\binom{n+r}{n-k},
\end{aligned}
\end{equation*}
and $\dim\H_r\Lambda^k(\R^n)=\dim\P_r\Lambda^k(\R^{n-1})$.

For each polynomial degree $r\ge0$, we get a homogeneous
polynomial subcomplex of the de~Rham complex:
\begin{equation}\label{hdr}
0\to \H_r\Lambda^0 \xrightarrow{d} \H_{r-1}\Lambda^{1}
 \xrightarrow{d} \cdots \xrightarrow{d} \H_{r-n}\Lambda^n \to 0.
\end{equation}
We shall verify below the exactness of this sequence.  More precisely,
the cohomology vanishes if $r>0$ and also for $r=0$ except in the
lowest degree, where the cohomology space is $\R$ (reflecting
the fact that the constants are killed by the gradient).

Taking the direct sum of the homogeneous polynomial
de~Rham complexes over all polynomial
degrees gives the polynomial de~Rham complex:
\begin{equation}\label{pdr}
0\to \P_r\Lambda^0 \xrightarrow{d} \P_{r-1}\Lambda^{1}
 \xrightarrow{d} \cdots \xrightarrow{d} \P_{r-n}\Lambda^n \to 0,
\end{equation}
for which the cohomology space is $\R$ in the lowest degree, and vanishes
otherwise.


\subsubsection{The Koszul complex}
\label{subsubsec:koszul}
Let $x\in\R^n$.  
Since there is a natural identification of $\R^n$ with the tangent
space $T_0\R^n$ at the origin, there is a vector in $T_0\R^n$
corresponding to $x$.    (The origin is chosen for convenience
here, but we could use any other point instead.)  Then the translation map
$y\mapsto y+x$ induces an isomorphism from $T_0\R^n$ to $T_x\R^n$,
and so there is an element $X(x)\in T_x\R^n$ corresponding to $x$.
(Essentially $X(x)$ is the vector based at $x$ which points opposite
to the origin and whose length is $|x|$.)
Contraction with the vector
field $X$ defines a map $\kappa$ from $\Lambda^k(\R^n)$ to
$\Lambda^{k-1}(\R^n)$ called the \emph{Koszul differential}:
\begin{equation*}
(\kappa\omega)_x(v_1,\ldots,v_{k-1}) =
\omega_x\bigl(X(x),v_1,\ldots,v_{k-1}\bigr).
\end{equation*}
It is easy to see that $\kappa$ is a graded differential, i.e.,
\begin{equation*}
\kappa\circ\kappa=0
\end{equation*}
and
\begin{equation*}
\kappa(\omega\wedge\eta) = (\kappa\omega)\wedge\eta 
+ (-1)^k\omega\wedge(\kappa\eta),\quad
\omega\in\Lambda^k,\ \eta\in\Lambda^l.
\end{equation*}
In terms of coordinates, if $\omega_x = a(x)\,
d x_{\sigma_1}\wedge\cdots\wedge d x_{\sigma_k}$, then
\begin{equation*}
(\kappa\omega)_x = \sum_{i=1}^k (-1)^{i+1}
a(x)\,x_{\sigma(i)} \,d x_{\sigma_1}\wedge\cdots \wedge
\widehat{d x}_{\sigma(i)}\wedge\cdots\wedge d x_{\sigma_k},
\end{equation*}
where the notation $\widehat{d x}_{\sigma_i}$ means that the
term is omitted.  Note that $\kappa$ maps $\H_r\Lambda^k$ to
$\H_{r+1}\Lambda^{k-1}$, i.e., $\kappa$ increases polynomial degree and
decreases form degree, the exact opposite of the exterior derivative
$d$.

The Koszul differential gives rise to the homogeneous \emph{Koszul complex}
\cite[Chapter 3.4.6]{Loday},
\begin{equation}\label{koszul}
0\rightarrow \H_{r-n}\Lambda^n  \xrightarrow{\kappa} 
\H_{r-n+1}\Lambda^{n-1} \xrightarrow{\kappa} \cdots  \xrightarrow{\kappa} \H_r\Lambda^0
\to0.
\end{equation}
We show below that this complex is exact for $r>0$.
Adding over polynomial
degrees, we obtain the Koszul complex (for any $r\ge0$),
\begin{equation*}
0\rightarrow \P_{r-n}\Lambda^n  \xrightarrow{\kappa} 
\P_{r-n+1}\Lambda^{n-1} \xrightarrow{\kappa} \cdots  \xrightarrow{\kappa} \P_r\Lambda^0
\to0,
\end{equation*}
for which all the cohomology spaces vanish, except the rightmost,
which is equal to $\R$.

To prove the exactness of the homogeneous polynomial de~Rham and Koszul
complexes, we note a key connection between the exterior derivative and the
Koszul differential.  In the language of homological algebra, this says that
the Koszul differential is a contracting homotopy for the homogeneous polynomial
de~Rham complex.

\begin{thm}\label{contract}
\begin{equation}\label{dk+kd}
(d\kappa+\kappa d)\omega = (r+k)\omega,\quad
\omega\in\H_r\Lambda^k.
\end{equation}
\end{thm}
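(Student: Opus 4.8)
The plan is to recognize the operator $L:=d\kappa+\kappa d$ as an \emph{even derivation} of the exterior algebra and then to evaluate it on algebra generators. Geometrically, since $\kappa=\iota_X$ is contraction with the radial (Euler) vector field $X$, Cartan's homotopy formula identifies $L$ with the Lie derivative $\Lie_X$; this makes the factor $r+k$ transparent, as $\Lie_X$ scales a homogeneous function of degree $r$ by $r$ (Euler's identity) and each $dx_i$ by $1$. I would, however, give a self-contained argument that does not invoke Cartan's formula, using only the two graded Leibniz rules already recorded in this section.

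First I would show that $L$ satisfies the ordinary (ungraded) Leibniz rule $L(\omega\wedge\eta)=L\omega\wedge\eta+\omega\wedge L\eta$. This follows formally from the facts that $d$ is an antiderivation of degree $+1$ and $\kappa$ an antiderivation of degree $-1$, both with sign $(-1)^{\deg\omega}$. Expanding $\kappa d(\omega\wedge\eta)$ and $d\kappa(\omega\wedge\eta)$ for $\omega\in\Lambda^k$ and adding, the four mixed terms of the types $d\omega\wedge\kappa\eta$ and $\kappa\omega\wedge d\eta$ cancel in pairs, since their coefficients are $(-1)^{k+1}$ against $(-1)^{k}$ and $(-1)^{k}$ against $(-1)^{k-1}$ respectively; what survives is exactly $L\omega\wedge\eta+\omega\wedge L\eta$.

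With the derivation property in hand, it suffices to compute $L$ on the generators of $\H_r\Lambda^k$, namely homogeneous polynomials (as $0$-forms) and the constant basic $1$-forms $dx_i$. On a $0$-form $a\in\H_r$ we have $\kappa a=0$ (there are no $(-1)$-forms), so $La=\kappa da=\kappa\bigl(\sum_i(\partial_i a)\,dx_i\bigr)=\sum_i x_i\,\partial_i a=ra$ by Euler's identity. On $dx_i$ we have $d\,dx_i=0$ and $\kappa\,dx_i=x_i$, hence $L\,dx_i=d x_i=dx_i$. Writing $dx_\sigma:=dx_{\sigma_1}\wedge\cdots\wedge dx_{\sigma_k}$ and applying the Leibniz rule to a monomial $a\,dx_\sigma$ then gives $L(a\,dx_\sigma)=(La)\,dx_\sigma+a\sum_{j=1}^{k}dx_{\sigma_1}\wedge\cdots\wedge L(dx_{\sigma_j})\wedge\cdots\wedge dx_{\sigma_k}=ra\,dx_\sigma+ka\,dx_\sigma=(r+k)\,a\,dx_\sigma$, and the general case follows by linearity, since such monomials span $\H_r\Lambda^k$.

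The only step requiring genuine care is the derivation identity for $L$: it is the standard fact that the graded commutator of two odd antiderivations is an even derivation, but it does demand careful sign bookkeeping with the two Leibniz rules, so I would present that cancellation explicitly. One could instead bypass it by substituting the explicit coordinate formulas for $d$ and $\kappa$ directly into $L(a\,dx_\sigma)$ and collecting terms, but this is combinatorially heavier, as it forces one to track the omitted-index sums $\widehat{dx}_{\sigma_i}$; I prefer the derivation route for cleanliness. Finally, I note that \eqref{dk+kd} immediately yields the exactness asserted for the homogeneous Koszul and de~Rham complexes: whenever $r+k>0$, the operator $(r+k)^{-1}\kappa$ is a contracting homotopy for the de~Rham differential and $(r+k)^{-1}d$ for the Koszul differential, forcing the cohomology to vanish in those degrees.
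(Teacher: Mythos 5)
Your proof is correct, and all the details check out: with the paper's conventions $d(\omega\wedge\eta)=d\omega\wedge\eta+(-1)^k\omega\wedge d\eta$ and $\kappa(\omega\wedge\eta)=\kappa\omega\wedge\eta+(-1)^k\omega\wedge\kappa\eta$ for $\omega\in\Lambda^k$, the four mixed terms in $(d\kappa+\kappa d)(\omega\wedge\eta)$ carry exactly the coefficients you list and cancel in pairs, so $L=d\kappa+\kappa d$ is an ungraded derivation; the evaluations $La=ra$ (Euler's identity, since $\kappa a=0$ for $0$-forms) and $L\,dx_i=dx_i$ then give \eqref{dk+kd} on the spanning monomials $a\,dx_{\sigma_1}\wedge\cdots\wedge dx_{\sigma_k}$. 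Where you and the paper differ is that the paper presents no argument at all: it remarks that the identity follows either from a direct computation or from Cartan's magic formula $\Lie_X=d\iota_X+\iota_Xd$ applied to the Euler vector field, and defers the details to the cited reference \cite{acta}. Your write-up is a self-contained execution of the ``direct computation'' route, but organized so that the combinatorial bookkeeping (the omitted-index sums $\widehat{dx}_{\sigma_i}$ that a brute-force expansion would require) is replaced by the single structural fact that the graded commutator of two odd antiderivations is an even derivation, after which only generators need to be checked. What the geometric route buys is conceptual transparency (the eigenvalue $r+k$ is the homogeneity weight under dilation); what your route buys is that it stays entirely inside the algebraic framework of Section~\ref{subsubsec:koszul}, uses only identities already recorded there, and requires no differential-geometric machinery. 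Your closing observation, that \eqref{dk+kd} immediately makes $(r+k)^{-1}\kappa$ and $(r+k)^{-1}d$ contracting homotopies and hence yields the exactness of \eqref{hdr} and \eqref{koszul} in positive degrees, is precisely how the paper uses the theorem afterwards.
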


\begin{proof}  This result can be established by a direct computation
or by using the 
\emph{homotopy formula} of differential geometry (Cartan's magic
formula). See \cite{acta} for the details.
\end{proof}

As a simple consequence of \eqref{dk+kd}, we prove the injectivity
of $d$ on the range of $\kappa$ and {\it vice versa}.

\begin{thm}\label{injlemma}
If $d\kappa\omega=0$ for some $\omega\in\P\Lambda$, then
$\kappa\omega=0$.  If $\kappa d\omega=0$ for some $\omega\in\P\Lambda$, then
$d\omega=0$. 
\end{thm}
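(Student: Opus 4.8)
The plan is to use the homotopy identity of Theorem~\ref{contract}, namely $(d\kappa+\kappa d)\omega=(r+k)\omega$ for $\omega\in\H_r\Lambda^k$, as a purely algebraic lever. First I would reduce to the homogeneous case. Since $d$ carries $\H_r\Lambda^k$ into $\H_{r-1}\Lambda^{k+1}$ and $\kappa$ carries $\H_r\Lambda^k$ into $\H_{r+1}\Lambda^{k-1}$, both compositions $d\kappa$ and $\kappa d$ preserve the bidegree $(r,k)$. Hence, writing a general $\omega\in\P\Lambda$ as its finite sum of bihomogeneous components, the hypotheses $d\kappa\omega=0$ and $\kappa d\omega=0$ each hold componentwise, and it is enough to treat $\omega\in\H_r\Lambda^k$.

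For the first statement I would apply the identity not to $\omega$ but to $\kappa\omega\in\H_{r+1}\Lambda^{k-1}$. Because $\kappa\circ\kappa=0$, the term $d\kappa(\kappa\omega)$ drops out, leaving $\kappa(d\kappa\omega)=[(r+1)+(k-1)]\,\kappa\omega=(r+k)\,\kappa\omega$. Under the hypothesis $d\kappa\omega=0$ the left-hand side vanishes, so $(r+k)\,\kappa\omega=0$, and when $r+k>0$ this forces $\kappa\omega=0$. The second statement is entirely parallel: apply the identity to $d\omega\in\H_{r-1}\Lambda^{k+1}$, use $d\circ d=0$ to annihilate $\kappa d(d\omega)$, and obtain $d(\kappa d\omega)=(r+k)\,d\omega$, whence $\kappa d\omega=0$ yields $d\omega=0$ when $r+k>0$. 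Note that the scalar factor comes out equal to $r+k$ in both cases, since the shifted bidegrees satisfy $(r+1)+(k-1)=(r-1)+(k+1)=r+k$.

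The only point needing a separate remark is the degenerate bidegree $r+k=0$, that is $r=k=0$: there $\omega$ is a constant $0$-form, so both $\kappa\omega$ and $d\omega$ vanish trivially and the conclusions hold automatically. This is essentially the lone edge case and the closest thing to an obstacle here; there is no analytic content, only the bookkeeping of which form to feed into Theorem~\ref{contract} and the check that the emerging factor $(r+k)$ is nonzero. Everything else is immediate from the nilpotency of $\kappa$ and $d$ together with the single contracting-homotopy relation.
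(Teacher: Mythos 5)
Your proof is correct and takes essentially the same approach as the paper: both reduce to a bihomogeneous component $\omega\in\H_r\Lambda^k$, dispose of the trivial case $r+k=0$, and use the homotopy identity of Theorem~\ref{contract} together with $\kappa\circ\kappa=0$ (resp.\ $d\circ d=0$) to obtain $\kappa d\kappa\omega=(r+k)\kappa\omega$ (resp.\ $d\kappa d\omega=(r+k)d\omega$). Whether one applies $\kappa$ to the identity evaluated at $\omega$, as the paper does, or evaluates the identity at $\kappa\omega$, as you do, is an immaterial rearrangement of the same computation.
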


\begin{proof}
We may assume that $\omega\in\H_r\Lambda^k$ for some $r,k\ge0$.  If
$r=k=0$, the result is trivial, so we may assume that $r+k>0$.  Then
$(r+k)\kappa\omega=\kappa(d\kappa+\kappa d)\omega=0$,
if $d\kappa\omega=0$, so $\kappa\omega=0$ in this case.  Similarly,
$(r+k)d\omega=d(d\kappa+\kappa d)\omega=0$,
if $\kappa d\omega=0$.
\end{proof}

Another easy application of \eqref{dk+kd} is to establish the claimed
cohomology of the Koszul complex and polynomial de~Rham complex.
Suppose that $\omega\in\H_r\Lambda^k$ for some $r,k\ge0$
with $r+k>0$, and that $\kappa\omega=0$.  From \eqref{dk+kd}, we
see that $\omega=\kappa\eta$ with
$\eta=d\omega/(r+k)\in\H_{r-1}\Lambda^{k+1}$.  This establishes
the exactness of the homogeneous Koszul complex \eqref{koszul}
(except when $r=0$ and the sequence reduces to $0\to\R\to0$).
A similar argument establishes the exactness of \eqref{hdr}.

Another immediate but important consequence of \eqref{dk+kd}
is a direct sum decomposition
of $\H_r\Lambda^k$ for $r,k\ge0$ with $r+k>0$:
\begin{equation}\label{Hdirectsum}
\H_r\Lambda^k = \kappa\H_{r-1}\Lambda^{k+1} \oplus
d\H_{r+1}\Lambda^{k-1}.
\end{equation}
Indeed, if $\omega\in\H_r\Lambda^k$, then
$\eta=d\omega/(r+k)\in \H_{r-1}\Lambda^{k+1}$ and
$\mu=\kappa\omega/(r+k)\in \H_{r+1}\Lambda^{k-1}$ and
$\omega=\kappa\eta+d\mu$, so $\H_r\Lambda^k
 = \kappa\H_{r-1}\Lambda^{k+1} +
d\H_{r+1}\Lambda^{k-1}$.  Also, if $\omega\in
\kappa\H_{r-1}\Lambda^{k+1} \cap
d\H_{r+1}\Lambda^{k-1}$, then $d\omega=\kappa\omega=0$
(since $d\circ d=\kappa\circ\kappa=0$), and so, by \eqref{dk+kd},
$\omega=0$.  This shows that the sum is direct.  Since
$\P_r\Lambda^k = \bigoplus_{j=0}^r \H_j\Lambda^k$,
we also have
\begin{equation*}
\P_r\Lambda^k = \kappa\P_{r-1}\Lambda^{k+1} \oplus
d\P_{r+1}\Lambda^{k-1}.
\end{equation*}

The exactness of the Koszul complex can be used to compute the
dimension of the summands in \eqref{Hdirectsum} (c.f. \cite{acta}).
\begin{thm}
Let $r\ge0$, $1\le k\le n$, for integers $r$, $k$, and $n$.  Then
\begin{equation}\label{dim}
\dim\kappa\H_r\Lambda^k(\R^n) = \dim d\H_{r+1}\Lambda^{k-1}(\R^n)=
\binom{n+r}{n-k} \binom{r+k-1}{k-1}.
\end{equation}
\end{thm}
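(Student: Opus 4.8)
The plan is to prove the second (dimension) equality after first disposing of the first equality, $\dim\kappa\H_r\Lambda^k = \dim d\H_{r+1}\Lambda^{k-1}$, which follows at once from structure already in hand. For the first equality I start from the direct sum decomposition \eqref{Hdirectsum}, namely $\H_r\Lambda^k = \kappa\H_{r-1}\Lambda^{k+1}\oplus d\H_{r+1}\Lambda^{k-1}$, and apply $\kappa$ to both sides. Since $\kappa\circ\kappa=0$ the first summand is annihilated, so $\kappa\H_r\Lambda^k = \kappa\bigl(d\H_{r+1}\Lambda^{k-1}\bigr)$. Moreover $\kappa$ is injective on $d\H_{r+1}\Lambda^{k-1}$: if $\eta=d\mu\in d\H_{r+1}\Lambda^{k-1}$ satisfies $\kappa\eta=\kappa d\mu=0$, then Theorem~\ref{injlemma} forces $d\mu=0$, i.e.\ $\eta=0$. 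Hence $\kappa$ restricts to an isomorphism of $d\H_{r+1}\Lambda^{k-1}$ onto $\kappa\H_r\Lambda^k$, giving $\dim\kappa\H_r\Lambda^k=\dim d\H_{r+1}\Lambda^{k-1}$.

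For the dimension itself, write $D(r,k):=\dim\kappa\H_r\Lambda^k$, with the convention $\H_s\Lambda^l=0$ when $s<0$ or $l>n$. Taking dimensions in \eqref{Hdirectsum} and using the equality just proved yields $\dim\H_r\Lambda^k = \dim\kappa\H_{r-1}\Lambda^{k+1} + \dim d\H_{r+1}\Lambda^{k-1} = D(r-1,k+1)+D(r,k)$, that is,
\begin{equation*}
D(r,k) = \dim\H_r\Lambda^k - D(r-1,k+1).
\end{equation*}
Here $\dim\H_r\Lambda^k = \binom{n+r-1}{n-1}\binom{n}{k}$, since $\H_r(\R^n)$ has dimension $\binom{n+r-1}{n-1}$ (the number of degree-$r$ monomials in $n$ variables) and $\dim\Alt^k\R^n=\binom{n}{k}$. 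The recursion couples $D(r,k)$ only to $D(r-1,k+1)$, so I would solve it by downward induction on $k$, for all $r\ge0$ at once. The base case $k=n$ is immediate: $\kappa\H_{r-1}\Lambda^{n+1}=0$, so $D(r,n)=\dim\H_r\Lambda^n=\binom{n+r-1}{n-1}$, which agrees with $\binom{n+r}{0}\binom{r+n-1}{n-1}$.

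For the inductive step I assume the closed form for $D(r-1,k+1)=\binom{n+r-1}{n-k-1}\binom{r+k-1}{k}$ and must verify
\begin{equation*}
\binom{n+r}{n-k}\binom{r+k-1}{k-1}
= \binom{n+r-1}{n-1}\binom{n}{k} - \binom{n+r-1}{n-k-1}\binom{r+k-1}{k}.
\end{equation*}
This single binomial identity is the computational heart of the argument and the step I expect to be the main obstacle; everything else is purely structural. I would establish it by clearing factorials, or equivalently by Pascal's rule together with the absorption identity $\binom{m}{j}=\tfrac{m}{j}\binom{m-1}{j-1}$, the manipulation being the one carried out in \cite{acta}. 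An alternative that avoids first guessing the closed form is to unroll the recursion into the alternating sum $D(r,k)=\sum_{j=0}^{n-k}(-1)^j\binom{n+r-j-1}{n-1}\binom{n}{k+j}$ and evaluate it directly, but this merely repackages the same identity. Since the exactness of the homogeneous Koszul complex \eqref{koszul} is exactly what underlies \eqref{Hdirectsum} and Theorem~\ref{injlemma}, no input beyond $\dim\H_r\Lambda^k$ and elementary binomial algebra is required.
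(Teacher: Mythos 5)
Your proposal is correct and follows essentially the route the paper indicates (the paper itself omits the details, remarking only that exactness of the Koszul complex computes these dimensions, with details deferred to \cite{acta}): the direct sum \eqref{Hdirectsum} together with Theorem~\ref{injlemma} is precisely that exactness, and it yields your recursion $D(r,k)=\dim\H_r\Lambda^k-D(r-1,k+1)$, which is then solved by induction. The binomial identity you flag as the main obstacle does hold: after clearing the common factorials it reduces to $(n+r)k+(n-k)r=n(r+k)$, so the inductive step goes through (and the convention $\binom{k-1}{k}=0$ correctly handles the case $r=0$).
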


\subsubsection{The space $\P_r^-\Lambda^k$}
\label{subsubsec:Prminus}
Let $r\ge1$. Obviously, $\P_r\Lambda^k=\P_{r-1}\Lambda^k+\H_r\Lambda^k$.
In view of \eqref{Hdirectsum}, we may define a space of $k$-forms
intermediate between $\P_{r-1}\Lambda^k$ and $\P_r\Lambda^k$ by
\begin{equation*}
\P_r^-\Lambda^k=\P_{r-1}\Lambda^k+\kappa\H_{r-1}\Lambda^{k+1}
=\P_{r-1}\Lambda^k+\kappa\P_{r-1}\Lambda^{k+1}.
\end{equation*}
Note that the first sum is direct, while the second need not
be.
An equivalent definition is
\begin{equation*}
\P_r^-\Lambda^k=\{\,\omega\in\P_r\Lambda^k\,|\,\kappa\omega
\in\P_r\Lambda^{k-1}\,\}.
\end{equation*}
Note that $\P_r^-\Lambda^0=\P_r\Lambda^0$ and
$\P_r^-\Lambda^n=\P_{r-1}\Lambda^n$, but for $0<k<n$, $\P_r^-\Lambda^k$
is contained strictly between $\P_{r-1}\Lambda^k$ and $\P_r\Lambda^k$.
For $r\le0$, we set $\P_r^-\Lambda^k=0$.

From \eqref{dim}, we have
\begin{equation*}
\begin{aligned}
\dim \P_r^-\Lambda^k(\R^n) &= \dim\P_{r-1}\Lambda^k+\dim\kappa\H_{r-1}\Lambda^{k+1}
\\*&=\binom{n+r-1}{n}\binom{n}{k} + \binom{n+r-1}{n-k-1}\binom{r+k-1}{k}
\\*&= \binom{r+k-1}{k}\binom{n+r}{n-k},
\end{aligned}
\end{equation*}
where the last step is a simple identity.


We also note the following simple consequences of Lemma~\ref{injlemma}.
\begin{thm}\label{dvan}
If $\omega\in\P_r^-\Lambda^k$ and $d\omega=0$, then
$\omega\in\P_{r-1}\Lambda^k$.  Moreover, $d\P_r\Lambda^k=d\P_r^-\Lambda^k$.
\end{thm}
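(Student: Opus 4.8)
The plan is to treat the two assertions separately, in each case reducing to the top-degree homogeneous component and then invoking the two structural facts already in hand: the injectivity statement of Theorem~\ref{injlemma} and the direct sum decomposition \eqref{Hdirectsum}. Throughout I would use that $d$ maps $\H_j\Lambda^k$ into $\H_{j-1}\Lambda^{k+1}$ while $\kappa$ maps $\H_{j}\Lambda^k$ into $\H_{j+1}\Lambda^{k-1}$, so that $\P_r^-\Lambda^k$ and $\P_r\Lambda^k$ can differ only in their degree-$r$ homogeneous part.

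For the first assertion, I would decompose $\omega$ into homogeneous pieces. Since $d$ lowers polynomial degree by one, the equation $d\omega=0$ decouples across degrees, and in particular the degree-$r$ part $\omega_r$ satisfies $d\omega_r=0$. By the definition $\P_r^-\Lambda^k=\P_{r-1}\Lambda^k+\kappa\H_{r-1}\Lambda^{k+1}$, the lower-degree summand contributes nothing at degree $r$, so $\omega_r=\kappa\eta$ for some $\eta\in\H_{r-1}\Lambda^{k+1}$. Then $d\kappa\eta=0$, and Theorem~\ref{injlemma} forces $\kappa\eta=\omega_r=0$. Hence $\omega$ has no degree-$r$ component and lies in $\P_{r-1}\Lambda^k$.

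For the second assertion, only the inclusion $d\P_r\Lambda^k\subseteq d\P_r^-\Lambda^k$ needs proof, the reverse being immediate from $\P_r^-\Lambda^k\subseteq\P_r\Lambda^k$. Given $\omega\in\P_r\Lambda^k$, I would split off its top part, writing $\omega=\omega'+\omega_r$ with $\omega'\in\P_{r-1}\Lambda^k$ and $\omega_r\in\H_r\Lambda^k$, and apply \eqref{Hdirectsum} to express $\omega_r=\kappa\eta+d\mu$ with $\eta\in\H_{r-1}\Lambda^{k+1}$ and $\mu\in\H_{r+1}\Lambda^{k-1}$. Since $d\circ d=0$, we get $d\omega_r=d\kappa\eta$, whence $d\omega=d(\omega'+\kappa\eta)$ with $\omega'+\kappa\eta\in\P_{r-1}\Lambda^k+\kappa\H_{r-1}\Lambda^{k+1}=\P_r^-\Lambda^k$, exhibiting $d\omega$ as an element of $d\P_r^-\Lambda^k$.

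I do not expect a serious obstacle here: both parts are short once the reduction to the top homogeneous degree is justified. The only point needing care is precisely the bookkeeping of polynomial and form degrees under $d$ and $\kappa$ — making sure the two families agree below degree $r$ so that the lower-degree components need no adjustment, and checking that the edge cases $k=0$ and $k=n$ (where one summand in \eqref{Hdirectsum} vanishes) remain consistent with the claim.
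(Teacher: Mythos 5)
Your proof is correct and follows essentially the same route as the paper: the first assertion via Theorem~\ref{injlemma} applied to the $\kappa$-part of $\omega$, and the second via the decomposition \eqref{Hdirectsum} (the paper phrases this as the identity $\P_r\Lambda^k=\P_r^-\Lambda^k+d\P_{r+1}\Lambda^{k-1}$ and applies $d$). The only difference is cosmetic: you make explicit the homogeneous-degree bookkeeping that justifies the paper's implication $d\omega=0\implies d\kappa\omega_2=0$, which the paper leaves implicit.
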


\begin{proof}
Write $\omega=\omega_1+\kappa\omega_2$ with $\omega_1\in\P_{r-1}\Lambda^k$
and $\omega_2\in\P_{r-1}\Lambda^{k+1}$.  Then
\begin{equation*}
d\omega=0 \implies d\kappa\omega_2=0 \implies \kappa\omega_2=0
\implies \omega\in\P_{r-1}\Lambda^k,
\end{equation*}
showing the first result.  For the second it suffices to note
that
$\P_r\Lambda^k=\P_r^-\Lambda^k+d\P_{r+1}\Lambda^{k-1}$.
\end{proof}

\begin{remark}
We defined the Koszul differential as contraction with the vector field $X$, where
$X(x)$ is the translation to $x$ of
the vector pointing from the origin in $\R^n$ to $x$.  The choice of
the origin as a base point is arbitrary---any point in $\R^n$ could be used.
That is, if $y\in\R^n$, we can define a vector field $X_y$ by
assigning to each point $x$ the translation to $x$ of the
vector pointing from $y$ to $x$, and then
define a Koszul differential $\kappa_y$ by contraction with $X_y$.  It is easy
to check that for $\omega\in \P_{r-1}\Lambda^{k+1}$ and
any two points $y,y'\in\R^n$, the difference
$\kappa_y\omega-\kappa_{y'}\omega\in \P_{r-1}\Lambda^k$.
Hence the space
\begin{equation*}
\P_r^-\Lambda^k = \P_{r-1}\Lambda^k +
\kappa_y\P_{r-1}\Lambda^{k+1}
\end{equation*}
does not depend on the particular choice of the point $y$.
This observation is important, because it allows us to define
$\P_r^-\Lambda^k(V)$ for any affine subspace $V$
of $\R^n$.  We simply set
\begin{equation*}
\P_r^-\Lambda^k(V) = \P_{r-1}\Lambda^k(V)
+\kappa_y\P_{r-1}\Lambda^{k+1}(V),
\end{equation*}
where $y$ is any point of $V$.  Note
that if $\omega\in\P_r^-\Lambda^k(\R^n)$, then the trace of
$\omega$ on $V$ belongs to $\P_r^-\Lambda^k(V)$.
\end{remark}

\begin{remark}
The spaces $\P_r^-\Lambda^k(\R^n)$ are \emph{affine-invariant}, i.e., if $\phi:\R^n\to\R^n$
is an affine map, then the pullback $\phi^*$ maps this space into itself.  Of course, the full
polynomial space $\P_r\Lambda^k(\R^n)$ is affine-invariant as well.  In \cite[Section~3.4]{acta},
all the finite dimensional affine-invariant spaces of polynomial differential forms are
determined.  These are precisely the spaces in the
$\P$ and $\P^-$ families together with one further
family of spaces which is of less interest to us.
\end{remark}

\subsubsection{Exact sequences of polynomial differential forms}
\label{subsubsec:seqs}
We have seen the polynomial de~Rham complex \eqref{pdr} is
a subcomplex of the de~Rham complex on $\R^n$ for which cohomology
vanishes except for the constants at the lowest order.  In other words,
for any $r\ge 0$ the sequence
\begin{equation}\label{pdr1}
\R\hookrightarrow \P_r\Lambda^0 \xrightarrow{d} \P_{r-1}\Lambda^{1}
 \xrightarrow{d} \cdots \xrightarrow{d} \P_{r-n}\Lambda^n \to 0
\end{equation}
is exact, i.e., is a resolution of $\R$.

As we shall soon verify, the complex
\begin{equation}\label{pdr2}
\R\hookrightarrow \P_r^-\Lambda^0 \xrightarrow{d} \P_r^-\Lambda^{1}
 \xrightarrow{d} \cdots \xrightarrow{d} \P_r^-\Lambda^n \to 0
\end{equation}
is another resolution of $\R$, for any $r>0$.  Note that in this
complex, involving the $\P_r^-\Lambda^k$  spaces, the polynomial
degree $r$ is held fixed, while in \eqref{pdr1}, the polynomial degree
decreases as the form order increases.  Recall that the $0$th order spaces in
these complexes, $\P_r\Lambda^0$ and
$\P_r^-\Lambda^0$, coincide. In fact, the complex \eqref{pdr1} is a
subcomplex of \eqref{pdr2}, and these two are the extreme cases of a
set of $2^{n-1}$ different resolutions of $\R$, each a subcomplex
of the next, and all of which have
the space $\P_r\Lambda^0$ in the $0$th order.
To exhibit them, we begin with the inclusion
$\R\hookrightarrow\P_r\Lambda^0$.  We may continue the complex
with either the map $d:\P_r\Lambda^0\to \P_{r-1}\Lambda^1$ or
$d:\P_r\Lambda^0\to\P_r^-\Lambda^1$, the former being a subcomplex
of the latter.  With either choice, the cohomology vanishes at the first
position.
Next, if we made the first choice, we can continue the complex
with either $d:\P_{r-1}\Lambda^1\to \P_{r-2}\Lambda^2$ or
$d:\P_{r-1}\Lambda^1\to\P_{r-1}^-\Lambda^2$.  Or, if we made the
second choice, we can continue with either $d:\P_r^-\Lambda^1\to
\P_{r-1}\Lambda^2$ or $d:\P_r^-\Lambda^1\to\P_r^-\Lambda^2$.  In the first
case, we may use the exactness of \eqref{pdr1} to see that
the second cohomology space vanishes.  In the second case, this
follows from Lemma~\ref{dvan}.  Continuing in this way
at each order, $k=1,\ldots,n-1$, we have two choices for the space
of $k$-forms (but only one choice for $k=n$, since $\P_{r-1}\Lambda^n$
coincides with $\P_r^-\Lambda^n$), and so we obtain $2^{n-1}$ 
complexes.  These form a totally ordered set with respect to
subcomplex inclusion.
For $r\ge n$ these are all distinct (but for small $r$ some coincide
because the later spaces vanish).

In the case $n=3$, the four complexes so obtained are:
\begin{equation*}
 \begin{CD}
\R\hookrightarrow\P_r\Lambda^0 @>d>> \P_{r-1}\Lambda^1 @>d>>
\P_{r-2}\Lambda^2 @>d>> \P_{r-3}\Lambda^3 \to 0,
\\
\R\hookrightarrow\P_r\Lambda^0 @>d>> \P_{r-1}\Lambda^1 @>d>>
\P_{r-1}^-\Lambda^2 @>d>> \P_{r-2}\Lambda^3 \to 0,
\\
\R\hookrightarrow\P_r\Lambda^0 @>d>> \P_r^-\Lambda^1 @>d>>
\P_{r-1}\Lambda^2 @>d>> \P_{r-2}\Lambda^3 \to 0,
\\
\R\hookrightarrow\P_r\Lambda^0 @>d>> \P_r^-\Lambda^1 @>d>>
\P_r^-\Lambda^2 @>d>> \P_{r-1}\Lambda^3 \to 0.
\end{CD}
\end{equation*}

\subsection{Degrees of freedom and finite element differential forms}
\label{subsec:dof}
Having introduced the spaces of polynomial differential forms
$\P_r\Lambda^k(\R^n)$ and $\P_r^-\Lambda^k(\R^n)$, we now wish to create
finite element spaces of differential forms.  We begin with notation for
spaces of polynomial differential forms on simplices.  If $f$ is a simplex (of any dimension)
in $\R^n$, we define
\begin{equation*}
 \P_r\Lambda^k(f)=\tr_{\R^n,f}\P_r\Lambda^k(\R^n), \quad
 \0\P_r\Lambda^k(f)=\{\,\omega\in\P_r\Lambda^k(f)\,|\, \tr_{f,\partial f}\omega=0\,\}.
\end{equation*}
The spaces $\P_r^-\Lambda^k(f)$ and $\0\P_r^-\Lambda^k(f)$ are defined similarly.

Now let $\Omega$ be a bounded polyhedral
domain which is triangulated, i.e., partitioned into a finite set $\T$ of $n$-simplices
determining a simplicial decomposition of $\Omega$.  This means that
the union of the elements of $\T$ is the closure of $\Omega$, and the intersection of any two
is either empty or a common subsimplex of each.
By way of notation, for any simplex $T$ we denote by $\Delta_d(T)$ the set of subsimplices
of $T$ of dimension $d$, and by $\Delta(T)$ the set of all subsimplices of $T$.  We also
set $\Delta_d(\T):=\bigcup_{T\in\T}\Delta_d(T)$ and $\Delta(\T):=\bigcup_{T\in\T}\Delta(T)$.

Corresponding to the $\P$ and $\P^-$ families of spaces of polynomial
differential forms, we will define two families of spaces of finite element
differential forms with respect to the triangulation $\T$, denoted
$\P_r\Lambda^k(\T)$ and $\P_r^-\Lambda^k(\T)$.  We shall show that these are
subspaces of $H \Lambda^k(\Omega)$ and can be collected, in various ways, into
subcomplexes of the de~Rham complex.

The spaces $\P_r\Lambda^k(\T)$ and $\P_r^-\Lambda^k(\T)$
will be obtained by the finite element assembly process.
For each $T\in\T$, we will choose the corresponding polynomial space
$\P_r\Lambda^k(T)$ or $\P_r^-\Lambda^k(T)$ to be used as
shape functions.  The other ingredient needed to define the
finite element space is a set of degrees of
freedom for the shape function spaces, that is, a basis for
the dual space, in which each degree of freedom is associated with a
particular subsimplex. When a subsimplex is shared by more than one simplex in
the triangulation, we will insist that the degrees of freedom associated with
that subsimplex be single-valued in a sense made precise below,
and this will determine the interelement
continuity.

The degrees of freedom for $\P_r\Lambda^k(\T)$ and $\P_r^-\Lambda^k(\T)$
which we shall associate to a $d$-dimensional subsimplex $f$ of $T$
will be of the following form: for some $(d-k)$-form $\eta$ on $f$, the functional will
be $\omega\mapsto \int_f\tr_{T,f}\omega\wedge\eta$.   The span of all the degrees of
freedom associated to $f$ is a subspace of the dual space of the shape function space,
and so we obtain a decomposition of the dual space of the shape functions on
$T$ into a direct sum of subspaces indexed by the subsimplices of $T$.  It
is really this \emph{geometric decomposition of the dual space} that determines the
interelement continuity rather than the particular choice of degrees of
freedom, since we may choose any convenient basis for each space in the
decomposition and obtain the same assembled finite element space.

The geometric
decompositions of the dual spaces of $\P_r\Lambda^k(T)$ or $\P_r^-\Lambda^k(T)$
are given specifically in the following theorem, which is proven in \cite[Sections~4.5 and 4.6]{acta}.

\begin{thm}\label{stardecomp}
Let $r$, $k$, and $n$ be integers with $0\le k\le n$ and $r>0$, and
let $T$ be an $n$-simplex in $\R^n$.  

1. To each $f \in \Delta(T)$, associate a space $W_r^k(T,f)\subset\P_r\Lambda^k(T)^*$:
\begin{equation*}
W_r^k(T,f)=\biggl\{\,\omega\mapsto\int_f \tr_{T,f}\omega\wedge\eta \,\big|\,\eta\in
   \P_{r+k-\dim f}^-\Lambda^{\dim f-k}(f)\,\biggr\}.
\end{equation*}
Then $W_r^k(T,f)\cong \P_{r+k-\dim f}^-\Lambda^{\dim f-k}(f)$ via the obvious correspondence, and
$$
\P_r\Lambda^k(T)^* = \bigoplus_{f\in\Delta(T)} W_r^k(T,f).
$$

2. To each $f \in \Delta(T)$, associate a space $W_r^{k-}(T,f)\subset\P^-_r\Lambda^k(T)^*$:
\begin{equation*}
W_r^{k-}(T,f)=\biggl\{\,\omega\mapsto\int_f \tr_{T,f}\omega\wedge\eta\,\big|\,
\eta\in \P_{r+k-\dim f-1}\Lambda^{\dim f-k}(f)\,\biggr\}.
\end{equation*}
Then $W_r^{k-}(T,f)\cong \P_{r+k-\dim f-1}\Lambda^{\dim f-k}(f)$ via the obvious correspondence, and
$$
\P^-_r\Lambda^k(T)^* = \bigoplus_{f\in\Delta(T)} W_r^{k-}(T,f).
$$
\end{thm}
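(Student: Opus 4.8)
The plan is to recognize the statement as the unisolvence of the indicated degrees of freedom. Write $V_f:=\P_{r+k-\dim f}^-\Lambda^{\dim f-k}(f)$ and let $\Phi\colon\bigoplus_{f\in\Delta(T)}V_f\to\P_r\Lambda^k(T)^*$ send a tuple $(\eta_f)$ to the functional $\omega\mapsto\sum_f\int_f\tr_{T,f}\omega\wedge\eta_f$. Part~1 asserts exactly that $\Phi$ is an isomorphism: surjectivity gives the direct-sum spanning of the dual, while injectivity restricted to a single summand gives the asserted isomorphism $V_f\cong W_r^k(T,f)$. Since $\dim\P_r\Lambda^k(T)^*=\dim\P_r\Lambda^k(T)$, it suffices to verify the dimension identity $\dim\P_r\Lambda^k(\R^n)=\sum_{f}\dim V_f$ and then prove $\Phi$ injective. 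For the count I would use the explicit formulas for $\dim\P_r\Lambda^k$ and $\dim\P_r^-\Lambda^k$ together with the fact that an $n$-simplex has $\binom{n+1}{d+1}$ faces of dimension $d$; after substitution one finds $\dim V_f=\binom{r-1}{d-k}\binom{r+k}{k}$ for a $d$-face, and summing collapses, by a single Vandermonde convolution, to $\binom{r+k}{k}\binom{n+r}{n-k}=\dim\P_r\Lambda^k(\R^n)$. This step is routine.

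The injectivity of $\Phi$ is the unisolvence statement: if $\omega\in\P_r\Lambda^k(T)$ annihilates every functional in $\bigcup_f W_r^k(T,f)$, then $\omega=0$. I would prove it by induction on the face dimension $d$, showing $\tr_{T,f}\omega=0$ for every $f\in\Delta_d(T)$. Assume this for all faces of dimension $<d$ and fix a $d$-face $f$. Because the trace of a polynomial form is a polynomial form of the same degree (affine-invariance, as in the remarks following Theorem~\ref{dvan}) and the trace is functorial, $\tr_{f,g}\tr_{T,f}\omega=\tr_{T,g}\omega=0$ for every proper subface $g\subset f$; hence $\mu:=\tr_{T,f}\omega$ lies in $\0\P_r\Lambda^k(f)$. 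The degrees of freedom attached to $f$ say that $\int_f\mu\wedge\eta=0$ for all $\eta\in V_f$, so $\mu=0$ provided the pairing below is nondegenerate in its first argument. Taking $d=n$ yields $\omega=\tr_{T,T}\omega=0$.

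Thus everything rests on one lemma, which I expect to be the main obstacle: for a $d$-simplex $f$, the bilinear pairing
\[
\0\P_r\Lambda^k(f)\times\P_{r+k-d}^-\Lambda^{d-k}(f)\to\R,\qquad
(\mu,\eta)\mapsto\int_f\mu\wedge\eta,
\]
is nondegenerate. The two spaces have equal dimension $\binom{r-1}{d-k}\binom{r+k}{k}$ (the same $\dim V_f$ entering the count above, obtained from \eqref{dim}), so nondegeneracy in either argument suffices and yields a perfect pairing. To prove it I would reduce to the case $f=T$, since the assertion is intrinsic to the simplex, rewrite $\int_T\mu\wedge\eta=\langle\star\mu,\eta\rangle$ using the Hodge star, and analyze the $L^2$-orthogonal complement of $\P_{r+k-n}^-\Lambda^{n-k}(T)$ inside the polynomial forms using the Koszul structure: in particular the homogeneous decomposition \eqref{Hdirectsum} and the injectivity of $d$ and $\kappa$ on their respective ranges (Theorem~\ref{injlemma}), combined with an explicit description of $\0\P_r\Lambda^k(T)$ in barycentric coordinates. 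This is the genuinely technical step; the surrounding reduction is bookkeeping.

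Part~2 is entirely analogous: with $W_r^{k-}(T,f)$ one forms the corresponding map into $\P_r^-\Lambda^k(T)^*$, runs the same dimension count and inductive trace argument, now invoking the dual nondegenerate pairing $\0\P_r^-\Lambda^k(f)\times\P_{r+k-d-1}\Lambda^{d-k}(f)\to\R$. This pairing is the transpose partner of the one above under the $\P\leftrightarrow\P^-$ duality that pervades this section, and is established by the same Koszul-complex techniques.
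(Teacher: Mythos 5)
Your proposal takes essentially the same route as the paper's proof, which the paper defers to \cite{acta}: reduce the decomposition to a dimension count plus unisolvence, verify the count by a Vandermonde convolution, and prove unisolvence by induction on face dimension, with the inductive step resting on nondegeneracy of the pairing $(\mu,\eta)\mapsto\int_f\mu\wedge\eta$ on $\0\P_r\Lambda^k(f)\times\P^-_{r+k-\dim f}\Lambda^{\dim f-k}(f)$. Your dimension count and your trace induction are both correct. The one step you leave as a sketch is exactly Lemma~\ref{trace2} of the paper (after reducing, by affine invariance, from a $d$-dimensional face $f$ of $T$ to a simplex of full dimension), and the paper likewise states that lemma without proof, citing the same sections of \cite{acta}; so your acknowledged gap coincides precisely with the ingredient the paper itself takes as given, and is better closed by invoking that lemma than by your Hodge-star/Koszul sketch, which is the vaguest part of the proposal --- the proof in \cite{acta} is in fact an explicit barycentric-coordinate computation. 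One minor inaccuracy: the claim $\dim\0\P_r\Lambda^k(f)=\binom{r-1}{d-k}\binom{r+k}{k}$ does not follow from \eqref{dim}, which gives the dimensions of $\kappa\H_r\Lambda^k$ and $d\H_{r+1}\Lambda^{k-1}$, not of trace-free spaces; that equality is really a consequence of Lemma~\ref{trace2} (or, a posteriori, of the theorem itself). Fortunately you never need it: unisolvence requires only injectivity of $\mu\mapsto\int_f\mu\wedge(\,\cdot\,)$ on $\0\P_r\Lambda^k(f)$, not a perfect pairing, so this overstatement is harmless.
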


Note that the spaces $W_r^k(T,f)$ and $W_r^{k-}(T,f)$ vanish if $\dim f<k$.
Note also that the dual space of $\P_r\Lambda^k(T)$ is expressed in terms
of spaces in the $\P^-$ family, and vice versa.
This intimate connection between the $\P$ and $\P^-$ families of spaces
of polynomial differential forms is most clearly seen in the following
algebraic proposition, which is closely related to Theorem~\ref{stardecomp},
and is also proved in \cite[Sections~4.5 and 4.6]{acta}.

\begin{lem}\label{trace2}
With $r$, $k$, $n$ as above,
\begin{equation*}
\0\P_r\Lambda^k(T)^* \cong \P_{r+k-n}^-\Lambda^{n-k}(T)
\text{\quad and\quad}
\0\P_r^-\Lambda^k(T)^* \cong \P_{r+k-n-1}\Lambda^{n-k}(T).
\end{equation*}
\end{lem}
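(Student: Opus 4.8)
The plan is to deduce both isomorphisms directly from the geometric decomposition of Theorem~\ref{stardecomp}, by realizing $\0\P_r\Lambda^k(T)$ as the annihilator of the ``boundary'' degrees of freedom and then dualizing. I treat the first isomorphism in detail; the second is identical, using part~2 of Theorem~\ref{stardecomp} in place of part~1. Set $P=\P_r\Lambda^k(T)$ and $\0P=\0\P_r\Lambda^k(T)$, and split the dual according to Theorem~\ref{stardecomp} as $P^*=A\oplus B$, where $B=W_r^k(T,T)$ collects the interior functionals and $A=\bigoplus_{f\in\Delta(T),\,f\ne T}W_r^k(T,f)$ collects those attached to the proper subsimplices. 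Since $B\cong\P_{r+k-n}^-\Lambda^{n-k}(T)$ (the $\dim f=n$ case of the theorem, via $\eta\mapsto(\omega\mapsto\int_T\tr_{T,T}\omega\wedge\eta)$), it suffices to exhibit an isomorphism $(\0P)^*\cong B$.

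The key step is to identify $\0P$ with the common kernel $A^\circ=\{\,\omega\in P\mid L(\omega)=0\ \forall L\in A\,\}$. One inclusion is immediate: every functional in $W_r^k(T,f)$ with $f\ne T$ is built from $\tr_{T,f}\omega$, and for $f\subset\partial T$ this vanishes whenever $\tr_{\partial T}\omega=0$, so $A$ annihilates $\0P$. For the reverse inclusion I would suppose $L(\omega)=0$ for all $L\in A$ and fix a facet $F$ of $T$. For every $g\in\Delta(F)\subseteq\Delta(T)\setminus\{T\}$ and every admissible form $\eta$, transitivity of the trace, $\tr_{F,g}\tr_{T,F}=\tr_{T,g}$, shows that the functional from $W_r^k(F,g)$ evaluated at $\tr_{T,F}\omega$ coincides with the functional from $W_r^k(T,g)\in A$ evaluated at $\omega$, and hence vanishes. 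Applying Theorem~\ref{stardecomp} to the simplex $F$, the family $\{W_r^k(F,g)\}_{g\in\Delta(F)}$ is unisolvent on $\P_r\Lambda^k(F)$, so $\tr_{T,F}\omega=0$; letting $F$ run over the facets covering $\partial T$ gives $\tr_{\partial T}\omega=0$, i.e.\ $\omega\in\0P$. Thus $\0P=A^\circ$.

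I then conclude by finite-dimensional duality. From $\0P=A^\circ$ and the double-annihilator identity $(A^\circ)^\circ=A$, the annihilator of $\0P$ in $P^*$ is exactly $A$, whence
\[
(\0P)^*\cong P^*/(\0P)^\circ = P^*/A = (A\oplus B)/A\cong B\cong\P_{r+k-n}^-\Lambda^{n-k}(T).
\]
Running the same argument with $P=\P_r^-\Lambda^k(T)$ and the functional spaces $W_r^{k-}(T,f)$ of part~2 of Theorem~\ref{stardecomp}, for which the interior piece satisfies $W_r^{k-}(T,T)\cong\P_{r+k-n-1}\Lambda^{n-k}(T)$, yields the second isomorphism.

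I expect the only delicate point to be the reverse inclusion in $\0P=A^\circ$: one must check that the degrees of freedom which the ambient simplex $T$ assigns to the subsimplices of a facet $F$ restrict to \emph{precisely} the degrees of freedom that Theorem~\ref{stardecomp} assigns intrinsically on $F$, so that unisolvence on $F$ is applicable. This is exactly the trace-transitivity bookkeeping above, and it is what makes the boundary functionals detect the full trace $\tr_{\partial T}\omega$ rather than merely finitely many moments of it. Everything else is formal linear-algebra duality and a direct appeal to the geometric decomposition.
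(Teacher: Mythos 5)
Your argument is correct, and the delicate points you flag do go through: trace transitivity identifies the degrees of freedom that $T$ attaches to the subsimplices of a facet $F$ with the intrinsic degrees of freedom of $F$, so unisolvence on $F$ (Theorem~\ref{stardecomp} applied in dimension $n-1$) forces $\tr_{T,F}\omega=0$; the annihilator computation $\0\P_r\Lambda^k(T)^*\cong\P_r\Lambda^k(T)^*/A\cong W_r^k(T,T)\cong\P^-_{r+k-n}\Lambda^{n-k}(T)$ is sound finite-dimensional duality; and the $\P^-$ case is indeed identical, using that traces of $\P^-$ forms are $\P^-$ forms on faces. However, this runs in the opposite logical direction from the source the paper relies on: the paper gives no proof here but cites \cite[Sections~4.5 and 4.6]{acta}, where the duality of Lemma~\ref{trace2} is established directly, by showing that the pairing $(\omega,\eta)\mapsto\int_T\omega\wedge\eta$ between $\0\P_r\Lambda^k(T)$ and $\P^-_{r+k-n}\Lambda^{n-k}(T)$ is nondegenerate (an induction on dimension using barycentric representations and the Koszul structure) together with a dimension count, and that duality is then used as an ingredient in proving the geometric decomposition, i.e., Theorem~\ref{stardecomp} itself. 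So your derivation is legitimate within this paper, where Theorem~\ref{stardecomp} is quoted as an established result, but it could not replace the original proof in a self-contained development: there it would be circular, since the decomposition rests on the duality and not conversely. What your route buys is economy and transparency --- it exhibits the lemma as pure linear algebra once the geometric decomposition is granted, the only real content being the trace-transitivity bookkeeping you isolate; what the original route buys is that it carries the actual load, the nondegenerate pairing being precisely what makes the decomposition of the dual spaces, and hence unisolvence of the degrees of freedom, provable in the first place.
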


With the decompositions in Theorem~\ref{stardecomp}, we can define the finite element
spaces.  Thus $\P_r\Lambda^k(\T)$ consists of all forms $\omega\in L^2\Lambda^k(\Omega)$
such that $\omega|_T$ belongs to the shape function space $\P_r\Lambda^k(T)$ for all
$T\in\T$, and for which the quantities $\int_f\tr_f\omega\wedge\eta$ are single-valued
for all $f\in\Delta(T)$ and all $\eta\in\P_{r+k-\dim f}^-\Lambda^{\dim f-k}(f)$.
More precisely, this means that if $f$ is a common face of $T_1,T_2\in\T$, then
\begin{equation*}
 \int_f\tr_{T_1,f}(\omega|_{T_1})\wedge\eta =  \int_f\tr_{T_2,f}(\omega|_{T_2})\wedge\eta
\end{equation*}
for all such $f$ and $\eta$.  The $\P^-$ family of spaces is defined analogously.

The degrees of freedom determine the amount of interelement continuity enforced
on the finite element space.  Of course we need to know that the assembled spaces
belong to $H\Lambda^k(\Omega)$.  In fact, the degrees of freedom we imposed enforce
exactly the continuity needed, as shown in the following theorem.
This is proved in \cite[Section~5.1]{acta}, where the equations below are taken as  definitions, and it
is shown that the assembly process leads to the same spaces.

\begin{thm}\label{smoothness}
 \begin{align*}
\P_r\Lambda^k(\T)
&= \{\omega \in H\Lambda^k(\Omega) \, | \ \omega|_T \in \P_r\Lambda^k,
\, T \in \T \},\\*
\P_r^-\Lambda^k(\T)
&= \{\omega \in H\Lambda^k(\Omega)\, | \ \omega|_T \in
\P_r^-\Lambda^k, 
\, T \in \T \}.
\end{align*}
\end{thm}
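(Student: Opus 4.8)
The plan is to reduce the statement to the standard fact that piecewise polynomial conformity in $H\Lambda^k$ is equivalent to continuity of tangential traces across faces of codimension one, and then to use the geometric decomposition of Theorem~\ref{stardecomp} to show that single-valuedness of the degrees of freedom is precisely this trace continuity. I would carry out the argument for the $\P_r$ family; the $\P_r^-$ case is handled identically, using part~2 of Theorem~\ref{stardecomp} together with the fact, noted after the definition of $\P_r^-\Lambda^k$, that the trace of a form in $\P_r^-\Lambda^k(\R^n)$ onto a face $f$ lies in $\P_r^-\Lambda^k(f)$.

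First I would record the conformity criterion: a form $\omega\in L^2\Lambda^k(\Omega)$ with $\omega|_T\in\P_r\Lambda^k(T)$ for each $T\in\T$ belongs to $H\Lambda^k(\Omega)$ if and only if for every $f\in\Delta_{n-1}(\T)$ shared by simplices $T_1,T_2$ one has $\tr_{T_1,f}(\omega|_T)=\tr_{T_2,f}(\omega|_T)$. This follows from integration by parts element by element: by \eqref{ibp1} the distributional exterior derivative of $\omega$ equals the piecewise-computed $d\omega$, which is in $L^2$, plus a sum over interior faces $f\in\Delta_{n-1}(\T)$ of surface terms proportional to the jump $\tr_{T_1,f}\omega-\tr_{T_2,f}\omega$; these surface distributions lie in $L^2$ precisely when every such jump vanishes.

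The heart of the argument is to connect the degrees of freedom to these traces. A degree of freedom $\omega\mapsto\int_g\tr_{T,g}\omega\wedge\eta$ associated to a subsimplex $g\subseteq f$ factors through $\tr_{T,f}\omega$, since $\tr_{T,g}=\tr_{f,g}\circ\tr_{T,f}$; hence all degrees of freedom carried by $f$ and its subsimplices depend on $\omega|_T$ only through $\tr_{T,f}\omega\in\P_r\Lambda^k(f)$. Applying Theorem~\ref{stardecomp} to the $(n-1)$-simplex $f$ in place of $T$, these functionals are exactly the full system $\bigoplus_{g\in\Delta(f)}W_r^k(f,g)$ dual to $\P_r\Lambda^k(f)$, and so are unisolvent for the trace space; therefore the degrees of freedom indexed by $f$ and its subsimplices determine $\tr_{T,f}\omega$ uniquely. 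This unisolvence on faces is the step I expect to be the main obstacle: one must know that the \emph{boundary} degrees of freedom restrict to a complete, linearly independent system on $\P_r\Lambda^k(f)$ rather than to a mere subset of one, which is exactly the compatibility of the decomposition on $T$ with that on $f$, and it is where the precise indexing $\eta\in\P_{r+k-\dim f}^-\Lambda^{\dim f-k}(f)$ and the surjectivity of $\tr_{T,f}:\P_r\Lambda^k(T)\to\P_r\Lambda^k(f)$ enter essentially.

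With this in hand both inclusions are immediate. If $\omega\in\P_r\Lambda^k(\T)$, then all degrees of freedom are single-valued, so on any interior $f\in\Delta_{n-1}(\T)$ the boundary degrees of freedom take the same values computed from $T_1$ or from $T_2$; by the unisolvence just established $\tr_{T_1,f}\omega=\tr_{T_2,f}\omega$, and the conformity criterion gives $\omega\in H\Lambda^k(\Omega)$, while $\omega|_T\in\P_r\Lambda^k(T)$ by construction. Conversely, if $\omega$ lies in the space on the right, then $\omega\in H\Lambda^k(\Omega)$ has a single-valued trace on every simplex of $\Delta(\T)$ via the extended trace operator of Section~\ref{subsec:deRcHc}, so each functional $\omega\mapsto\int_g\tr_{T,g}\omega\wedge\eta$ is independent of the element $T\supseteq g$ used to evaluate it; thus the degrees of freedom are single-valued and $\omega\in\P_r\Lambda^k(\T)$.
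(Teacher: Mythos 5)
Your strategy---reduce $H\Lambda^k$-conformity of piecewise polynomials to single-valuedness of traces across faces of codimension one, then apply Theorem~\ref{stardecomp} to a face $f$ in place of $T$ to see that the degrees of freedom attached to $f$ and its subsimplices are unisolvent on the trace space $\P_r\Lambda^k(f)$---is essentially the argument of the proof the paper relies on (the paper does not prove the theorem in the text but defers to Section~5.1 of \cite{acta}, where the right-hand sides are taken as definitions and the assembled spaces are shown to coincide with them). Your first inclusion, from the assembled space into $\{\omega\in H\Lambda^k(\Omega)\,:\,\omega|_T\in\P_r\Lambda^k \ \forall T\in\T\}$, is complete: single-valuedness of the degrees of freedom attached to a shared $(n-1)$-face and its subsimplices, together with unisolvence over $\P_r\Lambda^k(f)$, forces the two traces to agree, and your element-wise integration-by-parts criterion then puts $\omega$ in $H\Lambda^k(\Omega)$.

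The converse inclusion, however, has a genuine gap. You justify single-valuedness of the degrees of freedom by asserting that an element of $H\Lambda^k(\Omega)$ ``has a single-valued trace on every simplex of $\Delta(\T)$ via the extended trace operator.'' The extended trace operator of Section~\ref{subsec:deRcHc} produces traces only on hypersurfaces (codimension one), with values in $H^{-1/2}$; membership in $H\Lambda^k(\Omega)$ gives no trace at all on simplices of codimension $\ge 2$, so no trace theorem can deliver what you claim. For your piecewise polynomial $\omega$ the element-wise traces $\tr_{T,g}\omega$ do exist for every $g\in\Delta(T)$, but when $T_1\cap T_2=g$ has codimension $\ge 2$ their equality is precisely what must be proved. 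The correct route is: your conformity criterion gives agreement of traces across shared $(n-1)$-faces; one then connects $T_1$ to $T_2$ by a chain of simplices of $\T$, all containing $g$, with consecutive ones sharing an $(n-1)$-face containing $g$, and propagates the agreement along the chain using $\tr_{T,g}=\tr_{f,g}\circ\tr_{T,f}$. The existence of such a chain is a topological property of the triangulation, guaranteed here because the closure of the (Lipschitz) domain triangulated by $\T$ is a manifold with boundary; it is not a formality. For a pinched, non-Lipschitz domain---say a polygonal annulus whose inner hole touches the outer boundary at a single vertex, with $k=0$---a piecewise linear function can lie in $H^1$ while jumping across the pinch vertex, so the right-hand side is strictly larger than the assembled space and the converse inclusion fails. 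This step is therefore where the hypotheses on $\Omega$ and $\T$ must enter, and your proof as written never uses them.
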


Next we note that these spaces of finite element differential forms can be collected
in subcomplexes of the de~Rham complex.  In view of Theorem~\ref{smoothness}, we have
$d\P_r\Lambda^k(\T)\subset\P_{r-1}\Lambda^{k+1}(\T)$
and $d\P_r^-\Lambda^k(\T)\subset\P_r^-\Lambda^{k+1}(\T)$.
Corresponding to the resolutions defined in Section~\ref{subsubsec:seqs}, we obtain
$2^{n-1}$ de~Rham subcomplexes for each value of $r$ and each mesh $\T$.
Each complex begins with the space $\P_r\Lambda^0(\T)=\P_r^-\Lambda^0(\T)$.
The maps making up the subcomplexes are all one of the following types:
\begin{equation}\label{maptypes}
\begin{CD}
 \left\{
\begin{matrix}
 \P_{s+1}\Lambda^{k-1}(\T)\\[1ex] \text{or} \\[1ex] \P_{s+1}^-\Lambda^{k-1}(\T)
\end{matrix}
\right\}
@>d>>
 \left\{
\begin{matrix}
 \P_{s+1}^-\Lambda^k(\T)\\[1ex] \text{or} \\[1ex] \P_s\Lambda^k(\T)
\end{matrix}
\right\}
\end{CD}
\end{equation}
for some $s$ and some $k$.  In Section~\ref{subsec:bcp}, we shall show that
all the subcomplexes admit bounded cochain projections.  These $2^{n-1}$ complexes,
which are distinct for $r\ge n$, are linearly ordered by inclusion.
The maximal complex is
\begin{equation*}
0\to \P_r^-\Lambda^0(\T) \xrightarrow{d} \P_r^-\Lambda^{1}(\T)
 \xrightarrow{d} \cdots \xrightarrow{d} \P_r^-\Lambda^n(\T) \to 0.
\end{equation*}
The spaces $\P_r^{-}\Lambda^k(\T)$ in this complex are referred to
as the higher order Whitney forms, since for $r=1$, this is exactly the
complex introduced by Whitney \cite{whitney}.  The minimal complex
with this starting space is the complex of polynomial differential forms
\begin{equation*}
0\to \P_r\Lambda^0(\T) \xrightarrow{d} \P_{r-1}\Lambda^{1}(\T)
 \xrightarrow{d} \cdots \xrightarrow{d} \P_{r-n}\Lambda^n(\T) \to 0.
\end{equation*}
This complex was used extensively by Sullivan \cite{sullivan73,sullivan77} and is sometimes
referred to as the complex of Sullivan--Whitney forms \cite{baker}.  It was
introduced into the finite element literature in \cite{Demkowicz-Monk-Rachowicz-Vardapetyan}.
The intermediate complexes
involve both higher order Whitney spaces and full polynomial spaces.

Finally, we note that the degrees of freedom used to define the space
determine a canonical projection $I_h:C\Lambda^k(\Omega)\to \P_r\Lambda^k(\T)$.
Namely, $I_h\omega\in \P_r\Lambda^k(\T)$
is determined by
\begin{equation*}
 \int_f \tr_{T,f}(\omega-I_h\omega)\wedge\eta=0,\quad \eta\in
   \P_{r+k-\dim f}^-\Lambda^{\dim f-k}(f), \ f\in \Delta(\T).
\end{equation*}
Similar considerations apply to $\P_r^-\Lambda^k(\T)$.  The canonical projection
can be viewed as a map from the (sufficiently) smooth de~Rham complex to
any one of the complexes built from the maps \eqref{maptypes}.  The degrees of
freedom were chosen exactly so that \emph{the
canonical projection is
a cochain map}, i.e., commutes with $d$.  This is verified using Stokes's theorem.
See \cite[Theorem 5.2]{acta}.

\begin{remark}
Given a simplicial triangulation $\T$, we have defined,
for every form degree $k$ and every polynomial degree $r$,
two finite element spaces of differential $k$-forms of degree at most $r$:
$\P_r\Lambda^k(\T)$ and $\P_r^-\Lambda^k(\T)$.
In $n=2$ and $n=3$ dimensions, we may use proxy fields to identify these spaces
of finite element differential forms with finite element spaces of
scalar and vector functions.  The space $\P_r \Lambda^0(\T)$ corresponds to
the Lagrange elements \cite{Ciarlet}, and the spaces $\P_r \Lambda^n(\T)$ and
$\P_r^- \Lambda^n(\T)$ correspond to the space of discontinuous piecewise
polynomials of degree $\le r$ and $\le r-1$, respectively. When $n=2$, the
spaces $\P_r \Lambda^1(\T)$ and $\P_r^-\Lambda^1(\T)$ correspond to the
Brezzi--Douglas--Marini $H(\div)$ elements of degree $\le r$, introduced in
\cite{Brezzi-Douglas-Marini} and the Raviart--Thomas $H(\div)$ elements
of degree $\le r-1$ introduced in \cite{Raviart-Thomas}.  These spaces
were generalized to three dimensions by N\'ed\'elec \cite{Nedelec1},
\cite{Nedelec2}.  The spaces $\P_r\Lambda^1(\T)$, $\P_r\Lambda^2(\T)$,
$\P_r^-\Lambda^1(\T)$, and $\P_r^-\Lambda^2(\T)$ then correspond to the
N\'ed\'elec 2nd kind $H(\curl)$ and $H(\div)$ elements of degree $\le r$ and
the N\'ed\'elec 1st kind $H(\curl)$ and $H(\div)$ elements of degree $\le
r-1$, respectively.
\end{remark}

\subsection{Computational bases}
\label{subsec:bases}
This subsection relates to the implementation of the $\P$ and $\P^-$ families
of finite element differential forms.  It is not essential to the rest of the paper.
Because the spaces $\P_r^-\Lambda^k(\T)$
and $\P_r\Lambda^k(\T)$ were constructed through a finite element assembly procedure
(shape functions and degrees of freedom), we have at hand a basis for their dual spaces.
Consider the space 
$\P_r^-\Lambda^k(\T)$, for example, with reference to the decomposition of the dual space
of the shape function space $\P_r^-\Lambda^k(T)$ given in Theorem~\ref{stardecomp}.
Choose any $f\in \Delta(\T)$
of dimension $d\ge k$, and choose any convenient basis for $\P_{r+k-d}\Lambda^{d-k}(f)$.
For each of the basis functions $\eta$, we obtain an element of $\P_r^-\Lambda^k(\T)^*$:
\begin{equation*}
 \omega\mapsto \int_f\tr_f\omega \wedge\eta,\quad \omega\in \P_r^-\Lambda^k(\T).
\end{equation*}
(This is meaningful since the integral is, by construction, single-valued.)
Taking the union over $f\in\Delta(\T)$ of the sets of elements of $\P_r^-\Lambda^k(\T)^*$
obtained in this way, gives a basis for that space.  An interesting case is that
of the Whitney forms $\P_1^-\Lambda^k(\T)$.  In this case there is exactly one dual basis
function for each $f\in\Delta_k(\T)$, namely $\omega\mapsto\int_f\tr_f\omega$.

For computation with finite elements, we also need a basis for the finite element
space itself, not only for the
dual space.  One possibility which is commonly used is to use the dual basis 
to the basis for the dual space just discussed.  Given an element of the basis of the dual
space, associated to some $f\in\Delta(\T)$, it is easy to check that the corresponding
basis function of the finite element space vanishes on all simplices $T\in \T$ which do not
contain $f$.  Thus we have a \emph{local basis}, which is very efficient for computation.

In the case of the Whitney space $\P_1^-\Lambda^k(\T)$, the dual basis can be written
down very easily.  We begin with the standard dual basis for $\P_1\Lambda^0(\T_h)$:
the piecewise linear function $\lambda_i$ associated to the vertex $x_i$ is determined
by $\lambda_i(x_j)=\delta_{ij}$ (after picking some numbering $x_1,\ldots,x_N$ of the
vertices).  Then given a
$k$-face $f$ with vertices $x_{\sigma(0)},\ldots,x_{\sigma(k)}$ we define the \emph{Whitney form}
\cite[p.~229]{whitney}
\begin{equation*}
 \phi_f = \sum_{i=0}^k (-1)^i\lambda_{\sigma(i)}\,
d\lambda_{\sigma(0)}\wedge\cdots\wedge\widehat{d\lambda_{\sigma(i)}}\wedge\cdots
\wedge d\lambda_{\sigma(k)}.
\end{equation*}

For higher degree finite element spaces it does not seem possible to write down the
dual basis explicitly, and it must be computed.  The computation comes down to
inverting a matrix of size $d\times d$ where
$d$ is the dimension of the space of shape functions.
This can be carried out once on a single reference simplex and the result
transferred to any simplex via an affine transformation.

An alternative to the dual basis, which is often preferred, is to use a basis for the finite element
space which can be written explicitly in terms of barycentric coordinates.
In particular, for the Lagrange finite element space $\P_r\Lambda^0(\T)$ consisting
of continuous piecewise polynomials of degree at most $r$, one often uses the
\emph{Bernstein basis}, defined piecewise by monomials in the barycentric coordinates,
instead of the dual or \emph{Lagrange basis}.  It turns out that explicit bases analogous
to the Bernstein basis can be given for all the finite element spaces in
the $\P$ and $\P^-$ families, as was shown in \cite{decomp}.  Here we content ourselves
with displaying a few typical  cases.

Bases for the spaces $\P^-_r \Lambda^1(\T)$ and $\P_r \Lambda^2(\T)$ are
summarized in Tables~\ref{tb:t2} and \ref{tb:t5}, respectively, for $n=3$ dimensions and polynomial
degrees
$r=1$, $2$, and $3$.  To explain the presentation, we interpret the second line
of Table~\ref{tb:t2}.  We are assuming that $\T$ is a triangulation of
a 3-dimensional polyhedron.   The table indicates that for the space $\P_2^-\Lambda^1(\T)$,
there are two basis functions associated to each edge of $\T$ and two basis functions
associated to each
$2$-dimensional face of $\T$.  If an edge has vertices $x_i$ and $x_j$ with $i<j$,
then on an simplex $T$ containing the edge, the corresponding basis functions
are given by $\lambda_i\phi_{ij}$ and $\lambda_j\phi_{ij}$, where $\lambda_i$ and $\lambda_j$ are the
barycentric coordinates functions on $T$ equal to $1$ at the vertices $x_i$ and $x_j$,
respectively, and
\begin{equation*}
\phi_{ij} = \lambda_i \,d \lambda_j - \lambda_j\, d \lambda_i
\end{equation*}
is the Whitney form associated to the edge.  Similarly, if $T$ contains a face
with vertices $x_i$, $x_j$, $x_k$, $i<j<k$, then on $T$ the two basis functions
associated to the face are given by $\lambda_k\phi_{ij}$ and $\lambda_j\phi_{ik}$.

\begin{table}[htb]
\caption{Basis for the spaces $\P^-_r \Lambda^1$, $n =3$.}
\label{tb:t2}
\footnotesize
\begin{center}
\begin{tabular}{c c c c c c c}
\hline
\\
$r$ & Edge $[x_i,x_j]$ & Face $[x_i,x_j,x_k]$ & Tet $[x_i,x_j,x_k,x_l]$
\\
\hline
$1$\rule{0pt}{15pt} & $\phi_{ij}$ &   & 
\\[2ex]
$2$ & $\{\lambda_i, \lambda_j\} \phi_{ij}$  &
$\lambda_k \phi_{ij}$,  \ $\lambda_j \phi_{ik}$ & 
\\[2ex]
$3$ & $\{\lambda_i^2, \lambda_j^2, \lambda_i \lambda_j\} \phi_{ij}$ &
$\{\lambda_i, \lambda_j, \lambda_k\} \lambda_k \phi_{ij}$
& $\lambda_k \lambda_l \phi_{ij}$, \ $\lambda_j \lambda_l \phi_{ik}$,
\\
&   & $\{\lambda_i, \lambda_j, \lambda_k\} \lambda_j \phi_{ik}$
& $\lambda_j \lambda_k \phi_{il}$
\\
\\
\hline
\end{tabular}
\end{center}
\end{table}

\begin{table}[htb]
\caption{Basis for the space $\P_r \Lambda^2$, $n = 3$.}
\label{tb:t5}
\footnotesize
\begin{center}
\begin{tabular}{c c c c c}
\hline
\\
$r$ &\hspace{.1in} & Face $[x_i,x_j,x_k]$ &\hspace{.1in} & Tet $[x_i,x_j,x_k,x_l]$ \\
\hline
$1$\rule{0pt}{15pt} && $\lambda_k d \lambda_i \wedge d \lambda_j$, \
$\lambda_j d \lambda_i \wedge d \lambda_k$, \
$\lambda_i d \lambda_j \wedge d \lambda_k$  &&
\\[2ex]
$2$ && $\lambda_k^2 d \lambda_i \wedge d \lambda_j$, \
$\lambda_j \lambda_k d \lambda_i \wedge  d(\lambda_k - \lambda_j)$
&& $\lambda_k \lambda_l d \lambda_i \wedge d \lambda_j$, \
$\lambda_j \lambda_l d \lambda_i \wedge d \lambda_k$
\\
&&  $\lambda_j^2 d \lambda_i \wedge d \lambda_k$, \
$\lambda_i \lambda_j d(\lambda_j - \lambda_i) \wedge d \lambda_k$
 && $\lambda_j \lambda_k d \lambda_i \wedge d \lambda_l$, \
$\lambda_i \lambda_l d \lambda_j \wedge d \lambda_k$
\\
&&  $\lambda_i^2 d \lambda_j \wedge d \lambda_k$, \
 $\lambda_i \lambda_k d \lambda_j \wedge d (\lambda_k - \lambda_i)$
&& $\lambda_i \lambda_k d \lambda_j \wedge d \lambda_l$, \
$\lambda_i \lambda_j d \lambda_k \wedge d \lambda_l$
\\[2ex]
$3$ && $\lambda_k^3 d \lambda_i \wedge d \lambda_j$, \
$\lambda_j^3 d \lambda_i \wedge d \lambda_k$, \
$\lambda_i^3 d \lambda_j \wedge d \lambda_k$
&& $\{\lambda_k, \lambda_l\} \lambda_k \lambda_l
d \lambda_i \wedge d \lambda_j$
\\
&&  $\lambda_j^2  \lambda_k d \lambda_i
\wedge d (2 \lambda_k - \lambda_j)$, \
$\lambda_j \lambda_k^2 d \lambda_i \wedge d (\lambda_k - 2 \lambda_j)$
&& $\{\lambda_j, \lambda_k, \lambda_l\} \lambda_j \lambda_l
d \lambda_i \wedge d \lambda_k$
\\
&& $\lambda_i^2 \lambda_j d(2 \lambda_j - \lambda_i) \wedge d \lambda_k$, \
$\lambda_i^2 \lambda_k d \lambda_j \wedge d(2\lambda_k - \lambda_i)$
&& $\{\lambda_j, \lambda_k, \lambda_l\} \lambda_j \lambda_k
d \lambda_i \wedge d \lambda_l$
\\
&& $\lambda_i \lambda_j^2 d(\lambda_j - 2 \lambda_i) \wedge d \lambda_k$, \
$\lambda_i \lambda_k^2 d \lambda_j \wedge d (\lambda_k - 2 \lambda_i)$
&&
$\{\lambda_i, \lambda_j, \lambda_k, \lambda_l\} \lambda_i \lambda_l
d \lambda_j \wedge d \lambda_k$
\\
&& $\lambda_i \lambda_j \lambda_k d (2 \lambda_j - \lambda_i - \lambda_k)
\wedge  d (2 \lambda_k - \lambda_i - \lambda_j)$ &&
$\{\lambda_i, \lambda_j, \lambda_k, \lambda_l\} \lambda_i \lambda_k
 d \lambda_j \wedge d \lambda_l$
\\
&& && 
$\{\lambda_i, \lambda_j, \lambda_k, \lambda_l\} \lambda_i \lambda_j
 d \lambda_k \wedge d \lambda_l$
\\
\\
\hline
\end{tabular}
\end{center}
\end{table}

\subsection{Approximation properties}
\label{subsec:approx}
To apply the general theory for approximation of Hilbert
complexes to the finite element exterior calculus, outlined above, we
need to construct bounded cochain projections from
$L^2\Lambda^k(\Omega)$ onto the various finite element spaces. The error estimates given
in Theorems~\ref{qo1cor} and \ref{impest} bound the error in the finite element solution in terms of the
approximation error afforded by the subspaces.  In this subsection, we show that the spaces 
$\P_r\Lambda^k(\T)$ and $\P^-_r\Lambda^k(\T)$  provide optimal order approximation
of differential $k$-forms as the mesh size tends to zero, where optimal order
means that the rate of convergence obtained
is the highest possible given the degree of the piecewise polynomials
and the smoothness of the form being approximated.
In the next subsection, we shall construct $L^2$-bounded cochain projections
which attain the same accuracy.

Let $\{\T_h\}$ be a family of simplicial triangulations of
$\Omega \subset \R^n$,
indexed by decreasing values of \emph{the mesh parameter} $h$ given as 
$h = \max_{T \in \T_h} \operatorname{diam} T$.
We will assume throughout that the family $\{ \T_h \}$ is \emph{shape
  regular},
i.e., that the ratio of the volume of the
circumscribed to the inscribed ball associated to any element $T$ is bounded uniformly
for all the simplices in all the triangulations of the family.
Parts of the construction below simplify if we assume, in addition, that the family $\{ \T_h \}$ is
\emph{quasi-uniform}, i.e., that the ratio $h/\operatorname{diam}T$ is
bounded for all $T\in\T_h$, uniformly over the family.
However, we do \emph{not} require quasi-uniformity,
only shape-regularity.

Throughout this and the following subsection, $\Lambda^{k-1}_h$ will denote a subspace of
$H\Lambda^{k-1}(\Omega)$ and $\Lambda^k_h$ a subspace of $H\Lambda^k(\Omega)$.
Motivated by Section~\ref{subsubsec:seqs}, for each $r=0,1,\dots$,
we consider the following possible pairs of spaces:
\begin{equation}\label{spaces}
 \Lambda^{k-1}_h= \left\{
\begin{matrix}
 \P_{r+1}\Lambda^{k-1}(\T)\\[1ex] \text{or} \\[1ex] \P_{r+1}^-\Lambda^{k-1}(\T)
\end{matrix}
\right\},\quad
\Lambda^k_h=
 \left\{
\begin{matrix}
 \P_{r+1}^-\Lambda^k(\T)\\[1ex] \text{or} \\[1ex] \P_r\Lambda^k(\T) \text{ (if $r>0$)}
\end{matrix}
\right\}.
\end{equation}
Note that $\Lambda^k_h$ contains all polynomials of degree $r$, but not $r+1$ in
each case.
As we have seen in Section~\ref{subsec:dof}, 
the degrees of freedom define canonical projection operators
$I_h = I_h^k$ mapping $C\Lambda^k(\Omega)$ boundedly to $\Lambda^k_h$ and commuting with the
exterior derivative. However, the operators $I_h$ do not extend boundedly
to all of $L^2\Lambda^k(\Omega)$, or even to
$H\Lambda^k(\Omega)$ and so do not fulfill the requirements of
bounded cochain projections of the abstract theory developed in Section~\ref{sec:HCA}.
In this subsection, we will describe the Cl\'{e}ment interpolant, which
is a modification of the
canonical projection which is bounded on $L^2\Lambda^k$ and can be used to establish the
approximation properties of the finite element space.  However, the Cl\'{e}ment interpolant is
not a projection and does not commute with the exterior derivative.
So in the next subsection we construct
a further modification which regains these properties and maintains the approximation 
properties of the Cl\'ement interpolant.

The difficulty with the canonical interpolation operators $I_h$
is that they make use of traces onto lower dimensional simplexes,
and as a consequence, they cannot be extended boundedly to $L^2\Lambda^k(\Omega)$
(except for $k=n$).
The Cl\'{e}ment interpolant \cite{clement} is a classical tool of finite element theory
developed to overcome this problem in the case of $0$-forms.
To define this operator (for general $k$), we need some additional
notation. For any $f \in\Delta(\T_h)$,
we let $\Omega_f \subset \Omega$ be the union of elements containing $f$:
\[
\Omega_f = \bigcup \{T \, | \, T \in \T_h, \, f \in \Delta(T) \, \},
\]
and let $P_f:L^2\Lambda^k(\Omega_f)\to \P_r\Lambda^k(\Omega_f)$ be the $L^2$ projection
onto polynomial $k$-forms of degree at most $r$.  For $\omega\in L^2\Lambda^k(\Omega)$,
we determine the Cl\'ement interpolant
$\tilde I_h\omega:=\tilde I_h^k \omega \in \Lambda^k_h$ by specifying $\phi(\tilde I_h\omega)$
for each degree of freedom
$\phi$ of the space $\Lambda^k_h$; see Section~\ref{subsec:dof}.
Namely, if $\phi$ is a degree of freedom for $\Lambda^k_h$ associated to $f\in\Delta(\T_h)$, we take
\[
\phi(\tilde I_h \omega) = \phi(P_f\omega_f),
\]
where $\omega_f = \omega|_{\Omega_f}$.

The Cl\'{e}ment interpolant is local in the sense that
for any $T \in T_h$,
$\tilde I_h\omega|_T$ is determined by
$\omega|_{T^*}$, where $T^* = \bigcup \{\Omega_f \,|\, f \in \Delta(T) \, \}$.
In fact, by scaling, as in \cite[Section 5.3]{acta}, for example, it can be seen that
$$
\|\tilde I_h\omega\|_{L^2\Lambda^k(T)}\le
c_0\sum_{f\in\Delta(T)}\| P_h\omega \|_{L^2\Lambda^k(\Omega_f)}\le c_1 \| \omega \|_{L^2\Lambda^k(T^*)},
$$
where the constants $c_0,c_1$ may depend on the polynomial degree $r$ and the dimension $n$,
but are independent of $T\in\T_h$ and $h$, thanks to the shape regularity assumption.
Therefore, the Cl\'{e}ment interpolant is uniformly bounded in 
$\Lin(L^2\Lambda^k(\Omega),L^2\Lambda^k(\Omega))$:
\begin{align*}
\|\tilde I_h \omega\|_{L^2\Lambda^k(\Omega)}  \le c_2\| \omega \|_{L^2\Lambda^k(\Omega)}
\end{align*}
(with the constant independent of $h$).

Another key property of the Cl\'{e}ment interpolant is that it
preserves polynomials locally in the sense that $\tilde I_h \omega|_T
= \omega|_T$ if $\omega \in \P_r\Lambda^k(T^*)$.  It is then standard,
using the Bramble--Hilbert lemma \cite{bramble-hilbert}
and scaling, to obtain the following error estimate.
\begin{thm}\label{clement}
Assume that $\Lambda^k_h$ is either $\P_{r+1}^-\Lambda^k(\Th)$, or, if $r\ge 1$,
$\P_r\Lambda^k(\Th)$.
Then there is a constant $c$, independent of $h$, such that the 
Cl\'{e}ment interpolant $\tilde I_h^k :L^2\Lambda^k(\Omega) \to \Lambda^k_h$
satisfies the bound
\begin{equation*}
\|\omega-\tilde I_h^k\omega\|_{L^2\Lambda^k(\Omega)} \le
ch^s|\omega|_{H^s\Lambda^k(\Omega)}, \quad
\omega\in H^s\Lambda^k(\Omega),
\end{equation*}
for $0 \le s \le r+1$.
\end{thm}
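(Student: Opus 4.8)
The plan is to deduce the estimate from the three properties of the Cl\'ement interpolant already recorded above---its locality, its uniform boundedness in $\Lin(L^2\Lambda^k,L^2\Lambda^k)$, and its local reproduction of polynomials of degree at most $r$---by the standard device of combining the Bramble--Hilbert lemma with a scaling argument. Since the $L^2$ norm is additive over elements, I would first localize, writing
\begin{equation*}
\|\omega-\tilde I_h^k\omega\|_{L^2\Lambda^k(\Omega)}^2 = \sum_{T\in\T_h}\|\omega-\tilde I_h^k\omega\|_{L^2\Lambda^k(T)}^2,
\end{equation*}
so that it suffices to estimate each elementwise term, using that $\tilde I_h^k\omega|_T$ is determined by $\omega|_{T^*}$, where $T^*=\bigcup\{\Omega_f\,|\,f\in\Delta(T)\}$.

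Fix $T$ and let $p\in\P_r\Lambda^k(T^*)$ be arbitrary. Both admissible choices of $\Lambda^k_h$ contain $\P_r\Lambda^k$ locally, so the reproduction property gives $\tilde I_h^kp|_T=p|_T$, and hence by linearity $(\omega-\tilde I_h^k\omega)|_T=(\omega-p)|_T-\tilde I_h^k(\omega-p)|_T$. The triangle inequality together with the local $L^2$-stability bound $\|\tilde I_h^k(\omega-p)\|_{L^2\Lambda^k(T)}\le c_1\|\omega-p\|_{L^2\Lambda^k(T^*)}$ then yields
\begin{equation*}
\|\omega-\tilde I_h^k\omega\|_{L^2\Lambda^k(T)}\le C\|\omega-p\|_{L^2\Lambda^k(T^*)},
\end{equation*}
and, taking the infimum over $p$, the elementwise error is controlled by the best approximation of $\omega$ by polynomials of degree at most $r$ on the patch $T^*$.

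It then remains to bound $\inf_{p}\|\omega-p\|_{L^2\Lambda^k(T^*)}$ by $ch_T^s|\omega|_{H^s\Lambda^k(T^*)}$ for $0\le s\le r+1$, which is exactly the content of the Bramble--Hilbert lemma after rescaling $T^*$ to unit diameter; summing the squared elementwise bounds and invoking the finite overlap of the patches $\{T^*\}$ recovers the global estimate. The point requiring genuine care---and the reason shape regularity, rather than mere conformity, is needed---lies in this last scaling step, since $T^*$ is not a single simplex but a union of a bounded number of simplices whose configuration varies from element to element. One must use shape regularity to guarantee that every simplex meeting $T$ has diameter comparable to $h_T$ (so that $\operatorname{diam}T^*\le Ch_T$ and the dilation by $h_T^{-1}$ produces reference patches of size $O(1)$), that the number of simplices in each patch and the number of patches overlapping a given element are uniformly bounded, and, most delicately, that the constant in the Bramble--Hilbert estimate is uniform over the resulting family of reference-patch shapes. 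This uniformity is where the real work resides; it is handled by the scaling and norm-equivalence estimates of \cite[Section~5.3]{acta}, which I would invoke rather than reproduce.
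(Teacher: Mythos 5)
Your proposal is correct and follows essentially the same route as the paper, which proves this result precisely by combining the locality, local $L^2$-stability, and local preservation of $\P_r\Lambda^k$ by the Cl\'ement interpolant with the Bramble--Hilbert lemma and a scaling argument over the patches $T^*$. The paper leaves this argument as ``standard'' and defers the scaling details to \cite[Section~5.3]{acta}, exactly the step you fleshed out and correctly identified as the place where shape regularity enters.
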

Note that the estimate implies that any sufficiently smooth $k$-form is approximated
by elements $\Lambda^k_h$ with order $O(h^{r+1})$ in $L^2$.  Since the polynomial spaces
used to construct $\Lambda^k_h$ contain $\P_r\Lambda^k$ but not $\P_{r+1}\Lambda^k$,
this is the optimal order of approximation.

\subsection{Bounded cochain projections}
\label{subsec:bcp}
The Cl\'{e}ment interpolant $\tilde I_h$
is both uniformly bounded in $L^2$ and gives 
optimal error bounds for smooth functions. However, it is not a bounded cochain
projection in the sense of the theory of Section 3. Indeed, it is neither a
projection operator---it does not leave $\Lambda^k_h$ invariant---nor does it
commute with the exterior derivative.  Therefore, to construct bounded
cochain projections, we consider another modification of the canonical
projection $I_h$, in which the operator $I_h$ is
combined with a smoothing operator.

This construction---key ingredients of which were contributed by Sch\"oberl
\cite{schoberl05} and Christiansen \cite{christiansen05}---was discussed in
detail in \cite{acta} (where it was called a \emph{smoothed projection}),
under the additional assumption that the family of triangulations $\{\T_h\}$
is quasi-uniform, and then in \cite{winther-c} in the general shape regular
case.  Therefore, we will just give a brief outline of this construction here.

Let $\Lambda^k_h$ be one of the spaces $\P_{r+1}^-\Lambda^k(\T_h)$ or, if $r\ge1$,
$\P_r\Lambda^k(\T_h)$, and let $I_h:C\Lambda^k(\Omega)\to \Lambda^k_h$
be the corresponding canonical interpolant.
To define an appropriate smoothing operator, we let $\rho: \R^n \to \R$
be a nonnegative smooth function with support in the unit ball and
with integral equal to one. In the quasi-uniform case, we utilize 
a standard convolution operator of the form
\[
\omega \mapsto \int_{|y|\le 1} \rho(y)\omega(x + \delta y) \,
dy,
\]
mapping $L^2\Lambda^k(\tilde \Omega)$ into
$C^\infty\Lambda^k(\Omega)$, where the domain $\tilde \Omega \supset
\Omega$ is sufficiently large so that the convolution operator is well
defined.  We set the smoothing parameter $\delta =\epsilon h$, where
the proportionality constant $\epsilon > 0$ is a parameter to be
chosen. In the more general shape regular case, we need to generalize
this operator slightly.  As in \cite{winther-c}, we introduce a
Lipschitz continuous function $g_h : \Omega \to \R^+$, with Lipschitz
constant bounded uniformly in $h$, such that $g_h(x)$ is uniformly
equivalent to $\operatorname{diam} T$ for $T \in \T_h$ and $x \in
T$. Hence, $g_h|_T$ approximates $\operatorname{diam} T$.  The
appropriate smoothing operator in the general case, mapping
$L^2\Lambda^k(\tilde\Omega)$ into $C\Lambda^k(\Omega)$, is now given
by
\[
\omega \mapsto \int_{|y|\le 1} \rho(y)((\Phi_h^{\epsilon y})^*\omega)_x \,
dy,
\]
where the map $\Phi_h^{\epsilon y}: \R^n \to \R^n$ is defined by 
\[
\Phi_h^{\epsilon y}(x) = x + \epsilon g_h(x)y.
\]
Since this smoothing operator is defined as an average of
pullbacks,
it will indeed commute with the exterior derivative.
By combining it with an appropriate extension operator
$E : L^2\Lambda^k(\Omega) \to L^2\Lambda^k(\tilde\Omega)$,
constructed such that it commutes with the exterior derivative,
we obtain an operator $R_h^\epsilon : L^2\Lambda^k(\Omega) \to
C\Lambda^k(\Omega)$. The operator $Q_h^\epsilon = I_h \circ
R_h^\epsilon$ maps $L^2\Lambda^k(\Omega)$ into $\Lambda_h^k$,
and commutes with the exterior derivative. Furthermore, for each
$\epsilon > 0$, the operators $Q_h^\epsilon$ are bounded in $L^2$,
uniformly in $h$.

However, the operator $Q_h^\epsilon$ is not invariant on the
subspace $\Lambda_h^k$. The remedy to fix this is to establish that
the operators $Q_h^\epsilon|_{\Lambda_h^k} : \Lambda_h^k \to
\Lambda_h^k$ converge to the identity in the $L^2$ operator norm as
$\epsilon$ tends to zero,
uniformly in $h$. Therefore, for a fixed $\epsilon > 0$, taken
sufficiently small,  this operator
has an inverse $J_h^\epsilon : \Lambda_h^k \to
\Lambda_h^k$. 
The desired projection operator, $\pi_h=\pi_h^k$, is now given
as $\pi_h = J_h^\epsilon\circ Q_h^\epsilon$. The operators
$\pi_h$ are uniformly bounded with respect to $h$ as operators
in $\Lin(L^2\Lambda^k(\Omega), L^2\Lambda^k(\Omega))$. Furthermore, 
since they are projections onto $\Lambda_h^k$, we obtain 
\begin{equation}\label{x}
\|\omega-\pi_h\omega\|_{L^2\Lambda^k(\Omega)}\le 
\inf_{\mu \in \Lambda_h^k} \| (I - \pi_h)(\omega - \mu)
\|_{L^2\Lambda^k(\Omega)}.
\end{equation}
Based on these considerations, we obtain the
the following theorem. Cf.~\cite[Theorem
5.6]{acta} and \cite[Corollary 5.3]{winther-c}.
\begin{thm}\label{tilde_pi}
1.~Let $\Lambda^k_h$ be one of the spaces $\P_{r+1}^-\Lambda^k(\T_h)$ or, if $r\ge1$,
$\P_r\Lambda^k(\T_h)$ and $\pi^k_h:L^2\Lambda^k(\Omega)\to \Lambda^k_h$ the
smoothed projection operator constructed above.  Then $\pi^k_h$ is a projection onto  $\Lambda^k_h$
and satisfies
\begin{equation*}
\|\omega-\pi_h^k\omega\|_{L^2\Lambda^k(\Omega)}\le ch^s
\|\omega\|_{H^s\Lambda^k(\Omega)}, \quad \omega\in H^s\Lambda^k(\Omega),
\end{equation*}
for $0\le s\le r+1$. Moreover, for all $\omega\in L^2\Lambda^k(\Omega)$,
 $\pi_h^k\omega\to\omega$ in $L^2$ as $h\to0$.

2.~Let $\Lambda^k_h$ be one of the spaces $\P_{r}\Lambda^k(\T_h)$ or
$\P_r^-\Lambda^k(\T_h)$ with $r\ge1$.  Then
\begin{equation*}
\|d(\omega-\pi_h^k\omega)\|_{L^2\Lambda^k(\Omega)}\le ch^s
\|d\omega\|_{H^s\Lambda^k(\Omega)}, \quad \omega\in H^s\Lambda^k(\Omega),
\end{equation*}
for $0\le s\le r$.

3.~Let $\Lambda^{k-1}_h$ and $\Lambda^k_h$ be taken as in \eqref{spaces}
and $\pi_h^{k-1}$ and $\pi_h^k$ the corresponding smoothed projections.
Then
$d\pi^{k-1}_h=\pi^k_hd$.
\end{thm}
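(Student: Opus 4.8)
The plan is to take as given the construction of the smoothed projection $\pi_h^k=J_h^\epsilon\circ Q_h^\epsilon$ described above, together with the properties already recorded there: that $Q_h^\epsilon=I_h\circ R_h^\epsilon$ commutes with $d$ (being built from the $d$-commuting canonical interpolant $I_h$, the average of pullbacks, and the $d$-commuting extension $E$), that each $\pi_h^k$ is a projection onto $\Lambda_h^k$ bounded in $\Lin(L^2\Lambda^k(\Omega),L^2\Lambda^k(\Omega))$ uniformly in $h$, and that \eqref{x} holds. All three assertions then follow by assembling these facts; the substantive analytic work---uniform $L^2$-boundedness of $Q_h^\epsilon$ and convergence of $Q_h^\epsilon|_{\Lambda_h^k}$ to the identity---resides in \cite{acta,winther-c} and is taken as given.

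I would prove the commutativity of part~3 first, since parts~1 and~2 rely on it. The only delicate point is that $J_h^\epsilon$ is defined solely as the inverse of $Q_h^\epsilon|_{\Lambda_h^k}$, so its interaction with $d$ must be deduced. Abbreviating $Q^j:=Q_h^\epsilon$ in degree $j$ and $J^j:=(Q^j|_{\Lambda_h^j})^{-1}$, I first establish $dJ^{k-1}=J^kd$ on $\Lambda_h^{k-1}$. Indeed, for $v\in\Lambda_h^{k-1}$ the subcomplex property gives $dv\in\Lambda_h^k$, so applying $J^k$ to $dQ^{k-1}v=Q^kdv$ yields
\[
J^kdQ^{k-1}v=J^kQ^kdv=dv=dJ^{k-1}(Q^{k-1}v),
\]
the last equality using $J^{k-1}Q^{k-1}v=v$ on $\Lambda_h^{k-1}$. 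As $w=Q^{k-1}v$ ranges over all of $\Lambda_h^{k-1}$ when $v$ does (bijectivity of $Q^{k-1}|_{\Lambda_h^{k-1}}$), this is exactly $J^kdw=dJ^{k-1}w$ for every $w\in\Lambda_h^{k-1}$. Then for arbitrary $\omega\in L^2\Lambda^{k-1}(\Omega)$, since $Q^{k-1}\omega\in\Lambda_h^{k-1}$,
\[
d\pi_h^{k-1}\omega=dJ^{k-1}Q^{k-1}\omega=J^kdQ^{k-1}\omega=J^kQ^kd\omega=\pi_h^kd\omega,
\]
which is part~3.

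For part~1, I would combine the uniform $L^2$-boundedness with \eqref{x}: since $I-\pi_h^k$ is uniformly bounded, \eqref{x} gives $\|\omega-\pi_h^k\omega\|\le C\inf_{\mu\in\Lambda_h^k}\|\omega-\mu\|$. Because each admissible space contains $\P_r\Lambda^k$ elementwise, the Bramble--Hilbert and scaling argument used for the Cl\'ement interpolant in Theorem~\ref{clement} bounds this best-approximation error by $ch^s\|\omega\|_{H^s\Lambda^k(\Omega)}$ for $0\le s\le r+1$; the pointwise $L^2$ convergence then holds for smooth $\omega$ and extends to all of $L^2\Lambda^k(\Omega)$ by density and the uniform bound. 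For part~2, I would use commutativity to write $d(\omega-\pi_h^k\omega)=(I-\pi_h^{k+1})d\omega$ and bound the right-hand side by the same mechanism applied to $d\omega$ at form degree $k+1$; the attainable order drops to $0\le s\le r$ because, in every pair \eqref{spaces}, the space $\Lambda_h^{k+1}$ into which $d$ maps contains $\P_{r-1}\Lambda^{k+1}$ but not $\P_r\Lambda^{k+1}$, so the best approximation of $d\omega$ is $O(h^r)$.

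I expect the deduction of commutativity through the inverse $J_h^\epsilon$ to be the only genuinely structural step; the two approximation estimates are then routine consequences of \eqref{x}, the uniform boundedness, and the polynomial containment of the finite element spaces, all furnished by the construction above.
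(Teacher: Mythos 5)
Your proposal is correct and follows essentially the same route as the paper, whose proof simply notes that the $\pi_h^k$ are uniformly bounded projections commuting with $d$ (from the construction), derives the estimate of part~1 from \eqref{x} together with the best-approximation bound, and obtains part~2 from parts~1 and~3. The one point you flesh out beyond the paper --- deducing $dJ^{k-1}=J^kd$ on $\Lambda_h^{k-1}$ from the bijectivity of $Q^{k-1}|_{\Lambda_h^{k-1}}$ and the subcomplex property --- is exactly the detail the paper leaves implicit in the phrase ``we have already established,'' and your argument for it is sound (understood, of course, on the domain $H\Lambda^{k-1}$ where $d\omega$ makes sense).
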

\begin{proof}
We have already established that the $\pi^k_h$ are uniformly bounded projections
which commute with $d$. The error estimate in the first statement of the theorem
then follows from \eqref{x}.  Statement 2 follows from 1 and 3.
\end{proof}

\subsection{Approximation of the de Rham complex and the Hodge Laplacian}
\label{subsec:ahl}
Let $\Lambda^n_h$ be the space of piecewise polynomial $n$-forms of degree
at most $s$ for some nonnegative integer $s$.  We showed
in Section~\ref{subsec:dof} that there are $2^{n-1}$ distinct
discrete de Rham complexes
\begin{equation}
\label{disc-derham1}
0 \to \Lambda_h^0 \xrightarrow{d} \Lambda_h^0 \xrightarrow{d} \cdots
\xrightarrow{d} \Lambda_h^n \to 0,
\end{equation}
with each of the mapping $\Lambda_h^{k-1} \xrightarrow{d} \Lambda_h^{k}$
being of the form \eqref{maptypes}
for an appropriate choice of $s$.
Making use of the bounded projections, we obtain in each case
a commuting diagram
\[
\begin{CD}
0\to\,@. H\Lambda^0(\Omega) @>\D>> H\Lambda^1 (\Omega)@>\D>>
\cdots @>\D>> H\Lambda^n(\Omega)@.\,\to0\\
 @. @VV\pi_h V @VV\pi_hV @.  @VV\pi_hV\\
0\to\,@. \Lambda^0_h @>\D>> 
\Lambda^1_h  @>\D>>
\cdots @>\D>> \Lambda^n_h @.\,\to 0.
\end{CD}
\] 
Thus the projections give a cochain projection from the de~Rham complex to
the discrete de~Rham complex, and so induce a surjection on cohomology.
Since $\lim_{h \rightarrow 0} \|\omega - \pi_h w\|_{H \Lambda^k(\Omega)}=0$,
we can use Theorem~\ref{isom-cohom} (with $V=H\Lambda$ and $V_h=\Lambda_h$) to see immediately that for $h$
sufficiently small, this is in each case an isomorphism on cohomology.

The simplest finite element de~Rham complex is  the complex of Whitney forms,
\begin{equation}\label{whitf}
0\to \P_1^-\Lambda^0(\Th)
\xrightarrow{d} \P_1^-\Lambda^1(\Th) \xrightarrow{d} \cdots \xrightarrow{d}
\P_1^-\Lambda^n(\Th)\to 0.
\end{equation}
Via the basic Whitney forms $\phi_f$,
the space $\P_1^-\Lambda^k(\T_h)$ is isomorphic to the space of simplicial cochains of
dimension $k$ associated to the triangulation $\T_h$.  
In this way, the complex \eqref{whitf} of Whitney forms is isomorphic, as a cochain complex, to
the simplicial cochain complex, and its cohomology is isomorphic to the
simplicial cohomology of $\Omega$.  Thus Theorem~\ref{isom-cohom}, when applied to the Whitney forms,
gives an isomorphism between the de Rham cohomology and simplicial cohomology for a sufficiently
fine triangulation.  Since the simplicial cohomology
is independent of the triangulation (being isomorphic to the singular cohomology),
for the complex of Whitney
forms, the isomorphism on cohomology
given by Theorem~\ref{isom-cohom} holds for any triangulation, not just  one sufficiently fine.
This proves de~Rham's theorem on the equality of de~Rham and simplicial cohomology.

For all of the discrete complexes \eqref{disc-derham1}, as for the Whitney forms,
the cohomology is independent of the triangulation (and equal to the de~Rham cohomology).
To see this, 
following \cite{christiansen05}, we
consider the Whitney forms complex \eqref{whitf} as a subcomplex
of \eqref{disc-derham1}.  The canonical projections
$I_h$ define cochain projections.  Note that the
$I_h$ are defined on the finite element spaces $\Lambda^k_h$,
because all of the trace moments they require are single-valued
on $\Lambda^k_h$.  From the commuting diagram
\begin{equation*}
\begin{CD}
0\to\,@. \Lambda_h^0 @>\D>> \Lambda_h^1 @>\D>>
\cdots @>\D>> \Lambda_h^n @.\,\to0\\
 @. @VV I_hV @VV I_hV @.  @VV I_hV\\
0\to \,@. \P_1^-\Lambda^0(\Th) @>\D>> 
\P_1^-\Lambda^1(\Th) @>\D>>
\cdots @>\D>> \P_1^-\Lambda^n(\Th) @.\,\to0,
\end{CD}
\end{equation*}
we conclude that the cohomology of the top row, which we have
already seen to be an image of the de~Rham cohomology, maps onto
the cohomology of the bottom row, which is isomorphic
to the de~Rham cohomology.  Hence the dimension of all
the corresponding cohomology groups are equal and both cochain
projections induce an isomorphism on cohomology.

Consider now the numerical solution of the Hodge Laplacian problem
\eqref{pdes}.  We approximate this by a Galerkin method: Find
$\sigma_h\in\Lambda^{k-1}_h$, $u_h\in\Lambda^k_h$, and $p_h
\in \Hfrak^k_h$ such that
\begin{equation*}
\begin{aligned}
\<\sigma_h,\tau\> - \<
 d \tau,u_h\> &= 0,  & \tau&\in \Lambda^{k-1}_h,
\\*
\<  d \sigma_h,v\>
+ \< d  u_h, d  v\> + \< v, p_h\>
&= \< f,v\>, &
v&\in \Lambda^k_h,
\\*
\< u_h, q \>  &=0,
&q&\in \Hfrak^k_h.
\end{aligned}
\end{equation*}
For the finite element spaces, we may choose $\Lambda^{k-1}_h$ and $\Lambda^k_h$
to be any of the pairs given in \eqref{spaces}.
We have verified that $d$ maps $\Lambda^{k-1}_h$ into $\Lambda^k_h$ and this
map can be extended to a full subcomplex of the de~Rham complex admitting a bounded
cochain projection.  Therefore we may combine the error estimates derived in the abstract
setting in Sections~\ref{subsec:improved} and \ref{subsec:eigenv}
and approximation estimates from Section~\ref{subsec:bcp} to obtain convergence and rates of
convergence.  The rates of convergence are limited by three factors: (1)~the smoothness of
the data $f$, (2)~the amount of elliptic regularity (determined by the smoothness of the domain),
and (3)~the degree of complete polynomials contained in the finite element shape
functions. Specifically, from Theorem~\ref{impest} we get the following result.
Assume the regularity estimate
\begin{equation*}
\|u\|_{H^{t+2}(\Omega)} + \|p\|_{H^{t+2}(\Omega)} + \|du\|_{H^{t+1}(\Omega)}
+ \|\sigma\|_{H^{t+1}(\Omega)} + \|d\sigma\|_{H^{t}(\Omega)}
\le C \|f\|_{H^{t}(\Omega)},
\end{equation*}
holds for $0 \le t \le t_{\text{max}}$.  Then for $0\le s\le t_{\text{max}}$
the following estimates hold:
\begin{equation*}
 \|d(\sigma-\sigma_h)\| \le Ch^s \|f\|_{H^s(\Omega)}, \text{ if $s\le r+1$},
\end{equation*}
\begin{equation*}
\|\sigma-\sigma_h\| \le 
Ch^{s+1}\|f\|_{H^{s}(\Omega)}, \text{ if }\begin{cases} s \le r+1, & \Lambda_h^{k-1} =
\P_{r+1}\Lambda^{k-1}(\T), \\ s \le r, & \Lambda_h^{k-1} =
\P_{r+1}^-\Lambda^{k-1}(\T), \end{cases}
\end{equation*}
\begin{equation*}
 \|d(u-u_h)\| \le Ch^{s+1}\|f\|_{H^{s}(\Omega)},  \text{ if }\begin{cases}
s \le r,& \Lambda_h^{k} = \P_{r+1}^-\Lambda^{k}(\T), \\
s \le r-1,&\Lambda_h^{k} = \P_{r}\Lambda^{k}(\T),
                             \end{cases}
\end{equation*}
\begin{equation*}
 \|u-u_h\| + \|p - p_h\| \le \begin{cases}
Ch\|f\|, & \Lambda^k_h=\P_1^-\Lambda^{k}(\T), \\
Ch^{s+2}\|f\|_{H^{s}(\Omega)} \text{ if $s\le r-1$},& \text{otherwise}.
                             \end{cases}
\end{equation*}
Thus, the error in each case is the optimal
order allowed by the subspace if we have sufficient elliptic
regularity.

Concerning the eigenvalue problem, applying Theorem~\ref{ev-basicest}, we immediately
obtain the following error estimate:
\begin{equation*}
|\lambda - \lambda_h| \le C h^{2s+2} \|u\|_{H^{s}(\Omega)}^2, \quad
0 \le s \le r-1.
\end{equation*}
This estimate does not apply in case $\Lambda^k_h
=\P_1^-\Lambda^k(\T)$, i.e., the Whitney forms.  In that case, we get
the estimate:
\begin{equation*}
|\lambda - \lambda_h| \le \begin{cases}
C h \|u\|^2, \\
C h^2 \|u\| \|u\|_{H^{1}(\Omega)}.
\end{cases}
\end{equation*}

\section{Variations of the de Rham complex}
\label{sec:variations}
In Sections~\ref{sec:deRham} and \ref{sec:FEDF}, we applied the abstract theory of Section~\ref{sec:HCA}
to the Hilbert complex obtained by choosing
$W^k$ to be $L^2\Lambda^k(\Omega)$ and $d^k$ to be the exterior derivative with domain
$V^k=H\Lambda^k(\Omega)$.  This led to finite element approximations of certain boundary
value problems for the Hodge Laplacian, and a variety of related problems
($\Bfrak$ and $\Bfrak^*$ problems, eigenvalue problems).  In this section,
we show how some small variations of these choices lead to methods for important related
problems, namely generalizations of the Hodge Laplacian with variable coefficients
and problems with essential boundary conditions.

\subsection{Variable coefficients}
\label{subsec:vc}
In this section, $d^k$ is again taken to be the
exterior derivative with domain $V^k=H\Lambda^k(\Omega)$ and
$W^k$ is again taken to be the space $L^2\Lambda^k$, but $W^k$ is
furnished with an inner product
which is equivalent to, but not equal to, the standard $L^2\Lambda^k$ inner product.  Specifically,
we let $a^k:W^k\to W^k$ be a bounded, symmetric, positive definite operator
and choose
\begin{equation*}
 \<u,v\>_{W^k}=\<a^ku,v\>_{L^2\Lambda^k}.
\end{equation*}
With this choice, we can check that $d^*_k=(d^{k-1})^*$ is given by
\begin{equation*}
 d^*_k u=(a^{k-1})^{-1}\delta_k a^k u,
\end{equation*}
with the domain of $d^*_k$ given by
\begin{equation*}
 V^*_k=\{\,u\in L^2\Lambda^k(\Omega)\,|\, a^k u\in \0H^*\Lambda^k(\Omega)\,\}
 = (a^k)^{-1}\0H^*\Lambda^k(\Omega).
\end{equation*}
The harmonic forms are then
\begin{equation*}
 \Hfrak^k=\Zfrak^k\cap\Zfrak^*_k=
\{\,u\in H\Lambda^k\cap(a^k)^{-1}\0H^*\Lambda^k\,|\, du=0,\ \delta a^ku=0\,\}.
\end{equation*}
The Hodge Laplacian $d^*du+dd^*u$ is now realized as
\begin{equation*}
d(a^{k-1})^{-1}\delta a^k u +(a^k)^{-1}\delta a^{k+1} d u.
\end{equation*}

As an example of the utility of this generalization, we consider the $\Bfrak^*_1$
eigenvalue problem in $3$ dimensions.  We take $a^1=\varepsilon$
and $a^2=\mu^{-1}$ where $\varepsilon$ and $\mu$ are
symmetric positive definite $3\x 3$ matrix fields.
Then the $\Bfrak^*_1$ eigenvalue problem is
\begin{equation*}
 \curl \mu^{-1} \curl u = \lambda\varepsilon u.
\end{equation*}
This is the standard Maxwell eigenvalue problem where $\varepsilon$ is the
dielectric tensor of the material and $\mu$ is its magnetic permeability.
These are scalar for an isotropic material, but given by matrices in general.
They are constant for a homogeneous material, but vary if the material is
not constant across the domain.

We notice that the Hilbert complex in this case is exactly the de~Rham complex
\eqref{drl2}, except that the spaces are furnished with different, but equivalent,
inner products.  Thus the finite element de~Rham subcomplexes we constructed in Section~\ref{sec:FEDF}
apply for this problem, and satisfy the subcomplex and bounded cochain projection properties.
In other words, the same finite element spaces which were developed for the constant
coefficient Hodge Laplacian, can be applied equally
well for variable coefficient problems.

\subsection{The de Rham complex with boundary conditions}
\label{subsec:ebc}
For another important application of the abstract theory of Section~\ref{sec:HCA},
we again choose $W^k=L^2\Lambda^k(\Omega)$ and $d^k$ to be the exterior derivative,
but this time we take its domain $V^k$ to be the space $\0H\Lambda^k(\Omega)$.
This space is dense in $W^k$, and is a closed subspace of $H\Lambda^k$, so is complete,
and hence $d$ is again a closed operator with this domain.  Thus we obtain
the following complex of Hilbert spaces, called
the de Rham complex with boundary conditions:
\begin{equation}
\label{drl20}
0\to \0H\Lambda^0(\Omega) \xrightarrow{d} \0H\Lambda^1(\Omega) 
\xrightarrow{d} \cdots \xrightarrow{d}
\0H\Lambda^n(\Omega)\to 0.
\end{equation}
In Section~\ref{subsec:deRcHc},
we used the fact that $H\Lambda^k \cap \0H^* \Lambda^k$ is compactly included in $L^2 \Lambda^k$.
Replacing $k$ by $n-k$ and applying the Hodge star operator, we conclude
the analogous result, that $\0H\Lambda^k\cap H^*\Lambda^k$ is compactly included in $L^2\Lambda^k$.
Then, just as in Section~\ref{subsec:deRcHc}, 
all the results of Section~\ref{sec:HCA} apply to the de~Rham complex
with boundary conditions. 
In particular, we have the Hodge
decomposition of $L^2\Lambda^k$ and of $\0H\Lambda^k$, the Poincar\'e
inequality, well-posedness of the mixed formulation of the Hodge
Laplacian, and all the approximation results established in
Sections~\ref{subsec:approxhc}--\ref{subsec:eigenv}.

We denote the spaces of $k$-cocycles, $k$-coboundaries, and harmonic
$k$-forms as
\begin{gather*}
\0\Zfrak^k=\{\,\omega\in \0H\Lambda^k\,|\,d\omega=0\,\},\
\0\Bfrak^k=\{\,d\eta\,|\,\eta\in \0H\Lambda^{k-1}\,\},\\
\0\Hfrak^k=\{\,\omega\in \0\Zfrak^k\,|\,\langle\omega,\mu\rangle=0,
\ \mu\in\0\Bfrak^k\,\}.
\end{gather*}
Now, from \eqref{ibp2a}, we have that
\begin{equation}\label{ibp30}
 \langle d\omega,\mu\rangle = \langle \omega,\delta\mu\rangle,
\quad \omega\in \0H\Lambda^{k-1},\ \mu\in H^*\Lambda^k.
\end{equation}
Therefore
\begin{equation*}
 \0\Bfrak^{k\perp_{L^2}} := \{\,\omega\in L^2\Lambda^k\,|\,\<\omega,\D\eta\>
=0 \ \forall\eta\in \0H\Lambda^{k-1}\,\}
= \{\,\omega\in H^*\Lambda^k\,|\,\delta\omega=0\,\}.
\end{equation*}
This gives a concrete characterization of the harmonic forms analogous to \eqref{hfrak}:
\begin{equation}\label{hla0}
\0\Hfrak^k = \{\omega \in \0H\Lambda^k(\Omega) \cap H^*\Lambda^k(\Omega) \,
| \, d \omega =0, \delta \omega =0\}.
\end{equation}
Now, from \eqref{Hstar}, \eqref{H0star}, and \eqref{defdelta}, the Hodge star operator maps
the set of $\omega\in H\Lambda^{n-k}$
with $d\omega=0$ isomorphically onto the set of $\mu\in H^*\Lambda^k$ with $\delta\mu=0$,
and maps the set of $\omega\in\0\H^*\Lambda^{n-k}$ with $\delta\omega=0$ isomorphically
onto the set of $\mu\in \0H\Lambda^k$ with $d\mu=0$.  Comparing
\eqref{hfrak} and \eqref{hla0}, we conclude that the
\emph{Hodge star operator maps $\Hfrak^{n-k}$ isomorphically onto $\0\Hfrak^k$}.
This isomorphism is called \emph{Poincar\'e} duality.
In particular, $\dim\0\Hfrak^k$ is the $(n-k)$th Betti number of $\Omega$.

Finally, from \eqref{ibp30}, we can characterize $d^*$, the adjoint of the exterior
derivative $d$ viewed as an unbounded operator
$L^2\Lambda^{k-1}\to L^2\Lambda^k$ with domain $\0H\Lambda^k$.  Namely, in this case
$d^*$ has  domain $H^*\Lambda^k$ and coincides with the operator $\delta$ defined
in \eqref{defdelta}.  By contrast, when we took the domain of $d$ to be all of $H\Lambda^k$,
the domain of $d^*$ turned out to be the smaller space $\0H^*\Lambda^k$.

We now consider the Hodge Laplacian problem given in the abstract case by
\eqref{wfhc}. In this case, we get that $(\sigma,u,p)\in \0H\Lambda^{k-1}\x
\0H\Lambda^k\x\0\Hfrak^k$ is a solution if and only if
\begin{gather}\label{pdes0}
 \sigma=\delta u,\ d\sigma +\delta d u = f-p \quad\text{ in $\Omega$},
\\\label{bcs0}
\tr \sigma =0, \ \tr u =0 \quad\text{on $\partial\Omega$},
\\\label{sc0}
u\perp \0\Hfrak^k.
\end{gather}
Note that now both boundary conditions are \emph{essential} in the sense that
they are implied by membership in the spaces $\0H\Lambda^{k-1}$ and $\0H\Lambda^k$
where the solution is sought, rather than by the variational formulation.

To make things more concrete, we restrict to a domain
$\Omega\subset\R^3$, and consider the Hodge Laplacian for $k$-forms,
$k=0,1,2$, and $3$.  We also discuss the $\Bfrak^*$ and $\Bfrak$ problems
given by \eqref{bfrak*} and \eqref{bfrak} for each $k$. Again we have

\begin{equation*}
 d^0=\grad, \ d^1=\curl,\ d^2 = \div, 
\end{equation*}
and the de~Rham complex \eqref{drl20} is now realized as
\begin{equation*}
0\to \0H^1(\Omega) \xrightarrow{\grad} \0H(\curl;\Omega)
\xrightarrow{\curl} \0H(\div;\Omega) \xrightarrow{\div} L^2(\Omega)
\to 0,
\end{equation*}
where
\begin{align*}
 \0H(\curl;\Omega)&= \{\, u:\Omega\to\R^3\,|\, u\in L^2,\ \curl u\in L^2,\
u \times n =0 \text{ on } \partial \Omega \, \},\\
 \0H(\div;\Omega)&= \{\, u:\Omega\to\R^3\,|\, u\in L^2,\ \div u\in L^2,\
u \cdot n =0 \text{ on } \partial \Omega\,\}.
\end{align*}

\subsubsection{The Hodge Laplacian for $k=0$}  For $k=0$, the boundary
value problem \eqref{pdes0}--\eqref{sc0} is the Dirichlet problem for the
ordinary scalar Laplacian. The space $H\Lambda^{-1}$ is understood to be $0$,
so $\sigma$ vanishes.  The space
$\0\Hfrak^0$ of harmonic $0$-forms vanishes. The first
differential equation of \eqref{pdes0} vanishes, and the second gives Poisson's
equation
\begin{equation*}
 -\div \grad u = f \quad\text{in $\Omega$}.
\end{equation*}
Similarly, the first boundary condition in \eqref{bcs0} vanishes, while the
second is the Dirichlet condition $u= 0$ on $\partial\Omega$.
The side condition \eqref{sc0} is automatically satisfied.
Nothing additional is obtained by considering the split into the $\Bfrak^*$
and $\Bfrak$ subproblems, since the latter is trivial.

\subsubsection{The Hodge Laplacian for $k=1$}  In this case the 
differential equations and boundary conditions are
\begin{equation}\label{k=10}
\begin{gathered}
 \sigma=-\div u,\ \grad\sigma +\curl\curl u = f-p \quad\text{ in $\Omega$},
\\
\sigma =0, \  u\x n =0 \quad\text{on $\partial\Omega$},
\end{gathered}
\end{equation}
which is a formulation of a boundary value problem for the vector Laplacian
$\curl\curl-\grad\div$. The solution is determined uniquely by the
additional condition that $u$ be orthogonal to $\0\Hfrak^1 = \Hfrak^2$, i.e.,
those vector fields satisfying
\begin{equation*}
\curl p = 0,\ \div p=0 \ \text{ in $\Omega$}, \quad
p\times n =0, \ \text{on $\partial\Omega$}.
\end{equation*}

The $\Bfrak^*_1$ problem \eqref{bfrak*} is defined for $L^2$ vector fields $f$
which are orthogonal to both gradients of functions in $\0H^1(\Omega)$ and the
vector fields in $\0\Hfrak^1$.  In that case, the solution to \eqref{k=10} has
$\sigma=0$ and $p=0$, while $u$ satisfies
\begin{equation*}
\curl\curl u = f,\ \div u=0 \ \text{ in $\Omega$}, \quad
u\x n =0 \ \text{on $\partial\Omega$}.
\end{equation*}
The orthogonality condition $u\perp \0\Hfrak^1$ again determines the solution
uniquely.

Next we turn to the $\Bfrak^1$ problem.  For source functions of the form
$f=\grad F$ for some $F\in \0H^1$, \eqref{k=10} reduces to the problem of
finding $\sigma\in \0H^1$ and $u\in\Bfrak^1=\grad \0H^1$ such that:
\begin{gather*}
\sigma=-\div u,\ \grad\sigma = f \ \text{ in $\Omega$}. 
\end{gather*}
The differential equations may be simplified to $-\grad\div u = f$ and the
condition that $u\in\Bfrak^1$ can be replaced by the differential equation
$\curl u=0$ and the boundary condition $u\times n =0$, together with
orthogonality to $\0\Hfrak^1$.  Now $\grad(\sigma-F)=0$ and $\sigma-F =0$ on
$\partial \Omega$, so $\sigma=F$. Hence, we may rewrite the system as
\begin{equation*}
-\div u=F, \ \curl u=0 \ \text{ in $\Omega$}, \quad
u\times n =0, \ \text{on $\partial\Omega$},
\end{equation*}
which, again, has a unique solution subject to orthogonality to $\0\Hfrak^1$.

\subsubsection{The Hodge Laplacian for $k=2$}  The differential equations
and boundary conditions are
\begin{equation*}
\begin{gathered}
 \sigma=\curl u,\ \curl\sigma -\grad\div u = f-p \quad\text{ in $\Omega$},
\\
\sigma \x n =0, \ u \cdot n =0 \quad\text{on $\partial\Omega$}.
\end{gathered}
\end{equation*}
This is again a formulation of a boundary value problem for the vector
Laplacian $\curl\curl-\grad\div$, but with different boundary conditions than
for \eqref{k=10}, and this time stated in terms of two vector variables, rather
than one vector and one scalar.  This time uniqueness is obtained by imposing
orthogonality to $\0\Hfrak^2 = \Hfrak^1$, the space of vector fields
satisfying
\begin{equation*}
\curl p = 0,\ \div p=0 \ \text{ in $\Omega$}, \quad
p \cdot n =0, \ \text{on $\partial\Omega$}.
\end{equation*}

The $\Bfrak^*_2$ problem arises for source functions of the form $f=\grad F$
for some $F\in H^1$ which we may take to have integral $0$.  We find $\sigma=0$, and $u$ solves
\begin{equation*}
-\div u=F, \ \curl u=0 \ \text{ in $\Omega$}, \quad
u \cdot n =0, \ \text{on $\partial\Omega$},
\end{equation*}
i.e., the same differential equation as for $\Bfrak^1$, but with different
boundary conditions and, of course, now uniqueness is determined by
orthogonality to $\0\Hfrak^2$.

If $\div f=0$, $f \cdot n =0$ on $\partial \Omega$, and $f\perp\0\Hfrak^2$, we
get the $\Bfrak^2$ problem for which the differential equations are
$\sigma=\curl u$, $\curl\sigma =f$ and the condition $\div u=0$ arising from
the membership of $u$ in $\Bfrak^2$.  Thus $u$ solves
\begin{equation*}
\curl\curl u = f,\ \div u=0 \ \text{ in $\Omega$}, \quad
u \cdot n =0, \ \text{on $\partial\Omega$},
\end{equation*}
the same differential equation as for $\Bfrak^*_1$, but with different
boundary conditions.  

\subsubsection{The Hodge Laplacian for $k=3$} The space 
$\0\Hfrak^3$ consists of
the constants (or, if $\Omega$ is not connected, of functions which are
constant on each connected component).  The Hodge Laplacian problem is
\begin{equation*}
 \sigma=-\grad u,\ \div\sigma = f -p \text{ in $\Omega$}, \quad \sigma \cdot n
=0\ \text{on $\partial\Omega$}, \quad \int_{\Omega} u \, q\, dx =0,\ q\in\0\Hfrak^3,
\end{equation*}
which is the Neumann problem for Poisson's equation, where the (piecewise)
constant $p$ is required for there to exist a solution 
and the final condition fixes a unique solution.

\section{The elasticity complex}
\label{sec:elasticity}
In this section, we present another complex,
which we call the \emph{elasticity complex},
and apply it to the development of numerical methods 
for linear elasticity. In contrast to the other 
examples presented above, 
the elasticity complex is 
not a simple variant of the de Rham complex, and, in particular, it
involves a differential operator of second order.
However, the two complexes are related in a subtle manner, via a construction
known as the
Bernstein--Bernstein--Gelfand resolution, explained for example in 
\cite{acta,weak-sym,Eastwood} and references given there.

The equations of elasticity are of great importance in modeling solid structures,
and the need to solve them in engineering applications was probably the primary
reason for the development of the finite element method in the 1960s.  The simplest
finite element methods for elasticity are based on \emph{displacement approaches},
in which the displacement vector field is characterized as the minimizer of an
elastic energy functional.  The design and analysis of displacement finite element
methods is standard and discussed
in many textbooks, (e.g., \cite{Ciarlet}). However, for more general material models,
arising, for example, for incompressible materials or some viscoelastic or plastic
materials, the displacement method is either not feasible or performs poorly, and a mixed approach,
in which the elastic stress and the displacement are taken as unknowns, is the natural
alternative.  In fact, mixed finite element methods for elasticity were proposed in
the earliest days of finite elements \cite{fraeijsdv}, and stable finite elements for the mixed
formulation of elasticity have been sought for over four decades.  These proved very elusive.
Indeed, one of the motivations of the pioneering work of Raviart and Thomas \cite{Raviart-Thomas}
on mixed finite
elements for the Laplacian, was the hope that the solution to this easier problem
would pave the way to such elements
for elasticity, and there were many attempts to generalize their elements to the
elasticity system
\cite{Arnold-Brezzi-Douglas,Arnold-Douglas-Gupta,Johnson-Mercier,Stenberg86,Stenberg88,Stenberg-mafelap}.
However, it was not until 2002 that the first stable mixed finite elements for elasticity
using polynomial shape functions were discovered by two of the present authors \cite{arnold-winther02},
based on techniques and insights from the then nascent finite element exterior calculus.
See also \cite{arnold-falk-winther2}.  They
developed and analyzed a family of methods for plane (2-dimensional) elasticity.  The basic elements
in the family involve a space of shape functions intermediate between quadratics and cubics
for the stress, with 24 degrees of freedom, while the displacement field consists of
the complete space of piecewise linear vector fields, without continuity constraints.  It
is these elements we will describe here.

The plane elasticity elements of \cite{arnold-winther02} were generalized to three dimensions
in \cite{arnold-awanou-winther}.  See also \cite{adams-cockburn04}. These elements are, however,
quite complicated (162 degrees of freedom per tetrahedron for the stress).  A much more promising
approach involves a variant Hilbert complex for elasticity, the elasticity complex with
weak symmetry.  In \cite{acta} and \cite{weak-sym}, stable families of finite elements are
derived for this formulation in $n$-dimensions, again using techniques from the
finite element exterior calculus.  For the lowest order stable
elements, the stress space is discretized by
piecewise linears with just 12 degrees of freedom per triangle in two
dimensions, 36 per tetrahedron in three dimensions, while the other variables
are approximated by piecewise constants.

\subsection{The elasticity system}
The equations of linear elasticity arise as a system consisting of
a constitutive equation and an equilibrium equation:
\begin{equation}\label{mixed-system}
A\sigma = \eps u, \qquad \div \sigma = f \quad \text{in }\Omega.
\end{equation}
Here the unknowns $\sigma$ and $u$ denote the stress and displacement
fields engendered by a body force $f$ acting on a linearly elastic
body which occupies a region $\Omega \subset \R^n$. The constitutive equation
posits a linear relationship between the linearized deformation or strains due to
the displacement and the stresses.  The equilibrium equation states that the
stresses, which measure the internal forces in the body, are in equilibrium with
the externally applied force.  The stress field $\sigma$
takes values in the space $\Sym:=\R^{n\x n}_{\text{sym}}$ of symmetric
matrices and the displacement field $u$ takes values in $\V:=\R^n$. The differential operator
$\eps$ is the symmetric part of the gradient, the div operator is
applied row-wise to a matrix, and the compliance tensor
$A=A(x):\Sym\to\Sym$ is a bounded and symmetric, uniformly positive
definite operator reflecting the material properties of the body.  If the body
is clamped on the boundary $\partial\Omega$ of $\Omega$, then the
appropriate boundary condition for the system \eqref{mixed-system} is $u =
0$ on $\partial\Omega$. For simplicity, this boundary condition will
be assumed here.  

The pair $(\sigma,u)$ can alternatively be characterized as
the unique critical point of the Hellinger--Reissner functional
\begin{equation*}
I(\tau,v)=\int_\Omega\bigl(\frac12
A\tau:\tau+\div\tau\cdot v-f\cdot v\bigr)\,dx.
\end{equation*}
The critical point is sought 
among $\tau\in\Hdiv$,
the space of square-integrable symmetric matrix fields with
square-integrable divergence, and $v\in L^2(\Omega;\V)$, the
space of square-integrable vector fields.
Equivalently, the pair $(\sigma,u)\in \Hdiv$ $\x L^2(\Omega;\V)$
is the unique solution to the weak formulation
of the system \eqref{mixed-system}:
\begin{equation}\label{mixed-system_w}
\begin{array}{lrll}
\int_\Omega(A\sigma:\tau + \div\tau\cdot u)\,dx &=& 0,
\quad
&\tau \in \Hdiv,\\[0.2cm]
\int_\Omega \div\sigma\cdot v \,dx  &=& 
\int_\Omega f \cdot v\,dx, \quad
&v \in L^2(\Omega;\V).
\end{array}
\end{equation}

\subsection{The elasticity complex and its discretization}
The discretization
of the problem \eqref{mixed-system_w} is closely tied to
discretization of the elasticity complex.
For a two dimensional domain $\Omega$, the elasticity complex takes the form
\begin{equation}\label{2d-elasticity-complex}
\begin{CD}
0 \to\, @. H^2(\Omega)  @> J >> \Hdiv @>\div >>
L^2(\Omega;\R^2) @. \,\to 0.
\end{CD}
\end{equation}
The operator $J$ is a second-order differential operator
mapping scalar fields into symmetric matrix fields, namely
the rotated Hessian
\[
Jq= \begin{pmatrix} \partial^2_{y} q &
  -\partial_{x}\partial_{y} q \\
  -\partial_{x}\partial_{y} q &  \partial^2_{x} q  \end{pmatrix} = O (\grad \grad q) O^T, \quad O = 
\begin{pmatrix} 0 & 1 \\ -1 & 0 \end{pmatrix}.
\]
We now show that this complex is closed, i.e., that the
range of the two operators $J$ and $\div$ are closed subspaces.  The null space of $J$ is just
the space $\P_1$ of linear polynomials and we have $\|q\|_{H^2}\le c\|Jq\|_{L^2}$
for all $q\in H^2(\Omega)$ with $q\perp\P_1$, which implies that the range of
$J$ is closed.  We now show that $\div$ maps $\Hdiv$ onto $L^2(\Omega;\R^2)$, using the
fact that the scalar-valued divergence maps $H^1(\Omega;\R^2)$ onto $L^2(\Omega)$.  Given
$f\in L^2(\Omega;\R^2)$ the latter result implies the existence
of $\tau\in H^1(\Omega;\R^{2\x2})$ with $\div\tau=f$.  This $\tau$ may not
be symmetric.  However, for any $v\in H^1(\Omega;\R^2)$, we let $\curl v$ denote the matrix
with $i$th row equal to
$\curl v_i=(-\partial v_i/\partial x_2,\partial v_i/\partial x_1)$.  Then
$\div(\tau+\curl v)=f$ and $\tau+\curl v\in\Hdiv$ if $\div v=\tau_{21}-\tau_{12}$.
Thus $\div:\Hdiv\to L^2(\Omega;\R^2)$ is indeed surjective.
Setting
$W^0 =L^2(\Omega)$, $W^1 = L^2(\Omega;\Sym)$, and $W^2 =
L^2(\Omega;\R^2)$, we obtain a closed Hilbert complex with domain complex \eqref{2d-elasticity-complex}.

Now we describe only the simplest discretization from \cite{arnold-winther02}.
The space $V^0_h \subset V^0 =
H^2(\Omega)$ is the finite element space whose shape functions on each triangle $T$ are the
quintic polynomials $\P_5(T)$ and whose degrees of freedom are
the values, first, and second derivatives at each vertex,
together with the mean value of the normal derivatives along each edge,
which gives $21=\dim\P_5(T)$ degrees of freedom per triangle.  The resulting finite
element space $V^0_h$ is 
referred to in the finite element literature as the
Argyris space, and is easily seen to be contained in $C^1(\Omega)$, and hence
in $H^2(\Omega)$.  In fact, the Argyris space is in a certain sense the simplest finite
element space contained in $C^1(\Omega)$.  However the Argyris space does not coincide
with the space of all $C^1$ piecewise quintics (which is not a finite element space,
as it cannot be defined via degrees of freedom).  Elements of the Argyris space
have extra smoothness at the vertices, where they are $C^2$.

The space 
$V_h^1 \subset V^1 = \Hdiv$ consists 
of those continuous piecewise cubic symmetric matrix fields whose divergence
is piecewise linear. The degrees of freedom determining an
element $\sigma \in V_h^1$ are 
\begin{itemize}
\item the value of $\sigma$ at each vertex,
\item the moments of degree $0$ and $1$ of the two normal
  components of $\sigma$ on each edge, and
\item the moments of degree $0$ of $\sigma$ on each triangle.
\end{itemize}
Hence, the restriction of $\sigma$ to a triangle $T \in \T_h$
is uniquely determined by $24$ degrees of freedom.
Finally, the space $V_h^2 \subset V^2 =L^2(\Omega;\R^2)$
consists of all piecewise linear vector fields with respect to $\T_h$,
without any imposed continuity.  For degrees of freedom, we choose the moments of degrees
$0$ and $1$ for each component on each triangle $T$.  Using the degrees of
freedom, we also define projections $I^k_hv\in V^k_h$ defined for sufficiently smooth
$v\in V^k$ by $\phi(I^k_hv)=\phi(v)$ for all degrees of freedom $\phi$.

It is straightforward to check that we have the subcomplex property
$J V^0_h\subset V^1_h$ and $\div V^1_h\subset V^2_h$ and that the interpolation operators
$I_h:=I^k_h$ give a commuting diagram
\begin{equation}\label{full-2d-diagram}
\begin{CD}
0 \to\, @. V^0 \cap C^2 @>J >> V^1\cap C^0 @>\div >> V^2 @. \,\to 0\\
@. @V{I^0_{h}}VV @V{I^1_{h}}VV  @V{I^2_{h}}VV 
\\
0 \to\, @.V^0_h @>J >> V^1_h @>\div >> V^2_h @. \,\to 0
\end{CD}
\end{equation}
However, since $I_h^0$ involves point values of the second derivative and
$I_h^1$ involves point values, these operators do not extend to bounded operators
on the Hilbert spaces $V^0$ and $V^1$.

In order to apply the general theory of approximation of 
Hilbert complexes developed in Section~\ref{sec:HCA}, we will modify
the interpolation operators $I_h^k$ in \eqref{full-2d-diagram}
to obtain bounded cochain projections $\pi_h^k:W^k\to V^k_h$ for $k =0,1,2$.
As in Section~\ref{subsec:bcp}, the $\pi^k_h$ will be obtained from $I^k_h$
via smoothing.

\subsection{Bounded cochain projections for the elasticity complex}
First, we define an appropriate pullback of an \emph{affine map}, and
show that it defines a cochain map for the elasticity sequence.  Let
$F : \Omega \to \Omega' \subset \R^2$ be an affine map, so
that $DF = DF(x)$ is a constant $2\times 2$ matrix.
Let  $B$ be the matrix $B =\det(DF)(DF)^{-1} = O(DF)^T O^T$ and
define pullbacks $F^* = F^*_k : W^k(\Omega') \to W^k(\Omega)$ by
\begin{alignat*}{2}
F^*v &=v\circ F, \quad && v \in W^0(\Omega'),\\
F^* v  &=B (v\circ F)B^T, \quad&& v \in W^1(\Omega'),\\
F^*v &=\det(DF) B (v\circ F), \quad&& v \in W^2(\Omega').
\end{alignat*}
It is straightforward to check that, as long as $F$ is affine, $F^*$ commutes
with the differential operators $J$ and $\div$. In other words,
we obtain the commuting diagram
\begin{equation*}
\begin{CD}
0 \to\, @. V^0(\Omega') @>J >> V^1(\Omega') @>\div >> V^2(\Omega') @. \,\to 0\\
@. @V{F^*}VV @V{F^*}VV  @V{F^*}VV 
\\
0 \to\, @. V^0(\Omega) @>J >> V^1(\Omega) @>\div >> V^2(\Omega) @. \,\to 0.
\end{CD}
\end{equation*}

To define a smoothing operator which maps $W^k$ into itself, we need to
make the assumption that the domain \emph{$\Omega$ is star-shaped with
respect to some point in its interior} (which we assume, without further
loss of generality, to be the origin).  We then let
$a=\max_{x\in\partial\Omega}|x|^{-1}$ and define the dilation
$F_\delta :\R^2 \to \R^2$ by $F_\delta(x) = x/(1+ a \delta)$,
$\delta>0$. Then $F_\delta$ maps the $\delta$-neighborhood of $\Omega$,
$\Omega_\delta$, into $\Omega$ and so
$F^*_\delta$ maps $L^2(\Omega)$ to $L^2(\Omega_\delta)$.  Composing with
a standard mollification
\[
v \mapsto  \int_{|z|<1} \rho(z) v(x + \delta
z)\, dz,
\]
we obtain a smoothing operator $R_{\delta}$ mapping
$W^k$ into $W^k$.  Here $\rho : \R^2 \to \R$ is a smooth, nonnegative function supported
in the unit ball and with integral equal to one.  This smoothing
commutes with any differential operator with constant coefficients, in
particular with $J$ and $\div$.

The construction now proceeds as in Section~\ref{subsec:bcp}, and
we just give an outline of it.  For simplicity, we assume a quasi-uniform family
of meshes,
although extension to general shape regular meshes could be made as in Section~\ref{subsec:bcp}.
Let $\epsilon>0$ be fixed.  Define operators
$Q^k_h:W^k\to W^k_h$
by $Q_h^k = I_h^k \circ R_{\epsilon h}$. The operator $Q_h^k$
maps $W^k$ into $V_h^k$. In fact, it can be seen 
by a scaling argument,  that the operators 
$Q_h^k$ are bounded as operators in $\Lin(W^k,W^k)$, uniformly in
$h$. Furthermore, $Q_h^k$
commutes with
$J$ and $\div$, i.e.,
\begin{equation}\label{Q-commute}
Q_h^1 \circ J = J \circ Q_h^0 \quad \text{and} \quad 
Q_h^2 \circ \div = \div \circ \, Q_h^1.
\end{equation}
However, the operators $Q_h^k$ are not projections, since they do not
restrict to the identity on $V_h^k$. Therefore, we define the
desired projections $\pi_h^k : W^k \to V_h^k$ as $\pi^k_h =
(Q^k_h|_{V_h^k})^{-1} Q^k_h$.
To justify this definition, we need to argue that
$(Q^k_h|_{V_h^k})^{-1}$ exists. In fact, for $\epsilon$ sufficiently small,
but fixed, $(Q^k_h|_{V_h^k})^{-1}$
is even uniformly bounded in $\Lin(W_h^k, W_h^k)$, where $W_h^k =
V_h^k$,
but equipped with the norm of $W^k$.  This follows from
the fact that the operators $Q^k_h|_{V^k_h}$ converge to the identity
with $\epsilon$ in $\Lin(W_h^k, W_h^k)$, uniformly in $h$, established by
a scaling argument.

We conclude that if the parameter $\epsilon > 0$ is fixed, but sufficiently
small, then the operators $\pi_h^k$ are
well-defined and uniformly bounded with respect to $h$ in $\Lin(W^k,W^k)$.
Furthermore, the
diagram 
\[
\begin{CD}
0\to\, @. V^0 @>J >> V^1 @>\div >> V^2 @. \,\to 0\\
@. @V{\pi_{h}}VV @V{\pi_{h}}VV  @V{\pi_{h}}VV 
\\
0 \to\, @.V^0_h @>J >> V^1_h @>\div >> V^2_h @. \,\to 0,
\end{CD}
\]
commutes, as a consequence of \eqref{Q-commute}.

Thus, under the assumption of a star-shaped domain, we have established
the existence of bounded cochain projections and so the results of
Section~\ref{subsec:well-p-h-hc} apply to the elasticity complex. This means that the
corresponding Hodge Laplacians are well-posed and that the
discretizations
are stable.  In particular, using the spaces $V^1_h$ and $V^2_h$, we obtain
a stable, convergent mixed discretization 
of the elasticity equations.

\begin{remark}
Stability and convergence of the mixed finite element method for elasticity
using the spaces $V^1_h$ and $V^2_h$ described above is proven in \cite{arnold-winther02}
without restricting to star-shaped domains.  However we do not
know how to construct bounded cochain projections without this restriction.
\end{remark}

\bibliographystyle{amsplain}
\bibliography{bulletin}

\end{document}